\documentclass[11pt]{amsart}

\newif\iffinalrun
\finalruntrue

\newif\ifkuvio
\kuviotrue

\iffinalrun
  \kuviotrue
\fi

\usepackage{amssymb}
\iffinalrun
\else
  \usepackage[notref,notcite]{showkeys}
\fi
\usepackage{stmaryrd} 
\usepackage{epsfig}

\newcommand{\color}[2][1]{}

\ifkuvio
  \usepackage[forcekdg,arrsy]{kuvio}
\fi

\usepackage{hyperref}

\newcommand{\topspace}[1]{\vbox{\vspace*{#1}}}
\renewcommand{\v}{\vspace}

\newcommand{\ie}{i.\,e.}
\newcommand{\eg}{e.\,g.}

\newcommand{\tens}{\otimes}
\newcommand{\leg}[2]{\genfrac{(}{)}{}{}{#1}{#2}}

\newcommand{\A}{\mathbb A}
\renewcommand{\AA}{\mathcal A}
\newcommand{\I}{\ensuremath{\mathcal I}}
\renewcommand{\L}{\mathcal L}
\newcommand{\C}{\mathbb C}
\newcommand{\CC}{\mathcal C}
\newcommand{\Q}{\mathbb Q}
\newcommand{\qb}{\overline\Q}
\newcommand{\qp}{\Q_p}
\newcommand{\ql}{\Q_l}
\newcommand{\qpb}{\overline\Q_p}
\newcommand{\qpnr}{\Q_p^{nr}}
\newcommand{\R}{\mathbb R}
\newcommand{\T}{\mathbb T}
\newcommand{\Z}{\mathbb Z}

\newcommand{\zpb}{\overline\Z_p}
\newcommand{\znp}{\Z_{(Np)}}
\newcommand{\zn}{\Z_{(N)}}
\newcommand{\zl}{\Z_l}
\newcommand{\F}{\mathbb F}
\newcommand{\fp}{\F_p}
\newcommand{\fq}{\F_q}
\newcommand{\fpb}{\overline \F_p}
\newcommand{\fpn}[1]{\F_{p^{#1}}}
\newcommand{\fqn}[1]{\F_{q^{#1}}}
\newcommand{\G}{\mathbb G}

\newcommand{\ZZ}{\widehat{\mathbb Z}}
\newcommand{\HH}{\mathcal H}

\newcommand{\mm}{\mathfrak m}

\newcommand{\f}{\mathfrak f}
\newcommand{\pp}{\mathfrak p}
\newcommand{\qq}{\mathfrak q}

\renewcommand{\O}{{\mathcal O}}
\newcommand{\rO}{{\mathrm O}}
\newcommand{\RR}{\ensuremath{\mathcal R}}
\renewcommand{\SS}{\mathcal S}

\newcommand{\ok}{\O_K}
\renewcommand{\oe}{\O_E}
\newcommand{\oks}{\O_K\s}

\newcommand{\eq}{\Leftrightarrow}
\newcommand{\INTO}{\hookrightarrow}
\newcommand{\onto}{\twoheadrightarrow}
\newcommand{\congto}{\xrightarrow{\,\sim\,}}

\renewcommand{\r}{{}^r}
\newcommand{\s}{^\times}
\renewcommand{\d}{^\ast}
\renewcommand{\ss}{^{ss}}

\newcommand{\dual}{^\vee}
\newcommand{\subtf}{_{\mathit{tor}\textit{-}\mathit{free}}}
\newcommand{\subres}{_{\mathit{res}}}
\newcommand{\subreg}{_{\mathit{reg}}}
\newcommand{\reg}{\mathit{reg}}
\newcommand{\Ser}{\mathit{Ser}}
\newcommand{\ext}{\mathit{ext}}

\newcommand{\mmod}{\mathop{\mathrm{mod}}}

\newcommand{\End}{\mathop{\mathrm{End}}}

\newcommand{\Map}{\mathop{\mathrm{Map}}}
\newcommand{\Gal}{\mathop{\mathrm{Gal}}}
\newcommand{\Stab}{\mathop{\mathrm{Stab}}\nolimits}

\newcommand{\plim}{\varprojlim}  
\newcommand{\ilim}{\varinjlim}   

\newcommand{\res}{\operatorname{res}}
\newcommand{\BC}{\operatorname{BC}}
\newcommand{\AI}{\operatorname{AI}}
\newcommand{\cores}{\operatorname{cores}}

\newcommand{\ord}{\operatorname{ord}}
\newcommand{\sgn}{\operatorname{sgn}}

\newcommand{\fqrank}{\mathop{\hbox{$\fq$-rank}}}
\newcommand{\opt}{\mathit{opt}}

\DeclareMathOperator{\Det}{det}

\newcommand{\Ind}{\operatorname{Ind}}
\newcommand{\PInd}{\operatorname{PInd}}
\newcommand{\ind}{\operatorname{ind}}
\newcommand{\Mor}{\operatorname{Mor}}
\newcommand{\soc}{\operatorname{soc}}
\newcommand{\ch}{\operatorname{ch}}

\newcommand{\dia}{\operatorname{diag}}
\newcommand{\Frob}{\operatorname{Frob}}
\newcommand{\Sym}{\operatorname{Sym}}
\newcommand{\Supp}{\operatorname{Supp}}
\newcommand{\JH}{\operatorname{JH}}
\newcommand{\GL}{\operatorname{GL}}
\newcommand{\GSp}{\operatorname{GSp}}
\newcommand{\SL}{\operatorname{SL}}
\newcommand{\ssl}{\mathfrak{sl}}

\newcommand{\uo}{{U_1(N)}}

\newcommand{\go}{{\Gamma_1(N)}}
\newcommand{\gotilde}{{\widetilde\Gamma_1(N)}}

\newcommand{\gztilde}{{\widetilde\Gamma_0(N)}}
\newcommand{\gi}{{\Gamma_i(N)}}
\newcommand{\gitilde}{{\widetilde\Gamma_i(N)}}

\newcommand{\so}{{S_1(N)}}

\newcommand{\soptilde}{{\widetilde S_1'(N)}}

\newcommand{\szptilde}{{\widetilde S_0'(N)}}
\newcommand{\sigo}{{\Sigma_1(N)}}
\newcommand{\sip}{{S_i'(N)}}
\newcommand{\siptilde}{{\widetilde S_i'(N)}}

\newcommand{\ho}{\HH_1(N)}
\newcommand{\hop}{\HH'_1(N)}
\newcommand{\hoptilde}{\widetilde\HH'_1(N)}

\newcommand{\hzptilde}{\widetilde\HH'_0(N)}
\newcommand{\ha}{\HH_1^\A(N)}
\newcommand{\hip}{\HH'_i(N)}
\newcommand{\hiptilde}{\widetilde\HH'_i(N)}

\def\varddots{\mathinner{\raise0pt\vbox{\kern3pt\hbox{.}}\mkern1mu\smash{\raise-2pt\hbox{.}}
\mkern1mu\smash{\raise-4pt\hbox{.}}}}

\newcommand{\gln}{\ensuremath{{\GL_n(\fp)}}}
\newcommand{\glnfq}{\ensuremath{{\GL_n(\fq)}}}

\newcommand{\nivonematrix}{\bigg(\begin{smallmatrix}\omega^i \\ & \omega^j \\ && \omega^k
  \end{smallmatrix}\bigg)}

\newcommand{\nivtwomatrix}{\bigg(\begin{smallmatrix} \omega_2^m \\ & \omega_2^{pm} \\ && \omega^i
  \end{smallmatrix}\bigg)}

\newcommand{\nivthreematrix}{\bigg(\begin{smallmatrix} \omega_3^m \\ & \omega_3^{pm}\! \\[-4pt]
    && \omega_3^{p^2m} \end{smallmatrix}\bigg)}

\newcommand{\gpxtw}{\ensuremath{{X(T) \rtimes W}}}
\newcommand{\wxt}{\ensuremath{W \times X(T)}}
\newcommand{\rwmu}{\ensuremath{R_w(\mu)}}

\newcommand{\wmu}{\ensuremath{(w,\mu)}}
\newcommand{\wmup}{\ensuremath{(w',\mu')}}

\newcommand{\sd}{sufficiently deep}
\newcommand{\sdr}{sufficiently deep in a restricted alcove}

\newcommand{\ones}{1_{\mspace{-2mu}S}}

\iffinalrun
  \newcommand{\need}[1]{}
\else
  \newcommand{\need}[1]{{\tiny *** #1}}
\fi
\newcommand{\separator}{\medskip \centerline{$\ast\ \ \ast\ \ \ast$} \medskip}

\renewcommand{\ll}{\llbracket}
\newcommand{\rr}{\rrbracket}

\renewcommand{\(}{\textup{(}}
\renewcommand{\)}{\textup{)}}

\DeclareMathOperator*{\pprod}{{\textstyle\prod}}

\newcommand{\expbull}{{\mathop{\hspace{1.5pt}\begin{picture}(1,1)(0,-2)\circle*{2}\end{picture}\hspace{0.5pt}}}}

\newcommand{\Frn}{\text{$\mathrm{Fr}$-$n$}}
\newcommand{\St}{\operatorname{St}}

\theoremstyle{plain} 
\newtheorem{lm}[equation]{Lemma}
\newtheorem{sublm}[equation]{Sublemma}
\newtheorem{prop}[equation]{Proposition}
\newtheorem{thm}[equation]{Theorem}
\newtheorem{coroll}[equation]{Corollary}
\newtheorem{conj}[equation]{Conjecture}
\theoremstyle{definition}
\newtheorem{df}[equation]{Definition}
\newtheorem{rk}[equation]{Remark}

\newtheoremstyle{jantzen}{3pt}{3pt}{\itshape}{}{\scshape}{.}{.5em}{}
\theoremstyle{jantzen}
\newtheorem*{thmJ}{Theorem}
\newtheorem*{propJ}{Proposition}

\setcounter{tocdepth}{1} 

\numberwithin{equation}{section}
\numberwithin{figure}{section}

\begin{document}

  \title[The weight in Serre's Conjecture for $\GL_n$]{The weight in a Serre-type conjecture for tame $n$-dimensional Galois representations}
  \author{Florian Herzig}
  \address{Department of Mathematics, 2033 Sheridan Road, Evanston, IL 60208-2730, USA}
  \email{herzig@math.northwestern.edu}

  \begin{abstract}
    We formulate a Serre-type conjecture for $n$-dimensional Galois representations that are tamely ramified at~$p$.
    The weights are predicted using a representation-theoretic recipe. 
    For $n = 3$ some of these weights were not predicted by the previous conjecture of Ash, Doud, Pollack, and Sinnott. Computational evidence
    for these extra weights is provided by calculations of Doud and Pollack. We obtain theoretical evidence for $n = 4$ using automorphic inductions of
    Hecke characters. 
  \end{abstract}

  \maketitle

\section{Introduction}

Serre conjectured in 1973 that every two-dimensional irreducible, odd Galois representation
  $\rho : \Gal(\qb/\Q) \to \GL_2(\fpb)$
  arises from a modular eigenform. He later predicted that some such eigenform occurs in level $\Gamma_1(N^?(\rho))$ and weight
  $k^?(\rho)$, where  $N^?(\rho)$ is a prime-to-$p$ integer measuring the ramification of~$\rho$ outside~$p$, whereas $k^?(\rho) \ge 2$
  was defined by Serre in terms of the restriction of ~$\rho$ to an inertia subgroup~$I_p$ at~$p$ using an essentially combinatorial
  recipe~\cite{bib:SC2}. After important results of Mazur, Ribet, Gross, Taylor, and many others, the conjecture was 
  finally proved by Khare and Wintenberger~\cite{bib:KW1}, \cite{bib:KW2} (and Kisin~\cite{bib:Kisin}).

In this paper we consider $n$-dimensional irreducible, odd Galois representations
\begin{equation*}
  \rho : \Gal(\qb/\Q) \to \GL_n(\fpb)
\end{equation*}
(for ``odd'' see def.~\ref{df:odd}).  Ash, Doud, Pollack, and Sinnott~\cite{bib:ASinn}, \cite{bib:ADP} conjectured that such~$\rho$ arise in
the mod~$p$ group cohomology of $\Gamma_1(N^?(\rho)) \le \SL_n(\Z)$, where $N^?(\rho)$ is the natural analogue of the above.  Eigenvectors in mod~$p$
cohomology under a natural Hecke action are the analogues of mod~$p$ modular eigenforms, with the coefficients playing the role of the
weight.  The basic set of (``coefficient'') weights, the so-called Serre weights, are the irreducible representations of $\gln$ over~$\fpb$
with $\Gamma_1(N^?(\rho))$ acting via reduction mod~$p$. It is thus desirable to describe the \emph{set} of Serre weights in which $\rho$
arises. This actually provides finer information than $k^?(\rho)$ when $n = 2$. For us it will be more convenient to let $W(\rho)$ be the
set of ``regular'' Serre weights (up to twisting this corresponds to excluding $p+1$ among weights $2 \le k \le p+1$ when $n = 2$)
in which $\rho$ arises in \emph{some} prime-to-$p$ level~$N$ (i.e., not just $N = N^?(\rho)$; this is not expected to yield any
  further weights, just as when $n = 2$).

To state our Serre-type conjecture for the weights $W(\rho)$ of~$\rho$, we define a (Deligne--Lusztig) representation
$V(\rho|_{I_p})$ of $\gln$ over~$\qpb$ and an operator~$\RR$ on the set of Serre weights. By $\overline{V(\rho|_{I_p})}$ we denote the
reduction of a $\gln$-stable $\zpb$-lattice inside $V(\rho|_{I_p})$ modulo the maximal ideal and let $\JH(-)$ denote the set of Jordan--H\"older
factors of a composition series.
\begin{conj}\label{conj:intro}
  Suppose that $\rho : \Gal(\qb/\Q) \to \GL_n(\fpb)$ is irreducible, odd, and tamely ramified at~$p$. Then
  $W(\rho) = \RR(\JH(\overline{V(\rho|_{I_p})}))$.
\end{conj}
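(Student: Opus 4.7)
I would attack the conjecture by proving the two inclusions separately. The forward inclusion $\RR(JH(\overline{V(\rho|_{I_p})})) \subseteq W(\rho)$ asks us to realise $\rho$ in the mod $p$ cohomology of some $\Gamma_1(N)$ with coefficients in each predicted weight; the reverse inclusion asks that no further weight appears. I expect the reverse inclusion to be far out of reach in the full generality stated, so the realistic goal is to accumulate theoretical and computational evidence for each direction.

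The first task is to unwind the right-hand side explicitly. Since $\rho|_{I_p}$ is tame, it is determined by an unordered tuple of tame characters together with the Frobenius permutation of that tuple, and the Deligne--Lusztig representation $V(\rho|_{I_p})$ is the virtual parabolic induction from a maximal torus whose type is read off from the Frobenius orbits on the characters. Its mod $p$ reduction $\overline{V(\rho|_{I_p})}$ has a well-understood Jordan--H\"older decomposition, which one would enumerate case by case and then push through $\RR$ to obtain the predicted set; along the way one would verify that, for $n = 2$, this set coincides with Serre's original recipe, and that, for any $n$, it contains the weights predicted by Ash--Doud--Pollack--Sinnott.

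For the forward inclusion the principal tool is \emph{automorphic induction}. Given a cyclic extension $K/\Q$ of degree dividing $n$ and a suitably ramified Hecke character $\chi$ of $K$, the automorphic induction $\AI(\chi)$ is an automorphic representation of $GL_n(\A_\Q)$ whose attached Galois representation is $\Ind_K^\Q \chi$. By choosing $\chi$ to have prescribed infinity type one controls the Hodge--Tate weights; by choosing $K$ and the ramification of $\chi$ at $p$ one controls the inertial type. For each predicted Jordan--H\"older factor $V$ one would exhibit a $\chi$ whose automorphic induction contributes an eigenclass in $V$ whose Galois representation reduces to $\rho$. This is the mechanism the paper applies when $n = 4$.

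The reverse inclusion is the main obstacle. To carry it out one would need an $n$-dimensional weight part of Serre's conjecture: an eigenclass in a Serre weight $V$ would have to be lifted to characteristic zero in such a way that the inertial type and Hodge--Tate weights of the lift at $p$ constrain $V$ to lie in $\RR(JH(\overline{V(\rho|_{I_p})}))$. For $n \ge 3$ neither the requisite deformation theory nor the local--global compatibility is currently available. Accordingly, the paper's strategy is not a proof but a combination of the combinatorial definition of $\RR$, the forward construction via automorphic inductions for $n = 4$, and a reconciliation with the Ash--Doud--Pollack--Sinnott predictions together with the Doud--Pollack computations for $n = 3$.
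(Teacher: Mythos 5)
You correctly recognise that this statement is a conjecture which the paper formulates but does not prove, and your sketch of the evidence-gathering strategy (automorphic induction for the forward inclusion, comparison with ADPS and the Doud--Pollack computations for $n = 3$, acknowledging the reverse inclusion is out of reach) tracks the paper at a high level.

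Two of the mechanical details you sketch differ from what actually happens, and the differences matter. First, the field $K$ is not a cyclic extension of $\Q$: for $n = 4$ the paper uses a \emph{non-Galois} quartic CM field $K$ with dihedral Galois closure $L$, and performs Arthur--Clozel automorphic induction in two cyclic stages, $K/K^+$ and then $K^+/\Q$. A single cyclic extension would severely limit the Weyl-group elements $w$, and hence the inertial types $\tau(w,\mu+\rho)$, that can be realised; the dihedral construction lets the Frobenius in $\Gal(L/\Q)$ run over all five conjugacy classes of $D_8$ and thereby realises the $w$ in a dihedral subgroup of order~8 of $S_4$. Second, the inertial type of $\rho$ at $p$ is not controlled via ``the ramification of $\chi$ at $p$''; on the contrary, $\chi$ is chosen to be \emph{unramified} above $p$ (which is precisely what makes $\Ind_{G_K}^{G_\Q}\overline{\eta^{(p)}}$ tame at $p$), and the inertial type is then read off from the splitting behaviour of $p$ in $L$ together with the choice of infinity type $(n_\sigma)$. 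You also omit the structural limitations the paper emphasises: only Serre weights $F(\mu)$ in the lowest alcove lift to characteristic-zero coefficient systems of $GL_{n/\Q}$, so higher alcoves receive only ``weak evidence,'' and the self-duality constraint on the infinity type (forcing $\mu_1+\mu_4 = \mu_2+\mu_3$) restricts which pairs $(\rho|_{I_p},F)$ are accessible by this method.
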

Let us denote this conjectural weight set by $W^?(\rho|_{I_p})$, noting that it only depends on $\rho|_{I_p}$. When $\rho$ is no longer
tamely ramified at~$p$, i.e., $\rho|_{I_p}$ no longer semisimple, one expects that $\varnothing \ne W(\rho) \subseteq W^?(\rho|_{I_p}^{ss})$.

When $n = 3$ and $\rho|_{I_p}$ is tame, $W^?(\rho|_{I_p})$ contains the set of regular Serre weights specified
in~\cite{bib:ADP} (strictly in most cases); see thm.~\ref{thm:comparison-with-adps}. The set of all regular Serre weights is
essentially the disjoint union of two subsets (according to the ``alcoves'' in the representation theory of algebraic groups
in characteristic~$p$) that are interchanged by~$\RR$. If $\rho|_{I_p}$ is moreover generic, $W^?(\rho|_{I_p})$ consists of 9
weights, 3 lying in the lowest alcove and 6 lying in the other, regardless of what fundamental tame characters $\rho|_{I_p}$
involves (there are three possibilities).
The genericity assumption is a condition on the exponents of tame fundamental characters in $\rho|_{I_p}$ which guarantees
that the predicted weights do not get too close to alcove boundaries. For a precise definition of ``generic'', see
def.~\ref{df:generic_tau}; note that as $p$ tends to infinity the proportion of tame $\rho|_{I_p}$ that are generic tends to
1.  For any~$n$ we obtain an explicit description of $W^?(\rho|_{I_p})$ for generic tame $\rho|_{I_p}$ in terms of the
geometry of alcoves, using results of Jantzen on the decomposition of Deligne--Lusztig representations. Roughly,
$W^?(\rho|_{I_p})$ consists of $n!$~weights, $n$ to an alcove, together with certain higher translates. (The latter dominate
once $n \ge 4$.) See prop.~\ref{prop:generic_pred} and cor.~\ref{cor:no_wts_predicted} for precise statements.

The evidence we obtain for the conjecture is of two kinds. First, when $n = 3$, Doud and Pollack independently verified for us
computationally (up to convincing bounds) for several explicit, tame~$\rho$ (taken mostly from~\cite{bib:ADP}) that $W(\rho)$ contains
those weights predicted by conjecture~\ref{conj:intro} but missing in the predictions of~\cite{bib:ADP}. Doud has moreover verified for some
particular tame~$\rho$ that $\rho$ arises in no regular weights outside $W^?(\rho|_{I_p})$ (at least in level $N^?(\rho)$).

Second, when $n = 4$ we produce many 
odd, tame~$\rho$ and Serre weights~$F$ such
that $F \in W^?(\rho|_{I_p}) \cap W(\rho)$ (see thm.~\ref{thm:gl4_th_evid} and prop.~\ref{prop:GL4_theo_evid} for a precise description of which pairs
$(\rho|_{I_p},F)$ are obtained).  Our method is to obtain first Hecke
eigenvectors in group cohomology with complex coefficients from cohomological automorphic representations of ${\GL_4}_{/\Q}$ whose associated
$p$-adic Galois representation is known, and then to
``reduce mod~$p$.''  We use representations automorphically induced from carefully chosen Hecke characters over non-Galois quartic CM
fields. The main limitations of this method are that essentially only the Serre weights lying
in the lowest alcove can be lifted to characteristic zero (as representations of the ambient algebraic group ${\GL_n}_{/\Q}$)---although
weaker evidence for higher alcoves is obtained---and that the Serre weights have to satisfy a symmetry condition coming from a corresponding
condition on the infinity type of cuspidal, algebraic automorphic representations of ${\GL_n}_{/\Q}$~\cite[p.\ 144]{bib:Clozel}.  The argument also
goes through for $\GL_{2m}$ with $m > 2$ whenever the required automorphic inductions are known to exist. We remark that for $n=3$ a similar method was employed
in~\cite[\S 4]{bib:ADP} using symmetric square liftings of modular forms.

We also show that conjecture~\ref{conj:intro} is compatible with other conjectures. On the one hand we verify for generic tame
$\rho|_{I_p}$ the compatibility with a conjecture of Gee predicting a certain closure property of~$W^?(\rho|_{I_p})$ (see prop.~\ref{prop:gee_evidence}).
On the other hand we show that the predicted weight set in the Serre-type conjecture of Buzzard, Diamond, and Jarvis~\cite{bib:BDJ} (in many cases a theorem
of Gee) for two-dimensional, irreducible,
totally odd, mod~$p$ representations~$\rho$ of the Galois group of a totally real field that is unramified at~$p$ can be expressed completely analogously to
conjecture~\ref{conj:intro} in the tamely ramified case (restricting to regular weights). 
This contrasts with the result of Diamond~\cite{bib:Di} that in this case, the conjectural weight set itself (at a prime dividing~$p$) is
essentially equal to the Jordan--H\"older constituents of the reduction ``mod~$p$'' of an irreducible characteristic zero representation.
The possibility of relating the set of Serre weights of~$\rho$ to the reduction of characteristic zero representations in two ways (with or without~$\RR$) reflects
the fact for $n = 2$ there is just one relevant alcove. 
For $n > 2$ an operator like~$\RR$ is ``necessary,'' as $\RR$ interchanges alcoves with different numbers of predicted Serre weights.

Unfortunately we were unable to formulate a conjecture including the non-regular Serre weights of~$\rho$, but we expect more complicated
boundary phenomena based on considerations of local crystalline lifts. We were able to account for \emph{all} weights predicted by the conjecture
of Buzzard, Diamond, and Jarvis in the tame case by using a multi-valued extension of~$\RR$ (see thm.~\ref{thm:relation_to_bdj}).

Finally let us remark that we formulated many parts of this paper for groups more general than $\GL_n$ in the hope of its future usefulness.
We in fact apply some of the results in the case of $\GSp_4$ in recent work with Jacques Tilouine~\cite{bib:scgsp4}.

The paper is structured as follows. In sections~\ref{sec:glncharp}--\ref{sec:decomp_gln} we review the relevant
representation theory of $\GL_n(\fq)$ (and more general groups) and Jantzen's results on the decomposition ``mod~$p$'' of
Deligne--Lusztig representations. In section~\ref{sec:conj} we define $\RR$, $V(\rho|_{I_p})$, state the conjecture
in~\eqref{conj:serre} and discuss its generic behaviour. Section~\ref{sec:comp_adps} is devoted to a detailed comparison with the
conjecture of Ash, Doud, Pollack, and Sinnott when $n = 3$. We list the computations of Doud and Pollack providing numerical
evidence in section~\ref{sec:comp_evid}. The following section contains the generic compatibility result with the conjecture of
Gee. In section~\ref{sec:theo_evid} we obtain evidence for the conjecture from automorphic representations of~$\GL_4$, and in
section~\ref{sec:hilb_mod} we discuss the compatibility with the conjecture of Buzzard--Diamond--Jarvis. Finally, appendix~\ref{app:gener-jantz-form}
explains how Jantzen's theorem on the decomposition ``mod~$p$'' of
Deligne--Lusztig representations generalises to a larger class of reductive groups that includes $\GL_n$.

\subsection{Acknowledgements}

This paper grew out of my Harvard thesis~\cite{bib:thesis}. 
I am deeply indebted to my adviser, Richard Taylor, for
his invaluable guidance and unfailing support.  I am very grateful to Jens Carsten Jantzen for generalising his result about the decomposition of
Deligne--Lusztig representations and for answering other questions. The computations that Darrin Doud and David Pollack undertook for me were
crucial, and I thank them very much for their willingness to assist me in this way. 
I am grateful to Avner Ash and to Fred Diamond for very helpful discussions and for
their encouragement.  I also want to thank Christophe Breuil, Kevin Buzzard, Ga\"etan Chenevier, Matt Emerton, Toby Gee, Michael Harris, Guy Henniart,
Mark Reeder, Michael Schein, and Teruyoshi Yoshida for helpful comments and discussions. I thank the I.H.\'E.S. for 
the excellent working conditions it provided during my stay in 2006/07 when some of this work was carried out.
Finally I am grateful to the referees for their helpful comments.

  \tableofcontents

\section{Notation}

Throughout, $p$ denotes a prime number and $q = p^r$. Fix an algebraic closure $\qpb$ of~$\qp$ and denote by~$\fpb$ its
residue field. For all~$n$, let $\Q_{p^n} \subseteq \qpb$ denote the unique subfield which is unramified and of degree~$n$
over~$\qp$ and let $\F_{p^n} \subseteq \fpb$ denote the unique subfield of cardinality~$p^n$.

Fix an embedding $\qb \to \qpb$, and let~$G_p$ (resp.\ $I_p$) denote the corresponding decomposition group
(resp.\ inertia group) in $G_\Q = \Gal(\qb/\Q)$. A (choice of) \emph{geometric} Frobenius element at~$l$ will be denoted by $\Frob_l$.
We will normalise the local Artin map so that geometric Frobenius elements correspond to uniformisers.
Let $\widetilde\ : \fpb\s \to \qpb\s$ denote the Teichm\"uller lift.

All Galois representations we consider are assumed to be \emph{continuous}.

\subsection{Hecke pairs and Hecke algebras}\label{sub:hecke}

We will generally use the same terminology as Ash--Stevens \cite{bib:AStev}, but prefer left actions for our modules. Thus a \emph{Hecke
  pair} is a pair $(\Gamma,S)$ consisting of a subgroup~$\Gamma$ and a subsemigroup~$S$ of a fixed ambient group~$G$ such that
\begin{enumerate}
\item $\Gamma \subseteq S$.
\item $\Gamma$ and $s\Gamma s^{-1}$ are commensurable for all $s \in S$.
\end{enumerate}

The \emph{Hecke algebra} $\HH(\Gamma,S)$ is, as an abelian group, the subgroup of left $\Gamma$-invariant elements in the free
abelian group of left cosets $s \Gamma$ ($s \in S$). The multiplication is given by
\[ \sum a_i (s_i\Gamma) \sum b_j (t_j\Gamma) = \sum a_ib_j (s_i t_j\Gamma), \]
where $a_i$, $b_j \in \Z$, $s_i$, $t_j \in S$. In particular, any double coset $\Gamma s\Gamma = \coprod_i s_i\Gamma$ (a finite disjoint union) becomes a Hecke
operator in $\HH(\Gamma,S)$ in the natural way; it is denoted by $[\Gamma s\Gamma]$. If $M$ is a left $S$-module (over any ring),
the group cohomology modules $H^\expbull(\Gamma,M)$ inherit a natural linear action of $\HH(\Gamma,S)$. This action is $\delta$-functorial,
\ie, long exact sequences associated to short exact sequences of $S$-modules are $\HH(\Gamma,S)$-equivariant. It is thus determined by
demanding that
\begin{equation*}
  [\Gamma s\Gamma]m = \sum_i s_i m
\end{equation*}
for all $s \in S$, $m \in H^0(\Gamma, M)$. It is also possible to explicitly describe the action on cocyles in any degree (see~\cite{bib:AStev}, p.~194).

A Hecke pair $(\Gamma_0,S_0)$ is \emph{compatible} with $(\Gamma,S)$ if $\Gamma_0 \subseteq \Gamma$, $S_0 \subseteq S$, $S_0 \Gamma = S$,
and $\Gamma \cap S_0^{-1}S_0 = \Gamma_0$. In this case, it is easy to check that there is a natural injection
\begin{equation*}
  \HH(\Gamma,S) \INTO \HH(\Gamma_0,S_0)
\end{equation*}
induced by restriction from the map on left cosets sending $s_0 \Gamma$ to $s_0 \Gamma_0$ ($s_0 \in S_0$).

It will moreover be convenient to introduce a stronger relation. Let us say that two compatible Hecke algebras $(\Gamma_0,S_0)$, $(\Gamma,S)$ are \emph{strongly
compatible} if $\Gamma s_0 \Gamma = \Gamma_0 s_0 \Gamma$ for all $s_0 \in S_0$ (equivalently, $\Gamma = \Gamma_0(\Gamma \cap s_0 \Gamma s_0^{-1})$ for all $s_0 \in
S_0$). It is easy to see that this is precisely the condition to make the induced injection on Hecke algebras an isomorphism. Note that this isomorphism
identifies $[\Gamma s_0 \Gamma]$ with $[\Gamma_0 s_0 \Gamma_0]$ for all $s_0 \in S_0$.

\section{Representations of $\glnfq$ in characteristic $p$}\label{sec:glncharp}

In this section we will review some relevant results of the modular representation theory of groups like $\glnfq$. The main reference
is~\cite{bib:Jan-reps}.

\subsection{Generalities}

Let~$G_0$ be a connected, \emph{split} reductive group over~$\fp$ and let $G = G_0 \times_{\fp} \fpb$.
Let $T\subseteq G$ be a maximal torus defined and split over~$\fp$ with character group $X(T)$.  Let $R \subseteq X(T)$ be the set of roots of
$(G,T)$. For any $\alpha \in R$, $\alpha^\vee$ denotes the associated coroot.
Choose a set of positive roots~$R^+$ and let $\alpha_i$ denote the simple roots. By $B \supseteq T$ we denote the corresponding Borel subgroup and by~$B^-$ the opposite Borel.
Let $W = N(T)/T$ be the Weyl group of $(G,T)$ and $X(T)_+$ the monoid of dominant weights with respect to our choice of positive roots.

$W$ acts on $X(T)$ via $w\mu := \mu \circ w^{-1}$.  It will be useful in the following to also use a modified action. Choose $\rho'
\in \frac 12(\sum_{\alpha\in R^+} \alpha) + (X(T)\tens \Q)^W$ and define the ``dot action'' of~$W$ by
\begin{equation}\label{eq:dot_action}
  w \cdot \lambda := w(\lambda+\rho')-\rho'.
\end{equation}
Of course, this is independent of the choice of~$\rho'$. Note also that $\langle \rho',\alpha_i\dual\rangle = 1$ for all~$i$.
(In the literature, usually $\rho' = \frac 12\sum_{\alpha\in R^+} \alpha$
is used and denoted by~$\rho$. We prefer to reserve the letter ``$\rho$'' for a more convenient choice of~$\rho'$ in the case of $G=\GL_n$
\eqref{df:rho}.)

Any $\lambda\in X(T)$ can be considered as character of~$B^-$ via the natural map $B^- \onto T$.
For $\lambda\in X(T)_+$ the (dual) \emph{Weyl module} $W(\lambda)$ is defined as algebraic induced module:
\begin{align}
\label{eq:weylmod}  W(\lambda) &= \ind_{B^-}^G (\fpb(\lambda)) \\
\notag  &= \{ f \in \Mor(G,\G_a) : f(bg) = \lambda(b) f(g)\, \forall \,g\in G,\ b\in B^-\}.
\end{align}
(For non-dominant~$\lambda$, this induced module is zero.)
This is a finite-dimensional $\fpb$-vector space, which becomes a left $G$-module in the natural
way:
\[ (xf)(g) = f(gx) \quad \forall \,g, x\in G;\ f\in W(\lambda). \]

Let $F(\lambda) := \soc_G W(\lambda)$ (the socle of the Weyl module, as $G$-module).

\begin{thm}
  The set of simple $G$-modules is $\{F(\lambda) : \lambda\in X(T)_+\}$. If $F(\lambda) \cong F(\mu)$ \($\lambda$, $\mu \in X(T)_+$\)
  then $\lambda = \mu$.
\end{thm}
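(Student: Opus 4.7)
This is the standard classification of simple rational $G$-modules by their dominant highest weights (Chevalley), which I would prove as in Jantzen, \S II.2. Three things need to be established: (a) for $\lambda \in X(T)_+$ the Weyl module $W(\lambda)$ is nonzero and its $G$-socle $F(\lambda)$ is simple of highest weight $\lambda$; (b) distinct dominant weights give non-isomorphic $F(\lambda)$; (c) every simple rational $G$-module is isomorphic to some $F(\mu)$ with $\mu \in X(T)_+$. Part (b) is immediate from (a), since the highest weight is an isomorphism invariant.

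The tool underlying both (a) and (c) is Frobenius reciprocity for $\ind_{B^-}^G$: for every rational $G$-module $V$,
\[
  \Hom_G(V, W(\lambda)) \;\cong\; \Hom_{B^-}(V, \fpb(\lambda)).
\]
For (a) I would first verify $W(\lambda) \ne 0$, either by writing down an explicit nonzero section on the open cell $B \cdot B^- \subset G$ (giving a highest-weight vector of weight $\lambda$) or by invoking Kempf's vanishing theorem. A standard weight computation then shows that the weights of $W(\lambda)$ are bounded above in the dominance order by $\lambda$, which occurs with multiplicity one, so the $\lambda$-weight space is a one-dimensional $T$-eigenline. Applying the reciprocity isomorphism, any nonzero $G$-submodule $U \subset W(\lambda)$ admits a nonzero $B^-$-map $U \to \fpb(\lambda)$, hence contains a $T$-eigenvector of weight $\lambda$ and therefore the distinguished line. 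Thus the $G$-socle is the unique minimal nonzero $G$-submodule, so $F(\lambda)$ is simple with highest weight $\lambda$.

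For (c), let $V$ be a simple rational $G$-module. Rational modules are locally finite, so $V$ is finite-dimensional and $T$-semisimple. Pick a weight $\mu$ of $V$ maximal in the dominance order. Then $V^{<\mu} := \bigoplus_{\nu \ne \mu} V_\nu$ is a $B^-$-submodule: each negative root subgroup $U_{-\alpha}$ sends $V_\nu$ into $\bigoplus_{k \ge 0} V_{\nu - k\alpha}$, and $\nu - k\alpha = \mu$ would force $\nu \ge \mu$, contradicting $\nu \ne \mu$ and the maximality of $\mu$. The quotient $V/V^{<\mu}$ is a direct sum of copies of $\fpb(\mu)$ as a $B^-$-module, so any one-dimensional projection yields a nonzero element of $\Hom_{B^-}(V, \fpb(\mu))$ and hence, by reciprocity, a nonzero $G$-map $V \to W(\mu)$; simplicity of $V$ makes it an embedding into $F(\mu) = \soc_G W(\mu)$, so $V \cong F(\mu)$. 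Finally $\mu$ is dominant: because $\mu$ is maximal, $V_\mu$ is $B$-stable, and since the unipotent radical $U$ of $B$ acts unipotently on $V_\mu$ it admits a nonzero fixed vector, giving a $B$-stable line of weight $\mu$; restricting to the $SL_2$-copy attached to each simple root $\alpha_i$ then forces $\langle \mu, \alpha_i^\vee \rangle \ge 0$.

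The principal obstacle is the coordinated bookkeeping with the two Borels $B$ and $B^-$: Frobenius reciprocity for $\ind_{B^-}^G$ produces $B^-$-equivariant data of weight $\lambda$, whereas ``highest weight'' is phrased with respect to $B$. The two viewpoints are reconciled precisely because $\lambda$ is dominant---the same hypothesis that ensures $W(\lambda) \ne 0$ and that identifies the $B^-$-inducing character with the $B$-highest weight of $W(\lambda)$. Once these conventions are aligned, parts (a)--(c) combine cleanly to yield the theorem.
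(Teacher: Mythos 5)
Your proof is correct and follows the standard Chevalley classification argument via Frobenius reciprocity for $\ind_{B^-}^G$, which is precisely the approach in Jantzen, \S II.2 -- the reference the paper invokes, since the theorem is stated there without proof. The bookkeeping on $B$ versus $B^-$, the maximality argument for $V^{<\mu}$ being $B^-$-stable, and the reflection argument establishing dominance of $\mu$ are all handled correctly.
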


The formal character map
\begin{equation*}
  \ch : \{ \text{$G$-modules} \} \to \Z[X(T)]^W
\end{equation*}
induces an isomorphism between the Grothendieck group of $G$-modules and $\Z[X(T)]^W$ \cite[II.5.8]{bib:Jan-reps}.
Note that for all $\lambda \in X(T)$ a Weyl module $W(\lambda)$ can be defined 
in the Grothendieck group of $G$-modules \cite[II.5.7]{bib:Jan-reps}:
\begin{equation*}
  W(\lambda) = \sum_i (-1)^i (R^i\ind_{B^-}^G) (\fpb(\lambda)).
\end{equation*}
(If $\lambda$ is dominant, only the $i=0$ term is non-zero, so this agrees with the previous definition.)
The context should always make it clear whether $W(\lambda)$ refers to a genuine representation
(and $\lambda$ dominant) or to an element of the Grothendieck group.
The formal character is given by the Weyl character formula \cite[II.5.10]{bib:Jan-reps}:
\begin{equation}\label{eq:wcf}
  \ch W(\lambda) = \frac{\sum_{w\in W} \det w \cdot e(w(\lambda+\rho'))}{\sum_{w\in W} \det w \cdot e(w(\rho'))} \in \Z[X(T)]^W.
\end{equation}
Here $e(\lambda) \in \Z[X(T)]$ denotes the weight~$\lambda$ considered in the group algebra. In particular
it follows that
\begin{equation}\label{eq:change_of_weyl_chamber}
  W(w\cdot \lambda) = \det(w) W(\lambda),
\end{equation}
and in turn that $W(\lambda) = 0$ if and only if $\lambda+\rho'$ lies on the wall of a Weyl chamber, whereas in all other
cases, this formula allows to express $W(\lambda)$ as $\pm W(\lambda_+)$ with $\lambda_+$ dominant.

\begin{df}\label{df:restr}
  \begin{align*}
      X^0(T) &= \{\lambda \in X(T): \langle \lambda, \alpha^\vee\rangle = 0 \quad \forall\alpha\in R\}. \\
      \intertext{The set of $p^s$-restricted weights is defined to be:}
      X_s(T) &= \{\lambda \in X(T): 0 \le \langle \lambda, \alpha^\vee\rangle < p^s \quad
        \text{for all simple roots $\alpha$}\}.
  \end{align*}
\end{df}

\begin{rk}\label{rk:x0t}
  Note that $X^0(T) = X(T)^W$, by looking at the basic reflections $s_\alpha$ \($\alpha\in R$\) generating~$W$.
  If $\nu \in X^0(T)$ then $W(\nu) = F(\nu)$ is a one-dimensional representation with character $e(\nu)$
  by the Weyl character formula. From~\eqref{eq:weylmod} we get for $\mu \in X(T)_+$,
  \begin{equation*}
    W(\mu + \nu) \cong W(\mu) \tens W(\nu),\ F(\mu + \nu) \cong F(\mu) \tens F(\nu).
  \end{equation*}
\end{rk}

\begin{prop}[Brauer's formula]\label{prop:Brauer_formula}
  If $\sum_{\mu \in X(T)} a_\mu e(\mu) \in \Z[X(T)]^W$, then for all $\lambda \in X(T)$,
  \begin{equation*}
      \ch W(\lambda) \cdot \sum_{\mu \in X(T)} a_\mu e(\mu) = \sum_{\mu \in X(T)} a_\mu \ch W(\lambda+\mu).
  \end{equation*}
\end{prop}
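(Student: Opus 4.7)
The plan is a direct computation using the Weyl character formula~\eqref{eq:wcf} together with the $W$-invariance of the multiplier $\sum_\mu a_\mu e(\mu)$. Let me introduce the shorthand $N(\nu) := \sum_{w \in W} \det(w)\, e(w\nu)$ for $\nu \in X(T)$, so that~\eqref{eq:wcf} reads $\ch W(\nu) = N(\nu+\rho')/N(\rho')$. Multiplying through by $N(\rho')$, the identity to prove becomes
$$ N(\lambda + \rho') \cdot \sum_{\mu} a_\mu e(\mu) \;=\; \sum_{\mu} a_\mu\, N(\lambda + \mu + \rho'). $$

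The key observation is that since $\sum_\mu a_\mu e(\mu)$ lies in $\Z[X(T)]^W$, one has $\sum_\mu a_\mu e(\mu) = \sum_\mu a_\mu e(w\mu)$ for each $w \in W$. Substituting this $W$-transformed expression into the $w$-th summand of $N(\lambda+\rho')$ and expanding the product, I get
$$ N(\lambda+\rho') \cdot \sum_\mu a_\mu e(\mu) \;=\; \sum_{w \in W} \det(w) \sum_\mu a_\mu\, e\bigl(w(\lambda+\mu+\rho')\bigr). $$
Swapping the order of the (finite) sums yields $\sum_\mu a_\mu N(\lambda+\mu+\rho')$, as desired; dividing by $N(\rho')$ recovers the statement.

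The only subtlety to verify is that everything remains consistent when $\lambda + \mu + \rho'$ lies on a Weyl chamber wall: in that case $N(\lambda+\mu+\rho') = 0$ because the antisymmetric sum vanishes, and correspondingly $\ch W(\lambda+\mu) = 0$ by~\eqref{eq:change_of_weyl_chamber}, so both sides of the formula handle such terms identically. There is no real obstacle here --- the entire argument is a formal manipulation of finite sums in $\Z[X(T)]$, exploiting the contrast between the $W$-invariance of the multiplier and the antisymmetry of $N$.
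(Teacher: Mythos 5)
Your proof is correct, and it is the standard argument for Brauer's formula: rewrite $\ch W$ via the Weyl character formula, use the $W$-invariance of the multiplier $\sum_\mu a_\mu e(\mu)$ to slip it inside the alternating sum term by term, and reindex. The paper itself does not include a proof (it only cites Jantzen, \cite[\S2(1)]{bib:Jan-Dekompverhalten}), and the proof given there is essentially the same computation, so your argument matches the intended one. Your remark about walls (both $N(\lambda+\mu+\rho')$ and $\ch W(\lambda+\mu)$ vanishing) is not really needed for the proof itself --- once you multiply through by the Weyl denominator $N(\rho')$, the identity $N(\lambda+\rho')\sum_\mu a_\mu e(\mu) = \sum_\mu a_\mu N(\lambda+\mu+\rho')$ is a purely formal equality in $\Z[X(T)]$ and dividing by $N(\rho')$ (a nonzerodivisor) gives the claim unconditionally --- but it is a good sanity check.
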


For the simple proof, see for example \cite[\S2(1)]{bib:Jan-Dekompverhalten}.

Let $F_p : G \to G$ denote the $p$-power Frobenius morphism obtained as base change of the absolute Frobenius morphism
of~$G_0$.  For any $i \ge 0$ and any $G$-module~$V$, corresponding to a homomorphism $\rho: G\to \GL(V)$, define a new
$G$-module $V^{(i)}$ which equals~$V$ abstractly but whose $G$-action is obtained by composing $\rho$ with $F_p^i$.

\begin{thm}[Steinberg] \label{thm:Steinberg}
  Suppose $\lambda = \sum_{i=0}^s \lambda_i p^i$ with $\lambda_i \in X_1(T)$. Then
  \begin{equation*}
    F(\lambda) \cong F(\lambda_0) \tens F(\lambda_1)^{(1)} \tens \dots \tens F(\lambda_s)^{(s)}.
  \end{equation*}
\end{thm}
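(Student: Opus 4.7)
The plan is to prove this by induction on $r$, reducing to the two-factor case: if $\lambda_0\in X_1(T)$ and $\mu\in X(T)_+$, then
\[ F(\lambda_0 + p\mu) \cong F(\lambda_0) \tens F(\mu)^{(1)}. \]
The general statement then follows by writing $\lambda = \lambda_0 + p\mu'$ with $\mu' = \sum_{i\ge 1} \lambda_i p^{i-1}$, applying the induction hypothesis to $\mu'$ (with the smaller exponent $r-1$), and using that Frobenius twists compose: $\bigl((-)^{(1)}\bigr)^{(i)} = (-)^{(i+1)}$.

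First I would establish the elementary identity $F(\nu)^{(1)} \cong F(p\nu)$ for any $\nu\in X(T)_+$. Frobenius twist does not change the underlying vector space nor the lattice of $G$-stable subspaces (a subspace is stable under the twisted action iff it is stable under the original action), so $F(\nu)^{(1)}$ is still simple. Since $T$ is defined over $\fp$, the Frobenius $F_p|_T$ acts on $X(T)$ by multiplication by $p$, so every $T$-weight of $F(\nu)^{(1)}$ is $p$ times a $T$-weight of $F(\nu)$; in particular its highest weight is $p\nu$, and the classification of simple $G$-modules identifies $F(\nu)^{(1)}$ with $F(p\nu)$.

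Next I would show $V := F(\lambda_0)\tens F(\mu)^{(1)}$ is irreducible with highest weight $\lambda_0 + p\mu$. The highest-weight claim is immediate from the tensor structure and the previous step. For irreducibility I would analyse $V$ as a module over the first Frobenius kernel $G_1 := \ker(F_p\colon G\to G)$. Because $F_p(G_1) = 1$, the module $F(\mu)^{(1)}$ has trivial $G_1$-action, so as $G_1$-module
\[ V \cong F(\lambda_0) \tens_{\fpb} F(\mu)^{(1)} \cong F(\lambda_0)^{\oplus \dim F(\mu)}. \]
The key input is that $F(\lambda_0)$ is $G_1$-simple whenever $\lambda_0 \in X_1(T)$ (the standard fact that $p$-restricted simples remain simple on restriction to $G_1$; this is the reason restrictedness enters the statement). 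Consequently any $G$-submodule $M \subseteq V$, being in particular $G_1$-stable, takes the form $M = F(\lambda_0)\tens N$ where $N := \Hom_{G_1}(F(\lambda_0),M) \subseteq \Hom_{G_1}(F(\lambda_0),V) = F(\mu)^{(1)}$. The residual $G/G_1$-action on this Hom space agrees, via the identification $G/G_1 \cong G$ induced by $F_p$, with the original $G$-action on $F(\mu)^{(1)}$, so $N$ is a $G$-submodule of the simple module $F(\mu)^{(1)}$. Hence $N = 0$ or $N = F(\mu)^{(1)}$, giving $M = 0$ or $M = V$.

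The classification of simple $G$-modules now forces $V \cong F(\lambda_0 + p\mu)$, which closes the induction. The main obstacle is the irreducibility step: a direct weight-space argument is not sufficient because weights of $F(\lambda_0)$ can interact with weights of $F(\mu)^{(1)}$ to produce the same total weight in several ways. Passing to the Frobenius kernel $G_1$ and exploiting $G_1$-simplicity of $p$-restricted simples (together with the triviality of the twist on $G_1$) is what makes the argument work, and also explains why the theorem requires the digits $\lambda_i$ to lie in $X_1(T)$ rather than merely in $X(T)_+$.
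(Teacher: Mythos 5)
The paper gives no proof of this theorem, merely citing Jantzen [II.3.17] for a proof via the representation theory of Frobenius kernels. Your argument is correct and is precisely that standard argument: reduce by induction to the two-factor case $F(\lambda_0+p\mu)\cong F(\lambda_0)\tens F(\mu)^{(1)}$, then use the $G_1$-simplicity of $F(\lambda_0)$ (this is where $p$-restrictedness enters) to see that $F(\lambda_0)\tens F(\mu)^{(1)}$ is $F(\lambda_0)$-isotypic over $G_1$, so that its $G$-submodules correspond bijectively, via $\Hom_{G_1}(F(\lambda_0),-)$, to $G$-submodules of the simple module $F(\mu)^{(1)}$.
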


For a proof using the representation theory of Frobenius kernels see \cite[II.3.17]{bib:Jan-reps}.

Now we can state the classification theorem for irreducible modular representations of $G_0(\fq)$, at least under a mild condition on $G$.
The theorem is a slight extension of the one in~\cite[app.\,1]{bib:Jan-expo}, where in addition $G$ is assumed to be semisimple.
For the proof see prop.~1.3 in the appendix.

\begin{thm}\label{thm:mod_reps}
  Suppose that $G$ has simply connected derived group \(\eg, $G = \GL_n$\). Recall that $q = p^r$.
  
  \begin{enumerate}
  \item If $\lambda \in X_r(T)$, $F(\lambda)$ is irreducible as representation of $G_0(\fq)$. Any irreducible representation
    of $G_0(\fq)$ over~$\fpb$ arises in this way.
  \item $F(\lambda) \cong F(\mu)$ as representation of $G_0(\fq)$ if and only if $\lambda-\mu \in (q-1)X^0(T)$.
  \end{enumerate}
\end{thm}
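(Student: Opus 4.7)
The plan is to deduce the theorem from the classical Steinberg theorem for semisimple, simply connected groups (the version cited as \cite[app.\,1]{bib:Jan-expo}) by peeling off the contribution of the connected center. Write $G = Z^0 \cdot G'$, where $G'$ is the (semisimple, simply connected) derived group and $Z^0$ the connected center; this is an almost-direct product with finite central intersection $Z^0 \cap G'$. Set $T' = T \cap G'$, so $T = Z^0 \cdot T'$. Observe that $X^0(T) = X(T)^W$ is precisely the sublattice of characters trivial on $G'$, i.e., factoring through $G/G' \cong Z^0/(Z^0\cap G')$.

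For part (i), given $\lambda \in X_r(T)$, I would first apply Steinberg's tensor product theorem~\ref{thm:Steinberg} to write
\[
  F(\lambda) \cong F(\lambda_0) \otimes F(\lambda_1)^{(1)} \otimes \cdots \otimes F(\lambda_{r-1})^{(r-1)}
\]
with $\lambda_i \in X_1(T)$. The restriction of $F(\lambda)$ to $G'$ is irreducible of highest weight $\lambda|_{T'}$ (the $Z^0$-action decouples as a central scalar), and $\lambda|_{T'} \in X_r(T')$. The classical Steinberg theorem for the semisimple, simply connected group $G'$ then gives that $F(\lambda)|_{G'(\fq)}$ is irreducible, hence so is $F(\lambda)|_{G_0(\fq)}$ since $G'(\fq) \subseteq G_0(\fq)$.

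For part (ii), the ``if'' direction is a direct character computation. Given $\nu \in X^0(T)$, remark~\ref{rk:x0t} yields $F(\lambda + (q-1)\nu) \cong F(\lambda) \otimes F((q-1)\nu)$, and $F((q-1)\nu)$ is the one-dimensional character $(q-1)\nu$. Since $\nu$ factors through $G \twoheadrightarrow G/G'$ and any $g \in G_0(\fq)$ satisfies $\nu(g) \in \fq^\times$, one has $\nu(g)^{q-1} = 1$, so the character is trivial on $G_0(\fq)$. For the ``only if'' direction together with the surjectivity statement in (i), I would combine an injectivity argument with a counting step: show that the induced map
\[
  X_r(T)/(q-1)X^0(T) \longrightarrow \{\text{irreducible } \fpb\text{-representations of } G_0(\fq)\}
\]
is injective, by separating two classes via the restriction to $G'(\fq)$ (where classical Steinberg gives distinctness of distinct $G'$-restricted components) and via the central character on $Z^0(\fq)$, and then match cardinalities using the count of $p$-regular conjugacy classes of $G_0(\fq)$.

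The main obstacle is the bookkeeping around the finite subgroup $Z^0 \cap G'$: one must check that the combinatorial equivalence $\lambda - \mu \in (q-1)X^0(T)$ on $X_r(T)$ corresponds exactly to the identification of pairs (restricted weight of $G'$, character of $Z^0(\fq)$) imposed by this intersection, with no spurious overlaps at either the $Z^0(\fq)$ or $G'(\fq)$ level. This is where the simply connected hypothesis on $G'$ is used decisively: it guarantees that the restriction map $X(T) \to X(T')$ hits every weight of $G'$, so that every restricted weight of $G'$ lifts to some $\lambda \in X_r(T)$ and the center's freedom is captured cleanly by $X^0(T)$ modulo $(q-1)$-multiples.
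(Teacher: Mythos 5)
The paper gives no proof of this theorem; it is cited from Jantzen (personal communication) as a ``slight extension'' of the classical Steinberg result for semisimple, simply connected groups, so there is no paper proof to compare against and your proposal has to be judged on its own. Part (i) and the ``if'' direction of (ii) are sound: restriction to $G'$ is irreducible because $Z^0$ acts by scalars, the simple connectedness of $G'$ gives surjectivity of $X(T)\to X(T')$, and a $W$-invariant $\nu \in X^0(T)$ does extend to an $\fq$-rational character of $G$ whose $(q-1)$st power is trivial on $G_0(\fq)$.

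The gap is in the injectivity step. You propose to separate classes of $X_r(T)/(q-1)X^0(T)$ by the pair \emph{(restriction to $(G')^F$, central character on $(Z^0)^F$)}. This pair does not separate classes. If $\nu := \lambda-\mu \in X^0(T)$ (so the $G'$-restrictions agree), the central character test only detects whether $\nu$ is trivial on $(Z^0)^F$, whereas the conclusion $\nu \in (q-1)X^0(T)$ is equivalent to $\nu$ being trivial on the larger group $(G/G')^F$. The isogeny $Z^0 \to G/G'$ has kernel $Z^0\cap G'$, and the induced map $(Z^0)^F\to (G/G')^F$ has cokernel $H^1(F, (Z^0\cap G')(\fpb))$, which is generally nontrivial. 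Concretely, take $G=GL_2$ with $q$ odd and $\nu = ((q-1)/2,(q-1)/2)$: then $\nu\in X^0(T)$, $\nu$ is trivial on $(Z^0)^F = \fq^\times I$ because $\nu(zI)=z^{q-1}=1$, yet $\nu\notin(q-1)X^0(T)$, and for any $\lambda\in X_r(T)$ both $\lambda$ and $\lambda+\nu$ lie in $X_r(T)$. Your two tests therefore fail to distinguish $F(\lambda)$ from $F(\lambda)\otimes\det^{(q-1)/2}$, which the theorem asserts are non-isomorphic $GL_2(\fq)$-modules. Distinguishing them genuinely requires information from elliptic (non-split) semisimple classes---for instance a Brauer-character comparison at an elliptic element, or a Clifford-theoretic analysis showing that the distinct extensions of $F_{G'}(\lambda')$ from $(G')^F$ to $G^F$ form a torsor under the character group of $(G/G')^F$, not merely that of $(Z^0)^F$. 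Your final counting step cannot patch this: matching cardinalities upgrades a known injection (or surjection) to a bijection, but you must first actually have one of the two, and the injectivity as written fails. In your closing paragraph you do flag the $Z^0 \cap G'$ bookkeeping as the danger point, but the claim that simple connectedness of $G'$ resolves it ``cleanly'' is exactly what the $GL_2$ example shows is not automatic.
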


\subsection{Alcoves and the decomposition of Weyl modules}

The \emph{affine Weyl group} $W_p := p\Z R \rtimes W$ and the extended affine Weyl group $\widetilde W_p := pX(T) \rtimes W$
are defined with respect to the natural action of $W$ on $\Z R \subseteq X(T)$. We 
identify them with their images in the group of affine linear automorphisms of $X(T)\tens \R$ as follows:
\begin{equation*}
  (p\nu,w) \cdot \lambda :=  w\cdot \lambda + p\nu
\end{equation*}
(using the dot-action~\eqref{eq:dot_action}). For any $\alpha \in R$ and any $n \in \Z$ there is an affine reflection on $X(T) \tens \R$,
\[ s_{\alpha,np}(\lambda) = \lambda - (\langle \lambda + \rho', \alpha\dual \rangle - np) \alpha. \]
Note that the $s_{\alpha,np}$ generate $W_p$. For $\alpha \in R$ and any $n \in \Z$ denote by
\begin{equation}
  \label{eq:hyperplanes}
  H_{\alpha,np} = \{ \lambda : \langle \lambda + \rho', \alpha\dual \rangle = np \}
\end{equation}
the affine hyperplane fixed by $s_{\alpha,np}$.

\begin{df}
  An \emph{alcove} is a connected component of the complement of these affine hyperplanes in $X(T)\tens\R$.
\end{df}

In particular there is the \emph{``lowest alcove''}
\begin{equation*}
  C_0 = \{ \lambda :  0 < \langle \lambda + \rho', \alpha\dual \rangle < p\quad\forall\alpha\in R^+\}.
\end{equation*}
It can easily be checked that $W_p$ and even $\widetilde W_p$
map alcoves to alcoves; in fact, $\overline{C_0}$ is a fundamental domain for the $W_p$-action.

\begin{df}
  An alcove $C$ is \emph{dominant} if it is contained in
  \begin{equation*}
      \{ \lambda : 0 < \langle \lambda + \rho', \alpha_i\dual \rangle \quad\forall i \}.
  \end{equation*}

  An alcove $C$ is \emph{restricted} if it is contained in the restricted region
  \begin{equation}\label{eq:A_res}
      A\subres = \{ \lambda : 0 < \langle \lambda + \rho', \alpha_i\dual \rangle < p \quad\forall i \}.
  \end{equation}
\end{df}
Recall that the $\alpha_i$ denote the simple roots. Note that the restricted region $A\subres$ is related to the set of
$p$-restricted weights~\eqref{df:restr} as follows:
\begin{equation*}
  X(T)\cap A\subres \subseteq X_1(T) \subseteq X(T)\cap \overline{A\subres}.
\end{equation*}
Also, it is clear from the definition that $\overline{A\subres}$ is a union of closures of alcoves.

\begin{df}\ \label{df:uparrow}

  \begin{enumerate}
  \item Suppose that $\lambda$, $\mu \in X(T)$. We will say $\lambda \uparrow \mu$ if there exist
    $s_i := s_{\alpha_i,pn_i} \in W_p$ with $\alpha_i \in R$, $n_i \in \Z$ for $1 \le i\le r$ such that
    \[ \lambda \le s_1 \cdot \lambda \le s_2s_1 \cdot \lambda \le \dots \le s_r\dots s_1 \cdot \lambda = \mu. \]
  \item Suppose that $C_0 \cap X(T) \ne \varnothing$. Given alcoves $C$, $C'$, pick $\lambda \in C$ and let
    $\lambda'$ be the unique element of $W_p \cdot \lambda \cap C'$. Then
    \[ C \uparrow C'\ :\Longleftrightarrow\ \lambda \uparrow \lambda'. \]
  \end{enumerate}
\end{df}

Note that
\begin{equation*}
  \lambda \uparrow \mu \ \Longrightarrow\ \lambda \le \mu \ \text{and}\ \lambda \in W_p \cdot \mu
\end{equation*}
but the converse does not hold in general. One verifies that the second part of the definition is independent of the choice of~$\lambda$.
There is a natural definition even if $C_0$ contains no weights \cite[II.6.5]{bib:Jan-reps}. In any case, $C_0$ is the lowest dominant
alcove with respect to~$\uparrow$. If $C \uparrow C'$ we will also say that $C$ lies \emph{below} alcove~$C'$ and $C'$ \emph{above}~$C$.

The following result, the so-called ``strong linkage principle'' of Jantzen and Andersen \cite[II.6.13]{bib:Jan-reps}, is crucial
in the representation theory of reductive groups in prime characteristic.

\begin{prop}\label{prop:stronglinking}
  Suppose that $\lambda$, $\mu \in X(T)_+$ and that $F(\lambda)$ is a constituent of $W(\mu)$. Then $\lambda \uparrow \mu$.
\end{prop}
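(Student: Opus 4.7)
The plan is to prove this by induction on $\mu$ with respect to the standard dominance order on $X(T)_+$, using Jantzen's filtration on $W(\mu)$ and the associated sum formula as the main tool. First I would pick an integral form of $W(\mu)$ over $\zp$ together with a contravariant bilinear form, and define a descending filtration $W(\mu) = W(\mu)^0 \supset W(\mu)^1 \supset \cdots$ by the $p$-adic valuation of this form, reduced modulo $p$. One arranges matters so that the quotient $W(\mu)^0 / W(\mu)^1$ equals $F(\mu)$ and contains no other composition factors; hence if $F(\lambda)$ is a composition factor of $W(\mu)$ with $\lambda \ne \mu$, it must already occur in $W(\mu)^1$, and in particular contributes positively to the left-hand side of Jantzen's sum formula (\cite[II.8.19]{bib:Jan-reps})
\[
  \sum_{i \geq 1} \ch W(\mu)^i \;=\; \sum_{\alpha \in R^+}\;\sum_{0 < mp < \langle \mu+\rho',\,\alpha\dual\rangle} \nu_p(mp)\cdot \ch W\bigl(s_{\alpha,mp}\cdot\mu\bigr),
\]
where each $W(\nu)$ on the right is to be interpreted in the Grothendieck group via the Weyl character formula~\eqref{eq:wcf}.

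For the inductive step, each reflection contributing to the right-hand side yields $\nu := s_{\alpha,mp}\cdot\mu = \mu - (\langle \mu+\rho',\alpha\dual\rangle - mp)\alpha$, which lies strictly below $\mu$ in the dominance order and satisfies $\nu \uparrow \mu$ by the single affine reflection (definition~\eqref{df:uparrow}). Using~\eqref{eq:change_of_weyl_chamber} I would then rewrite $\ch W(\nu) = \pm\ch W(\nu_+)$ for the unique dominant $\nu_+ \in W\cdot\nu$ (or zero if $\nu+\rho'$ lies on a Weyl-chamber wall), with an accompanying alcove-combinatorial check that $\nu_+ \uparrow \nu$, so that $\nu_+ \uparrow \mu$ holds and $\nu_+ < \mu$ strictly. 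Consequently $\ch\bigl(\sum_{i\ge 1} W(\mu)^i\bigr)$ is a $\Z$-linear combination of characters $\ch W(\nu_+)$ with $\nu_+ \in X(T)_+$, $\nu_+ < \mu$, and $\nu_+ \uparrow \mu$. Since $F(\lambda)$ contributes with strictly positive total multiplicity on the left, at least one such $\nu_+$ must have $[W(\nu_+):F(\lambda)] > 0$ (any cancellation among the signed terms on the right cannot annihilate an irreducible factor whose net multiplicity on the left is positive). Applying the inductive hypothesis to this $\nu_+$ gives $\lambda \uparrow \nu_+$, and transitivity of $\uparrow$ yields $\lambda \uparrow \mu$.

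The main obstacle is the Jantzen sum formula itself: one must construct the contravariant form on an integral lattice, prove that its valuation filtration descends appropriately modulo $p$, and compute the sum of the layer characters through a careful analysis of the form's Gram determinant. I would cite Jantzen~\cite{bib:Jan-reps} for this rather than reprove it. A secondary technical point is the alcove-combinatorics check that bringing $s_{\alpha,mp}\cdot\mu$ into the dominant chamber under the dot-action of the finite Weyl group cannot push it $\uparrow$-above $\mu$; this reduces to verifying, via Brauer's formula~\eqref{prop:Brauer_formula} or a direct check on simple reflections $s_{\alpha_i}\cdot$ applied to non-dominant weights, that such adjustments only move weights down (or to zero) in the $\uparrow$-order, which is standard.
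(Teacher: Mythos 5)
The paper does not prove this proposition; it only cites the strong linkage principle from \cite[II.6.13]{bib:Jan-reps}, whose proof there rests on Andersen's analysis of the cohomology of line bundles on $G/B$. Your route, via the Jantzen filtration and the sum formula, is therefore a genuinely different one. It is a recognised strategy, but the step you dismiss as a ``secondary technical point'' contains a real error, and that step is precisely where the strength of \emph{strong} linkage (the relation $\uparrow$, rather than mere $W_p$-linkage together with $\le$) is decided.

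Concretely, you assert $\nu_+ \uparrow \nu$, where $\nu = s_{\alpha,mp}\cdot\mu$ and $\nu_+$ is its dominant representative under the $W$ dot action. This is backwards: bringing a weight into the dominant chamber under the dot action only \emph{raises} it in the dominance order, so $\nu \le \nu_+$ with strict inequality whenever $\nu$ is not already dominant; since $\uparrow$ refines $\le$, the relation $\nu_+ \uparrow \nu$ forces $\nu_+ = \nu$. Your accompanying gloss that ``such adjustments only move weights down (or to zero) in the $\uparrow$-order'' therefore cannot be right. What you actually need, and what does not follow by transitivity from $\nu \uparrow \mu$, is the assertion $\nu_+ \uparrow \mu$. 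That assertion is true, but it requires two nontrivial inputs: (a) showing $\nu_+ \le \mu$, which fails for an arbitrary $\nu \le \mu$ with $\mu$ dominant but does hold under the sum-formula constraint $0 < mp < \langle\mu+\rho',\alpha\dual\rangle$; and (b) a genuine combinatorial lemma on $\uparrow$ of the kind developed in \cite[II.6.4--II.6.9]{bib:Jan-reps}, to the effect that if $\nu \uparrow \mu$ and $w\cdot\nu \le \mu$ for some $w \in W$, then $w\cdot\nu \uparrow \mu$. Recall the remark in the paper just after Definition~\ref{df:uparrow}: $\lambda \le \mu$ together with $\lambda \in W_p\cdot\mu$ do \emph{not} in general imply $\lambda \uparrow \mu$. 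It is exactly this failure that makes (b) a substantive lemma rather than a routine check, and it is where your sketch breaks down.
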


\subsection{The case of $\GL_n$}\label{sub:case-gl_n}

To apply these results to $\GL_n$, let $T$ be the diagonal matrices and $B$ the
upper-triangular matrices. Denote by $\epsilon_i\in X(T)$ the character
\begin{equation*}
  \Biggl(\begin{smallmatrix} t_1 \\ & t_2 \\[-5pt] && \ddots \\ &&& t_n \end{smallmatrix}\Biggr) \mapsto t_i,
\end{equation*}
and we identify $X(T)$ with~$\Z^n$, also writing $(a_1,a_2,\dots,a_n)$ for $\sum a_i\epsilon_i$. Then $R = \{\epsilon_i-\epsilon_j : i\ne j\}$
and the simple roots are given by $\alpha_i = \epsilon_i-\epsilon_{i+1}$ for $1\le i\le n-1$. The coroot $(\epsilon_i-\epsilon_j)\dual$ for $i\ne j$
then sends $t$ to a diagonal matrix whose only entries are 1's except for a $t$ in the $(i,i)$-entry and a $t^{-1}$ in the $(j,j)$-entry. We will identify $W$
with~$S_n$ so that $w(\epsilon_i) = \epsilon_{w(i)}$.

Then $X^0(T) = (1,\dots,1)\Z$, $X_r(T) = \{(a_1,\dots,a_n):0\le a_i-a_{i+1}\le q-1\ \forall i\}$,
$(a_1,\dots,a_n)$ is dominant if and only if $a_1 \ge \dots \ge a_n$. We may choose $\rho' = (n-1,n-2,\dots,1,0)$.

\begin{coroll}\ \label{cor:gln_modules}

  \begin{enumerate}
  \item The irreducible $\GL_n$-modules over~$\fpb$ are the $F(a_1,\dots,a_n)$, $a_1 \ge \dots \ge a_n$.
  \item The irreducible representations of $\glnfq$ over~$\fpb$ are the $F(a_1,\dots,a_n)$, $0 \le a_i-a_{i+1} \le q-1\ \forall i$.
    $F(a_1,\dots,a_n) \cong F(a_1',\dots,a_n')$ if and only if $(a_1,\dots,a_n) - (a_1',\dots,a_n') \in (q-1,\dots,q-1)\Z$.
  \item Any irreducible representation of $\glnfq$ over~$\fpb$ can be written as
    \[ M_0 \tens_{\fpb} M_1^{(1)} \tens_{\fpb} \dots \tens_{\fpb} M_{r-1}^{(r-1)} \]
    for unique irreducible representations $M_i = F(\lambda_i)$ with $\lambda_i \in X_1(T)$.
  \end{enumerate}
\end{coroll}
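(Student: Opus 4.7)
The plan is to read off all three parts as specialisations of the general theorems of this section to $G = GL_n$, using the explicit root data of \S\ref{sub:case-gl_n}; this works because the derived group $SL_n$ of $GL_n$ is simply connected, so Theorem~\ref{thm:mod_reps} applies. The common calculational ingredient is that the coroot $(\epsilon_i - \epsilon_j)^\vee$ pairs with $(a_1, \dots, a_n)$ to give $a_i - a_j$; this unwinds dominance to $a_k \ge a_{k+1}$ and the $q$-restricted condition to $0 \le a_k - a_{k+1} \le q - 1$.

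Part (i) is then an immediate consequence of the classification of simple $G$-modules by dominant weights. For (ii), Theorem~\ref{thm:mod_reps}(i) gives the first statement, and for the isomorphism criterion one identifies $X^0(T)$ by imposing $\langle(a_1, \dots, a_n), (\epsilon_i - \epsilon_j)^\vee\rangle = 0$ for all $i \ne j$: this forces all $a_i$ equal, so $X^0(T) = \Z \cdot (1, \dots, 1)$, and Theorem~\ref{thm:mod_reps}(ii) finishes the argument. For (iii), combine Theorem~\ref{thm:mod_reps}(i) with Steinberg's tensor product theorem (Theorem~\ref{thm:Steinberg}): given $F(\lambda)$ with $\lambda \in X_r(T)$, write $\lambda = \sum_{i=0}^{r-1}\lambda_i p^i$ with $\lambda_i \in X_1(T)$ by $p$-adically expanding each $\langle\lambda, \alpha_k^\vee\rangle \in [0, q-1]$ and lifting the digits to weights; the simple coroots span $X(T)/X^0(T)$ rationally, so one can realise the digits as pairings $\langle\lambda_i, \alpha_k^\vee\rangle$ and then adjust by multiples of $(1, \dots, 1)$ so that $\sum \lambda_i p^i = \lambda$. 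By construction each $\lambda_i$ lies in $X_1(T)$, and Theorem~\ref{thm:Steinberg} yields the tensor product decomposition.

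The one mildly subtle step, and what I would identify as the main obstacle, is the uniqueness of the $M_i$ in (iii). Different choices of $\lambda_i \in X_1(T)$ satisfying $\sum \lambda_i p^i = \lambda$ differ by compatible $X^0(T)$-translations, which twist each factor $F(\lambda_i)^{(i)}$ by a power of $\det$; these twists must cancel in the tensor product once restricted to $\glnfq$ (using that $q = p^r$ acts as the identity on $\fq$-points), so the $M_i$ themselves are well-defined up to isomorphism. Beyond this bookkeeping, nothing here goes beyond a direct application of the preceding theorems.
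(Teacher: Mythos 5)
The paper states this corollary without proof; it is intended as an immediate consequence of Theorem~\ref{thm:mod_reps}, Theorem~\ref{thm:Steinberg}, and the explicit $GL_n$ root data of \S\ref{sub:case-gl_n}, which is exactly the reading you adopt. Your handling of parts (i), (ii), and the existence half of (iii) is correct and matches what the author surely had in mind.

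Your uniqueness argument for (iii), however, has a genuine gap. You correctly note that two decompositions $\lambda = \sum\lambda_ip^i = \sum\lambda_i'p^i$ with all $\lambda_i,\lambda_i'\in X_1(T)$ force $\lambda_i'-\lambda_i = c_i\cdot(1,\dots,1)\in X^0(T)$ with $\sum c_ip^i=0$, but the conclusion that ``these twists must cancel in the tensor product $\dots$ so the $M_i$ themselves are well-defined up to isomorphism'' is a non sequitur: triviality of $\det^{\sum c_ip^i}$ pins down only the tensor product, not the individual factors. In fact the factors genuinely are \emph{not} determined by $\lambda$ alone. For $r=2$ take $c_0=p$, $c_1=-1$; then $\lambda_0'=\lambda_0+p\cdot(1,\dots,1)$ still lies in $X_1(T)$ (membership is detected only by coroot pairings, which are invariant under $X^0(T)$-translation), yet $F(\lambda_0')\not\cong F(\lambda_0)$ as $GL_n$-modules, and $F(\lambda_0')\not\cong F(\lambda_0)$ as $\glnfq$-modules either, since $p\not\equiv 0\pmod{q-1}$. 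So the $\lambda_i$ --- and hence the $M_i$ --- are pinned down only after you impose a normalization that your argument never supplies: for instance, choose the representative $\lambda\in X_r(T)$ whose last coordinate lies in $[0,q-2]$, require each $\lambda_i$ to have last coordinate in $[0,p-1]$, and then use that $(c_0,\dots,c_{r-1})\mapsto\sum c_ip^i$ is injective on $[0,p-1]^r$. That normalization is what ``unique'' is silently invoking in the corollary; the appeal to cancellation of determinant twists cannot do this job.
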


The number of restricted alcoves is $(n-1)!$ (see \S\ref{sub:jantzens-theorem}).

\emph{Suppose that $n = 2$.} The only restricted alcove is $C_0 = \{(a,b) \in \R^2 : -1 < a-b < p-1 \}$. If $(a,b) \in X_1(T)$,
we claim that $F(a,b) \cong \Sym^{a-b} \fpb^2 \tens \det^b$. First note that $F(a,b) = W(a,b)$ by the strong linkage principle~\eqref{prop:stronglinking}.
For any homogeneous polynomial~$F$ of degree $a-b$,
$
\big(\begin{smallmatrix}
  x_1& x_2 \\ x_3 & x_4
\end{smallmatrix}\big) \mapsto (x_1x_4-x_2x_3)^b F(x_1,x_2)$ is in $W(a,b)$ and these elements form a
subrepresentation isomorphic to $\Sym^{a-b} \fpb^2 \tens \det^b$.
By irreducibility the claim follows.

\emph{Suppose that $n = 3$.} The two restricted alcoves
are the \emph{``lower alcove''}
\begin{equation*}
  C_0 = \{(a,b,c)-\rho'\in\R^3 : 0 < a-b,\; b-c\quad\text{and}\quad a-c < p \}
\end{equation*}
and the \emph{``upper alcove''}
\begin{equation*}
  C_1 := \{(a,b,c)-\rho'\in\R^3 : p < a-c \quad\text{and}\quad a-b,\; b-c < p \}.
\end{equation*}

\begin{prop}[Jantzen]\label{prop:weyl_simple} Suppose that $(x,y,z) \in X_1(T)$.

  \begin{enumerate}
  \item If $(x,y,z)$ is in the upper alcove
    then there is a \(non-split\) exact sequence
    \[ 0 \to F(x,y,z) \to W(x,y,z) \to F(z+p-2,y,x-p+2) \to 0. \]
  \item Otherwise, \ie, if $(a,b,c)$ is in the lower alcove or on the boundary of the upper alcove,
    $F(x,y,z) = W(x,y,z)$.
  \end{enumerate}

\end{prop}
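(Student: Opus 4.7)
The plan is to combine the strong linkage principle (Prop.~\ref{prop:stronglinking}) with Jantzen's sum formula to identify the composition factors of $W(a,b,c)$, and then to read off the submodule structure from the defining equality $F(a,b,c) = \soc_G W(a,b,c)$.

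I first work out the relevant alcove geometry. For $GL_3$ the positive roots are $\alpha_1,\alpha_2,\alpha_1+\alpha_2 = \epsilon_1-\epsilon_3$, and a short computation identifies the reflection through the common wall of $C_0$ and $C_1$ as $s_{\alpha_1+\alpha_2,\,p}$, with
\[ s_{\alpha_1+\alpha_2,\,p} \cdot (a,b,c) = (c+p-2,\; b,\; a-p+2). \]
For $(a,b,c) \in C_1 \cap X_1(T)$ one checks directly that this image is dominant and lies in $\overline{C_0}$, strictly below $(a,b,c)$ in the $\uparrow$-order.

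For case (ii), when $(a,b,c) \in \overline{C_0}\cap X_1(T)$ or lies on the walls $a-b=p-1$ or $b-c=p-1$ of $\overline{C_1}$, strong linkage together with the fact that $\overline{C_0}$ is a fundamental domain for the dot action of $W_p$ rules out every dominant $\lambda \ne (a,b,c)$ with $\lambda \uparrow (a,b,c)$. Since the leading weight of $\ch W(a,b,c)$ has multiplicity one, this forces $W(a,b,c) = F(a,b,c)$.

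For case (i), $(a,b,c) \in C_1 \cap X_1(T)$, strong linkage restricts the possible composition factors to $F(a,b,c)$ and $F(c+p-2,b,a-p+2)$. The former occurs with multiplicity one by the leading-weight argument. For the latter I invoke Jantzen's sum formula: among pairs $(\alpha,m)$ with $\alpha \in R^+$ and $0 < mp < \langle (a,b,c)+\rho, \alpha^\vee \rangle$, only $(\alpha_1+\alpha_2, 1)$ contributes, since $\langle (a,b,c)+\rho, \alpha_i^\vee\rangle$ equals $a-b+1$ or $b-c+1$, which is at most $p-1$. The resulting term is the Weyl character $\chi((c+p-2,b,a-p+2))$, which by case (ii) equals $\ch F((c+p-2,b,a-p+2))$. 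Hence $W(a,b,c)$ has length two with each of the two listed constituents appearing once. Since $F(a,b,c) = \soc_G W(a,b,c)$ by definition, it is the submodule in the extension, and the quotient is $F(c+p-2,b,a-p+2)$. Non-splitting is automatic: a splitting would place the simple module $F(c+p-2,b,a-p+2)$ into the socle, contradicting its simplicity.

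The main obstacle is the sum-formula step in case (i). It requires both the enumeration that rules out every pair $(\alpha,m)$ except $(\alpha_1+\alpha_2,1)$, and the identification of $\chi((c+p-2,b,a-p+2))$ with the character of a simple module---which is only possible once case (ii) has been proved. The proof is therefore organized to handle case (ii) first and feed its output into the Jantzen-sum-formula step.
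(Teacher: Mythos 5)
Your proof is essentially correct, but it takes a genuinely different route from the paper for part (i). The paper cites Jantzen \cite[II.7.11, II.7.15]{bib:Jan-reps}: starting from $\ch W(\lambda) = \ch F(\lambda) + m\ch F(\r\lambda)$ (strong linkage), one applies a translation functor $T^\mu_{\r\lambda}$ to a wall weight $\mu$ in the upper closure of $C_0$ to pin down $m=1$. You instead invoke Jantzen's sum formula; your enumeration is correct (the only pair $(\alpha,m)$ with $0<mp<\langle\lambda+\rho,\alpha^\vee\rangle$ is $(\alpha_1+\alpha_2,1)$, contributing $\chi(c+p-2,b,a-p+2)$), and since this equals $\ch F(c+p-2,b,a-p+2)$ by part (ii), the nested Jantzen filtration $V^1\supseteq V^2 \supseteq \cdots$ forces $V^1 = F(\r\lambda)$ and $V^i=0$ for $i\ge 2$, giving length two. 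Both methods are standard; yours is arguably more self-contained in that it avoids the translation-functor calculus, at the cost of needing the precise shape of the sum formula and the logical dependence on part (ii) that you correctly flag.

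There is one small gap in your argument for part (ii). You claim that strong linkage together with ``$\overline{C_0}$ is a fundamental domain for $W_p$'' rules out every dominant $\mu\ne(a,b,c)$ with $\mu\uparrow(a,b,c)$. This is immediate when $(a,b,c)\in\overline{C_0}$, but when $(a,b,c)$ lies on the wall $a-b=p-1$ (or $b-c=p-1$) of $C_1$ with $a-c>p-2$, the weight is \emph{not} in $\overline{C_0}$, and the fundamental-domain fact by itself does not rule out a lower dominant linked weight. What actually happens is this: reflecting such a $\lambda$ through the common wall $H_{\alpha_1+\alpha_2,p}$ of $C_0$ and $C_1$ produces $\mu=(c+p-2,b,a-p+2)$, whose shifted weight $\mu+\rho$ has two equal coordinates (indeed $(\mu+\rho)_2-(\mu+\rho)_3 = p-1-(a-b)=0$), so $\mu$ fails to be dominant and $\chi(\mu)=0$. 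One then checks directly that the $W_p$-orbit of $\lambda+\rho$ contains no other strictly dominant element $\le\lambda+\rho$. You should state this observation explicitly; as written, your justification for the upper-wall subcase is incomplete even though the conclusion $W(\lambda)=F(\lambda)$ is correct.
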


\textbf{Notation:}\label{not:\r} $\r (x,y,z) = (z+p-2,y,x-p+2)$.

\need{really need that notation?}

\begin{proof} (ii) follows from the strong linkage principle (\ref{prop:stronglinking}). (i) is a consequence of prop.~II.7.11 and lemma~II.7.15 \cite{bib:Jan-reps}:
  let $\lambda = (x,y,z)$. Then $\r\lambda = (z+p-2,y,x-p+2)$ is the unique weight which is strictly smaller than~$\lambda$ in the
  $\uparrow$-ordering of $X(T)$. Pick a weight~$\mu$ in the upper closure of the lower alcove, but not in the lower alcove itself (\eg\ 
  $\mu = (p-2,0,0)$), and apply the translation functor $T_{\r\!\lambda}^\mu$ to the identity of formal characters
\[ \ch W(\lambda) = \ch F(\lambda) + m\ch F(\r\lambda), \]
which holds for some integer~$m$ by the strong linkage principle, to deduce that $m = 1$. \end{proof}

\emph{Suppose that $n = 4$.} 
Here is a list of all dominant alcoves below the top restricted one ($C_5$).
They consist of all $(a,b,c,d)-\rho' \in X(T)\tens \R = \R^4$ satisfying respectively:
\begin{align*}
  C_0:\quad & 0 < a-b,\;b-c,\;c-d;\ a-d < p, \\
  C_1:\quad & 0 < b-c;\ p < a-d;\ a-c,\;b-d < p, \\
  C_2:\quad & 0 < c-d;\ p < a-c;\ a-b,\;b-d < p, \\
  C_3:\quad & 0 < a-b;\ p < b-d;\ c-d,\;a-c < p, \\
  C_4:\quad & p < a-c,\;b-d;\ b-c < p;\ a-d < 2p, \\
  C_5:\quad & 2p < a-d;\ a-b,\;b-c,\;c-d < p, \\
  C_{0'}:\quad & 0 < b-c,\;c-d;\ p < a-b;\ a-d < 2p, \\
  C_{0''}:\quad & 0 < a-b,\;b-c;\ p < c-d;\ a-d < 2p.
\end{align*}
\need{???$C_i$ will also be called \emph{alcove~$i$}. As $C_0$ is the lowest alcove, the notation is compatible with the above.???}

The first six alcoves in this list are the restricted ones. The $\uparrow$-ordering on the above eight alcoves of~$\GL_4$ is 
generated by $0 \uparrow 1 \uparrow i \uparrow 4\uparrow 5$ ($i = 2$, 3), $2 \uparrow 0' \uparrow 5$, and
$3 \uparrow 0'' \uparrow 5$.

The constituents of $W(\lambda)$ for $\lambda \in X_1(T)$ are known by~\cite{bib:Jan-Charformel}.

\section{Representations of $\glnfq$ in characteristic zero}\label{sec:glnchar0}

The aim of this section is to recall relevant facts about the  ordinary representations theory of~$\glnfq$. Since it will be convenient for reduction later,
we will work over the field~$\qpb$.

\subsection{Deligne--Lusztig representations}

We allow $G$ to be slightly more general than in the previous section: $G = G_0 \times_{\fq} \fpb$ for a connected reductive group $G_0$
over~$\fq$.  We will identify a variety over~$\fpb$ with the set of its $\fpb$-rational points.  Let $F$ denote the ($q$-power)
Frobenius morphism, so $G^F = G_0(\fq)$. We assume that the maximal torus $T$ is $F$-stable (not necessarily split over~$\fq$).

To each pair $(\T,\theta)$ consisting of an $F$-stable maximal torus~$\T$ and a homomorphism $\theta : \T^F \to \qpb\s$, Deligne--Lusztig \cite{bib:DL} associate
a virtual representation $R_\T^\theta$ of~$G^F$ (defined in terms of the \'etale cohomology of a variety over~$\fpb$ having commuting
$\T^F$- and $G^F$-actions). We will recall the relevant facts, together with Jantzen's parameterisation \cite[3.1]{bib:Jan-DL}.

Given $w \in W$, by Lang's theorem there is a $g_w \in G$ such that $g_w^{-1} F(g_w)$ is a lift of~$w$ in $N(T)$. Then $T_w := {}^{g_w} T( = g_w T g_w^{-1})$ is
an $F$-stable maximal torus, well defined up to $G^F$-conjugacy. Two elements $w$, $w' \in W$ are said to be $F$-conjugate if $w =
\sigma^{-1} w' F(\sigma)$ for some $\sigma \in W$ (note that the natural $F$-action on~$W$ is trivial if $G$ is split over~$\fq$). The
map sending $w \in W$ to~$T_w$ induces a bijection between $F$-conjugacy classes in~$W$ and $G^F$-conjugacy classes of $F$-stable maximal
tori. We say that the \emph{type} of~$T_w$ is (the $F$-conjugacy class of)~$w$.

If $\mu \in X(T)$ let
\begin{align*}
  \theta_{w,\mu} : T_w^F &\to \qpb\s \\
               t_w &\mapsto \widetilde\mu(g_w^{-1}t_w g_w).
\end{align*}
(Recall that $\widetilde\ $ denotes the Teichm\"uller lift.) 
Form the semi-direct product \gpxtw\ where $w\in W$ acts on $\mu \in X(T)$ as $F(w)(\mu)$. The group \gpxtw\ acts on the set \wxt\ as follows:
\begin{equation*}
 {}^{(\nu,\sigma)}\wmu = (\sigma w F(\sigma)^{-1},\sigma\mu + F(\nu)-\sigma w F(\sigma)^{-1} \nu).
\end{equation*}
In particular if $G$ is split over~$\fq$, $W$ acts on $X(T)$ in the natural way and the \gpxtw-action becomes
\begin{equation}\label{eq:jantzen-action}
 {}^{(\nu,\sigma)}\wmu = (\sigma w \sigma^{-1},\sigma\mu + (q-\sigma w \sigma^{-1}) \nu).
\end{equation}
We will also use the notation $\wmu \sim \wmup$ for  elements of $\wxt$ in the same $\gpxtw$-orbit.

\begin{lm}\label{lm:dl_reps}\label{x}  
  \begin{alignat*}{2}
    \frac{\wxt}{\gpxtw} &\congto \frac{\{ \text{pairs $(\T,\theta)$} \}}{\text{$G^F$-conjugacy}} &&\INTO 
    \bigg\{\begin{array}[h]{@{}c@{}}
      \text{virtual representations} \\
      \text{of $G^F$ over $\qpb$}
    \end{array} \bigg\} / \cong \\
    (w,\mu)\quad &\mapsto (T_w,\theta_{w,\mu}); (\T,\theta) &&\mapsto \qquad \epsilon_G\epsilon_\T R_\T^\theta
  \end{alignat*}
  where $\epsilon_G = (-1)^{\text{\upshape $\fq$-rank(G)}}$ and $\epsilon_{\T} = (-1)^{\text{\upshape $\fq$-rank($\T$)}}$.
  If $(\T_i,\theta_i)$ are not $G^F$-conjugate, then $\langle
  R_{\T_1}^{\theta_1},R_{\T_2}^{\theta_2}\rangle = 0$.
\end{lm}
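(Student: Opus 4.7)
The plan is in three parts, matching the three assertions of the lemma.

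\textbf{The first bijection.} The correspondence $w \mapsto T_w$ on $F$-conjugacy classes in $W$ and $G^F$-conjugacy classes of $F$-stable maximal tori was recalled just above the lemma (a standard consequence of Lang's theorem). For the character side, I would observe that conjugation by $g_w^{-1}$ identifies $T_w^F$ with the fixed points of the twisted Frobenius $F_w$ on $T$ given by $t \mapsto F(w)(F(t))$ (where $F$ denotes also the intrinsic Frobenius on $T$); under this identification, $\theta_{w,\mu}$ is just the Teichm\"uller lift $\tilde\mu$ restricted to $T^{F_w}$. Pulling Teichm\"uller lifts back yields a surjection $X(T) \twoheadrightarrow \Hom(T_w^F, \qpb\s)$ whose kernel is $(F_w - 1)X(T)$, and this is precisely the $X(T)$-part of the $\gpxtw$-action in \eqref{eq:jantzen-action}. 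The $W$-part records the freedom in choosing $g_w$: replacing $g_w$ by $g_w n_\sigma^{-1}$ for $\sigma \in W$ with a chosen lift $n_\sigma \in N(T)$ replaces $w$ by $\sigma w F(\sigma)^{-1}$ and transports the character datum accordingly. A direct calculation matches the combined equivalence with $G^F$-conjugacy of the pairs $(T_w, \theta_{w,\mu})$.

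\textbf{The second arrow and orthogonality.} That $R_\T^\theta$ depends only on the $G^F$-conjugacy class of $(\T, \theta)$ is immediate from its definition via the $\ell$-adic cohomology of the corresponding Deligne-Lusztig variety; the sign $\epsilon_G \epsilon_\T$ depends only on the type of $\T$ and is constant on conjugacy classes. For the orthogonality statement I would invoke the Deligne-Lusztig orthogonality theorem \cite{bib:DL}:
\[ \langle R_{\T_1}^{\theta_1}, R_{\T_2}^{\theta_2}\rangle_{G^F} \;=\; \frac{1}{|\T_1^F|}\,\bigl|\{g \in G^F : {}^g\T_1 = \T_2,\ \theta_2\circ\mathrm{Ad}(g) = \theta_1\}\bigr|, \]
which visibly vanishes when $(\T_1,\theta_1)$ and $(\T_2,\theta_2)$ are not $G^F$-conjugate. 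Specialising to $\T_1 = \T_2$, $\theta_1 = \theta_2$ gives a nonzero self-inner-product, so distinct classes produce distinct signed virtual representations, which establishes injectivity of the second arrow.

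\textbf{Main obstacle.} The only substantive work is the bookkeeping in Step~1: translating the ``change $g_w$ by $T$'' and ``change $g_w$ by $N(T)$'' ambiguities into the single $\gpxtw$-action of \eqref{eq:jantzen-action}, in particular tracking the Frobenius twist $F_w$ correctly when $G$ is non-split and $F$ acts nontrivially on $W$ and $X(T)$. All the genuinely non-trivial inputs (existence of $g_w$ via Lang, the torus parameterisation, the orthogonality formula) are standard Deligne-Lusztig theory; the content of the lemma is really the combinatorial reformulation.
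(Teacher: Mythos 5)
Your argument follows the same route as the paper's, which is deliberately terse: for the bijection on the left the paper simply declares it ``elementary'' and points to Digne--Michel 13.7(i), and for well-definedness, orthogonality, and (hence) injectivity of the second arrow it cites Deligne--Lusztig 6.8. Your step~1 is exactly the bookkeeping that the cited Digne--Michel reference packages, namely the short exact sequence $0 \to X(T) \xrightarrow{\,F_w-1\,} X(T) \to \Hom(T^{F_w},\qpb\s) \to 0$ combined with the $N(T)$-ambiguity in choosing $g_w$, and your steps~2--3 are precisely the DL~6.8 orthogonality formula and the inference that distinct classes yield distinct virtual characters. One small caution: you should double-check the sign and direction conventions relating the twisted Frobenius $F_w$ acting on $X(T)$ to the $X(T)$-translation part $\mu \mapsto \mu + F(\nu) - \sigma w F(\sigma)^{-1}\nu$ in~\eqref{eq:jantzen-action}; the two differ by a Weyl-group twist and it is easy to misstate the kernel as $(wF-1)X(T)$ versus $(F-w)X(T)$ if one is not careful about whether characters are pulled back or pushed forward. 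But the substance matches the paper.
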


Following Jantzen, we denote the image of $\wmu$ under the composite of these maps by $\rwmu$. The choice of sign ensures that
the character value at~1 is positive.

\begin{proof}
  It is elementary to establish the bijection (the key point is \cite[13.7(i)]{bib:Digne-Michel}). \cite[6.8]{bib:DL} implies that the
  second arrow is well defined and the claim about orthogonality which, in turn, entails the injectivity.
\end{proof}

\subsection{The case of $\GL_n$}\label{sub:case-gl_n-char0}

We let $G = \GL_n$ and keep the notation of \S\ref{sub:case-gl_n}.

For any decomposition $n = \sum_{i=1}^r n_i$ with $n_i >0$ there is a corresponding ``parabolic'' subgroup $P_{\vec n}(\fq)$ in $G^F =
\glnfq$ consisting of matrices with $n_i \times n_i$ square blocks along the diagonal (in that order) with arbitrary entries above the
blocks and zeroes below.

\begin{df}
  Suppose that $n = \sum_{i=1}^r n_i$ and for all~$i$, $\sigma_i$ is a representation of $\GL_{n_i}(\fq)$ over~$\qpb$. 
  The \emph{parabolic induction} of the~$\sigma_i$ is defined by
  \[ \PInd(\sigma_1,\dots,\sigma_r) := \Ind_{P_{\vec n}(\fq)}^\glnfq (\sigma_1\tens \dots \tens \sigma_r). \]
\end{df}

It is independent of the order of the $(n_i,\sigma_i)$. An irreducible representation~$\pi$ of $\glnfq$ (over~$\qpb$) is
called \emph{cuspidal} if $\pi$ does not occur in any parabolic induction $\PInd(\sigma_1,\dots,\sigma_r)$ with $r > 1$. For
any~$\pi$ there is a set $\Supp(\pi) = \{\sigma_1,\dots,\sigma_r\}$ uniquely determined by demanding that each~$\sigma_i$ is
cuspidal and that $\pi$ occurs in $\PInd(\sigma_1,\dots,\sigma_r)$.  (See \eg\ \cite{bib:Bump}, ex.\ 4.1.17--20.)

If $l/k$ is an extension of finite fields and $A$ an abelian group, we say that a homomorphism $l\s \to A$
is \emph{$k$-primitive} if it does not factor
through the norm map $l\s \to k_0\s$ for any intermediate field $k \subseteq k_0 \subsetneq l$. More generally, for extensions
$l_i/k$ we say a homomorphism $\prod l_i\s \to A$ is \emph{$k$-primitive} if each component $l_i\s \to A$ is.

\begin{lm}\label{lm:cusp}
  Suppose that $w \in W$ is an $n$-cycle. Since $T_w^F \cong T^{wF}$ via~$g_w$, there is an identification
  $T_w^F \congto \F_{q^n}\s$, determined up to the action of the $q$-power map. Then
  \begin{align}\notag
    \Bigg\{\begin{array}[h]{@{}c@{}}
      \text{$\fq$-primitive} \\
      \text{$\F_{q^n}\s \xrightarrow{\theta} \qpb\s$}
    \end{array} \Bigg\}/ (\theta \sim \theta^q)
    &\overset{\,\sim\,}{\longrightarrow}
    \Bigg\{\begin{array}[h]{@{}c@{}}
      \text{cuspidal representations} \\[1mm]
      \text{of $\GL_n(\fq)$ over $\qpb$}
    \end{array} \Bigg\} / \cong \\ \label{eq:cusp}
     [\theta] \hspace{2cm} & \longmapsto \hspace{2.1cm} (-1)^{n-1} R_{T_w}^\theta.
  \end{align}
\end{lm}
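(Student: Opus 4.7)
The plan is to unpack the parameterisation of~\eqref{lm:dl_reps} in the case where $w \in W = S_n$ is an $n$-cycle, and then to invoke the classical characterisation of cuspidal representations in Deligne--Lusztig theory.

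First I would make the isomorphism $T_w^F \cong \F_{q^n}\s$ explicit. The identification $T^{wF} \cong T_w^F$ given by $g_w$ reduces this to computing the $wF$-fixed points in $T(\fpb) = (\fpb\s)^n$; the operator $wF$ cyclically permutes coordinates and raises each to the $q$-th power, so its fixed points are the tuples $(t, t^q, \dots, t^{q^{n-1}})$ with $t \in \F_{q^n}\s$. The choice of $g_w$ is only well-defined up to right multiplication by lifts of $C_W(w)$, which translates into replacing $t$ by $t^{q^i}$ for some $i$, explaining the indeterminacy. In these coordinates $\theta_{w,\mu}(t) = \widetilde{t}^{\,m_1 + m_2q + \cdots + m_n q^{n-1}}$ for $\mu = (m_1, \dots, m_n)$, and every character of $\F_{q^n}\s$ arises as some $\theta_{w,\mu}$.

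Second I would compute the $\gpxtw$-orbits. Since $C_W(w) = \langle w \rangle$ for $w$ an $n$-cycle, in the action formula~\eqref{eq:jantzen-action} only $\sigma \in \langle w \rangle$ can stabilise $w$. A short calculation shows the shift $\mu \mapsto w\mu$ corresponds to $\theta \mapsto \theta^q$ (using $\widetilde{t}^{\,q^n} = \widetilde{t}$), while the translation $\mu \mapsto \mu + (q-w)\nu$ acts trivially on $\theta_{w,\mu}$: since $\det(q - w) = q^n - 1 = |\F_{q^n}\s|$, the sublattice $(q-w)X(T)$ necessarily coincides with the kernel of $\mu \mapsto \theta_{w,\mu}$. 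Therefore the $\gpxtw$-orbits of pairs $(w, \mu)$ with $w$ an $n$-cycle are in natural bijection with characters of $\F_{q^n}\s$ modulo $\theta \sim \theta^q$, and by~\eqref{lm:dl_reps} the resulting map $[\theta] \mapsto (-1)^{n-1} R_{T_w}^\theta$ is well-defined and injective. The sign $(-1)^{n-1}$ equals $\epsilon_G \epsilon_{T_w} = (-1)^n \cdot (-1)$, since $GL_n$ is $\fq$-split of rank $n$ while $T_w$ has $\fq$-split rank~$1$ (only the one-dimensional diagonal subspace of $X(T) \tens \Q$ is fixed by $wF$).

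The main obstacle is identifying the image with the cuspidal representations, which goes beyond~\eqref{lm:dl_reps}. I would appeal to~\cite{bib:DL}: $R_{T_w}^\theta$ is irreducible up to sign exactly when $\theta$ is in general position, \ie\ has trivial stabiliser under the action of $C_W(w) = \langle w \rangle$ by $\theta \mapsto \theta^q$---which is precisely $\fq$-primitivity---and such an irreducible is cuspidal because $T_w$ is anisotropic: an $n$-cycle preserves no non-trivial partition of $\{1, \dots, n\}$, so $T_w$ lies in no proper $F$-stable parabolic of $GL_n$. Exhaustion (that every cuspidal arises this way) follows by Harish-Chandra philosophy together with a comparison of orbit counts on the two sides; a convenient reference is~\cite{bib:Digne-Michel}.
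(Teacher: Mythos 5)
Your argument is substantially correct and follows the same route as the paper: cuspidality of $R_{T_w}^\theta$ via Deligne--Lusztig (torus not contained in a proper $F$-stable parabolic, plus general position = $\fq$-primitivity), well-definedness and injectivity via lemma~\ref{lm:dl_reps}, and surjectivity by comparing counts. Two small points. First, $T_w$ is \emph{not} anisotropic over $\fq$ (its $\fq$-rank is $1$, not $0$, because the scalar torus $\G_m = Z(GL_n)$ is split); the correct notion is that $T_w$ is \emph{elliptic}, i.e.\ lies in no proper $F$-stable parabolic, which you do state and which is what the paper means by ``$\fq$-rank one.'' (This is also what makes your sign computation $\epsilon_G\epsilon_{T_w}=(-1)^{n}\cdot(-1)$ correct.) Second, the surjectivity step is where your write-up is thinnest: ``Harish-Chandra philosophy together with orbit counts'' is the right idea, but the load-bearing ingredient is the \emph{count} of cuspidals, which is not automatic and must come from somewhere. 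The paper handles this by citing Springer's explicit classification of Green's cuspidal characters $\chi_n(\phi)$ (thms.~8.6 and 7.12 of Springer), which both exhausts all cuspidals and is parametrised by exactly the same data ($\fq$-primitive $\phi$ up to the $q$-power action), so that the two constructions are visibly in bijection. If you instead want to argue by counting irreducibles in a fixed Harish-Chandra series, you would need to independently establish that the number of cuspidals is $\frac1n\sum_{d\mid n}\mu(n/d)(q^d-1)$, which in effect re-proves Green's result; so pointing to a classification reference rather than leaving it as an exercise in Digne--Michel would make the argument airtight.

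One technical remark on your orbit computation: from $\det(q-w)=q^n-1$ you only get that $(q-w)X(T)$ and $\ker(\mu\mapsto\theta_{w,\mu})$ have the same index in $X(T)$; you also need the containment $(q-w)X(T)\subset\ker$, which follows from the ``short calculation'' that $t\in T^{wF}$ satisfies $t^q=w^{-1}(t)$, so $((q-w)\nu)(t)=\nu(t^q)\nu(w^{-1}t)^{-1}=1$. As written the sentence reads as if equal index alone sufficed.
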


\begin{proof}
  Note that as $w$ is an $n$-cycle, $T_w$ has $\fqrank$ one and hence is not contained in any proper $F$-stable parabolic subgroup.
  Also, no non-trivial element of $(N(T_w)/T_w)^F$ (a cyclic group of order~$n$) fixes~$\theta$,
  as $\theta$ is $\fq$-primitive. Then (5.15), (7.4) and (8.3) of~\cite{bib:DL} show that $R_w(\mu)$ is cuspidal.

  The map is well defined and injective by lemma~\ref{lm:dl_reps}, noting that the $G^F$ conjugacy class of the pair $(T_w,\theta)$
  determines~$\theta$ up to $(N(T_w)/T_w)^F$, i.e., up to $q$-power action.

  For surjectivity we use \cite{bib:Springer_special}. First note that a character is in the discrete series in Springer's nomenclature
  \cite[\S4.3]{bib:Springer_cusp} if and only if it is cuspidal \cite[9.1.2]{bib:Carter}.  Theorems~8.6 and~7.12
  in~\cite{bib:Springer_special} show that the cuspidal characters are precisely the ones denoted there by $\chi_n(\phi)$, for
  $\fq$-primitive characters $\phi : \fqn n\s \to \qpb\s$ (and $\fqn n\s$ naturally embedded in $\GL_n(\fq)$; the image is denoted by~$T_n$
  in Springer's notes), with $\chi_n(\phi) = \chi_n(\phi')$ if and only if $\phi$ is in the $q$-power orbit of~$\phi'$. As the two
  constructions yield the same number of cuspidal representations and Springer shows that he constructs them all, we are done. (It is true
  that $\chi_n(\phi) = (-1)^{n-1}R_{T_w}^\phi$ \cite[\S2.1]{bib:thesis}.)
\end{proof}

\begin{df}\label{df:cusp}
  Denote the cuspidal representation parameterised by~$\theta$ by $\kappa(\theta)$. It follows from lemma~\ref{lm:dl_reps} that it is independent of~$w$.
\end{df}

\begin{lm}\label{lm:para_ind}
  Suppose that $w \in W \cong S_n$. Write $\{1,\dots,n\} = \coprod S_i$ as disjoint union of orbits under the action of~$w$ and
  let $n_i := \# S_i$.
  Via~$g_w$ there is an identification $T_w^F \congto \prod \fqn {n_i}\s$, well defined up to the action of the $q$-power
  map on each component. Suppose that $\theta : T_w^F \to \qpb\s$ is $\fq$-primitive, and denote by $\theta_i : \fqn {n_i}\s \to \qpb\s$
  its $i$-th component. Then
  \[ R_{T_w}^\theta \cong \PInd(\kappa(\theta_1),\dots,\kappa(\theta_r)). \]
\end{lm}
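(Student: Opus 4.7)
The plan is to exploit the transitivity of Deligne--Lusztig induction along the chain $T_w \subset L \subset G$, where $L$ is the standard block-diagonal Levi $GL_{n_1} \times \cdots \times GL_{n_r}$ of $GL_n$ corresponding to the cycle type of $w$. First I would replace $(T_w,\theta)$ by a $G^F$-conjugate pair so that $T_w$ actually lies in this $L$; this is legitimate because the $G^F$-conjugacy class of $T_w$ depends only on the $F$-conjugacy class of $w$ in $W$ by Lemma \ref{lm:dl_reps}, and any permutation of cycle type $(n_1,\dots,n_r)$ is $W$-conjugate to a block permutation that preserves the standard Levi. In this position the torus decomposes as $T_w = \prod_i T_{w_i}$ with $w_i \in S_{n_i}$ an $n_i$-cycle, and $\theta$ factors as $\bigotimes_i \theta_i$ with $\theta_i : \fqn{n_i}\s \to \qpb\s$; each $\theta_i$ is $\fq$-primitive because $\theta$ is.

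Next I would invoke Lusztig's transitivity: for the $F$-stable standard parabolic $P \supset L$ one has $R_{T_w}^{G,\theta} = R_L^G(R_{T_w}^{L,\theta})$, and because $P$ is $F$-stable, $R_L^G$ coincides with ordinary Harish-Chandra parabolic induction $\Ind_{P^F}^{G^F}$. The direct-product structure of $L$ gives a factorization $R_{T_w}^{L,\theta} = \bigotimes_i R_{T_{w_i}}^{GL_{n_i},\theta_i}$, since the varieties, their \'etale cohomology, and the torus actions that define Deligne--Lusztig induction all factor across direct products of groups.

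Substituting Lemma \ref{lm:cusp} component by component then converts each $R_{T_{w_i}}^{GL_{n_i},\theta_i}$ into $(-1)^{n_i-1}\kappa(\theta_i)$, so the tensor product in $L^F$ becomes $\prod_i (-1)^{n_i-1}$ times $\bigotimes_i \kappa(\theta_i)$. Applying $\Ind_{P^F}^{G^F} = \PInd$ then yields the asserted identity, with any overall sign $(-1)^{n-r} = (-1)^{\sum(n_i-1)}$ reconciled through the $\epsilon_G\epsilon_{T_w}$ factor relating the unsigned $R_T^\theta$ with Jantzen's sign-normalised $R_w(\mu)$ introduced just after Lemma \ref{lm:dl_reps}. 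That both sides are genuine (rather than merely virtual) representations follows from the standard fact that when the cuspidal supports $\{\kappa(\theta_i)\}$ are pairwise distinct (which is forced by $\fq$-primitivity), $\PInd(\kappa(\theta_1),\dots,\kappa(\theta_r))$ is irreducible, so the character identity promotes to an isomorphism.

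The main obstacle will be consistent bookkeeping of the sign conventions: the $\epsilon_G$, $\epsilon_L$, $\epsilon_{T_w}$ entering Lemma \ref{lm:dl_reps}, the factor $(-1)^{n_i-1}$ of Lemma \ref{lm:cusp}, and the sign that appears in Lusztig transitivity must all cancel so that the assertion reads cleanly at the level of genuine representations. A secondary technical point is checking that the initial conjugation step does not affect $R_{T_w}^\theta$ up to isomorphism---which is immediate from the conjugacy-invariance clause of Lemma \ref{lm:dl_reps}---and that the irreducibility of $\PInd(\kappa(\theta_1),\dots,\kappa(\theta_r))$ under the primitivity hypothesis is available, either from the support calculus recalled before the lemma or from a direct Mackey argument.
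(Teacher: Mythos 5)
Your proposal is essentially the paper's proof: both reduce to an $F$-stable parabolic via \cite[8.2]{bib:DL}, factor the Deligne--Lusztig representation of the Levi across its $GL_{n_i}$ blocks by K\"unneth, and invoke lemma~\ref{lm:cusp} on each factor. The only organizational difference is that you conjugate $(T_w,\theta)$ to sit in the standard block-diagonal Levi at the outset, whereas the paper defines $P$ and $L$ directly from the $w$-orbits $S_i$ (so $P$ is $wF$-stable, hence $P_w = g_w P g_w^{-1}$ is $F$-stable), arranges $g_w\in L$ by Lang's theorem, and only then notes $P_w^F$ is $G^F$-conjugate to $P_{\vec n}(\fq)$; this is immaterial.

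One caveat about the sign discussion you raise at the end. Chaining $R_{T_w}^\theta = \Ind_{P^F}^{G^F}\bigotimes_i R_{T_{w_i}}^{\theta_i}$ with $R_{T_{w_i}}^{\theta_i}=(-1)^{n_i-1}\kappa(\theta_i)$ actually gives $R_{T_w}^\theta=(-1)^{n-r}\,\PInd(\kappa(\theta_1),\dots,\kappa(\theta_r))$, and $(-1)^{n-r}=\epsilon_G\epsilon_{T_w}$. So what your argument (and the paper's, which glosses this identically) establishes is the identity for Jantzen's sign-corrected $R_w(\mu)=\epsilon_G\epsilon_{T_w}R_{T_w}^\theta$ rather than for $R_{T_w}^\theta$ as displayed; the $\epsilon$-factors do not make the sign cancel, they just relabel which normalization the clean identity holds for. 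This is harmless where the lemma is used (only to identify the image of $V$ as the set of parabolic inductions of cuspidals), but you should not present the $\epsilon$-factors as ``reconciling'' a sign-free statement for the unnormalized $R_{T_w}^\theta$.
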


\begin{proof}
  First let $P$ be the parabolic subgroup consisting of $x \in \GL_n$ with $x_{\alpha,\beta} = 0$ whenever $\alpha \in S_i$, $\beta \in S_j$ and $i > j$.
  Similarly let $L$ be the Levi subgroup of~$P$ defined by
  $x_{\alpha,\beta} = 0$ if $i \ne j$. Then $P_w = g_w P g_w^{-1}$ is an $F$-stable parabolic subgroup containing~$T_w$
  (as $P$ is $wF$-stable), and $L_w = g_w L g_w^{-1}$ is an $F$-stable Levi subgroup. From \cite[8.2]{bib:DL},
  \[ R_{T_w}^\theta \cong \Ind_{P_w^F}^{G^F} (R_{T_w,P_w}^\theta) \]
  where the Deligne--Lusztig representation $R_{T_w,P_w}^\theta$ is computed inside~$L_w$ and which becomes a representation of $P_w^F$ via
  $P_w^F \onto L_w^F$.

  But as $n_w \in L$, without loss of generality $g_w \in L$ (Lang's theorem) in which case $L = L_w$, $P = P_w$ and $P_w^F$ is
  $\glnfq$-conjugate to $P_{\vec n}(\fq)$ considered above. Finally $L$ decomposes as $\prod \GL_{n_i}$ (as $\fq$-group) compatibly with the decomposition
  of~$w$ and~$\theta$. An application of K\"unneth's theorem yields the result.
\end{proof}

\section{Decomposition of $\glnfq$-representations}\label{sec:decomp_gln}

Suppose that $V/\qpb$ is a finite-dimensional representation of a finite group~$\Gamma$. Then we can
define the (semisimplified) reduction of~$V$ ``modulo~$p$'' to be $\overline V := (M/\mm_{\zpb} M)\ss$ for
any $\Gamma$-stable $\zpb$-lattice $M \subseteq V$.  This is a semisimple representation over~$\fpb$ which, by the
Brauer--Nesbitt theorem, is independent of the choice of~$M$.

\subsection{Jantzen's formula}\label{sub:jantzens-theorem}

In order to state Jantzen's theorem on the decomposition of Deligne--Lusztig representations mod~$p$ in the special
case of $\GL_n$, we will need to introduce some notation.

As $G' = \SL_n$ is simply connected, for any simple root~$\alpha$ there is a $\omega'_\alpha \in X(T)$ such that $\langle
\omega'_\alpha,\beta\dual\rangle = \delta_{\alpha\beta}$ for all simple roots~$\beta$.  These are unique up to $X^0(T) = X(T)^W$; in fact,
$X(T) = X'(T) \oplus X^0(T)$ where $X'(T)$ is the sublattice spanned by the $\omega'_\alpha$. A possible choice is $\omega'_{\alpha_i} =
\epsilon_1+\dots+\epsilon_i$ ($1 \le i \le n-1$). Note that $A\subres$ \eqref{eq:A_res} is a fundamental domain for the translation action of
$pX'(T)$ on $X(T)\tens \R$\label{A_res_fund_dom}. Hence for any $\sigma \in W$ there is a unique $\rho'_\sigma \in X'(T)$ such that $\sigma
\cdot C_0 + p \rho'_\sigma$ is a restricted alcove. A simple argument shows that
\begin{equation}\label{eq:rho_sig}
  \rho'_\sigma = \sum_{\substack{\text{$\alpha$ simple}\\ \sigma^{-1}(\alpha) < 0}} \omega'_\alpha
\end{equation}
\cite[lemma 1]{bib:Jan-Dekompverhalten}. Denoting the longest Weyl group element by~$w_0$ we define, compatibly with \S\ref{sec:glncharp},
\begin{equation*}
  \rho' := \rho'_{w_0} = \sum_{\text{$\alpha$ simple}} \omega'_\alpha \in \frac 12\sum_{\alpha\in R^+} \alpha + (X(T)\tens \Q)^W.
\end{equation*}
Let $\varepsilon'_\sigma := \sigma^{-1}\rho'_\sigma$ and define
\begin{equation*}
  W_1 = \{ \sigma \in W: \sigma \cdot C_0 + p \rho'_\sigma = C_0 \}.
\end{equation*}
Via the dot action, $W$ acts on the set of alcoves modulo translations by elements of $pX'(T)$ (equivalently, on the set of restricted alcoves).
The stabiliser of~$C_0$ is $W_1$ by definition, and we see that the number of restricted alcoves is $(W:W_1)$.
It is not hard to see that $W_1$ is generated by $(1\; 2\;\dots\; n)$ (with the notation of~\S\ref{sec:glncharp}), so that
there are $(n-1)!$ restricted alcoves. \label{nr_of_alcoves}(For the root system of a simply connected group, $W_1$ is isomorphic to
the root lattice modulo the weight lattice; see \cite{bib:Jan-Dekompverhalten}, lemmas~3 and~2.)

It is known that the matrix
\begin{equation*}
  (\det(\tau)\ch W(-\varepsilon'_{w_0 \sigma} + \varepsilon'_\tau - \rho'))_{\sigma,\tau\in W}
\end{equation*}
with entries in $\Z[X(T)]^W$ is upper triangular with respect to some ordering of~$W$ (not unique). It is easy to see
that its diagonal entries are invertible, as the highest weight of the Weyl module is in $X^0(T)$ if $\sigma = \tau$. Denote by
$\gamma'_{\sigma,\tau}$ the entries of the inverse matrix. These depend on the choice of the $\omega'_\alpha$ (in a simple way).
For more details and references see the appendix, \S3.3.

Not very much seems to be known about the matrix $(\gamma'_{\sigma,\tau})$; it is known to be diagonal if and only if $n \le 3$ \cite{bib:Hulsurkar}.

\begin{thm}[Jantzen]\label{thm:jantzen_formula}
In the Grothendieck group of $\glnfq$-modules,
  \begin{equation*}
    \overline{R_w(\mu+\rho')} = \sum_{\sigma,\tau \in W} \gamma'_{\sigma,\tau} W(\sigma\cdot(\mu-w\varepsilon'_{w_0 \tau}) + q\rho'_\sigma).
  \end{equation*}
\end{thm}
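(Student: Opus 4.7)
By the Brauer--Nesbitt theorem, $\overline{R_w(\mu+\rho')}$ is determined as a virtual $\glnfq$-module over $\fpb$ by its Brauer character on $p$-regular semisimple elements. Any such element is $G^F$-conjugate to one in some $T_{w'}^F$, so I reduce to checking the claimed identity of Brauer characters on each of these tori. The Brauer character of a Weyl module $W(\lambda)$ evaluated on $t\in T_{w'}^F$ is obtained by applying the Weyl character formula \eqref{eq:wcf} to the Teichm\"uller lift of $t$ inside $T(\qpb)$ via $g_{w'}$, so the characters of the terms on the RHS of the claimed identity become rational expressions in the group algebra $\Z[X(T)]$.

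On the LHS, the Deligne--Lusztig character formula gives the value of $R_w(\mu+\rho')$ at a regular $t\in T_{w'}^F$ as a signed sum, over those $\sigma\in W$ conjugating $T_{w'}$ to $T_w$ up to $F$-twist, of Teichm\"uller lifts of $\sigma(\mu+\rho')$ evaluated at the image of $t$ in $T(\qpb)$, divided by an analogous Weyl-type denominator. Since the Teichm\"uller lift is a multiplicative section on $p$-regular elements, both sides become expressions of the same form, and the identity reduces to a statement in the invariant ring $\Z[X(T)]^W$.

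I would then use Brauer's formula \eqref{prop:Brauer_formula} to rewrite products $\ch W(\lambda)\cdot e(\mu)$ appearing on the RHS as sums of $\ch W(\lambda+\mu)$, and Steinberg's tensor product theorem \eqref{thm:Steinberg} to interpret the shift by $q\rho'_\sigma=p^r\rho'_\sigma$ as a Frobenius twist at the level of Brauer characters. The resulting identity is then inverted via Hulsurkar's theorem: the matrix
\[
M_{\sigma,\tau} := \det(\tau)\,\ch W(-\varepsilon'_{w_0\sigma}+\varepsilon'_\tau-\rho')
\]
is upper triangular with invertible diagonal entries lying in $\Z[X^0(T)]$, so its inverse has entries $\gamma'_{\sigma,\tau}$, and these transform the explicit Deligne--Lusztig character identity into the claimed expression for $\overline{R_w(\mu+\rho')}$.

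The main obstacle will be the third step: showing that the character identity arranged on the RHS by Brauer's formula and Steinberg's theorem matches, term by term, the Deligne--Lusztig character computation on the LHS, once $M^{-1}$ is applied. This requires careful tracking of the dot-action shifts by $\rho'$ and $\varepsilon'_\sigma$, which come from the $W_1$-stabilizer structure of $C_0$ and encode how the restricted alcoves sit inside $X(T)\otimes\R$ (cf.~\eqref{eq:rho_sig} and the discussion at p.~\pageref{nr_of_alcoves}). The passage from semisimple $G$, where Hulsurkar's theorem is classical, to the reductive group $GL_n$ requires handling the central factor $X^0(T)=\Z(1,\dots,1)$ separately, which is the technical content of~\cite{bib:Jan-pers}.
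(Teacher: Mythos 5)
The paper does not prove this theorem; it cites it directly to Jantzen (\cite[cor.\,4.8]{bib:Jan-pers}), and remark~\ref{rk:jantzen_formula} explains how the $GL_n$ statement is deduced from the $SL_n$ version in~\cite{bib:Jan-DL}. So there is no in-paper proof to compare against; I can only assess your sketch against the known shape of Jantzen's argument, which it does resemble in outline (Brauer characters on tori, DL character formula, Weyl character formula, Brauer's formula, Steinberg, and Hulsurkar's triangular-matrix inversion).

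As a proof, however, the proposal has two real gaps. First, the step you label ``the main obstacle''---matching the DL character identity term-by-term against the Brauer/Steinberg/Hulsurkar rearrangement of the right-hand side, with the $\rho'$- and $\varepsilon'_\sigma$-shifts tracked exactly---is not a finishing detail but the entire substance of the theorem. Without carrying it out, you have listed the ingredients but not shown that they combine into the stated formula; the Hulsurkar inversion does not by itself produce the coefficients $\gamma'_{\sigma,\tau}$ in front of the correct Weyl modules unless that character identity has already been established. Second, restricting to \emph{regular} semisimple $t\in T_{w'}^F$ does not by itself determine a virtual Brauer character: one must check all $p$-regular (equivalently, all semisimple) conjugacy classes, not just the regular ones. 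To get away with regular elements you would need a supplementary interpolation or density argument (e.g.\ observing that both sides are specialisations of a single element of $\Z[X(T)]^W$ and that regular semisimple elements of all types $T_{w'}$, over all $q$, separate such elements), and this is precisely the kind of care that distinguishes a correct proof from a plausible plan. The asymmetry also bears mentioning: the DL character at a regular element is a clean sum $\sum_\sigma {}^\sigma\theta(t)$ with no denominator, whereas the Weyl character formula on the reduction side has the usual $\prod(1-e(-\alpha))$ denominator, so the two sides are not as structurally parallel as your sketch suggests, and the reconciliation is correspondingly delicate.
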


\begin{rk}
  The formula is easily seen to be independent of the choice of the $\omega'_\alpha$. On the other hand, the left-hand side
  depends only on the $\gpxtw$-orbit of $(w,\mu+\rho')$ \eqref{lm:dl_reps} which is not obvious on the right-hand side.
\end{rk}

\begin{rk}\label{rk:jantzen_formula} For the proof see thm.~3.4 in the appendix.
  Originally Jantzen proved the analogue of this
  theorem for simply-connected, quasi-simple groups defined and not necessarily split over a finite field~\cite{bib:Jan-DL}. In fact, the
  above formula nearly follows from the one for $\SL_n$: each ingredient in the formula restricts to its counterpart for $\SL_n$. The only
  loss of generality is that for an irreducible representation~$F$ of $\glnfq$ appearing as Jordan--H\"older constituent of
  $\overline{R_w(\mu+\rho')}$, $F|_{\SL_n(\fq)}$ determines $F$ only up to determinant-power twist.  Taking into account the central character of
  $R_w(\mu+\rho')$, $F$ is determined up to a twist by $\det^r$ for integer multiples $r$ of $(q-1)/n$. Thus if $\gcd(q-1,n) = 1$, the
  formula follows from the one for $\SL_n$. 
\end{rk}

Let us analyse the statement of Jantzen's formula a little when $q = p$. Notice first that a typical highest weight appearing,
$\sigma(\mu-w\varepsilon'_{w_0 \tau}) + p\rho'_\sigma-\rho'$, is a small deformation of $\sigma\cdot \mu + p\rho'_\sigma$. 
If $\mu$ lies in alcove~$C$, the latter weight is contained in alcove $\sigma\cdot C + p\rho'_\sigma$. This alcove is automatically restricted if $C=C_0$,
which can always be achieved, up to a small error, by varying \wmu\ (see~\eqref{eq:jantzen-action}). We will continue to assume that $\mu$ lies in a small
neighbourhood of~$C_0$.

To use Jantzen's formula to find the complete decomposition of $\overline{\rwmu}$ into irreducible \gln-modules, we use Brauer's
formula~\eqref{prop:Brauer_formula} to express each $\gamma'_{\sigma,\tau} W(\lambda)$ as a linear combination of Weyl modules, thus
\begin{equation}\label{eq:2}\notag
  \overline{R_w(\mu)} = \sum_\nu a_\nu W(\nu),\ \text{some $a_\nu \in \Z$.}
\end{equation}
There is a small neighbourhood of the restricted region which contains all~$\nu$ occurring in this expression. Any non-dominant $W(\nu)$ can
be converted into a dominant one using~\eqref{eq:change_of_weyl_chamber}. Next, one has to decompose each $W(\nu)$ as $\GL_n$-module. This is a
difficult problem which has not been solved in general (\S\ref{sec:glncharp}), but in any case the possible highest weights of constituents
are controlled by the strong linkage principle. In particular, these are close to the boundary of their alcove if the same is true for~$\nu$.

Finally to decompose these as representations of \gln, one uses the Steinberg tensor product theorem~\eqref{thm:Steinberg} and Brauer's
formula~\eqref{prop:Brauer_formula}, noting that the Frobenius endomorphism is trivial on \gln.

\subsection{The generic case ($q = p$)}

In generic situations Jantzen found a way to describe the Jordan--H\"older constituents of $\overline {R_w(\mu+\rho')}$ (including multiplicities) in terms
of the constituents of certain induced modules of $G_r T \subseteq G$ ($G_r$ being the kernel of the Frobenius morphism~$F$). When $r = 1$, that is when $q = p$,
---and if we disregard multiplicities which will not concern us anyway---his result can be made completely explicit.

Note first that alcoves for varying~$p$ can naturally be identified with each other: using the isomorphism $X(T) \tens \R \to X(T) \tens \R$,
$\mu - \rho' \mapsto \mu/p - \rho'$, alcoves are described independently of~$p$. For example, we can identify the lowest alcove $C_0$ for each~$p$.

We will say that $\mu \in X(T)$ lies \emph{$\delta$-deep in an alcove~$C$} if
\begin{equation}\label{eq:suff_deep}
  n_\alpha p + \delta < \langle \mu+\rho',\alpha\dual \rangle <  (n_\alpha + 1)p - \delta  \quad\forall\alpha \in R^+
\end{equation}
where $C$ is the alcove determined by putting $\delta = 0$ in these inequalities ($n_\alpha \in \Z$). A statement in which $p$ is allowed to vary is
said to be true for $\mu$ \emph{\sd} in some alcove~$C$ if there is a $\delta > 0$, independent of~$p$, such that the statement is true
for all $\delta$-deep $\mu \in C$.

\begin{lm}\label{lm:extended-weyl}
  Suppose that $(p\nu,w) \in \widetilde W_p$ fixes an element of some alcove. Then $(p\nu,w) = (0,1)$.
\end{lm}

\begin{proof}
  If $w\cdot \mu + p\nu = \mu$ for $\mu$ in some alcove then $p\nu = (1-w)(\mu+\rho') \in (1-w)X(T) \subseteq \Z R$.
  Since $\Z R \subseteq X(T)$ is saturated for $\GL_n$, $\nu \in \Z R$ and $\mu$ is fixed by an element of~$W_p$. The lemma
  follows since the closure of any alcove is a fundamental domain for~$W_p$.
\end{proof}

\begin{prop}\label{prop:generic_decomp}
  Suppose that $C$ is an alcove and that $\mu \in X(T)$ lies \sd\ inside~$C$. Then the Jordan--H\"older constituents of
  $\overline{R_w(\mu+\rho')}$ are the $F(\lambda)$ with $\lambda$ restricted such that there exist 
  $\sigma \in W$, $\nu \in X(T)$ with $\sigma\cdot (\mu + (w-p)\nu)$ dominant and
  \begin{equation}\label{eq:generic_decomp}
    \sigma\cdot (\mu + (w-p)\nu) \uparrow w_0\cdot (\lambda-p\rho').
  \end{equation}
\end{prop}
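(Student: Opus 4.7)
The plan is to combine Jantzen's description of the Jordan--Hölder constituents of $\overline{R_w(\mu+\rho')}$ in the generic regime---which identifies them with the constituents of a certain $G_1T$-induced module---with Ye's explicit decomposition of such modules (valid precisely when $q=p$), and then to translate the resulting indexing into the $\uparrow$-condition stated in the proposition.

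First I would start from Jantzen's formula (Theorem~\ref{thm:jantzen_formula}) specialised to $q = p$,
\[
\overline{R_w(\mu+\rho')} = \sum_{\sigma,\tau\in W} \gamma'_{\sigma,\tau}\, W\bigl(\sigma\cdot(\mu - w\varepsilon'_{w_0\tau}) + p\rho'_\sigma\bigr),
\]
and expand each $\gamma'_{\sigma,\tau}\, W(-)$ via Brauer's formula (Proposition~\ref{prop:Brauer_formula}). Using $\rho'_\sigma = \sigma\varepsilon'_\sigma$ and the definition of the dot-action, each resulting highest weight can be rewritten in the form $\sigma\cdot(\mu + (w-p)\nu)$ for some $\nu \in X(T)$ drawn from a finite (bounded) set; terms with highest weight on a reflection hyperplane contribute zero in the Grothendieck group by \eqref{eq:change_of_weyl_chamber}.

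Second, for $\mu$ sufficiently deep in $C$, every nonzero term $W(\sigma\cdot(\mu + (w-p)\nu))$ admits a unique dominant Weyl-chamber representative (with the sign from \eqref{eq:change_of_weyl_chamber} absorbed), and the dominance clause in the proposition precisely isolates these contributions. Jantzen's generic structural theorem---the $G_1T$-reduction mentioned just before the proposition---then guarantees that distinct such dominant Weyl modules do not cancel after Brauer expansion, because the multiset of Jordan--Hölder constituents is intrinsic to a $G_1T$-module depending only on the orbit data of $(w,\mu+\rho')$ modulo the $\gpxtw$-action.

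Third, invoking Ye's explicit result on $G_1T$-constituents together with the strong linkage principle (Proposition~\ref{prop:stronglinking}), the restricted $F(\lambda)$ occurring as a constituent of a Weyl module with highest weight $\kappa = \sigma\cdot(\mu+(w-p)\nu)$ is exactly the one whose canonical antidominant shift $w_0\cdot(\lambda - p\rho')$ satisfies $\kappa \uparrow w_0\cdot(\lambda - p\rho')$. I expect the main obstacle to be this final identification: verifying that the correct $\uparrow$-target attached to a restricted $\lambda$ is the specific representative $w_0\cdot(\lambda - p\rho')$ of its $W_p$-orbit rather than $\lambda$ itself or any other $W_p$-conjugate. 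This reduces to a careful bookkeeping of the dot-action shifts by $\rho'$ on both sides and the identity \eqref{eq:rho_sig} for $\rho'_\sigma$, showing that $w_0\cdot(\lambda - p\rho')$ is the unique element of $W_p\cdot\lambda$ lying in the translate of $-\overline{C_0}$ reached by pulling a restricted irreducible into the lowest-alcove frame used in Ye's description. Once this combinatorial matching is settled, the three steps combine to yield the stated description.
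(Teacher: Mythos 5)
Your opening paragraph states the right strategy---the one the paper actually uses---namely Jantzen's identity expressing $\overline{R_w(\mu+\rho')}$ in terms of $G_1T$-module multiplicities (the generalisation of \cite[4.3]{bib:Jan-DL} to $GL_n$, appearing as~\eqref{eq:jantzen_generic} in the paper) together with Ye's criterion for when $[\widehat Z_1(\lambda'):\widehat L_1(\mu')]\ne 0$. But your detailed steps abandon that plan and instead start from the \emph{other} Jantzen formula, Theorem~\ref{thm:jantzen_formula}, which writes the reduction as a $\Z$-linear combination of Weyl modules. These are genuinely different statements, and the substitution causes the argument to break.

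The crucial gap is in your third step. You assert that for a Weyl module with dominant highest weight $\kappa$, the restricted $F(\lambda)$ occurring in it as a $\gln$-module are ``exactly'' those with $\kappa \uparrow w_0\cdot(\lambda - p\rho')$. That is not what either the strong linkage principle or Ye's theorem says. Strong linkage (Proposition~\ref{prop:stronglinking}) for $W(\kappa)$ gives the \emph{opposite} direction, $\lambda'\uparrow\kappa$ for the $G$-constituents $F(\lambda')$, and those $\lambda'$ need not be restricted; passing to $\gln$-constituents would then require Steinberg's tensor product theorem and a further analysis that you do not carry out. The $\uparrow$-condition appearing in the proposition, with $w_0\cdot(\lambda-p\rho')$ on the right, is precisely Ye's criterion for the nonvanishing of a $G_1T$-decomposition number $[\widehat Z_1(\cdot):\widehat L_1(\cdot)]$ (see~\cite[II.9.16]{bib:Jan-reps}); it is a statement about baby Verma modules for the Frobenius kernel, not about Weyl modules for $G$. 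Conflating these two produces a criterion that is simply false even for $GL_2$ in the lowest alcove (the purported condition fails already for $\kappa=\lambda=(a,0)$), while the paper's route via~\eqref{eq:jantzen_generic} hands you the constituents directly as $F(u\cdot(\mu+w\nu))$ with $u\in D_1$ forcing the weight to be restricted, after which Ye's criterion applied to the coefficient $[\widehat Z_1(\mu-p\nu+p\rho'):\widehat L_1(u\cdot\mu)]$ gives the $\uparrow$-condition. Your second step (no cancellation between Brauer-expanded Weyl modules, justified by appeal to ``the $G_1T$-reduction'') is likewise not an argument one can run: the Weyl modules in Theorem~\ref{thm:jantzen_formula} are $G$-modules and there is no $G_1T$-intrinsic multiset attached to them. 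To repair the proof you should work directly from~\eqref{eq:jantzen_generic}, convert the non-vanishing of the $G_1T$-multiplicity via Ye into condition~\eqref{eq:condition_for_nonzero_mult}, and then carry out the alcove-shift bookkeeping (replacing $\mu$ by $\mu+w\nu$) to land on~\eqref{eq:condition_for_nonzero_mult2}, taking care of the $SL_n$ versus $GL_n$ passage for the ``if'' implication as the paper does.
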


\begin{rk}
  Note that $\lambda \mapsto w_0 \cdot (\lambda - p\rho')$ induces a bijection on $A\subres$ \eqref{eq:A_res}.
\end{rk}

\begin{proof}
  Note that possible values of the left-hand side of~\eqref{eq:generic_decomp} are precisely the weight coordinates in the \gpxtw-orbit of
  $(w,\mu+\rho')$ shifted by $-\rho'$. Thus, without loss of generality, $C = C_0$. Let
  \begin{equation*}
    D_1 = \{ u \in \widetilde W_p : u \cdot \mu \in X_1(T) \}.
  \end{equation*}
  The generalisation of Jantzen's result \cite[4.3]{bib:Jan-DL} to
  $\GL_n$ is the following identity in the Grothendieck group of $\gln$-representations, valid for $\mu$ \sd\ in~$C_0$:
  \begin{equation}
    \label{eq:jantzen_generic}
      \overline{R_w(\mu+\rho')} = \sum_{\substack{u \in D_1\\\nu \in X(T)}} [\widehat Z_1(\mu -p\nu + p\rho'):\widehat L_1(u\cdot \mu)]
      F(u\cdot (\mu + w\nu)),
  \end{equation}
  where $\mu + w\nu \in C_0$ (and so $u\cdot (\mu + w\nu) \in X_1(T)$) whenever $[\widehat Z_1(\mu -p\nu + p\rho'):\widehat L_1(u\cdot \mu)] \ne 0$.
  (The proof generalises without difficulty. Use also lemma~\ref{lm:extended-weyl}.)
  Here $\widehat Z_1(\lambda)$ and $\widehat L_1(\lambda)$ for $\lambda \in X(T)$ denote $G_1T$-modules as in \cite[{\S{}II.9}]{bib:Jan-reps},
  the latter being simple.
  
  Choose $\sigma \in W$ such that $\sigma(\mu-p\nu+\rho')$ is dominant.  Then by \cite[II.9.16(4)]{bib:Jan-reps},
  \begin{equation}\label{eq:multiplicity}
    [\widehat Z_1(\mu - p\nu + p\rho'):\widehat L_1(u\cdot \mu)]
    = [\widehat Z_1(\sigma\cdot(\mu -p\nu) + p\rho'):\widehat L_1(u\cdot \mu)].
  \end{equation}
  If this integer is non-zero then by~\cite[II.9.16(6)]{bib:Jan-reps},
  \begin{align}
    \notag\sigma\cdot (\mu - p\nu) &\uparrow w_0 u\cdot \mu + p ({\textstyle\sum_{R^+}} \alpha - \rho') \\
    & \qquad = w_0 \cdot (u\cdot \mu - p\rho').\label{eq:condition_for_nonzero_mult}
  \end{align}
  As mentioned just after def.~\ref{df:uparrow}, we may replace both sides by weights in the same alcoves as long as they
  remain in the same $W_p$-orbit. We may thus replace $\mu \in C_0$ by $\mu + w\nu \in C_0$ in~\eqref{eq:multiplicity}, provided that the resulting weights
  are still in the same $W_p$-orbit. To see this is the case, note that $\widetilde w := w_0\cdot(u\cdot (\sigma^{-1}\cdot (-) + p\nu)-p\rho')$ is the element
  in $\widetilde W_p$ that takes the left-hand side of~\eqref{eq:multiplicity} to the right-hand side. By lemma~\ref{lm:extended-weyl}, $\widetilde w$ has to
  lie in fact in~$W_p$. Therefore
  \begin{equation}\label{eq:condition_for_nonzero_mult2}
    \sigma\cdot ((\mu + w\nu) - p\nu) \uparrow w_0 \cdot (u\cdot (\mu + w\nu) - p\rho').
  \end{equation}
  This shows that all constituents of $\overline{R_w(\mu+\rho')}$ are of the right form.
  
  Conversely suppose that~\eqref{eq:generic_decomp} holds. Note that the set of all $\mu-(w-p)\nu$ allowed
  by~\eqref{eq:generic_decomp} for some $\lambda \in X_1(T)$ and $\sigma \in W$ has to be contained in a finite union of
  alcoves. A simple argument like the one after def.~\ref{df:generic_tau} shows that there are only finitely many possibilities for $\nu$ modulo $X^0(T)$.  Thus
  if $\mu$ is \sd\ in~$C_0$, $\mu + w\nu \in C_0$ for any such~$\nu$.  Since moreover $\lambda \in X_1(T)$ is in the
  $\widetilde W_p$-orbit of $\mu+w\nu$, $\lambda = u\cdot (\mu+w\nu)$ for some $u \in D_1$
  and~\eqref{eq:condition_for_nonzero_mult2} holds. As in the previous argument we may replace $\mu + w\nu$ by~$\mu$
  in~\eqref{eq:condition_for_nonzero_mult2} to obtain~\eqref{eq:condition_for_nonzero_mult}. It remains to show
  that~\eqref{eq:multiplicity} is non-zero. A result of Ye (see \cite[II.9.16]{bib:Jan-reps}) shows that $[\widehat
  Z_1(\lambda') : \widehat L_1(\mu')]_{\SL_n} \ne 0$, where $\lambda' = \sigma\cdot(\mu -p\nu) + p\rho'$ and $\mu' = u\cdot
  \mu$. Finally note that
  \begin{equation*}
    [\widehat Z_1(\lambda') : \widehat L_1(\mu')] = [\widehat Z_1(\lambda') : \widehat L_1(\mu')]_{\SL_n}.
  \end{equation*}
  One observes first that $\widehat Z_1(\lambda)$ and $\widehat L_1(\lambda)$ restrict to the corresponding objects for
  $\SL_n$: this uses that $G_1T \cong U_1^- \times T \times U_1$ as schemes \cite[II.9.7]{bib:Jan-reps} and that these modules
  have a central character. The equality of multiplicities then follows since by \cite[II.9.15]{bib:Jan-DL} any constituent
  of $\widehat Z_1(\lambda')$ is of the form $\widehat L_1(\nu')$ for some $\nu' \in W_p\cdot \lambda'$ (even $\nu' \uparrow
  \lambda'$), the stabiliser of~$\lambda'$ in the affine Weyl group is trivial, and since the natural projection $X(T)\otimes
  \R \to X(T\cap \SL_n)\otimes \R$ maps alcoves for $\GL_n$ bijectively to the ones for $\SL_n$ and compatibly with respect to
  the action of the affine Weyl group.
\end{proof}

\section{A Serre-type conjecture}\label{sec:conj}

\emph{From now on we will assume that $n > 1$.} This could be avoided by working adelically (as in section~\S\ref{sec:theo_evid}), but for~$n = 1$ the adelic
version of the conjecture just comes down to class field theory for~$\Q$.

\subsection{Serre weights}

The representation-theoretic analogue of the weight in Serre's Conjecture is the following \cite{bib:ASinn}, \cite{bib:BDJ}.

\begin{df}
  A \emph{Serre weight} is an isomorphism class of irreducible representations of $\gln$ over~$\fpb$.
  By cor.~\ref{cor:gln_modules}, a Serre weight is of the form $F(a_1,a_2,\dots,a_n)$ with $0 \le a_i-a_{i+1} \le p-1$ for all~$i$. It is
  called \emph{regular} if $0 \le a_i-a_{i+1} < p-1$ for all~$i$.
\end{df}

Note that the number of Serre weights is $p^{n-1}(p-1)$, which equals the number of semisimple conjugacy classes in $\GL_n(\fp)$.

\subsection{Hecke algebras}\label{sub:hecke_alg}

Fix a positive integer~$N$ with $(N,p) = 1$.  Let $\Gamma_1(N)$ be the group of matrices in $\SL_n(\Z)$ with last row
congruent to $(0,\dots,0,1)$ modulo~$N$. Also let $S_1(N)$ be the group of matrices in $\GL_n^+(\zn)$ with last row congruent
to $(0,\dots,0,1)$ modulo~$N$ and let $S_1'(N) = S_1(N) \cap \GL_n^+(\znp)$.  Here $\zn$ is the ring of rational numbers with
denominators prime to~$N$.

Then $(\Gamma_1(N),S_1'(N))$, $(\Gamma_1(N),S_1(N))$ are Hecke pairs (see~\S\ref{sub:hecke}).  The corresponding Hecke algebras over the
integers are denoted by $\hop$, $\ho$; clearly $\hop \subseteq \ho$ is a subalgebra.

For any prime number $l\nmid N$ choose $\omega_N(l) \in \SL_n(\Z)$ with last row congruent to $(0,\dots,0,l^{-1}) \pmod N$; then
$\omega_N(l)\go = \go\omega_N(l)$ does not depend on any choices.
For primes $l\nmid N$ and $0\le i \le n$ define the Hecke operator
\[ T_{l,i} := [\go \bigg(\begin{smallmatrix}l\! \\[-5pt]
    & \ddots \\ && 1 \end{smallmatrix}\bigg) \widehat{\omega_N(l)}\go], \]
in $\ho$ ($i$ diagonal entries being equal to~$l$, $n-i$ equal~1). Here $\widehat{\omega_N(l)}$ stands for $\omega_N(l)$ if the diagonal
matrix has an~$l$ as its $(n,n)$-entry and for~1 otherwise. $T_{l,i}$ does not depend on the order of the diagonal entries.
This follows from the proof of the following lemma:

\begin{lm}
  \begin{align*}
    &\ho = \Z[T_{l,1},T_{l,2},\dots, T_{l,n},T_{l,n}^{-1} : l\nmid N] \\
    &\ \ \ \cup \\
    &\hop = \Z[T_{l,1},T_{l,2},\dots, T_{l,n},T_{l,n}^{-1} : l\nmid Np]
  \end{align*}
\end{lm}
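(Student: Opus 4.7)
The plan is to argue via a local-global decomposition of $\ho$, followed by a Satake-type identification of each local factor.

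First I would establish a tensor factorisation
\[
  \ho \;\cong\; {\textstyle\bigotimes}'_{l \nmid N}\, \ho[l],
\]
where the restricted tensor product is taken over primes $l \nmid N$ and $\ho[l] \subset \ho$ denotes the subalgebra generated by double cosets $[\go s \go]$ with $\det(s) \in l^{\Z}$ (up to units in $\zn\s$). Surjectivity of the product map follows from Smith normal form: every $s \in S_1(N)$ is $\go$-equivalent on both sides to a diagonal matrix in $\zn$ (with a correction by $\widehat{\omega_N(\cdot)}$ to preserve the last-row condition, exactly as in the definition of $T_{l,i}$), and this diagonal decomposes uniquely into prime-supported factors. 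The required multiplicativity $[\go s_1 \go][\go s_2 \go] = [\go s_1 s_2 \go]$ whenever $\det(s_1)$ and $\det(s_2)$ are coprime is a standard coset-counting argument.

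Second, for each prime $l \nmid N$, I would identify $\ho[l]$ with the classical unramified spherical Hecke algebra $\HH(GL_n(\zl),\, M_n(\zl) \cap GL_n(\ql)^+)$ via the strong-compatibility framework of \S\ref{sub:hecke}: the mod-$N$ congruence is vacuous at $l$, and $\omega_N(l)$ lies in $GL_n(\zl)$ (in fact is an $l$-adic unit). The Satake isomorphism---equivalently, the elementary-divisor decomposition of $GL_n(\zl) \backslash (M_n(\zl) \cap GL_n(\ql)^+)$---then identifies $\ho[l]$ with $\Z[T_{l,1}, \dots, T_{l,n-1}, T_{l,n}, T_{l,n}^{-1}]$, where $T_{l,i}$ corresponds to the double coset of $\dia(l, \dots, l, 1, \dots, 1)$ with $i$ entries equal to $l$.

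The main technical point, and the one most likely to require care, is accounting for the correction factor $\widehat{\omega_N(l)}$ appearing in the definition of $T_{l,i}$. It is needed precisely when $i = n$, since only then does the bare diagonal $l \cdot I_n$ fail to have last row $\equiv (0, \dots, 0, 1) \pmod N$; multiplication by $\omega_N(l)$ restores the congruence. One must verify (i) independence of the choice of $\omega_N(l)$---two such choices differ by an element of $\go$, leaving the double coset unchanged---and (ii) compatibility with the local factorisation, which holds because $\omega_N(l)$ is an $l$-adic unit and therefore trivial in the local picture at~$l$. For $\hop$ the argument is identical except that the tensor product ranges over $l \nmid Np$, reflecting $S_1'(N) \subset GL_n^+(\znp)$.
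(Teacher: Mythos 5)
Your proposal is essentially correct, but it takes a genuinely different route from the paper. The paper's argument is: (1) exhibit $(\go,\Sigma_1(N))$ as strongly compatible with $(SL_n(\Z),S_N)$ (where $\Sigma_1(N)=M_n(\Z)\cap S_1(N)$, $S_N=M_n(\Z)\cap GL_n^+(\zn)$), so the Hecke algebras are identified; (2) cite Shimura's global structure theorem that $\HH(SL_n(\Z),M_n(\Z)^+)$ is the polynomial ring in all $T_{l,i}$; and (3) use the determinant grading on $\ho$ to identify it as the localization of $\HH(\go,\Sigma_1(N))$ at the central elements $T_{l,n}$, $l\nmid N$. Your route---a restricted tensor factorisation into ``local components'' $\ho[l]$, followed by identification of each with the $l$-adic spherical Hecke algebra and a local structure theorem---is a more local-to-global way of saying the same thing, and it is ultimately the argument by which Shimura's global theorem is itself proven. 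What your route buys is a clearer conceptual picture of the local structure (which is also the content of lemma~\ref{lm:loc_glob_hecke} later in the paper); what the paper's route buys is economy, since a single citation to Shimura avoids re-deriving the local factorisation.

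Two points deserve more care in your sketch. First, your local semigroup should be $GL_n(\ql)$, not $M_n(\zl)\cap GL_n(\ql)^+$: since $S_1(N)\subset GL_n^+(\zn)$ permits denominators at $l$, the subalgebra $\ho[l]$ contains $T_{l,n}^{-1}$ from the start, and this is precisely why the answer is $\Z[T_{l,1},\dots,T_{l,n},T_{l,n}^{-1}]$ rather than the monoid Hecke algebra $\Z[T_{l,1},\dots,T_{l,n}]$. Second, the identification of $\ho[l]$ with $\HH(GL_n(\zl),GL_n(\ql))$ is not a direct instance of the strong-compatibility framework of \S\ref{sub:hecke}, because the ambient groups are different ($GL_n^+(\zn)\subset GL_n(\Q)^+$ versus $GL_n(\ql)$). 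Getting from the arithmetic double cosets $\Gamma_1(N)\backslash\,\cdot\,/\Gamma_1(N)$ to the $l$-adic ones $GL_n(\zl)\backslash\,\cdot\,/GL_n(\zl)$ requires a strong-approximation argument, as is carried out in lemma~\ref{lm:loc_glob_hecke}. In the paper's route this issue is subsumed in the single strong-compatibility check against $(SL_n(\Z),S_N)$; in yours it reappears locally at each $l$ and should be flagged explicitly.
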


\emph{Proof sketch:} Let $\Sigma_1(N) = M_n(\Z) \cap S_1(N)$ and $S_N = M_n(\Z) \cap \GL_n^+(\Z_{(N)})$.
One checks that $(\go,\Sigma_1(N)) \subseteq (\SL_n(\Z),S_N)$ are strongly compatible Hecke pairs (\S\ref{sub:hecke}),
such that $T_{l,i}$ corresponds to $[\SL_n(\Z)\bigg(\begin{smallmatrix}l\! \\[-5pt]
& \ddots \\ && 1
\end{smallmatrix}\bigg)\SL_n(\Z)]$ ($i$ entries equal~$l$). Finally one uses that $\HH(\SL_n(\Z),S_1)$ is a polynomial
ring in the $T_{l,i}$ for all primes~$l$ and all $1 \le i \le n$ \cite[\S3.2]{bib:Shimura}, and one makes use of the
grading on the Hecke algebras considered here induced by the determinant. \qed

Whenever $M$ is an $\fpb[S'_1(N)]$-module and for any~$e$, $\hop$ acts on the group cohomology module $H^e(\Gamma_1(N),M)$.
We will mostly consider the situation when $M = F$, a Serre weight, with $S_1'(N)$ acting via the reduction mod~$p$ map
$S'_1(N) \onto \GL_n(\fp)$.

\begin{df}[\cite{bib:ASinn}]
  Suppose that $\alpha \in H^e(\Gamma_1(N),M)$ is an $\hop$-eigenvector, say $T_{l,i} \alpha = a(l,i)\alpha$ for all
  $l\nmid pN$, $1 \le i \le N$. We say that a Galois representation $\rho : G_\Q \to \GL_n(\fpb)$ is \emph{attached to}~$\alpha$
  if for all $l\nmid Np$, $\rho$ is unramified at~$l$ and
  \begin{equation}\label{eq:rho_attached}
    \sum_{i=0}^n (-1)^i l^{i(i-1)/2} a(l,i)X^i = \det(1-\rho(\Frob_l^{-1})X),
  \end{equation}
  \(Remember that $\Frob_l \in G_\Q$ is a \emph{geometric} Frobenius element at~$l$.\)
\end{df}

\begin{rk}
  \need{double-check compatibility of notation!}
  
  A conjecture of Ash \(\cite{bib:A}, conjecture~B\) implies that for any Serre weight~$F$ and any $\hop$-eigenvector in
  $H^e(\go,F)$ \(any $(N,p) = 1$, $e \ge 0$ and $n > 1$\) there is an attached \(semisimple\) Galois representation.
  To see this implication, we will use the notation of~\S\ref{sec:comp_adps} and let $\widetilde\Sigma'_1(N) := \soptilde \cap M_n(\Z)$. An
  $\hop$-eigenvector gives rise to an $\hoptilde$-eigenvector \(prop.~\ref{lm:compare_with_ash}\) and $(\gotilde,\widetilde\Sigma'_1(N))$ is a
  ``congruence Hecke pair of level $Np$'' \(\cite{bib:A}, def.~1.2\) as $n > 1$.
\end{rk}

Analogous to Serre's Conjecture, we would like to understand, conversely, when a given $n$-dimensional Galois representation occurs
in such a group cohomology module and, if so, for which prime-to-$p$ levels~$N$ and Serre weights~$F$.
Fix thus a Galois representation $\rho : G_\Q \to \GL_n(\fpb)$ which we assume to be
\emph{irreducible} and \emph{odd}, in the following sense.

\begin{df}[\cite{bib:ASinn}]\label{df:odd}
  We will say that $\rho$ is \emph{odd} if either $p = 2$ or $|n_+ - n_-| \le 1$ where $n_+$ \(resp. $n_-$\)
  is the number of eigenvalues of $\rho(c)$ equal to 1 \(resp. $-1$\) where $c \in G_\Q$ is a complex conjugation.
\end{df}

Associated to~$\rho$ there is a prime-to-$p$ integer $N^?(\rho)$, its Artin conductor (see, for example,~\cite{bib:ADP}).
In Serre's Conjecture this is the smallest prime-to-$p$ level in which $\rho$ appears.

\begin{df}
  Let $W(\rho)$ \(resp., $W_{\!\opt}(\rho)$\) be the set of \emph{regular} Serre weights~$F$ such that $\rho$ is attached
  to an $\hop$-eigenvector in $H^e(\go,F)$ for some $e \ge 0$ and some integer~$N$ prime to~$p$ \(resp.,  $N = N^?(\rho)$\).
\end{df}

\begin{rk}
  \need{edit!}
  As discussed in~\cite{bib:ADP}, rk.~3.2, when $n = 3$, $e$ can be taken to be~3, the virtual
  cohomological dimension of~$\go$, in the definition.
\end{rk}

Let us now state a Serre-type conjecture for $n$-dimensional Galois representations~$\rho$ that are tame at~$p$.
It depends on two ingredients to be defined in the next two subsections: a representation $V(\rho|_{I_p})$
of $\gln$ over~$\qpb$ and an operator~$\RR$ on the set of Serre weights.

\begin{conj}\label{conj:serre}
  Suppose that $\rho : G_\Q \to \GL_n(\fpb)$ is irreducible, odd, and \emph{tamely ramified at~$p$}. Then
  \[ W(\rho) = W_{\!\opt}(\rho) = W^?(\rho|_{I_p}), \]
  where $W^?(\rho|_{I_p}) := \RR(\JH(\overline{V(\rho|_{I_p})}))$.
\end{conj}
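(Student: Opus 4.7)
This is a conjecture, not a theorem, so rather than a proof plan I will sketch how one might \emph{approach} proving it and where the serious obstacles lie. The equality $W(\rho) = W_{\!\opt}(\rho) = W^?(\rho|_{I_p})$ naturally splits into three assertions: the trivial inclusion $W_{\!\opt}(\rho) \subset W(\rho)$; a \emph{lower bound} $W^?(\rho|_{I_p}) \subset W_{\!\opt}(\rho)$ asserting that every predicted weight actually occurs at optimal level; and an \emph{upper bound} $W(\rho) \subset W^?(\rho|_{I_p})$ asserting no extraneous weights occur at any prime-to-$p$ level. I would attack these separately, and I would first handle the case when $\rho|_{I_p}$ is generic in the sense of the paper, since there $W^?(\rho|_{I_p})$ is described explicitly (roughly $n!$ weights distributed across alcoves, cf.\ the discussion after Conjecture~\ref{conj:intro}) via Jantzen's decomposition, so the target set is concrete.

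For the lower bound the natural strategy is automorphic: produce a lift to characteristic zero. Given $F \in \RR(JH(\overline{V(\rho|_{I_p})}))$, the definition of $\RR$ (together with Jantzen's formula~\eqref{thm:jantzen_formula} and the Steinberg theorem~\eqref{thm:Steinberg}) tells us exactly which irreducible $\glnfq$-representation $\widetilde F$ over $\qpb$ reduces to something containing $F$ as a Jordan--Hölder factor. One then wants to realise $\rho$ as the mod-$p$ reduction of a Galois representation associated to a cohomological cuspidal automorphic representation $\pi$ of $GL_{n/\Q}$ whose local factor at $p$ has inertial type $V(\rho|_{I_p})$ and whose archimedean component corresponds to $\widetilde F$. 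For $n = 4$ the paper already executes a partial version of this via automorphic induction from CM quartic fields, which suffices only for weights in the lowest alcove and under a symmetry constraint on the infinity type. To cover all alcoves one would need either congruences between automorphic forms of different weights (a mod-$p$ theta-operator / weight-cycling formalism for $GL_n$), or $p$-adic / completed cohomology techniques à la Emerton that bypass the need for an irreducible algebraic lift. This is the main missing piece: lifting the ``upper alcove'' Serre weights (which dominate numerically for $n \ge 4$) into characteristic zero as representations of the ambient algebraic group is simply impossible in general, so a direct automorphic construction cannot suffice.

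For the upper bound one needs a local-global compatibility statement: if $\rho$ is attached to a Hecke eigenclass in $H^\bullet(\Gamma_1(N),F)$, then the local inertial data of any putative automorphic lift at $p$ must be compatible with $V(\rho|_{I_p})$, and then the admissible $F$ must lie in $\RR(JH(\overline{V(\rho|_{I_p})}))$. Concretely one would want a mod-$p$ local Langlands correspondence for $GL_n(\qp)$ matching the socle/cosocle of the reduction of principal series with the inertial type of $\rho|_{I_p}$, and use it to rule out Serre weights outside the predicted set. The principal obstacles, in order of severity, are: (i) for $n \ge 3$ no construction of Galois representations attached to eigenclasses in $H^\bullet(\Gamma_1(N),F)$ is known in general (Ash's conjecture~B is itself open), so we do not yet even have a $\rho$ to analyse intrinsically; (ii) mod-$p$ local Langlands for $GL_n(\qp)$ with $n \ge 3$ is essentially unavailable, so the bridge between $\rho|_{I_p}$ and $F$ that makes the upper bound tractable for $GL_2$ (via Breuil--Mézard-type results) is missing; and (iii) the representation $V(\rho|_{I_p})$ and the operator $\RR$ are defined for $GL_n(\fp)$, but appearances in cohomology reflect $GL_n(\qp)$-data, and reconciling these requires a serious study of the Jantzen filtration interacting with the Deligne--Lusztig induction, which is exactly the content of Theorem~\ref{thm:jantzen_formula} but used in reverse. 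Because of (i)--(iii), my plan would realistically be to restrict to small $n$ and generic $\rho|_{I_p}$, combine the automorphic lifting of lowest-alcove weights with an explicit analysis of the Jantzen filtration via Proposition~\ref{prop:generic_decomp} to transport weights across alcoves, and treat the full conjecture as a target guiding the development of the $p$-adic Langlands program rather than as something the present technology can settle outright.
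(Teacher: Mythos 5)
You correctly identify this as a conjecture with no proof in the paper, and your survey of the three sub-assertions (trivial inclusion, lower bound via automorphic lifting, upper bound via local-global compatibility) and the obstacles to each matches the evidence-gathering strategy the paper actually pursues: computational verification of extra weights for $n=3$ (\S\ref{sec:comp_evid}), automorphic inductions of Hecke characters over quartic CM fields for $n=4$ giving lowest-alcove weights and weak evidence for higher alcoves (\S\ref{sec:theo_evid}, Prop.~\ref{prop:GL4_theo_evid} and Thm.~\ref{thm:gl4_th_evid}), and consistency checks against the conjectures of Gee and of Buzzard--Diamond--Jarvis. The limitations you flag---that only lowest-alcove weights lift to characteristic zero as $GL_n$-modules, that the infinity type must satisfy a symmetry constraint, and that Galois representations attached to torsion cohomology classes and a mod-$p$ local Langlands for $GL_n(\qp)$ with $n\geq 3$ are unavailable---are exactly the ones the introduction and \S\ref{sec:theo_evid} acknowledge. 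There is nothing to correct; the paper neither claims nor attempts a proof.
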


By $\overline{V(\rho|_{I_p})}$ we mean, as in~\S\ref{sec:decomp_gln}, the reduction ``modulo~$p$'' of a \gln-stable
$\zpb$-lattice in $V(\rho|_{I_p})$ and by $\JH(-)$ the set of Jordan--H\"older constituents (forgetting multiplicities).

\subsection{The operator $\RR$ on Serre weights}\label{sub:operator_R}

Consider the bijection
\begin{align*}
  \{ \text{regular Serre weights} \}  &\to (\Z/(p-1))^n \\
  F(a_1,\dots,a_n) &\mapsto (\overline{a_1},\dots,\overline{a_n}).
\end{align*}
For any $b_i \in \Z$ define then $F(b_1,\dots,b_n)\subreg$ to be the regular Serre weight corresponding
in this bijection to $(\overline{b_1},\dots,\overline{b_n})$.

We can then define the operator~$\RR$ by
\begin{align*}
  \{ \text{Serre weights} \} &\to \{ \text{regular Serre weights} \} \\
  F(a_1,\dots,a_n) &\mapsto F(a_n-(n-1),\dots,a_2-1,a_1)\subreg.
\end{align*}
Thus on regular Serre weights, $\RR$ is an involution up to twist: $\RR^2(F) = F \tens \det^{1-n}$. A more conceptual
description is the following.

\begin{df}\label{df:rho}
  We let
  \begin{equation*}
    \rho := (n-1,n-2,\dots,1,0) \in \frac 12\sum_{\alpha\in R^+} \alpha + (X(T)\tens \Q)^W.
  \end{equation*}
  It thus also satisfies the condition imposed on~$\rho'$ in \S\ref{sec:glncharp}.
\end{df}

\begin{rk}
  Note that $\RR(F(\mu)) \cong F(w_0 \cdot (\mu - p\rho))\subreg$ for any $\mu \in X_1(T)$.
\end{rk}

\subsection{The characteristic zero representation $V(\rho|_{I_p})$}\label{sub:V(rho|_I)}

To make this as conceptual as possible, we will define it in the more general context of connected reductive groups defined
and split over~$\fq$ (with connected centre) and then make it explicit for $\GL_n$. We will use the notion of dual groups over
a finite field, as formulated by Deligne--Lusztig \cite{bib:DL}. The notation will be as in \S\ref{sec:glnchar0}.

At first $G$ need not be split and there is no assumption on the centre.
Our conventions for the actions of~$F$ and $w \in W$ on $\mu \in X(T)$ and $\lambda \in Y(T)$ are as follows:
$F(\mu) = \mu \circ F$, $F(\lambda) = F\circ \lambda$, $w(\mu) = \mu \circ w^{-1}$, $w(\lambda) = w \circ \lambda$.

\begin{df}
  Suppose $G\d$ is a connected reductive group defined over~$\fq$ with relative Frobenius morphism~$F\d$ and $F\d$-stable maximal torus~$T\d$.
  A \emph{duality} between $(G,T)$ and $(G\d,T\d)$ is an isomorphism $\phi : X(T) \to Y(T\d)$ such that $F^* \phi = \phi F$ and such
  that both $\phi$ and $\phi\dual : X(T\d) \to Y(T)$ send roots bijectively to coroots.
\end{df}

$G\d$ is called the \emph{dual group} of~$G$; it always exists and is unique up to isomorphism.

We get natural identifications of the Weyl groups so that $w\phi = \phi w$, but the Frobenius actions on~$W$ are mutual inverses: $F\d(w) = F^{-1}(w)$.
There is a correspondence between rational conjugacy classes of Frobenius-stable maximal tori $\T \subseteq G$ and $\T\d \subseteq G\d$ so that a type~$w$ torus in~$G$
corresponds to a type $F\d(w^{-1})$ torus in~$G\d$ (note that corresponding tori are in duality).
It extends to a correspondence between rational conjugacy classes of pairs $(\T,\theta)$ and pairs $(\T\d,s)$
where $\theta : \T^F \to \qpb\s$ and $s \in {\T\d}^{F\d}$. This depends on the choice of a generator $(\zeta_{p^i-1})_{i=1}^\infty \in \plim \F_{p^i}\s$:
without loss of generality, $\T = T_w$. Then $\overline{\theta({}^{g_w}-)} : T^{wF} \to \fpb\s$; extend it arbitrarily to a character $\mu \in X(T)$.
Let $\bar\mu = \phi(\mu) \in Y(T\d)$ and choose a positive integer~$t$ such that $T\d_{F\d(w^{-1})}$ (equivalently, $T_w$) is split
over $\F_{q^t}$. Then the dual pair is
\begin{equation*}
  (T\d_{F\d(w^{-1})}, {}^{g\d_{F\d(w^{-1})}} N_{(F\d w^{-1})^t/F\d w^{-1}}(\bar\mu(\zeta_{q^t-1})))
\end{equation*}
\cite[13.13]{bib:Digne-Michel}. Here we use the notation $N_{A^t/A} = \prod_{i=0}^{t-1} A^i$  for any $A \in \End (Y(T\d))$
and $F\d w^{-1} = F\d \circ w^{-1}$.

An $F$-stable maximal torus $\T \subseteq G$ is said to be \emph{maximally split} if $\T \subseteq B$ for some $F$-stable Borel subgroup~$B$.
Equivalently, $\fqrank(\T) = \fqrank(G)$ \cite[6.5.7]{bib:Carter}. All maximally split tori in~$G$ are $G^F$-conjugate \cite[1.18]{bib:Carter}.

\begin{df}[{\cite[5.25]{bib:DL}}]
  A pair $(\T,\theta)$ and its dual pair $(\T\d,s)$ \(as above\) are called \emph{maximally split} if $\T\d \subseteq Z_{G\d}(s)^\circ$ is maximally split.
\end{df}

Note that if $s \in G\d$ is semisimple, then $Z_{G\d}(s)^\circ$ is connected reductive, and if $Z(G)$ is connected then $Z_{G\d}(s)^\circ = Z_{G\d}(s)$
(see (2.3) and (13.15) in \cite{bib:Digne-Michel}).

Recall that the tame inertia group $I_p^t$ is isomorphic to $\plim \F_{p^i}\s$. This isomorphism
is canonical with our conventions as we \emph{defined} $\fpb$ to be the residue field of~$\qpb$ and $\F_{p^i} \subseteq \fpb$
as the unique subfield of cardinality~$p^i$.
Recall also the fundamental characters $\omega_{p^i} : I_p \to \F_{p^i}\s$ for each~$i$ obtained by projection from
the above isomorphism (again canonical here). In particular, $\omega := \omega_1$ is the mod~$p$ cyclotomic
character.

\begin{prop}\label{prop:V(rho|_I)}
  Assume that $Z(G)$ is connected, and that $T$ \(hence also~$T\d$\) is split over~$\fq$. Then we have the following commutative diagram:
  \ifkuvio
  \begin{equation*}
    \xscale=3.5
    \Diagram
    {\bigg\{\begin{array}[h]{@{}c@{}}
        \text{maximally split} \\
        \text{$(\T\d,s)$}
      \end{array} \bigg\} / {G\d}^{F\d}}
    & \rBij^{\text{duality}} & 
    {\bigg\{\begin{array}[h]{@{}c@{}}
        \text{maximally split} \\
        \text{$(\T,\theta)$}
      \end{array} \bigg\} / {G}^{F}} \\
    \dBij && \dInto > {\text{\eqref{x}}} \\
    {\bigg\{\begin{array}[h]{@{}c@{}}
        \text{tame $\tau : I_p \to G\d(\fpb)$} \\
        \text{that extend to $G_q$}
      \end{array} \bigg\} / \cong} & \rDashto^{V_\phi} & 
    {\bigg\{\begin{array}[h]{@{}c@{}}
        \text{virtual representations} \\
        \text{of $G^F$ over $\qpb$}
      \end{array} \bigg\} / \cong}\\
      \endDiagram
  \end{equation*}
  \else

  \begin{minipage}{1.0\linewidth}
    \v{0.6in}
    \centerline{big diagram here - switch on ``kuvio'' to see it!!}
    \v{0.6in}
  \end{minipage}
  \fi
  Here $G_q := \Gal(\qpb/\Q_q)$ with inertia subgroup~$I_p$.

  If $(T_w,\theta_{w,\mu})$ is maximally split for some $\wmu\in \wxt$, then under the above
  bijections it corresponds to the inertial Galois representation
  \begin{equation}\label{eq:tau(w,mu)}
    \tau(w,\mu) := N_{(F\d w^{-1})^t/F\d w^{-1}} (\bar\mu(\omega_{rt})).
  \end{equation}
  Here $\bar\mu = \phi(\mu) \in Y(T\d)$ and $t$ is any positive integer such that $T\d_{F\d w^{-1}}$ is split over $\F_{q^t}$
  \(equivalently, $w^t = 1$ as $T\d$ is split\). 

  In particular, $V_\phi(\tau\wmu) \cong R_w(\mu)$ and $V_\phi$ is independent of the choice of
  $(\zeta_{p^i-1})_i$.
\end{prop}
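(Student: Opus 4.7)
The plan is to construct each of the three non-dashed arrows of the diagram independently, and then verify commutativity by tracing an arbitrary representative through both routes. The top horizontal bijection is the duality correspondence recalled just before the proposition (Digne--Michel 13.13); the ``maximally split'' condition on $(\T\d,s)$ depends only on the Levi $Z_{G\d}(s)^\circ \subset G\d$, which corresponds under duality to the Levi of $G$ containing $\T$, so the correspondence restricts to a bijection on maximally split pairs. The right vertical injection is already Lemma \ref{x}.

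For the left vertical bijection, note that a tame $\tau : I_p \to G\d(\fpb)$ factors through $I_p^t \cong \plim \F_{p^i}\s$, which is pro-cyclic of pro-order prime to $p$. Hence $\im\tau$ is a finite abelian prime-to-$p$ subgroup of $G\d(\fpb)$, contained in some maximal torus $\T\d$. Extending $\tau$ to $G_q$ forces conjugation by a lift of arithmetic Frobenius to raise elements of $\im\tau$ to their $q$-th power, so after $G\d(\fpb)$-conjugation the lift may be taken in $N_{G\d}(\T\d)$, automatically making $\T\d$ an $F\d$-stable torus. The $G\d(\fpb)$-conjugacy class of $\tau$ is thereby encoded by the $G^{\d F\d}$-conjugacy class of a pair $(\T\d,s)$ with $s := \tau(\sigma_0)$ for the topological generator $\sigma_0 \in I_p^t$ corresponding to $(\zeta_{p^i-1})_i$. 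Connectedness of $Z(G)$ yields $Z_{G\d}(s)^\circ = Z_{G\d}(s)$, so demanding that $\T\d$ be maximally split inside this centraliser picks out exactly one $G^{\d F\d}$-orbit of such $\T\d$ per $\tau$.

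To verify commutativity, start from $\wmu$ with $(T_w,\theta_{w,\mu})$ maximally split. Under the top bijection, this maps to $(T\d_{F\d(w^{-1})},\ {}^{g\d_{F\d(w^{-1})}} N_{(F\d w^{-1})^t/F\d w^{-1}}(\bar\mu(\zeta_{q^t-1})))$. Inverting the left-vertical bijection means reading off the character $I_p^t \to T\d(\fpb)$ whose value at $\sigma_0$ is the given torus element. Under the tame-inertia identification sending $\sigma_0$ to $(\zeta_{p^i-1})_i$, the replacement $\zeta_{q^t-1} \leftrightarrow \omega_{rt}$ at the level of characters converts $\bar\mu(\zeta_{q^t-1})$ into $\bar\mu \circ \omega_{rt}$, while the norm operator $N_{(F\d w^{-1})^t/F\d w^{-1}}$ (summing Galois conjugates under $F\d w^{-1}$) is precisely what forces the resulting character to take values in the $F\d(w^{-1})$-fixed torus. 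This is exactly the formula for $\tau(w,\mu)$, and by Lemma \ref{x} the image under $V_\phi$ of the associated pair is $R_w(\mu)$.

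Finally, replacing $(\zeta_{p^i-1})_i$ by a compatible alternative sends $\zeta_{q^t-1}$ to $\zeta_{q^t-1}^c$ and simultaneously $\omega_{rt}$ to $\omega_{rt}^c$, so the composite assignment $\tau(w,\mu) \mapsto R_w(\mu)$ is unchanged, giving well-definedness of $V_\phi$. I expect the main obstacle to be bookkeeping: lining up the many conventions (the actions of $F$ versus $F\d$, the $w$ versus $w^{-1}$ asymmetry introduced by duality, Teichm\"uller lifts versus tame characters, and the natural $X(T)$- versus $Y(T\d)$-actions) so that the explicit norm formula for $\tau(w,\mu)$ emerges exactly as stated. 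Each ingredient is standard in isolation, but only one tuple of compatible normalisations makes every square of the diagram commute.
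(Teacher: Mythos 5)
Your construction of the left vertical bijection has a real gap. You start from $s := \tau(\sigma_0)$, place $\im\tau$ in a maximal torus $\T\d$, note that some $g \in G\d(\fpb)$ conjugates $s$ to $s^q$, arrange $g \in N_{G\d}(\T\d)$, and then assert that this ``automatically'' makes $\T\d$ an $F\d$-stable torus. But $g$ is the image under $\tau$ of an abstract Frobenius lift in $G_q$; it has no a priori relation to the geometric Frobenius morphism $F\d$ on the variety $G\d$, and having $g \in N_{G\d}(\T\d)$ says nothing about whether $F\d(\T\d) = \T\d$. What you actually know at this stage is only that the $G\d$-conjugacy class of the semisimple element $s$ is stable under $x \mapsto x^q$. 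Since $T\d$ is $\fq$-split, $q$-power on $T\d(\fpb)$ \emph{is} $F\d$; hence that conjugacy class is $F\d$-stable, and one must then invoke Lang's theorem to find a representative $s' \in {G\d}^{F\d}$, and connectedness of $Z_{G\d}(s')$ (which follows from connectedness of $Z(G)$) to see that $s'$ is unique up to ${G\d}^{F\d}$-conjugacy. This is the step via Carter \cite[3.7.3]{bib:Carter} that the paper's proof cites, and it cannot be bypassed by the normalizer argument you propose: without Lang's theorem you never move from a ``twisted'' rational structure on $\T\d$ to an honest $F\d$-stable pair $(\T\d,s)$ with $s \in {G\d}^{F\d}$. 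Once this is fixed, the bijection from maximally split pairs $(\T\d,s)$ to semisimple elements $s \in {G\d}^{F\d}$ (both up to rational conjugacy) uses existence and uniqueness of maximally split tori in $Z_{G\d}(s)$, which again requires the connectedness you noted.

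Separately, your argument for independence of the choice of $(\zeta_{p^i-1})_i$ is based on a misconception: you claim that changing $(\zeta_{p^i-1})_i$ changes $\omega_{rt}$, but the fundamental character $\omega_{rt} : I_p \to \F_{p^{rt}}\s$ is canonical (it comes from the fixed isomorphism $I_p^t \congto \plim \F_{p^i}\s$ and the residue field, not from any chosen compatible system of roots of unity). What changes is only which topological generator of $I_p^t$ you call $g_{\mathrm{can}}$, and hence which torus element $s$ represents $\tau$; both vertical and top horizontal arrows shift compatibly. The cleanest way to see independence is the one the paper implicitly uses: the final formula $\tau(w,\mu) = N_{(F\d w^{-1})^t/F\d w^{-1}}(\bar\mu(\omega_{rt}))$ involves only $\omega_{rt}$ and contains no reference to $\zeta$, so the composite $V_\phi$ is independent of that auxiliary choice by inspection.

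Your account of the top bijection and the commutativity-chasing is in the right spirit and close to the paper's, but as written the proof of the proposition has a genuine hole at the point flagged above.
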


\begin{rk}
  It is known that $V_\phi(\tau)$ is a genuine representation in every case \cite[10.10]{bib:DL}.
\end{rk}

\begin{proof}
  The bijection on the left is obtained as follows. The choice of $(\zeta_{p^i-1})_i$ induces a generator $g_{\mathrm{can}}$ of the maximal
  tame quotient $I_p^t \congto \plim \F_{p^i}\s$ of~$I_p$. The isomorphism class of~$\tau$ is determined by the conjugacy class of
  $\tau(g_{\mathrm{can}})$, i.e., a conjugacy class in~$G\d$ that is stable under $x \mapsto x^q$ (as $\tau$ extends to~$G_q$) and whose
  members have prime-to-$p$ order. An element $g \in G\d$ has order prime to~$p$ iff it is semisimple (embed $G\d$ in some $\GL_m$). By
  conjugating $g$ to~$T\d$ and using that $T\d$ is split over~$\fq$ we see that its conjugacy class contains $g^q$ iff it contains $F\d(g)$.
  A simple argument shows that $F\d$-stable semisimple conjugacy classes in~$G\d$ are in natural bijection with ${G\d}^{F\d}$-conjugacy
  classes of semisimple elements in ${G\d}^{F\d}$ (see the proof of \cite[3.7.3]{bib:Carter}; this uses that $Z(G)$ is connected). Finally
  one shows that $(\T\d,s) \mapsto s$ induces a bijection from maximally split pairs to semisimple elements in ${G\d}^{F\d}$ (both up to
  ${G\d}^{F\d}$-conjugacy).  This only uses existence and uniqueness up to rational conjugacy of maximally split tori in $Z_G(s)^\circ$.

  The explicit description of~$\tau$ associated to $(T_w,\theta_{w,\mu})$ follows immediately from the description of the dual pair above.
\end{proof}

\textbf{From now on suppose again that $G = \GL_n$ and $T$ is the torus of diagonal matrices.} Let $(G\d,T\d) = (G,T)$ and let
\begin{align}\label{eq:duality_gln}
  \phi : X(T) &\congto Y(T\d) \\
  \notag (a_1,\dots,a_n) &\mapsto (a_1,\dots,a_n)
\end{align}
(the notation should be self-evident). This is clearly a duality in the sense defined above.
Since a connected reductive group defined over~$\fq$ is determined
by its root datum together with the $F$-action on it \cite[3.17]{bib:Digne-Michel}, $(G\d,T\d)$ is well defined up to isomorphism and any
other duality between $(G,T)$ and $(G\d,T\d)$ differs by an automorphism
of $(X(T),R,X(T)\dual,R\dual)$ commuting with $F$ (the latter condition is automatic as $T$ is split here). It is known and easy
to verify that any such automorphism is, up to the Weyl group action, which leaves $V_\phi$ unchanged, either trivial or given by
$(a_1,\dots,a_n)\mapsto (-a_1,\dots,-a_n)$ on $X(T)$.

Thus there are two ways to define $V(\tau)$ which differ by $\tau \mapsto \tau\dual$. These two choices corresponds to the two choices of normalising
the Galois representation associated to a Hecke eigenvector (in~\eqref{eq:rho_attached}, geometric Frobenius elements could be replaced
by arithmetic ones). The above choice of~$\phi$ is the one that will work here.

\begin{df}\label{df:V(tau)}
  For a tame inertial Galois representation $\tau : I_p \to \GL_n(\fpb)$ that extends to~$G_q$, we set $V(\tau) := V_\phi(\tau)$
  with $\phi$ as in~\eqref{eq:duality_gln}.
\end{df}

We will finally describe explicitly the maximally split pairs $(\T,\theta)$, which enables us to characterise the image of~$V$.

\begin{df}
  Suppose that $\wmu\in \wxt$ with $\mu = (\mu_1,\dots,\mu_n)$. For each $1 \le i\le n$, let $n_i$ denote the smallest positive integer
  with $w^{n_i}(i) = i$. We say that $\wmu$ is \emph{good} if for all~$i$,
  \begin{equation*}
    \sum_{k\mmod n_i} \mu_{w^k(i)} q^{k} \not\equiv 0\pmod{\textstyle\frac{q^{n_i}-1}{q^d-1}}
  \end{equation*}
  for all $d | n_i$, $d \ne n_i$.
\end{df}

\begin{prop}
  Suppose that $\wmu\in \wxt$. The pair $(T_w,\theta_{w,\mu})$ is maximally split if and only if $\wmu$ is good.
\end{prop}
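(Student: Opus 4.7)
The plan is to apply the duality bijection from Proposition~\ref{prop:V(rho|_I)} and then perform a cycle-by-cycle rank computation.

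\emph{Duality.} By Proposition~\ref{prop:V(rho|_I)}, $(T_w,\theta_{w,\mu})$ is maximally split iff the dual pair $(T^\ast_{w^{-1}},s)$ is, meaning $T^\ast_{w^{-1}} \subset Z_{G^\ast}(s)^\circ$ is a maximally split maximal torus. Since $Z(GL_n)$ is connected, $Z_{G^\ast}(s)^\circ = Z_{G^\ast}(s)$, and for a maximal torus in a connected reductive $\fq$-group being maximally split is equivalent to equality of $\fq$-ranks. Since $T^\ast_{w^{-1}}$ is of type $w^{-1}$, $\fqrank(T^\ast_{w^{-1}})$ equals the number of $w$-orbits on $\{1,\dots,n\}$, which I denote $r_w$.

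\emph{Explicit form of $s$.} Conjugating by $g^\ast$ and unwinding Proposition~\ref{prop:V(rho|_I)}, write $s' = \mathrm{diag}(\zeta^{c_1},\dots,\zeta^{c_n})$ with $\zeta := \zeta_{q^t-1}$ (taking $t$ to be any common multiple of the cycle lengths $n_i$) and $c_i = \sum_{k=0}^{t-1} q^k\mu_{w^k(i)}$. A short calculation using $\mu_{w^{k+n_i}(i)} = \mu_{w^k(i)}$ gives
\[ c_i \equiv \frac{q^t-1}{q^{n_i}-1}\, C_i \pmod{q^t-1},\qquad C_i := \sum_{k=0}^{n_i-1} q^k\mu_{w^k(i)}, \]
together with the relation $c_{w(i)} \equiv q^{-1} c_i \pmod{q^t-1}$; equivalently $F^\ast$ acts on the multiset $\{s'_1,\dots,s'_n\}$ of eigenvalues as the permutation $w^{-1}$ on the indices.

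\emph{Centraliser rank and merging.} As an $\fq$-group, $Z_{G^\ast}(s) \cong \prod_b \mathrm{Res}_{\fqn{|O_b|}/\fq}\, GL_{m_b}$, where $b$ indexes Frobenius orbits on distinct eigenvalues and $m_b$ is the common multiplicity on $O_b$, so $\fqrank(Z_{G^\ast}(s)) = \sum_b m_b$. Within a single $w$-cycle $I$ of length $n_I$, Frobenius acts transitively (as $w^{-1}|_I$) on $\{s'_j\}_{j\in I}$, so these take $d_I$ distinct values each repeated $e_I := n_I/d_I$ times, forming a single Frobenius orbit. If two cycles $I, I'$ share an eigenvalue, their orbits merge into one of multiplicity $e_I + e_{I'}$; a bookkeeping check gives $\sum_b m_b = \sum_I e_I$ unconditionally. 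Since $e_I \ge 1$, we have $\sum_b m_b \ge r_w$, with equality iff every $e_I = 1$, i.e., iff within each cycle the values $\{s'_j\}_{j\in I}$ are pairwise distinct. The main obstacle is precisely this merging argument: one must rule out spurious failures of the rank equality caused by inter-cycle eigenvalue coincidences.

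\emph{Combinatorial translation.} Within-cycle distinctness amounts to $C_i, q^{-1}C_i,\dots,q^{-(n_I-1)}C_i$ being distinct modulo $q^{n_I}-1$, which fails iff $(q^k-1)C_i \equiv 0 \pmod{q^{n_I}-1}$ for some $0<k<n_I$. Setting $d := \gcd(k,n_I)$ and using $\gcd(q^k-1,q^{n_I}-1) = q^d-1$, this is equivalent to $C_i \equiv 0 \pmod{(q^{n_I}-1)/(q^d-1)}$ for some proper divisor $d \mid n_I$. Negating this condition for every cycle yields exactly the stated definition of $(w,\mu)$ being good.
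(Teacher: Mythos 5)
Your proof is correct, and it takes a route that is recognizably different from the paper's in one key place: how the rank inequality is established. The paper proves a sublemma characterizing which $F^\ast$-stable maximal tori of a given type contain the semisimple element $s$ (namely, those $\sigma$ with $F^\ast(s')=\sigma^{-1}(s')$ for a $G^\ast$-conjugate $s'\in T^\ast$), and then directly compares the maximal number of orbits of such a $\sigma$ with the number of orbits of $w$; this sidesteps the centralizer entirely. You instead compute $\fqrank(Z_{G^\ast}(s))$ via the standard identification $Z_{G^\ast}(s)\cong\prod_b\mathrm{Res}_{\F_{q^{|O_b|}}/\fq}GL_{m_b}$ and then do Frobenius-orbit bookkeeping on eigenvalues. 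The price you pay for this structural shortcut is exactly the ``merging'' argument you flag: you must check that coincidences of eigenvalue orbits across different $w$-cycles leave $\sum_b m_b=\sum_I e_I$ intact, which you do correctly (once two cycles share an eigenvalue, their Frobenius orbits of distinct eigenvalues are entire orbits and hence coincide, forcing $d_I=d_{I'}$, and multiplicities simply add). The paper's orbit-maximization argument gets the same count $\sum_i 1/\deg_{\fq}(x_i)$ without ever having to worry about inter-cycle coincidences, so it is marginally more economical; on the other hand your version makes the role of $Z_{G^\ast}(s)$ explicit and avoids proving the paper's sublemma. The explicit evaluation of $c_i$, the relation $c_{w(i)}\equiv q^{-1}c_i$, and the final reduction via $\gcd(q^k-1,q^{n_I}-1)=q^{\gcd(k,n_I)}-1$ all match the intended computation and are correct.
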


\begin{proof}
  As described above, the dual pair is $(T\d_{F\d(w^{-1})}, s_{w,\mu})$ where $s_{w,\mu} = {}^{g\d_{F\d(w^{-1})}} N_{(F\d w^{-1})^t/F\d
    w^{-1}}(\bar\mu(\zeta_{q^t-1}))$ ($t$ and $\bar\mu$ as before).  Note that if $\T\d \subseteq G\d$ is an $F\d$-stable maximal torus of
  type $\sigma \in W \cong S_n$, then the $\fqrank$ of~$\T\d$ is the number of orbits of~$\sigma$ on $\{1,\dots,n\}$. (Recall that
  $T$ and $T\d$ are split.) 

  \begin{sublm}
    Suppose $s \in {G\d}^{F\d}$ semisimple. Then $s$ lies in some $F\d$-stable  maximal torus of
  type~$\sigma$ iff $F\d(s') = \sigma^{-1}(s')$ for some $G\d$-conjugate $s' \in T\d$ of~$s$.
  \end{sublm}

  \begin{proof}
    If $s \in \T\d$ of type~$\sigma$ then there is a $g\in G\d$ such that $\T\d = {}^g T\d$ and $g^{-1}F\d(g)$ is a lift of~$\sigma$ in $N(T\d)$.
    Note that $s' := {}^{g^{-1}} s$ works.

    For the other direction we can reverse the argument just given to see that there is a $G\d$-conjugate $s_0 \in {\T\d}^{F\d}$ of~$s$ for some
    $F\d$-stable maximal torus~$\T\d$ of type~$\sigma$. Writing $s = {}^h s_0$ for some $h\in G\d$, it follows that $h^{-1}F\d(h) \in Z_{G\d}(s_0)$
    which is connected reductive. By Lang's theorem $h^{-1}F\d(h) = z^{-1}F\d(z)$ for some $z \in Z_{G\d}(s_0)$. Then $s \in {}^{hz^{-1}} \T\d$ which
    is of type~$\sigma$ as $hz^{-1} \in {G\d}^{F\d}$.
  \end{proof}

  It follows that $(T_w,\theta_{w,\mu})$ is maximally split iff whenever $F\d(s'_{w,\mu}) = \sigma^{-1}(s'_{w,\mu})$ for a $G\d$-conjugate
  $s'_{w,\mu} \in T\d$ of $s_{w,\mu}$ then $\sigma$ has at most as many orbits on $\{1,\dots,n\}$  as~$w$. As $G\d$-conjugate elements
  in~$T\d$ are $W$-conjugate \cite[0.12(iv)]{bib:Digne-Michel}, we need only consider $s'_{w,\mu} =  N_{(F\d w^{-1})^t/F\d
    w^{-1}}(\bar\mu(\zeta_{q^t-1}))$, which equals $\bigg(\begin{smallmatrix} x_1 \\[-5pt] & \ddots \! \\
    && x_n \end{smallmatrix}\bigg)$ for some $F$-stable sub-multiset $\{x_i\}_{i=1}^n$ of~$\fpb\s$.
  
  If $F\d(s'_{w,\mu}) = \sigma^{-1}(s'_{w,\mu})$ then for all~$i$ and all~$k$, $F^k(x_i) = x_i$ whenever $\sigma^k(i) = i$. It follows that
  such a~$\sigma$ has the maximal number of orbits iff
  \[ \forall i\ \forall k,\ F^k(x_i) = x_i \iff \sigma^k(i) = i. \]
  Thus $(T_w,\theta_{w,\mu})$ is maximally split iff for all~$i$, $\zeta_{q^{n_i}-1}^{\sum \bar\mu_{w^k(i)} q^k}$ is in no proper subfield
  of $\F_{q^{n_i}}$, i.e., iff \wmu\ is good.
\end{proof}

Using lemmas \ref{lm:cusp}, \ref{lm:para_ind} this implies:

\begin{coroll}
  The image of the map $V$ consists precisely of parabolic inductions of cuspidal representations.
\end{coroll}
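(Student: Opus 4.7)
The plan is to combine the preceding proposition---characterising maximally split pairs $(T_w,\theta_{w,\mu})$ as those with $\wmu$ good---with Lemma~\ref{lm:para_ind}, which expresses $R_{T_w}^\theta$ as a parabolic induction of cuspidals whenever $\theta$ is $\fq$-primitive. By proposition~\ref{prop:V(rho|_I)} and its explicit form, the image of $V$ is precisely the set of (up to sign) virtual representations $R_w(\mu)$ for $\wmu$ good, so the task reduces to identifying that range of $R_w(\mu)$'s with parabolic inductions of cuspidals.

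First I would unpack the goodness condition translationally. Fix $\wmu$ and let $\{S_i\}$ be the orbit decomposition of $\{1,\dots,n\}$ under $w$, with $n_i = \#S_i$. Via $g_w$ the torus $T_w^F$ is identified with $\prod_i \F_{q^{n_i}}\s$, and under this identification the character $\theta_{w,\mu}$ decomposes as $\theta = \prod_i \theta_i$, where $\theta_i : \F_{q^{n_i}}\s \to \qpb\s$ is (up to $q$-power action) the Teichm\"uller lift of the character $x \mapsto x^{\sum_{k \mmod n_i} \mu_{w^k(i)} q^k}$. A character of $\F_{q^{n_i}}\s$ factors through the norm $N : \F_{q^{n_i}}\s \to \F_{q^d}\s$ for a proper divisor $d\mid n_i$ iff the exponent is divisible by $(q^{n_i}-1)/(q^d-1)$ modulo $q^{n_i}-1$. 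Thus the goodness condition on $\wmu$ is exactly the assertion that each $\theta_i$ is $\fq$-primitive, i.e., that $\theta$ is $\fq$-primitive.

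Now for the two directions. If $\wmu$ is good, Lemma~\ref{lm:para_ind} gives
\[ R_w(\mu) = \epsilon_G\epsilon_{T_w} R_{T_w}^{\theta_{w,\mu}} \cong \PInd(\kappa(\theta_1),\dots,\kappa(\theta_r)) \]
(up to sign absorbed into the $\kappa(\theta_i)$, since the cuspidal representation is defined with the correct normalisation in~\eqref{df:cusp}); this is a parabolic induction of cuspidals. Conversely, given any parabolic induction $\PInd(\kappa(\phi_1),\dots,\kappa(\phi_r))$ of cuspidals associated to $\fq$-primitive characters $\phi_i : \F_{q^{n_i}}\s \to \qpb\s$ (with $\sum n_i = n$), I choose $w \in W \cong S_n$ to be a product of disjoint cycles of lengths $n_1,\dots,n_r$ and choose $\mu \in X(T)$ so that the resulting $\theta_i$'s agree with the $\phi_i$'s up to $q$-power twist; the goodness of $\wmu$ is immediate from $\fq$-primitivity of the $\phi_i$'s, and Lemma~\ref{lm:para_ind} again identifies $R_w(\mu)$ with the given parabolic induction.

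The only point requiring any vigilance is the dictionary between the integer-theoretic goodness inequalities and the non-factorisation of the $\theta_i$ through norm maps; once that is in place the result is just a repackaging of Lemmas~\ref{lm:cusp} and~\ref{lm:para_ind}. There is no real obstacle, since Lemma~\ref{lm:cusp} has already established that $\fq$-primitive characters of $\F_{q^{n_i}}\s$ parameterise cuspidals of $GL_{n_i}(\fq)$.
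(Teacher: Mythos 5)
Your proposal is correct and follows the same route the paper intends: the paper states the corollary immediately after the characterisation of maximally split pairs as good ones, with the remark ``Using lemmas~\ref{lm:cusp}, \ref{lm:para_ind} this implies,'' and you have faithfully reconstructed that implicit argument (image of $V$ equals $\{R_w(\mu):\wmu \text{ good}\}$, goodness translates to $\fq$-primitivity of $\theta_{w,\mu}$, and lemma~\ref{lm:para_ind} then identifies $R_w(\mu)$ with a parabolic induction of cuspidals, with the reverse direction obtained by choosing $w$ of the right cycle type and $\mu$ inducing the prescribed $\theta_i$). Your parenthetical about signs being ``absorbed into the $\kappa(\theta_i)$'' is the right thing to say; the identification $R_w(\mu)\cong\PInd(\kappa(\theta_1),\dots,\kappa(\theta_r))$ holds on the nose, as both sides are genuine representations.
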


For example, if $n = 3$, $w = (1\;2\;3)$ and $\mu = (i,j,k)$ then \wmu\ is good iff $m := i + qj + q^2 k \not\equiv 0 \pmod {q^2+q+1}$.
In this case 
\begin{equation*}
  \tau\wmu \sim \bigg(\begin{smallmatrix} \omega_3^m \\ & \omega_3^{qm}\! \\[-4pt]
    && \omega_3^{q^2m} \end{smallmatrix}\bigg)
\end{equation*}
and $R_w(\mu)$ is a cuspidal representation of $\GL_3(\fq)$ \eqref{lm:cusp}.

The following basic proposition will be used later. The corresponding result for $W(\rho)$ follows in the same way
as in~\cite{bib:ASinn}, lemma~2.5 and prop.~2.8.

\begin{prop}\label{prop:conj_basicprops}
  Suppose that the tame inertial Galois representation $\tau : I_p \to \GL_n(\fpb)$ extends to~$G_p$. Then
  \begin{enumerate}
  \item $W^?(\tau \tens \omega) = W^?(\tau) \tens \Det.$
  \item $W^?(\tau\dual) = \{F\dual \tens \Det^{1-n} : F \in W^?(\tau) \}.$
  \end{enumerate}
\end{prop}

\begin{proof}
  For (i) this follows from the facts that $R_T^{\theta\cdot \widetilde{\det}} \cong R_T^\theta \tens \widetilde\det$
  \cite[cor.\ 1.27]{bib:DL} and that $\RR(F\tens \det) \cong \RR(F) \tens \det$.

  For (ii) this follows from the facts that $R_T^{\theta^{-1}} \cong (R_T^\theta)\dual$
  \cite[p.\ 136]{bib:DL} and that $\RR(F\dual) \cong \RR(F) \tens \det^{1-n}$.
\end{proof}

\subsection{The generic case}

\begin{lm}\label{lm:generically_good}
  Suppose that $\mu \in X(T)$ lies \sd\ in~$C_0$. Then $(w,\mu)$ is good.
\end{lm}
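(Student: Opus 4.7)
The plan is to reformulate the goodness condition as the statement that a certain cyclic sequence of $\mu$-values has no nontrivial period, and then to rule this out using the distinctness of coordinates of $\mu$ deep in $C_0$.

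First I would fix $i\in\{1,\dots,n\}$ and a proper divisor $d$ of $n_i$, and write $\nu_k:=\mu_{w^k(i)}$ for $k\in\Z/n_i$ and $S_i:=\sum_{k=0}^{n_i-1}\nu_k q^k$. The good condition at $(i,d)$ is $S_i\not\equiv 0 \pmod{(q^{n_i}-1)/(q^d-1)}$, equivalently $(q^d-1)S_i\not\equiv 0 \pmod{q^{n_i}-1}$. Using $q^{n_i}\equiv 1 \pmod{q^{n_i}-1}$ and reindexing the sum $q^d S_i=\sum_k \nu_k q^{k+d}$, one obtains
\[
(q^d-1)S_i \;\equiv\; \sum_{m=0}^{n_i-1}\bigl(\nu_{(m-d)\bmod n_i}-\nu_m\bigr)\,q^m \pmod{q^{n_i}-1}.
\]

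The next step uses $\mu\in C_0$ to bound the right-hand side. For every positive root $\alpha_{j,j'}=\epsilon_j-\epsilon_{j'}$ with $j<j'$, the containment $\langle\mu+\rho',\alpha_{j,j'}^\vee\rangle\in(0,p)$ yields $|\mu_j-\mu_{j'}|\le p-2$, so each coefficient satisfies $|\nu_{(m-d)\bmod n_i}-\nu_m|\le p-2\le q-2$. Hence
\[
\Bigl|\sum_m\bigl(\nu_{(m-d)\bmod n_i}-\nu_m\bigr)\,q^m\Bigr| \;\le\; (p-2)\frac{q^{n_i}-1}{q-1} \;<\; q^{n_i}-1,
\]
so the congruence mod $q^{n_i}-1$ collapses to the integer equation $\sum_m(\nu_{(m-d)\bmod n_i}-\nu_m)\,q^m=0$. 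Since each coefficient has absolute value $\le q-1$, uniqueness of signed base-$q$ expansion (compare the top nonzero coefficient against the geometric bound $q^{m^*}-1$ on the sum of strictly lower terms) forces every coefficient to vanish. Thus $\nu_{(m-d)\bmod n_i}=\nu_m$ for all $m$, i.e., the cyclic sequence $(\nu_k)_{k\in\Z/n_i}$ has period $d$.

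Finally I would rule out this periodicity using deepness. For $\mu$ sufficiently deep in $C_0$ (any deepness $\delta\ge 1$ suffices), the simple-root inequalities give $\mu_j-\mu_{j+1}\ge 1$, so $\mu_1>\mu_2>\dots>\mu_n$ and in particular the $\mu_j$ are pairwise distinct. Since $k\mapsto w^k(i)$ is a bijection from $\Z/n_i$ onto the $w$-orbit of $i$, the values $\nu_k$ are pairwise distinct as well, contradicting $d$-periodicity for $d<n_i$. Hence the good condition holds at every $(i,d)$, and $(w,\mu)$ is good. The hard part is the middle paragraph: the two size estimates -- one to eliminate carries modulo $q^{n_i}-1$, one to invoke base-$q$ uniqueness -- both follow from the elementary bound $|\mu_j-\mu_{j'}|\le p-2$ and together convert a subtle divisibility condition into a transparent periodicity question.
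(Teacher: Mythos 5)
Your proof is correct and uses essentially the same strategy as the paper's: telescope the goodness congruence into a sum with bounded coefficients, observe that the bound forces the congruence to become an integer identity, invoke base-$q$ digit uniqueness, and finally derive a contradiction from the pairwise-distinctness of coordinates guaranteed by deepness. The only cosmetic differences are that the paper first reduces to the case where $w$ is an $n$-cycle and reduces the sum modulo $(p^{n_i}-1)/(p^d-1)$ directly (yielding a sum of $n_i-d$ terms), whereas you multiply by $q^d-1$ and work modulo $q^{n_i}-1$ (yielding a sum of $n_i$ terms) and phrase the conclusion as the absence of a nontrivial period; both routes run through the same two size estimates and the same use of $\delta\ge 1$.
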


\begin{proof}
  If $\mu = \sum a_i\epsilon_i$ is \sd\ in~$C_0$, note that $\sum_{j = 1}^r a_{i_j} \epsilon_j$ is as deep as we like in the lowest alcove for
  $\GL_r$ (whenever $1 \le i_1 < \dots < i_r \le n$). We are thus reduced
to the case when $w$ is an $n$-cycle. 
We need to show that if $\mu$ is \sd\ in~$C_0$,
\begin{equation}\label{eq:lm_suff_deep}
  \sum_{i \mmod n} a_{w^i(1)} p^i \not\equiv 0 \pmod {\textstyle\frac{p^n-1}{p^d-1}}
\end{equation}
for all $d | n$, $d \ne n$. Fix $n = de$ with $d < n$. Using
\begin{equation*}
  \frac{p^n-1}{p^d-1} = \sum_{j = 0}^{e-1} p^{dj},
\end{equation*}
equation~\eqref{eq:lm_suff_deep} becomes
\begin{equation}\label{eq:lm_suff_deep2}
  \sum_{i=0}^{d(e-1)-1} (c_{i}-c_{d(e-1)+d\{\frac {i}d\}}) p^{i} \not\equiv 0 \pmod {\textstyle\sum_{j = 0}^{e-1} p^{dj}},
\end{equation}
where $c_i = a_{w^i(1)}$ and $\{x\} \in [0,1)$ denotes the fractional part of a real number~$x$. As $\mu$ is in the lowest alcove,
$|c_i - c_j| \le p-1$ for all $i$, $j$. So if $\mu$ lies \sd\ in~$C_0$ then $c_i \ne c_j$ for all $i \ne j$ and~\eqref{eq:lm_suff_deep2} is
automatic as $(p-1)(1+p+\dots+p^i) < p^{i+1}$ for all~$i$.
\end{proof}

\begin{df}\label{df:generic_tau}
  Suppose that $\tau : I_p \to \GL_n(\fpb)$ is tame and that it can be extended to~$G_p$. Then $\tau$ is said to be \emph{$\delta$-generic}
  if $\tau \cong \tau(w,\mu)$ for some good $(w,\mu) \in \wxt$ such that $\mu$ is $\delta$-deep in~$C_0$.
\end{df}

A statement in which~$p$ is allowed to vary is said to be true for
\emph{sufficiently generic}~$\tau$ if there is a $\delta > 0$, independent of~$p$, such
that the statement holds for all $\delta$-generic $\tau$.

Recall that by lemma~\ref{lm:dl_reps} and prop.~\ref{prop:V(rho|_I)}, $(w,\mu)$ in the definition is well defined up to the
$\gpxtw$-action~\eqref{eq:jantzen-action} which can be expressed as
\begin{equation*}
   {}^{(\nu,\sigma)}\wmu = (\sigma w \sigma^{-1},(\sigma\cdot\mu + p\nu) + \nu_\epsilon ),
\end{equation*}
where $\nu_\epsilon = \rho' -\sigma \rho' - \sigma w \sigma^{-1} \nu$. Fix for now \wmu\ with $\mu \in C_0$. Consider the set
$\{\sigma\cdot\mu + p\nu : (\nu,\sigma) \in \gpxtw \} \subseteq X(T)$. Modulo $pX^0(T)$, it contains precisely $\# W_1 = n$ weights in each alcove.
To see this, note that $\sigma\cdot\mu + p\nu \in C_0$ iff $\sigma \in W_1$ and $\nu \in \rho'_\sigma + X^0(T)$ (\S\ref{sub:jantzens-theorem}), that 
$W_p$ acts transitively on the set of alcoves, and that no non-trivial element of $\widetilde W_p$ fixes any weight in~$C_0$ (lemma~\ref{lm:extended-weyl}).

Fix any alcove~$C$ and let us always assume for now that $\mu$ is \sd\ in~$C_0$ (the implied constant might depend on the statement).
Consider the set of weight coordinates of the $\gpxtw$-orbit of \wmu,
\begin{equation*}
  \{ \sigma\mu +(p-\sigma w\sigma^{-1})\nu : (\nu,\sigma) \in \gpxtw \}.
\end{equation*}
We claim that modulo $(p-1)X^0(T)$, this set contains precisely $\# W_1 = n$ weights in~$C$.

First of all let us show that $\mu' := \sigma\mu +(p-\sigma w\sigma^{-1})\nu \in C$ if and only if $\sigma\cdot\mu + p\nu \in
C$. Suppose first that $\mu' \in C$.  There exist $n_\alpha \in \Z$ for $\alpha \in R$ such that $\eta \in C$ implies that $|\langle
\eta+\rho',\alpha\dual\rangle| < n_\alpha p$ for all~$\alpha$. Thus we may even assume that $|\langle
\eta,\alpha\dual\rangle| \le (n_\alpha+1)(p-1)$ for all~$\alpha$ (by the assumption on~$\mu$, we can put a lower bound
on~$p$). Also $|\langle \mu,\alpha\dual\rangle| \le p-1$ for all~$\alpha$. Summing
\begin{equation*}
  |\langle \mu',\alpha\dual\rangle| \ge p |\langle \nu,\alpha\dual\rangle| -
    |\langle \nu,\sigma w^{-1}\sigma^{-1}\alpha\dual\rangle| - |\langle \mu,\sigma^{-1}\alpha\dual\rangle|
\end{equation*}
over all $\alpha \in R$, we find that $\sum |\langle \nu,\alpha\dual\rangle| \le \sum (n_\alpha + 2)$, so that $\nu$ can only take a finite number
of values modulo $(p-1)X^0(T)$. If $\mu$ is \sd\ in~$C_0$, for all those values of~$\nu$, $\sigma\mu +(p-\sigma w\sigma^{-1})\nu$
lies in the same alcove as $\sigma\cdot\mu + p\nu$. This shows the ``only if'' implication; the converse is similar but much easier.

It remains to show that
$\sigma_1\mu +(p-\sigma_1 w\sigma_1^{-1})\nu_1 = \sigma_2\mu +(p-\sigma_2 w\sigma_2^{-1})\nu_2 \in C$ if and only if
$\sigma_1 \cdot \mu + p\nu_1 = \sigma_2 \cdot \mu + p\nu_2 \in C$. For the ``only if'' implication, note
that the first statement implies $\sigma_1^{-1}\nu_1 - \sigma_2^{-1}\nu_2 \in {\Z R}$ by an argument as in the proof of lemma~\ref{lm:extended-weyl}.
Thus $\sigma_i\cdot \mu + p\nu_i \in C$ are in the same $W_p$-orbit, so they are equal.
For the ``if'' implication, note that the second statement implies $\sigma_1 = \sigma_2$, $\nu_1 = \nu_2$.

\begin{prop}\label{prop:generic_pred}
  Suppose that $\tau : I_p \to \GL_n(\fpb)$ is tame, can be extended to~$G_p$ and that $\lambda \in X_1(T)$. Suppose
  either that \(a\) $\tau$ is sufficiently generic or \(b\) $\lambda$ is \sdr. Then
  \begin{equation*}
    F(\lambda) \in W^?(\tau)
  \end{equation*}
  if and only if
  \begin{gather*}
    \tau \cong \tau(w',\lambda'+\rho)\text{\ for some $\lambda' \in X(T)_+$ such that $\lambda' \uparrow \lambda$} \\
    \text{and some $w' \in W$.}
  \end{gather*}
\end{prop}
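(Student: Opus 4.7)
The plan is to translate between the Jordan--Hölder parametrization of $\overline{V(\tau)}$ given by prop.~\ref{prop:generic_decomp} and the $\gpxtw$-orbit parametrization \eqref{eq:jantzen-action} of the pair $(w',\mu')$ describing $\tau$.

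First, unwind the definition. By construction $W^?(\tau) = \RR(JH(\overline{V(\tau)}))$, so $F(\lambda) \in W^?(\tau)$ iff there exists $\mu \in X_1(T)$ with $F(\mu) \in JH(\overline{V(\tau)})$ and $F(\lambda) \cong \RR(F(\mu))$. Using the formula for $\RR$ and the identity $w_0 \cdot (w_0 \cdot \lambda) = \lambda$, the second condition is equivalent to $\mu \equiv w_0 \cdot \lambda + p\rho \pmod{(p-1)X^0(T)}$. The $\uparrow$-relation, dominance, and the linear expression $\sigma\cdot(\mu_0+(w-p)\nu)$ appearing in prop.~\ref{prop:generic_decomp} are all insensitive to translation by $X^0(T)$ (using $w(1,\dots,1) = (1,\dots,1)$ so that $(w-p)(1,\dots,1) = (1-p)(1,\dots,1)$, which lets the $(p-1)X^0(T)$-ambiguity be absorbed into $\nu$). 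So without loss of generality $\mu = w_0 \cdot \lambda + p\rho$ exactly.

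Second, fix a representative $\tau \cong \tau(w, \mu_0 + \rho)$ with $\mu_0$ sufficiently deep in its alcove; in case (a) this follows directly from the definition of ``sufficiently generic'', while in case (b) the finite list of candidate $\lambda'$ with $\lambda' \uparrow \lambda$ dominant is controlled by~$\lambda$ and an inverse orbit calculation places $\mu_0$ into a correspondingly deep region. Prop.~\ref{prop:generic_decomp} then gives: $F(\mu) \in JH(\overline{R_w(\mu_0+\rho)})$ iff there exist $\sigma \in W$ and $\nu \in X(T)$ with $\lambda' := \sigma\cdot(\mu_0 + (w-p)\nu)$ dominant and
\[ \lambda' \uparrow w_0 \cdot (\mu - p\rho) = \lambda. \]

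Third, translate the data $(\sigma,\nu)$ into a new representative of $\tau$. Setting $w' := \sigma w \sigma^{-1}$ and applying the $\gpxtw$-action \eqref{eq:jantzen-action} with parameter $(\nu_0, \sigma) := (-\sigma\nu, \sigma)$, a direct computation using $(p-\sigma w \sigma^{-1})(-\sigma\nu) = \sigma(w-p)\nu$ yields
\[ {}^{(\nu_0,\sigma)}(w, \mu_0+\rho) = (w', \lambda'+\rho), \]
so $\tau \cong \tau(w', \lambda'+\rho)$ by prop.~\ref{prop:V(rho|_I)}. The calculation is reversible, handling both directions of the equivalence at once.

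The main obstacle is ensuring that the genericity requirement of prop.~\ref{prop:generic_decomp} remains in force throughout. In case (a) this reduces to choosing $\mu_0$ to be $\delta$-deep in $C_0$ for $\delta$ exceeding a bound depending only on $n$ (controlled by the finite, $p$-independent collection of dominant $\lambda' \uparrow \lambda$). Case (b) is more delicate because $\tau$ itself need not be generic a priori, but ``$\lambda$ \sdr'' forces every $\lambda'$ with $\lambda' \uparrow \lambda$ to lie in a bounded neighbourhood of~$\lambda$, and the orbit calculation transfers this deepness back to $\mu_0$; one also needs to check that goodness of $(w', \lambda'+\rho)$ is preserved, which follows from lem.~\ref{lm:generically_good} applied to the orbit representatives.
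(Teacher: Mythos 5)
Your argument for case (a) is essentially the paper's: apply prop.~\ref{prop:generic_decomp}, unwind $\RR$, and use the $\gpxtw$-action \eqref{eq:jantzen-action} to match dominant $\lambda'\uparrow\lambda$ with orbit representatives of $(w,\mu_0+\rho)$. Your explicit computation with $(\nu_0,\sigma)=(-\sigma\nu,\sigma)$ is correct and spells out what the paper states more briefly.

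However, your reduction of case (b) to case (a) has a genuine gap in the forward direction. You write that in case (b) ``the finite list of candidate $\lambda'$ \dots\ places $\mu_0$ into a correspondingly deep region'' and that ``the orbit calculation transfers this deepness back to $\mu_0$.'' But that reasoning only runs if you already know the right-hand side of the iff, i.e.\ that $\tau\cong\tau(w',\lambda'+\rho)$ for some dominant $\lambda'\uparrow\lambda$. When $\tau$ is not generic --- for instance when $\tau$ is trivial up to twist --- there is \emph{no} choice of orbit representative $\mu_0$ lying deep in $C_0$ (by the discussion after \eqref{df:generic_tau}, there are only $n$ orbit representatives per alcove and none of them can be pushed away from the boundary), so you cannot invoke prop.~\ref{prop:generic_decomp} to conclude anything about $JH(\overline{R_w(\mu_0+\rho)})$. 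The paper closes this gap with a separate argument: it inspects Jantzen's decomposition formula (thm.~\ref{thm:jantzen_formula}, and the discussion following it on the decomposition procedure) to show that if $\mu_0$ is $\delta$-close to the boundary of $C_0$, then every constituent of $\overline{R_w(\mu_0+\rho)}$ has highest weight $\delta'$-close to a restricted alcove boundary, with $\delta'$ depending only on $\delta$ and $n$. Hence $F(\lambda)\in W^?(\tau)$ with $\lambda$ sufficiently deep forces $\tau$ to be generic, and only \emph{then} does case (a) apply. Without this boundedness argument the iff in case (b) is circular: you need genericity to parametrize the constituents, and you are trying to deduce genericity from the existence of a deep constituent.

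Your observation on goodness (via lem.~\ref{lm:generically_good}) and your translation of the condition $\RR(F(\mu))=F(\lambda)$ into $\mu\equiv w_0\cdot\lambda + p\rho$ are both fine, and the absorption of the $(p-1)X^0(T)$-ambiguity into $\nu$ is handled correctly.
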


\begin{proof}
  We will first show the result under assumption (a), then we will show how (b) reduces to (a).

  Write $\tau \cong \tau(w,\mu+\rho)$. If $\mu$ lies \sd\ in~$C_0$, we may assume by lemma~\ref{lm:generically_good} that $(w,\mu+\rho)$ is good and
  by prop.~\ref{prop:generic_decomp} that
  $W^?(\tau)$ consists of the $F(\lambda)$ with $\lambda \in X_1(T)$ such that there exists a dominant $\lambda' \uparrow \lambda$
  satisfying
  \begin{equation}\label{eq:1}
    \exists (\sigma,\nu) \in \wxt,\ \lambda' = \sigma\cdot(\mu + (w-p)\nu).
  \end{equation}
  From~\eqref{eq:jantzen-action} it follows that~\eqref{eq:1}  is equivalent to
  \begin{equation*}
    \exists w' \in W,\ (w,\mu + \rho) \sim (w',\lambda'+\rho).
  \end{equation*}
  Finally, as this $\gpxtw$-orbit is good by the choice of~$\mu$, this is equivalent to
  \begin{equation*}
    \exists w' \in W,\ \tau(w,\mu+\rho) \cong \tau(w',\lambda'+\rho).
  \end{equation*}
  
  To reduce (b) to (a), suppose the proposition holds if $\tau$ is $\epsilon$-generic.  Suppose that $\tau$ is \emph{not}
  $\epsilon$-generic. Write $\tau \cong \tau(w,\mu+\rho)$ for some good $(w,\mu+\rho)$. Using~\eqref{eq:jantzen-action} we first claim that $\mu$
  can be chosen to be $\delta$-close to, i.e., $(-\delta)$-deep in, $C_0$ (for some $\delta > 0$ independent of $p$). By
  using a fundamental parallelepiped for the lattice $(p-w)X(T) \subseteq X(T)$, we see from~\eqref{eq:jantzen-action} with $\sigma = 1$ that $\mu$
  may be chosen to lie in a finite union of alcoves, say $\bigcup_{i=1}^N (\sigma_i \cdot C_0 + p\nu_i)$. The claim now
  follows from~\eqref{eq:jantzen-action} using $\sigma = \sigma_i^{-1}$. As $\tau$ is not $\epsilon$-generic, we may increase~$\delta$ if necessary
  to assure that $\mu$ is $\delta$-close to the boundary of~$C_0$ (that is, ($-\delta$)-deep but not $\delta$-deep). The
  analysis of Jantzen's decomposition formula~\eqref{thm:jantzen_formula} in~\S\ref{sub:jantzens-theorem} shows then that the
  highest weights of each constituent of $\overline{R_w(\mu+\rho)}$---and thus the highest weights of each Serre weight in
  $W^?(\tau)$---is $\delta'$-close to the boundary of some restricted alcove for some $\delta' > 0$ depending on~$\delta$.
  Therefore if $\lambda$ is $\delta'$-deep in a restricted alcove, $F(\lambda) \in W^?(\tau)$ implies that $\tau$ is
  $\epsilon$-generic.  By restricting~$\lambda$ yet further in its alcove, we can moreover achieve that for any $\lambda'
  \uparrow \lambda$ with $\lambda'$ dominant and any $w' \in W$, $\tau(w',\lambda'+\rho)$ is $\epsilon$-generic.
\end{proof}

\begin{coroll}\label{cor:no_wts_predicted}
  Suppose that $\tau$ is sufficiently generic. Then
  \begin{equation*}
    \# W^?(\tau) = \# W_1 \cdot \# \{ (C',C) : \text{$C'$ dominant, $C$ restricted, and $C'\uparrow C$} \},
  \end{equation*}
  where $C'$ and~$C$ denote alcoves. Note that $\# W_1 = n$.
\end{coroll}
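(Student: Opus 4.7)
The plan is to unpack $W^?(\tau)$ via Proposition~\ref{prop:generic_pred} and then count carefully, using the enumeration of orbit elements per alcove that was established in the discussion immediately preceding that proposition. Write $\tau \cong \tau(w,\mu+\rho)$ with $\mu$ sufficiently deep in $C_0$, as permitted by the definition of genericity. Let $O \subset X(T)$ denote the set of weights $\lambda'$ such that $\lambda'+\rho$ appears as the weight coordinate of some element of the \gpxtw-orbit of $(w,\mu+\rho)$. By Proposition~\ref{prop:generic_pred}, a regular Serre weight $F(\lambda)$ (with $\lambda \in X_1(T)$) lies in $W^?(\tau)$ precisely when there exists a dominant $\lambda' \in O$ with $\lambda' \uparrow \lambda$.

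The next step is to translate this into a count over pairs of alcoves. For a fixed restricted alcove $C$ containing $\lambda$, the dominant weights $\lambda'$ with $\lambda' \uparrow \lambda$ are in natural bijection with the dominant alcoves $C'$ satisfying $C' \uparrow C$: for each such $C'$ there is a unique representative $\lambda' \in C' \cap (W_p \cdot \lambda)$ (under the dot action), and it is automatically dominant. This defines a surjection
\[
\Phi:\ \coprod_{\substack{C' \text{ dom},\ C \text{ restr} \\ C' \uparrow C}} \bigl(C' \cap O\bigr)/(p-1)X^0(T)\ \twoheadrightarrow\ W^?(\tau),
\]
sending $\lambda' \in C' \cap O$ to $F(\lambda)$ for the unique $\lambda \in C \cap (W_p \cdot \lambda')$. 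The discussion just before Proposition~\ref{prop:generic_pred} shows that, for $\mu$ sufficiently deep in $C_0$, each $(C' \cap O)/(p-1)X^0(T)$ has exactly $\#W_1 = n$ elements. Hence the domain of $\Phi$ has cardinality $\#W_1 \cdot \#\{(C',C) : C' \text{ dom},\ C \text{ restr},\ C' \uparrow C\}$, and the corollary reduces to the injectivity of $\Phi$.

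Injectivity is the main obstacle. Two elements in the domain of $\Phi$ map to the same weight iff they determine the same $\lambda$ modulo $(p-1)X^0(T)$, which forces them to lie over the same $C$. If they also lie over the same $C'$, then simple transitivity of $W_p$ on alcoves shows $\lambda'_1 = \lambda'_2$ modulo $(p-1)X^0(T)$, as needed. The nontrivial case is two representatives $\lambda'_1 \in C'_1 \cap O$ and $\lambda'_2 \in C'_2 \cap O$ with $C'_1 \ne C'_2$ lying in the same $W_p$-orbit modulo $(p-1)X^0(T)$. Writing $\lambda'_i = \sigma_i\mu + (p - \sigma_i w \sigma_i^{-1})\nu_i$ and unwinding the putative $W_p$-identity $\lambda'_1 = \tau\cdot\lambda'_2 + p\beta + \eta$ (with $\eta \in (p-1)X^0(T)$), one bounds $\|\nu_i\|$ in terms of the depth of $\mu$ exactly as in the paragraph preceding Proposition~\ref{prop:generic_pred}. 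For $\mu$ sufficiently deep, this bound forces the perturbation $\sigma_i w \sigma_i^{-1} \nu_i$ to be negligible, reducing the analysis to the simpler set $\{\sigma\cdot\mu + p\nu\}$ and its $\widetilde W_p$-orbit structure; since no weight in an alcove interior is fixed by a nontrivial element of $\widetilde W_p$ under the dot action, one concludes that $C'_1 = C'_2$ and $\lambda'_1 \equiv \lambda'_2 \pmod{(p-1)X^0(T)}$, a contradiction.

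The hard part is the last step: translating the $W_p$-relation modulo $(p-1)X^0(T)$ (which is not contained in $pX(T)$) into an honest $\widetilde W_p$-identity on the approximate orbit, so that the ``$n$ elements per alcove'' count can be applied to preclude coincidences across alcoves. This parallels, and in fact refines, the argument used in the paragraph just before Proposition~\ref{prop:generic_pred}; once it is in place, injectivity of $\Phi$ is immediate and the desired equality follows.
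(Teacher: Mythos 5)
Your framework is the right one and, in broad strokes, the same as the paper's: set up a map from the disjoint union of $(C'\cap O)/(p-1)X^0(T)$ over the pairs $(C',C)$ to $W^?(\tau)$, count the domain as $n\cdot\#\{(C',C)\}$ using the ``$n$ orbit elements per alcove'' claim established before prop.~\ref{prop:generic_pred}, and reduce to injectivity. The paper does exactly this, reducing injectivity to the uniqueness statement \eqref{eq:5}: if $\tau\cong\tau(w_i,\lambda_i'+\rho)$ with $\lambda_i'$ dominant and $\lambda_i'\uparrow\lambda$ for one fixed restricted $\lambda$, then $(w_1,\lambda_1')=(w_2,\lambda_2')$.

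The gap is in your proof of injectivity. You assume $\lambda_1'\in C_1'\cap O$ and $\lambda_2'\in C_2'\cap O$ with $C_1'\ne C_2'$ lie in the same $W_p$-orbit modulo $(p-1)X^0(T)$, absorb the small perturbations $\sigma_iw\sigma_i^{-1}\nu_i$, and then invoke freeness of the $\widetilde W_p$ dot action on alcove-interiors to conclude $C_1'=C_2'$. That last inference is not valid: freeness only says that no nontrivial element of $\widetilde W_p$ \emph{fixes} a weight; it certainly does not forbid two $\widetilde W_p$-orbit elements from lying in different alcoves (indeed any two alcoves are related by an element of $\widetilde W_p$). So after your reduction you still face the same problem, and nothing in the proposal closes it. You have not produced any constraint forcing $\lambda_1'$ and $\lambda_2'$ into the same alcove, which is precisely the crux.

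The missing idea is the central character argument. The paper applies the linear functional $S(a_1,\dots,a_n)=\sum a_i$ (constant on $W_p$ dot orbits since $\ker(S|_{X(T)})=\Z R$ and $S$ is $W$-invariant) to $\lambda_i'=\sigma_i\cdot\mu+(p-\sigma_iw\sigma_i^{-1})\nu_i$ and deduces $S(\nu_1)=S(\nu_2)$, hence $\sigma_1^{-1}\nu_1-\sigma_2^{-1}\nu_2\in\Z R$. This is exactly what converts the a~priori $\widetilde W_p$-relation into an honest $W_p$-relation between $\mu+p\sigma_2^{-1}\nu_2-w\sigma_i^{-1}\nu_i$, which for $\mu$ sufficiently deep both lie in the alcove $C_0+p\sigma_2^{-1}\nu_2$ and must therefore be equal. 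One then gets $\sigma_1^{-1}\nu_1=\sigma_2^{-1}\nu_2$, and a dominance argument ($\sigma_i\cdot C_0+p\nu_i$ are dominant alcoves related by $\sigma_1\sigma_2^{-1}\in W$, which forces $\sigma_1=\sigma_2$) finishes. Without $S$, you never control the $X^0(T)$-direction, and the translation you need to compare the two presentations is not available inside $W_p$. This is also what makes the $(p-1)X^0(T)$ ambiguity harmless: the paper handles it by choosing representatives with $\lambda_1=\lambda_2$ literally and then applying the exact (not ``mod $(p-1)X^0(T)$'') uniqueness \eqref{eq:5}, rather than trying to track $\eta\in(p-1)X^0(T)$ through the estimate.
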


In particular, the number of weights predicted in the generic case is 2, 9, 88, 1640, \dots\ if $n = 2$, 3, 4, 5, \dots

\begin{proof}
  Write $\tau \cong \tau(w,\mu+\rho)$ with $\mu$ sufficiently deep in~$C_0$. Note first that if $\varepsilon \in (p-1)X^0(T)$
  then $(w',\lambda'+\rho) \sim (w',\lambda' + \varepsilon + \rho)$, $F(\lambda) \cong F(\lambda+\varepsilon)$ and
  $\lambda' \uparrow \lambda$ implies $\lambda' + \varepsilon \uparrow \lambda + \varepsilon$. Thus we only need to consider the $\lambda'$ in
  prop.~\ref{prop:generic_pred} up to $(p-1)X^0(T)$.
  
  It suffices by the argument after def.~\ref{df:generic_tau} to prove the following statement.
  \begin{equation}\label{eq:5}
    \begin{minipage}[h]{0.85\linewidth}
      \emph{Suppose that $\tau \cong \tau(w_i,\lambda_i'+\rho)$ with $\lambda_i'$ dominant and $\lambda_i' \uparrow \lambda$ for some
        restricted weight~$\lambda$ \($i = 1$, $2$\).  Then $w_1 = w_2$ and $\lambda_1' = \lambda_2'$.}
    \end{minipage}
  \end{equation}
  
  We can write $w_i = \sigma_i w\sigma_i^{-1}$, $\lambda_i' = \sigma_i\cdot \mu + (p-\sigma_i w\sigma_i^{-1}) \nu_i$ for some
  $(\nu_i,\sigma_i) \in \gpxtw$.  As in the proof of lemma~\ref{lm:extended-weyl}, it follows that $\lambda \equiv \lambda_i'
  \equiv \mu+(p-1)\sigma_i^{-1}\nu_i \pmod {\Z R}$ so that $\sigma_1^{-1}\nu_1-\sigma_2^{-1}\nu_2 \in \Z R$.  As the
  $\lambda_i'$ are in the same $W_p$-orbit by assumption, so are the $\mu+(p-w)\sigma_i^{-1} \nu_i$. As $\sigma_1^{-1}\nu_1 -
  \sigma_2^{-1}\nu_2 \in \Z R$, the weights $\mu + p\sigma_2^{-1}\nu_2 - w \sigma_i^{-1} \nu_i$ are in
  the same $W_p$-orbit. But as they lie in the same alcove $C_0 + p\sigma_2^{-1}\nu_2$, they have to be equal and we obtain
  that $\sigma_1^{-1}\nu_1 - \sigma_2^{-1}\nu_2 \in X^0(T) \cap \Z R = \{0\}$. So the  dominant alcoves $\sigma_i \cdot C_0 + p\nu_i$  are
  related by $\sigma_1\sigma_2^{-1} \in W$. Thus $\sigma_1 = \sigma_2$, which implies the claim.
\end{proof}

\section{Comparison with the ADPS conjecture ($n=3$)}\label{sec:comp_adps}

The framework of the conjecture used here differs slightly from that of~\cite{bib:ADP}---we prefer to use left cosets, left actions and to
ignore the nebentype character. First we explain how to relate them. When comparing the weight predictions, note that the conjecture in~\cite{bib:ADP}
is stated for general~$n$ and for odd Galois representations~$\rho$ that are neither necessarily tame at~$p$ (at least in niveau~1)
nor irreducible.  For irreducible~$\rho$, their predictions only depend on $\rho|_{I_p}$. We will restrict to the irreducible, tame-at-$p$ case
to compare with our conjecture, and we will assume that $n=3$, the case they studied in detail.  (For larger~$n$ in generic cases their weights will
all be predicted here, but the discrepancy grows with~$n$, even if Doud's extension in the niveau~$n$ case~\cite{bib:Doud_ss} is taken into
account.) We will moreover interpret their recipe in the most favourable way, that is, include the ``extra weights'' described
in~\cite{bib:ADP}, def.~3.5. We should point out though that~\cite{bib:ADP} never claims to predict all possible weights.

Let $\gotilde$ be the group of matrices in $\SL_n(\Z)$ with first row congruent modulo~$N$ to $(1,0,\dots,0)$, and
let $\soptilde \subseteq \GL_n^+(\znp)$ be defined by the same congruence condition. Then $\hoptilde$ is
the Hecke algebra defined by the Hecke pair $(\gotilde,\soptilde)$, but instead of left cosets (as in
\S\ref{sub:hecke}) using \emph{right cosets}. If the congruence condition is weakened to the first row being $(*,0,\dots,0)$ modulo~$N$, the
corresponding objects are denoted by $\gztilde$, $\szptilde$, $\hzptilde$. Note that $(\gotilde,\soptilde)$
and $(\gztilde,\szptilde)$ are strongly compatible (\S\ref{sub:hecke}).

Letting
\begin{equation*}
  \eta =  \left(\begin{smallmatrix}    &&&N \\ &&1 \\[-5pt] &\reflectbox{$\ddots$} \\ 1  \end{smallmatrix}\right),
\end{equation*}
observe that $g \mapsto \eta \cdot {}^t g \cdot \eta^{-1}$ induces \textit{anti-isomorphisms} of groups $\gi \to \gitilde$, $\sip \to \siptilde$,
and of (commutative) algebras $\hip \to \hiptilde$ ($i = 0,1$).

A Serre weight~$F$ (with usual left $\sip$-action) becomes a right $\siptilde$-module, denoted~$\widetilde F$, as follows: $m\widetilde s :=
{}^t(\eta^{-1} \widetilde s \eta)m$ ($m \in F$, $\widetilde s \in \siptilde$, $i = 0,1$). It is easy to see that with this action, $\widetilde
F$ is a ``right Serre weight'' with the same highest weight. The following lemma is immediate.

\begin{lm}
  \label{lm:compare_with_ash}
  The above anti-isomorphisms induce an isomorphism
  \[ H^e(\go,F) \cong H^e(\gotilde,\widetilde F), \]
  as modules for $\hop \cong \hoptilde$.
\end{lm}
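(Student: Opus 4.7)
The strategy is to convert the anti-isomorphism $\phi \colon \go \to \gotilde$, $\phi(g) = \eta \cdot {}^tg \cdot \eta^{-1}$, into a genuine (covariant) group isomorphism and to verify that under it, $F$ corresponds to $\tilde F$. Set $\psi := \phi^{-1} \circ (\,\cdot\,)^{-1} \colon \gotilde \to \go$, i.e., $\psi(\tilde g) = {}^t(\eta^{-1} \tilde g^{-1} \eta)$. Since $\phi$ and inversion are each anti-multiplicative, $\psi$ is a group isomorphism. Now convert the right $\gotilde$-action on $\tilde F$ into a left action in the standard way, $\tilde g \cdot m := m\tilde g^{-1}$; the defining formula $m\tilde s = {}^t(\eta^{-1}\tilde s\eta) m$ then says $\tilde g\cdot m = \phi^{-1}(\tilde g^{-1})\, m = \psi(\tilde g)\,m$, so as a left $\gotilde$-module $\tilde F$ is exactly the pullback $\psi^* F$. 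Functoriality of group cohomology under group isomorphisms then supplies a natural isomorphism
\[
  H^e(\go, F) \congto H^e(\gotilde, \tilde F)
\]
for every $e \ge 0$.

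It remains to verify Hecke-equivariance. Anti-multiplicativity of $\phi$ converts a left coset decomposition $\go s \go = \coprod_i s_i \go$ into the right coset decomposition $\gotilde \phi(s)\gotilde = \coprod_i \gotilde\phi(s_i)$, so the algebra anti-isomorphism $\hop \to \hoptilde$ sends $[\go s \go]$ (as defined via left cosets) to $[\gotilde \phi(s)\gotilde]$ (as defined via right cosets); since both algebras are commutative, this anti-isomorphism is in fact an isomorphism. By the $\delta$-functoriality of the Hecke action on both sides (\S\ref{sub:hecke}) together with the $\delta$-functoriality in $F$ of the cohomology isomorphism above (which comes from the group isomorphism $\psi$), it suffices to verify Hecke equivariance on $H^0$. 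There, for $m \in F^{\go} = \tilde F^{\gotilde}$, one side yields $[\go s\go]\, m = \sum_i s_i m$ while the other gives $m\cdot [\gotilde \phi(s)\gotilde] = \sum_i m\cdot \phi(s_i) = \sum_i \phi^{-1}(\phi(s_i))\, m = \sum_i s_i m$, matching.

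The main potential obstacle here is simply keeping the conventions straight: left versus right cosets in the two Hecke algebras, and left versus right module structures on $F$ and $\tilde F$. Once that bookkeeping is organised---by packaging the two anti-multiplicative ingredients ($\phi^{-1}$ and inversion) into the single covariant isomorphism $\psi$---the proof reduces to unwinding the definition of $\tilde F$ and the formula $[\Gamma s\Gamma]m = \sum s_i m$ on $H^0$.
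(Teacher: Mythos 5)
Your argument is correct. The paper itself offers no proof (it declares the lemma ``immediate'' in the sentence preceding it), and the argument you give is precisely the expected one: packaging $\phi^{-1}$ and inversion into the covariant isomorphism $\psi$ identifies $\tilde F$ with $\psi^* F$, the anti-isomorphism on Hecke algebras converts left into right coset decompositions, and the $\delta$-functorial characterisation of the Hecke action from \S\ref{sub:hecke} legitimately reduces the Hecke-equivariance check to the routine $H^0$ computation you carry out.
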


Any character $\epsilon : (\Z/N)\s \to \fpb\s$ can be considered as character of $\szptilde$ via its natural projection to
$\szptilde/\soptilde \cong (\Z/N)\s$. Let $\widetilde F(\epsilon) = \widetilde F \tens \fpb(\epsilon)$.

\begin{lm}
  Fix a ring homomorphism
  \[  \sigma : \hoptilde \cong \hzptilde \to \fpb.  \]
  The following are equivalent:

  \begin{enumerate}
  \item There is an $\hoptilde$-eigenvector for~$\sigma$ in $H^e(\gotilde,\widetilde F)$ for some~$e$.
  \item There is an $\hzptilde$-eigenvector for~$\sigma$ in $H^e(\gztilde,\widetilde F(\epsilon))$ for some~$e$ and for some
    $\epsilon: (\Z/N)\s \to \fpb\s$.
  \end{enumerate}
\end{lm}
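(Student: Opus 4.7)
The plan is to descend from $\gztilde$-cohomology to $\gotilde$-cohomology and use that the quotient is abelian of order prime to $p$.

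First I would verify that $\gotilde$ is normal in $\gztilde$ with $\gztilde/\gotilde \cong (\Z/N)\s$, the isomorphism sending $g$ to its $(1,1)$-entry modulo $N$. Normality is a direct matrix computation: conjugating an element whose first row reduces to $(1,0,\dots,0) \pmod N$ by one whose first row reduces to $(u,0,\dots,0)$ yields another whose first row reduces to $(1,0,\dots,0)$. Surjectivity is shown by exhibiting explicit elements of $SL_n(\Z)$ with any prescribed unit in the $(1,1)$-slot modulo~$N$. Since $(N,p)=1$, the order of $(\Z/N)\s$ is invertible in $\fpb$, so the Hochschild--Serre spectral sequence for $\gotilde \triangleleft \gztilde$ collapses: for any $\gztilde$-module $M$ over $\fpb$,
\[ H^e(\gztilde, M) \xrightarrow{\sim} H^e(\gotilde, M)^{(\Z/N)\s} \quad \text{for all $e$.} \]
Strong compatibility of the Hecke pairs $(\gotilde,\soptilde) \subset (\gztilde, \szptilde)$ (\S\ref{sub:hecke}) makes this isomorphism $\hzptilde \cong \hoptilde$-equivariant.

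Applying this to $M = \tilde F(\epsilon) = \tilde F \tens \fpb(\epsilon)$: since $\epsilon|_{\gotilde}$ is trivial we have $M|_{\gotilde} = \tilde F$, but the induced $(\Z/N)\s$-action on $H^e(\gotilde, \tilde F)$ equals the natural action (from the $\gztilde$-structure on $\tilde F$) multiplied by $\epsilon$. Hence
\[ H^e(\gztilde, \tilde F(\epsilon)) \cong H^e(\gotilde, \tilde F)_{\epsilon^{-1}}, \]
the $\epsilon^{-1}$-isotypic summand for the natural $(\Z/N)\s$-action. Semisimplicity then yields the Hecke-equivariant decomposition
\[ H^e(\gotilde, \tilde F) = \bigoplus_{\epsilon : (\Z/N)\s \to \fpb\s} H^e(\gztilde, \tilde F(\epsilon)) \]
as $\hoptilde$-modules, and summing over $e$ gives the same decomposition for the total cohomology.

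A $\sigma$-eigenvector for $\hoptilde$ in $H^e(\gotilde, \tilde F)$ therefore decomposes into $\sigma$-eigenvectors in each summand, proving (i) $\Rightarrow$ (ii); conversely any $\sigma$-eigenvector in some $H^e(\gztilde, \tilde F(\epsilon))$ corresponds to one in $H^e(\gotilde, \tilde F)$, proving (ii) $\Rightarrow$ (i). The main subtle point is the Hecke-equivariance of the Hochschild--Serre identification---equivalently, the commutation of the $(\Z/N)\s$-action on $H^e(\gotilde, \tilde F)$ with the $\hoptilde$-action---which rests precisely on the strong compatibility of the two Hecke pairs recorded in \S\ref{sub:hecke}.
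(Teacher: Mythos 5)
There is a genuine gap, and it is precisely the one the paper flags at the start of its own proof (``the proof is complicated by the fact that $p$ could divide $\phi(N)$'').

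Your argument rests on the claim that ``since $(N,p)=1$, the order of $(\Z/N)\s$ is invertible in $\fpb$.'' This is false: $(N,p)=1$ constrains $N$, not $\phi(N) = \#(\Z/N)\s$. For example with $N = 11$ and $p = 5$ one has $\gcd(N,p)=1$ but $\phi(N)=10$ is divisible by $p$. When $p \mid \phi(N)$, the Hochschild--Serre spectral sequence for $\gotilde \lhd \gztilde$ does \emph{not} collapse, the functor of $(\Z/N)\s$-invariants is not exact on $\fpb[(\Z/N)\s]$-modules, and the clean decomposition
\[ H^e(\gotilde, \tilde F) = \bigoplus_{\epsilon} H^e(\gztilde, \tilde F(\epsilon)) \]
fails (characters $(\Z/N)\s \to \fpb\s$ need not even separate points, since the $p$-part of $(\Z/N)\s$ is killed by every such character). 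So the semisimplicity you invoke simply is not available, and both implications need an argument that survives this degeneration.

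The paper's actual proof handles this by working with the undegenerate Hochschild--Serre spectral sequence and passing to \emph{generalized} $\sigma$-eigenspaces, which is an exact operation. For (ii) $\Rightarrow$ (i) it observes that $(E_2^{p,q})_\sigma \ne 0$ for some $p,q$. For (i) $\Rightarrow$ (ii) it picks the smallest $q$ with $H^q(\gotilde,\tilde F)_\sigma \ne 0$, chooses $\epsilon$ so that the $\sigma$-generalized eigenspace of $H^q(\gotilde,\tilde F(\epsilon))$ has a $(\Z/N)\s$-fixed vector (possible because the $p$-part acts unipotently), and uses minimality of $q$ to see that this survives to $E_\infty^{0,q}$. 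Your observation that Hecke-equivariance of the spectral sequence comes from strong compatibility of the two Hecke pairs is correct and is used in the paper, but the collapse step must be replaced.

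Incidentally, the paper also does not identify $\gztilde/\gotilde$ with $(\Z/N)\s$ via a quotient map of groups; instead it uses a natural $\delta$-functorial action of $(\Z/N)\s$ on $H^\bullet(\gotilde,-)$ commuting with the Hecke action (citing Ash--Stevens). Your group-theoretic description of the quotient is fine as far as it goes, but it is not the mechanism that needs repairing; the failure of semisimplicity is.
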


\begin{proof}
  Note that the proof is complicated by the fact that $p$ could divide $\phi(N)$.
  If $M$ is any $\szptilde$-module then
  $(\Z/N)\s$ acts naturally (and $\delta$-functorially) on $H^e(\gotilde, M)$, commuting with the action of $\hoptilde$
  (as observed in~\cite{bib:AStev}, p.~196). The Hochschild--Serre spectral sequence
  \[ E_2^{p,q} : H^p((\Z/N)\s ,H^q(\gotilde,\widetilde F(\epsilon))) \Rightarrow H^{p+q}(\gztilde,\widetilde F(\epsilon)) \]
  is compatible with the action of $\hoptilde \cong \hzptilde$. The reason is that the Grothendieck spectral sequence for a composition $F_1
  \circ F_2$ is compatible with natural transformations $F_2 \to F_2$ since the spectral sequences for the hyperderived functors
  $(\mathbb{R}^i F_1)(C)$ are functorial in the cochain complex~$C$ \cite[\S2.4]{bib:Tohoku}.

  If $M$ is any $\hoptilde$-module, denote by $M_\sigma$ the generalised $\sigma$-eigenspace.
  
  Supposing (ii), considering the generalised $\sigma$-eigenspace of the above spectral sequence we find that $(E_2^{p,q})_\sigma \ne 0$ for
  some $p$, $q$, whence (i). (All terms of the spectral sequence are finite-dimensional, as explained just before~\eqref{eq:6}.)

  Conversely, assuming (i), pick~$q$ smallest such that $H^q(\gotilde,\widetilde F)_\sigma \ne 0$. Observing that
  \[ H^q(\gotilde,\widetilde F(\epsilon)) \cong H^q(\gotilde,\widetilde F)(\epsilon) \]
  as $(\Z/N)\s$-module, we can choose $\epsilon$ so that $H^q(\gotilde,\widetilde F(\epsilon))_\sigma$ has a $(\Z/N)\s$-fixed vector. By
  the minimality of~$q$, $(E_\infty^{0,q})_\sigma \ne 0$, whence (ii).
\end{proof}

\need{be more precise? state [ADP] conj.?}

For the remainder of this section, suppose that $n = 3$.
For simplicity we will say that a Serre weight $F(\lambda)$ ($\lambda \in X_1(T)$) is in the lower alcove $C_0$ if $\lambda \in C_0$.  If
$F(\lambda)$ is a regular Serre weight, we will use the notation $\r F(\lambda) := F(\r \lambda)$ with $(\lambda \mapsto \r\lambda) \in W_p$
as in prop.~\ref{prop:weyl_simple}. (Note that both definitions do not actually depend on any choices.)

\begin{df}
  For $\lambda \in X_1(T)$ let $\AA(\lambda)$ be the set of regular Serre weights consisting of $F := F(\lambda-\rho)\subreg$
and, in case $F \in C_0$, also $\r F \in C_1$.
\end{df}

The next result should be compared with prop.~\ref{prop:generic_pred}.

\begin{prop}\label{prop:combi_conj_gl3}
  Suppose that the tame inertial Galois representation  $\tau : I_p \to \GL_n(\fpb)$  can be extended to~$G_p$.
  Let
  \begin{equation*}
    \CC(\tau) = \{ \lambda \in X_1(T) : \exists w \in W,\ \text{$(w,\lambda)$ good and $\tau \cong \tau(w,\lambda)$}  \}.
  \end{equation*}
  Then 
  \begin{equation}\label{eq:wts_combinatorial}
    W^?(\tau) = \bigcup_{\lambda \in \CC(\tau)} \AA(\lambda).
  \end{equation}
\end{prop}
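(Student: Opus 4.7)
The strategy is to compute both sides of~\eqref{eq:wts_combinatorial} explicitly using Jantzen's formula (theorem~\ref{thm:jantzen_formula}), which for $n=3$ simplifies because $(\gamma'_{\sigma,\tau'})$ is diagonal (as noted in \S\ref{sub:jantzens-theorem}), combined with the decomposition of Weyl modules in prop~\ref{prop:weyl_simple}.

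First I would fix a good representative $(w,\lambda_0)\in\wxt$ with $\lambda_0 \in X_1(T)$, $\lambda_0 - \rho \in C_0$, and $\tau\cong\tau(w,\lambda_0)$; such exists because the weight-coordinates of the \gpxtw-orbit of any representative of $\tau$ meet every alcove (cf.\ the analysis after~\eqref{df:generic_tau}), and goodness is a \gpxtw-invariant. Then $V(\tau)=R_w(\lambda_0)$, and $\CC(\tau)$ equals the set of restricted weight-coordinates of this orbit. Jantzen's formula, with diagonality, reduces to
\[
\overline{R_w(\lambda_0)}=\sum_{\sigma\in W}\gamma'_{\sigma,\sigma}\,W\bigl(\sigma\cdot(\lambda_0-\rho-w\varepsilon'_{w_0\sigma})+p\rho'_\sigma\bigr).
\]
Using~\eqref{eq:jantzen-action}, I would rewrite the highest weight in the $\sigma$-summand as $\lambda_\sigma-\rho$, where $(\sigma w\sigma^{-1},\lambda_\sigma)$ is a \gpxtw-translate of $(w,\lambda_0)$. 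Since $|W_1|=3$ and $[W:W_1]=2$ for $n=3$, the three $\sigma\in W_1$ yield the three $\lambda_\sigma\in\CC(\tau)$ with $\lambda_\sigma-\rho\in C_0$, while the three $\sigma\in w_0 W_1$ yield the three with $\lambda_\sigma-\rho\in C_1$; together these exhaust $\CC(\tau)$.

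Next I would decompose each $W(\lambda_\sigma-\rho)$ via prop~\ref{prop:weyl_simple}: its Jordan-H\"older constituents are $F(\lambda_\sigma-\rho)$, together with $F(\r(\lambda_\sigma-\rho))$ when $\lambda_\sigma-\rho$ lies strictly in $C_1$. Applying $\RR F(\mu)=F(w_0\cdot(\mu-p\rho))\reg$, a direct calculation shows that $\RR$ interchanges the $\sigma$- and $(w_0\sigma)$-indexed summands: $\RR F(\lambda_\sigma-\rho)$ coincides with $F(\lambda'_\sigma-\rho)\reg\in\AA(\lambda'_\sigma)$ for a representative $\lambda'_\sigma\in\CC(\tau)$ in the opposite alcove, and the constituent $F(\r(\lambda_\sigma-\rho))$ arising from strictly-$C_1$ summands supplies under $\RR$ the ``extra weight'' $\r F$ that appears in $\AA(\lambda'_\sigma)$ precisely when $\lambda'_\sigma-\rho\in C_0$. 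Collecting these contributions across all six $\sigma$ gives $\RR(JH(\overline{R_w(\lambda_0)}))=\bigcup_{\lambda\in\CC(\tau)}\AA(\lambda)$, establishing both inclusions. The main obstacle will be this combinatorial bookkeeping---tracking how the $W_1$-coset structure on the Jantzen summands intertwines with $\RR$ to recover precisely the structure of $\AA$ (in particular the pairing between $C_0$-representatives and their ``extra'' $\r F$ weights), and verifying that the diagonal entries $\gamma'_{\sigma,\sigma}$ are all nonzero so that every expected composition factor is actually realized.
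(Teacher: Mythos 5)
Your overall strategy—exploit the diagonality of $(\gamma'_{\sigma,\tau})$ for $n=3$, combine Jantzen's formula~\eqref{thm:jantzen_formula} with the Weyl module decomposition~\eqref{prop:weyl_simple}, and match the six summands against $\CC(\tau)$ via an $\RR$-interchange—is the right high-level approach, and it is in the same spirit as the paper's proof (which rests on the identity $\AA(x',y',z') = \RR(JH(W(z'+p-1,y',x'-p+1)))$, essentially your interchange claim). However there are two genuine gaps.

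First, the step ``rewrite the highest weight in the $\sigma$-summand as $\lambda_\sigma - \rho$, where $(\sigma w\sigma^{-1},\lambda_\sigma)$ is a $\gpxtw$-translate of $(w,\lambda_0)$'' is not correct as stated. In the niveau~1 case with $\lambda_0 = (i,j,k)$, the Jantzen summands (after absorbing the $\gamma'_{\sigma,\sigma}$ twists) are $W(i,j,k)$, $W(j,k,i-p+1)$, $W(k+p-1,i,j)$, $W(k+p-1,j,i-p+1)$, $W(i,k,j-p+1)$, $W(j+p-1,i,k)$; none of these highest weights has the form $\lambda-\rho$ with $\lambda$ in the $\gpxtw$-orbit modulo $(p-1)X^0(T)$ (for instance $(i,j,k)+\rho=(i+2,j+1,k)$ is not). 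The actual correspondence between the Jantzen summand $W(\mu_\sigma)$ and the element of $\CC(\tau)$ it encodes goes through the ``reverse'' substitution $\mu_\sigma = (z'+p-1,y',x'-p+1) \mapsto (x',y',z')$, i.e.\ an $\r$-type reflection with a $\pm 1$ correction, not a $\rho$-shift. This is precisely the content of the paper's identity~\eqref{eq:AARR}, and without it the $\sigma \leftrightarrow w_0\sigma$ matching does not line up.

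Second, and more seriously, you begin by fixing a good representative $(w,\lambda_0)$ with $\lambda_0-\rho$ lying in the \emph{open} alcove $C_0$. This is not always possible: the paper's analysis after~\eqref{df:generic_tau} showing that the weight-coordinates of the orbit populate each alcove (three times, mod $(p-1)X^0(T)$) is established only for $\mu$ sufficiently deep in $C_0$, i.e.\ for sufficiently generic $\tau$. For non-generic $\tau$ the representative can only be taken in $\overline{C_0}$, the orbit can hit an alcove fewer than three times, $\CC(\tau)$ can have fewer than six elements, and several Jantzen summands can fail to be restricted or can cancel against one another (this is exactly what happens in the niveau~2 and niveau~3 cases of lemma~\ref{lm:combi_conj_gl3} and in the discussion of~\eqref{eq:niv3_reduction}). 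The proposition is unconditional, and these degenerate configurations are where the real work is: the ``combinatorial bookkeeping'' you defer is not routine but is the bulk of the argument (the paper devotes a detailed case analysis in all three niveaux to it). As written, your proof would establish the result only for sufficiently generic $\tau$, which is the content of prop.~\ref{prop:generic_pred} rather than of this proposition.
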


It will become clear from the proof that for sufficiently generic~$\tau$, $\CC(\tau)$ consists of three weights each in the upper and the
lower alcove. We will use the following lemma.

\begin{lm}\label{lm:combi_conj_gl3}
  \textup{(i)} If $\tau \sim \nivonematrix$ with $i \ge j \ge k$, $i-k \le p-1$,
  \begin{gather*}
    \CC(\tau) = \big\{ (i,j,k), (j,k,i-p+1), (k+p-1,i,j), \\
    (k+p-1,j,i-p+1), (i,k,j-p+1), (j+p-1,i,k)\big\} 
    + (p-1)X^0(T).
  \end{gather*}

  \textup{(ii)} If $\tau \sim \nivtwomatrix$ with $m = j+pk$ and $i \ge j > k$, $i-k \le p-1$,
  \begin{gather*}
    \CC(\tau) = X_1(T) \cap \big\{(i,j,k), (j,k,i-p+1), (k+p,i,j-1), \\
    (k+p,j-1,i-p+1), (i,k+1,j-p), (j+p,i,k-1),  \\
    (i+p-1,j,k), (j,k,i-2p+2)\big\} 
    + (p-1)X^0(T).
  \end{gather*}

  \textup{(iii)}  If $\tau \sim \nivthreematrix$ with $m = i+pj+p^2 k$ and $i > j \ge k$, $i-k \le p$,
  \begin{gather*}
    \CC(\tau) =  X_1(T) \cap \big\{(i,j,k), (j+1,k,i-p), (k+p,i-1,j), \\
     (k+p,j+1,i-p-1), (i,k+1,j-p),  (j+p,i,k-1)\big\} 
    + (p-1)X^0(T).
  \end{gather*}
\end{lm}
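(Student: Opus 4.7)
The plan is to reduce the computation of $\CC(\tau)$ to an explicit enumeration by making the formula~\eqref{eq:tau(w,mu)} of proposition~\ref{prop:V(rho|_I)} fully concrete in each cycle type of $w \in W \cong S_3$, then to solve the resulting congruences in $X_1(T)$ modulo $(p-1)X^0(T)$, and finally to verify the goodness condition.

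First I would observe that the shape of $\tau(w,\mu)$---specifically, which fundamental characters $\omega$, $\omega_2$, $\omega_3$ appear---is determined by the cycle type of $w$: the identity gives niveau-$1$ characters, a transposition gives a niveau-$2$ pair together with one niveau-$1$ character, and a $3$-cycle gives a niveau-$3$ triple. This restricts $w$ to a single conjugacy class in each of cases (i), (ii), (iii). A direct computation in $T\d(\fpb)$ using that $F\d$ is the $p$-power map and that $w$ acts on coordinates by an inverse permutation then yields:
\begin{itemize}
\item[(i)] $\tau(e,\mu) \cong \diag(\omega^{\mu_1},\omega^{\mu_2},\omega^{\mu_3})$;
\item[(ii)] for $w = (a\,b)$ fixing $c$, $\tau(w,\mu) \sim \diag(\omega_2^{M}, \omega_2^{pM}, \omega^{\mu_c})$ with $M = \mu_a + p\mu_b$;
\item[(iii)] for a $3$-cycle $w$, $\tau(w,\mu) \sim \diag(\omega_3^{M}, \omega_3^{pM}, \omega_3^{p^2 M})$ with $M = \mu_1 + p\mu_{w(1)} + p^2 \mu_{w^2(1)}$.
\end{itemize}

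Matching against the given $\tau$ translates the condition $\tau(w,\mu) \cong \tau$ into an explicit congruence. In (i), $\mu$ must be a permutation of $(i,j,k)$ modulo $p-1$ componentwise; after shifting each of the six permutations by suitable multiples of $(p-1)$ in single coordinates to enforce the monotonicity defining $X_1(T)$, I obtain exactly the six listed weights. In (ii), for each of the three transpositions $w$ and each of the two alternatives $M \equiv m$ or $M \equiv pm \pmod{p^2-1}$, the pair $(\mu_a,\mu_b)$ is pinned down modulo $p^2-1$ in base $p$ while $\mu_c$ is pinned down modulo $p-1$; the size hypothesis $i - k \le p-1$ fixes a canonical small representative, which I would shift into $X_1(T)$ when possible, producing the eight candidates listed (the explicit intersection with $X_1(T)$ accounting for degenerate shapes of $(i,j,k)$). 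In (iii), for each of the two $3$-cycles and each of the three alternatives $M \equiv m, pm, p^2m \pmod{p^3-1}$, the full triple $\mu$ is determined modulo $p^3-1$ in base $p$, and the hypothesis $i - k \le p$ similarly gives a unique small representative to be shifted into $X_1(T)$. Goodness is automatic in (i); in (ii) and (iii) it reduces to $j + pk \not\equiv 0 \pmod{p+1}$ and $i + pj + p^2 k \not\equiv 0 \pmod{p^2+p+1}$ (and their Galois-conjugate variants), which follow immediately from the strict inequalities $j > k$ and $i > j$ together with the size bounds.

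The main obstacle is the combinatorial bookkeeping: organising the (cycle type)~$\times$~(shift) grid of candidates, recognising the identifications forced by the $\gpxtw$-orbit action~\eqref{eq:jantzen-action} so as not to overcount, and translating each abstract congruence solution into its unique $X_1(T)$-representative by adding multiples of $(p-1)$ in appropriate coordinates. The explicit offsets $-p+1$, $+p-1$, $+p$, $-p$, $-2p+2$ appearing in the statement are precisely the data encoding this last step.
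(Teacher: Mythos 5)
Your overall strategy — make $\tau(w,\mu)$ explicit per cycle type, translate $\tau \cong \tau(w,\lambda)$ into congruences on the coordinates of $\lambda$, and enumerate solutions in $X_1(T)$ modulo $(p-1)X^0(T)$ — is the same one the paper uses, and your reduction of goodness to $j-k \not\equiv 0 \pmod{p+1}$, $i + pj + p^2k \not\equiv 0 \pmod{p^2+p+1}$ via the size bounds is correct.

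The gap is in the enumeration step, where the sketch misrepresents the combinatorics. You assert that in case (ii) "the pair $(\mu_a,\mu_b)$ is pinned down modulo $p^2-1$ in base $p$," and in case (iii) "the full triple $\mu$ is determined modulo $p^3-1$ in base $p$." This is false: the congruence pins down the \emph{combination} $\mu_a + p\mu_b \pmod{p^2-1}$ (resp.\ $\mu_a + p\mu_b + p^2\mu_c \pmod{p^3-1}$), not the individual coordinates. The solution set for $(\mu_a,\mu_b)$ is a rank-two lattice coset; one generator is $(p-1,p-1)$ (absorbed into $(p-1)X^0(T)$), but the other, $(p,-1)$, produces genuinely distinct representatives, and the heart of the lemma is bounding this extra shift parameter and checking case by case which shifts land in $X_1(T)$. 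Concretely, the $X_1(T)$ condition forces $|y-z| \le 2(p-1)$ for any two of the three coordinates, which bounds the parameter $n$ in $(y,z) = (j,k) + n(p,-1)$ to $-2 \le n \le 1$; then, for each $n$, one still has to decide where the niveau-one coordinate $x$ sits relative to $(y,z)$ and whether the result is in $X_1(T)$. This is why (ii) has eight candidates: they are \emph{not} in bijection with your $3\times 2$ grid of transpositions and exponent alternatives (which at any rate collapses, since swapping $m \leftrightarrow pm$ merely swaps the two niveau-two entries). Similarly, in (iii) one must show that each of $A := x-i$, $B := y-j$, $C := z-k$ vanishes by summing the bounds $|A-C| \le 2p-2$, $|B-C| \le p-1$ against the relation $A + pB + p^2 C = 0$ — another step with no analogue in your sketch. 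Finally, the intersection with $X_1(T)$ in the statement does not account for "degenerate shapes of $(i,j,k)$"; it reflects that some of the enumerated candidates (e.g.\ $(i+p-1,j,k)$, which lies in $X_1(T)$ only when $i=j$) are conditionally restricted. The paper also uses a cyclic-shift symmetry $\theta$ (resp.\ $\theta'$) to cut the case analysis in half; without something like it, your enumeration would either overcount or miss cases.
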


\begin{proof} Suppose that $\lambda = (x',y',z') \in \CC(\tau)$.
  
(i) By prop.~\ref{prop:V(rho|_I)} and lemma~\ref{lm:dl_reps}, $\tau \cong \tau(w,\lambda)$ with $(w,\lambda)$ good implies that $(w,\lambda) \sim
(1,(i,j,k))$. Thus there is a permutation $(x,y,z)$ of $(x',y',z')$ such that $x \equiv i,\ y \equiv j,\ z\equiv k \pmod {p-1}$.
This is invariant under the change of coordinates
\begin{equation*}
  \theta : (x,y,z;i,j,k) \mapsto (z,x,y;k+p-1,i,j).
\end{equation*}

We may assume without loss of generality that $y = j$ and (using~$\theta$) that either $x \ge y \ge z$ or $x < y < z$.
In the first case, $(x',y',z') = (x,y,z)$. It is then evident that precisely the following weights are obtained:
$(i,j,k)$, $(i+p-1,j,k) = (j+p-1,i,k)$ (if $i=j$), $(i,j,k-p+1) = (i,k,j-p+1)$ (if $j = k$),
$(i+p-1,j,k-p+1) = (k+p-1,j,i-p+1)$ (if $i = j = k$). The second case is analogous, yielding
precisely $(k+p-1,j,i-p+1)$ (due to the inequalities being strict).

(ii) Here there is a permutation $(x,y,z)$ of $(x',y',z')$ such that $x \equiv i \pmod {p-1},\ y+pz \equiv m \pmod {p^2-1}$.
Without loss of generality, $y+pz = j+pk$. Note that $|y-z| \le 2p-2$. Thus $(y,z) = (j,k) +
n(p,-1)$ with $-2 \le n \le 1$.

If $n = -2$: since $j-2p < i-2p+2 < i-p+1 < k+2$, this can't happen.

If $n = -1$: use that $y = k+1 > i-p+1 > j-p = z$ to get one of $(i,k+1,j-p)$, $(k+1,i-p+1,j-p)$ and $(k+1,j-p,i-2p+2)$.

If $n = 0$, at most $(i+p-1,j,k)$, $(i,j,k)$, $(j,k,i-p+1)$, $(j,k,i-2p+2)$ arise.

If $n = 1$, the only possibility is $(j+p,i,k-1)$, since $(j+p) - (k-1) > p-1$ and $j+p > i > k-1$.

(iii) Here there is a permutation $(x,y,z)$ of $(x',y',z')$ such that $x+py+p^2z \equiv m \pmod {p^3-1}$.
This is invariant under the change of coordinates
\begin{equation*}
  \theta' : (x,y,z;i,j,k) \mapsto (z,x,y;k+p,i-1,j).
\end{equation*}
So, without loss of generality, either $x \ge y \ge z$ or $x < y < z$.

In the first case, $(x',y',z') = (x,y,z)$. Without loss of generality, $A+pB + p^2 C = 0$, with $A =
x-i$, $B = y-j$, $C = z-k$.  Noting that
\begin{align*}
  |A-C| &= |(x-z)-(i-k)| \\
  &\le \max(p,2p-3) \le 2p-2, \\
  |B-C| &\le p-1,
\end{align*}
it follows that
\begin{equation*}
  |(1+p+p^2)C| = |(A-C) + p(B-C)| \le p^2+p-2.
\end{equation*}
Thus $C = 0$, and $A+pB = 0$ implies
\begin{equation*}
  |(1+p)B| = |A-B| \le p
\end{equation*}
and hence $B = A = 0$. So, $(x',y',z') = (i,j,k)$.

In the second case, a completely analogous argument shows that $(x',y',z') = (k+p,j+1,i-p-1)$.
\end{proof}

\begin{proof}[Proof of prop.~\ref{prop:combi_conj_gl3}] 
First note that, for $\lambda \in X_1(T)$,
\[ \RR(\JH(W(\lambda))) \]
consists of $F := \RR(F(\lambda))$ and, if $F \in C_0$, also~$\r F$. Also note that for $(x',y',z') \in X_1(T)$,
$F(x'-2,y'-1,z')\subreg = \RR(F(z'+p-1,y',x'-p+1))$ (note that the latter weight is also restricted). Thus
\begin{equation}\label{eq:AARR}
  \AA(x',y',z') = \RR(\JH(W(z'+p-1,y',x'-p+1))).
\end{equation}
With the convention that $\AA(\lambda) := \varnothing$ ($\lambda \not\in X_1(T)$), $\RR(0) := \varnothing$, \eqref{eq:AARR} is even true 
for any $(x',y',z') \in X(T)$ satisfying $x'-y' = p$ or $y'-z' = p$ or $x'-z' = 2p$ (by~\eqref{eq:change_of_weyl_chamber}).

If $\tau \cong \tau(1,(i,j,k)) \sim \nivonematrix$, without loss of generality, $i \ge j \ge k$, $i-k \le p-1$.
By thm.~\ref{thm:jantzen_formula}, $\overline{R_1(i,j,k)}$ equals
\begin{gather*}
  W(k+p-1,j,i-p+1) + W(i,k,j-p+1) + W(j+p-1,i,k) \\
  {}+ W(i,j,k) + W(j,k,i-p+1) + W(k+p-1,i,j).
\end{gather*}
The lemma follows from~\eqref{eq:AARR}, term by term.

If $\tau \sim \nivtwomatrix$, we can write $m = j+pk$ with
(unique) $i \ge j > k$, $i-k \le p-1$ (replacing $m$ with~$pm$ if necessary). Then $\tau \cong \tau((2\; 3), (i,j,k))$ and
$\overline{R_{(2\; 3)}(i,j,k)}$ equals
\begin{gather*}
  W(k+p-1,j,i-p+1) + W(i,k,j-p+1) + W(j+p-2,i,k+1) \\
  {}+ W(i,j-1,k+1) + W(j-1,k+1,i-p+1) + W(k+p-2,i,j+1).
\end{gather*}
Note that the last two weights in the lemma do not contribute (\eg, for $(i+p-1,j,k)$ to occur we need $i=j$
in which case $F(i+p-3,j-1,k)\subreg = F(i-2,j-1,k)\subreg$). The remaining six weights $(x',y',z')$ all
verify $x'-y'$, $y'-z' \in [0,p]$.
The lemma follows from~\eqref{eq:AARR}, term by term.

If $\tau \sim \nivthreematrix$, a simple exercise shows that either $m$ or $-m$ equals
$i+pj+p^2k$ for some (unique) $i > j \ge k$, $i-k\le p$. In the first case,
$\tau \cong \tau((1\; 2\; 3), (i,j,k))$ and $\overline{R_{(1\; 2\; 3)} (i,j,k)}$ equals
\begin{gather}
  \notag W(k+p-1,j,i-p+1) + W(i-1,k,j-p+2) \\
  \label{eq:niv3_reduction}{} + W(j+p-1,i-1,k+1) + W(i-2,j+1,k+1) \\
  \notag {} + W(j-1,k+1,i-p+1) + W(k+p-2,i,j+1).
\end{gather}
Everything works as in the previous situation, except that the fourth through the sixth weight in the lemma can fail to be restricted by
having their second and third coordinate differ by $p+1$. Using the cyclic symmetry~$\theta'$ exploited in the lemma, we may assume without
loss of generality that $i = k+p$ and $i \ne j+1$ (because not all three equalities can hold simultaneously). Then we can already match the
first four terms of~\eqref{eq:niv3_reduction} with the first four weights in the lemma using~\eqref{eq:AARR}. This is even true for the
fifth: that weight in the lemma fails to be restricted iff $j - k \le 1$ and then either $y'-z' = p$ or $x'-z' = 2p$. If $j - k = p - 1$ the
same argument works for the sixth also, so let us assume that $j - k < p-1$.

Note that term~6 in~\eqref{eq:niv3_reduction} equals $-F(i-1,k+p-1,j+1)$ (by~\eqref{eq:change_of_weyl_chamber}) which cancels the
irreducible constituent in~$C_0$ of the reducible $W(j+p-1,i-1,k+1)$ (term~3).  We will be done if we show that $\RR(F(i-1,k+p-1,j+1))$ is
contained in the union of $\RR(\JH(W))$ where $W$ runs over terms~1, 2, 4, 5 in~\eqref{eq:niv3_reduction}. Term~2 suffices:
\begin{align*}
  \RR(F(i-1,k,j-p+2)) = \RR(F(i-1,k+p-1,j+1)).
\end{align*}

In the second case, we dualise: in light of prop.~\ref{prop:conj_basicprops} we only have to show that $\CC(\tau\dual) = \{ -w_0 \lambda :
\lambda \in \CC(\tau)\}$ and that $\r$ and~$\dual$ commute on regular Serre weights, but this is obvious.
\end{proof}

\begin{thm}\label{thm:comparison-with-adps}\

  \textup{(i)} If $\tau$ is of niveau~1, the regular Serre weights predicted in~\cite{bib:ADP} agree exactly with the ones here.
  
  \textup{(ii)} If $\tau$ is of niveau~2, we can write $\tau \sim \nivtwomatrix$, with $m = j + pk$ and $i \ge j > k$, $i - k \le p-1$ \(up
  to swapping $m$ and $pm$\). Then the regular Serre weights predicted in~\cite{bib:ADP} are precisely the ones given by formula~\eqref{eq:wts_combinatorial}
  when the sixth weight on the list in lemma~\ref{lm:combi_conj_gl3}\(ii\) is removed.

  \textup{(iii)} If $\tau$ is of niveau~3, we can write $\tau \sim \nivthreematrix$ with
  $m = i +pj+p^2 k$ and $i > j \ge k$, $i-k \le p$ \(up to dualising~$\tau$\). Then the regular Serre weights
  predicted in~\cite{bib:ADP} are precisely the ones given by formula~\eqref{eq:wts_combinatorial}
  when the following weights are removed from the list in lemma~\ref{lm:combi_conj_gl3}\(iii\): the last three and those among the
  first three of the form $(x',y',z')$ with $x'-z' = p$ and $x'-1 > y' > z'$.
\end{thm}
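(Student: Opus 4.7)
The strategy is to combine the explicit description of $W^?(\tau)$ already available with a direct translation of the ADPS recipe~\cite{bib:ADP} into the coordinate language used here, and then match weights case by case.

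First I would use Proposition~\ref{prop:combi_conj_gl3} together with Lemma~\ref{lm:combi_conj_gl3} to enumerate $W^?(\tau)$ as the union $\bigcup_{\lambda \in \CC(\tau)} \AA(\lambda)$. Recall that $\AA(\lambda)$ contributes either one weight (when $F(\lambda-\rho)\reg \in C_1$) or two weights (when it lies in $C_0$, in which case we also adjoin $\r F$). The first task is therefore to determine, for each of the six to eight representatives $(x',y',z')$ listed in the lemma, which alcove $F(x'-2,y'-1,z')\reg$ lies in; this is an elementary inspection of the inequalities characterising $C_0$ and $C_1$, which also tells us when two different representatives contribute the same weights (the issue flagged in the proof of \ref{prop:combi_conj_gl3}, e.g.\ the collapses occurring on alcove walls).

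Second, I would unwind the ADPS recipe in the tame case. In niveau~1 the recipe is a Weyl orbit prescription on $(i,j,k)$ producing six regular weights (together with the ``extra weights'' of \cite[def.~3.5]{bib:ADP} that match the $\r$-shifts in our $\AA(\lambda)$); in niveau~2 the recipe combines a diagonal twist with the cuspidal parameter, producing, after translation, exactly the seven weights obtained by deleting the sixth entry $(j+p,i,k-1)$ from the list in \ref{lm:combi_conj_gl3}(ii); in niveau~3 the recipe is purely cyclic in $(i,j,k)$ and, after translation, yields the first three weights with the stated boundary exclusion. In each case I would introduce the same Weyl-group/$\theta$, $\theta'$ symmetries already exploited in the proof of \ref{lm:combi_conj_gl3} so that the matching is a term-by-term identification of weights modulo $(p-1)X^0(T)$ rather than a search through orbits.

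Finally, for the equalities asserted in~(i), I would verify that the six weights from \ref{lm:combi_conj_gl3}(i) (plus their lower-alcove reflections) exhaust the ADPS predictions; this is just checking that each of the six $W(\cdot)$ in Jantzen's formula~\eqref{thm:jantzen_formula} (used in the proof of \ref{prop:combi_conj_gl3}) corresponds, via~\eqref{eq:AARR}, to one of the regular ADPS weights. For~(ii) and~(iii) the matching has to be compatible with the duality operation (as was done at the end of the proof of \ref{prop:combi_conj_gl3}), since ADPS's list in niveau~3 was originally written for $m = i+pj+p^2k$ up to dualising.

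The main obstacle will be the translation step: the conventions in~\cite{bib:ADP} (nebentypus character, right cosets, dual normalisation of Frobenius via~\eqref{eq:rho_attached}) differ from ours, and one must keep track of the $\eta$-twist of Lemma~\ref{lm:compare_with_ash} and of the $\theta,\theta'$ cyclic symmetries while identifying the ``extra weights'' of \cite[def.~3.5]{bib:ADP} with the $\r$-partner in $\AA(\lambda)$. Once the dictionary is set up, the niveau~1 comparison is automatic, and the extraction of the excluded weights in niveau~2 and~3 amounts to comparing the alcove location of the surplus representatives in $\CC(\tau)$ against the constraints imposed by the ADPS recipe.
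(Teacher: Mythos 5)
Your plan — enumerate $W^?(\tau)$ via Proposition~\ref{prop:combi_conj_gl3} and Lemma~\ref{lm:combi_conj_gl3}, write out the ADPS recipe in the same $(i,j,k)$ coordinates, and match the two lists modulo $(p-1)X^0(T)$ using the cyclic symmetries $\theta$, $\theta'$ and the alcove positions of the representatives — is the paper's route, essentially step for step. In particular the paper writes out the ADPS weights explicitly in each niveau (e.g.\ $F(i-2,j-1,k)\reg$, $F(j-2,i-1,k)\reg$, $F(j-2,k-1,i)\reg$, etc.\ in niveau~2) and drives the niveau-3 case split by the invariants $\alpha := p-(i-k)$, $\beta := j-k$, $\gamma := i-j-1$, which are cyclically permuted by $\theta'$.

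Where your plan goes astray is in identifying ``the translation step'' (nebentypus, right cosets, the $\eta$-twist of Lemma~\ref{lm:compare_with_ash}, the Frobenius normalisation in~\eqref{eq:rho_attached}) as the main obstacle. Those conversions are about the Hecke action on group cohomology, which is irrelevant to Theorem~\ref{thm:comparison-with-adps}: this theorem compares two \emph{predicted} weight sets, and both are sets of irreducible $\fpb[\gln]$-modules given by their highest weights, so there is nothing to convert. The paper's proof accordingly makes no appeal to Lemma~\ref{lm:compare_with_ash}. The real work, which your plan acknowledges only in passing as ``boundary exclusions,'' is the degenerate-case bookkeeping: in niveau~2 the case $j=k+1$, where the second ADPS expression $pm \equiv (k+p)+p(j-1)$ is inadmissible and the fourth and fifth weights of Lemma~\ref{lm:combi_conj_gl3}(ii) fail to be restricted; in niveau~3 the $\alpha,\beta,\gamma$ trichotomy (all non-zero / exactly one zero / exactly two zero), which is what pins down precisely which of the first three weights is excluded by the $x'-z'=p$ condition in the theorem. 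Without spelling those cases out your outline is not yet a proof, but it is pointed in the right direction.
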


\begin{proof} We use the explicit description of $\CC(\tau)$ in terms of congruences as in the proof of~\ref{lm:combi_conj_gl3}.

(i) This is obvious.

(ii) Note that according to~\cite{bib:ADP} we write $m = j+pk$ (note that $0 \le j-k \le p-1$) and
$pm \equiv (k+p) + p(j-1) \pmod{p^2-1}$ (note that $0 \le (k+p) - (j-1) \le p-1$ unless $j = k+1$, in which case $pm$ cannot
be expressed in this way). So the regular weights predicted there are $F(i-2,j-1,k)\subreg$, $F(j-2,i-1,k)\subreg$,
$F(j-2,k-1,i)\subreg$ and, if $j \ne k+1$, $F(i-2,k+p-1,j-1)\subreg$, $F(k+p-2,i-1,j-1)\subreg$, $F(k+p-2,j-2,i)\subreg$
together with the reflections~$\r F$ for any~$F$ in this list that is in the lower alcove. Suppose first that $j \ne k+1$.
As $F := F(k+p-2,i-1,j-1) \in C_0$ and $\r F = F(j-2,i-1,k)\subreg$, the latter weight is redundant in the list just given and we obtain
the union of $\AA(i,j,k)$, $\AA(j,k,i-p+1)$, $\AA(i,k+1,j-p)$, $\AA(k+p,i,j-1)$, $\AA(k+p,j-1,i-p+1)$ as required. If $j = k+1$,
the fourth and fifth weight in lemma~\ref{lm:combi_conj_gl3}\(ii\) fail to be restricted and we can match up the three terms on the list just given
with the first three weights in the lemma by noting that $F(j-2,i-1,k)\subreg = F(k+p-2,i-1,j-1)\subreg$.

(iii) Let $\alpha := p-(i-k)$, $\beta := j-k$, $\gamma := i-j-1$. These are permuted by~$\theta'$ from the proof of
lemma~\ref{lm:combi_conj_gl3}(iii) and we can assume without loss of generality that either (a) $\alpha$, $\beta$, $\gamma$ are all non-zero,
(b) $\alpha = 0$ and the other two non-zero, or (c) $\alpha = \beta = 0$, $\gamma\ne 0$. Note that one of the first three weights
in the lemma will be excluded by the condition in the theorem iff we are in case (b) in which case precisely $(i,j,k)$ is affected.

If (a) holds, we write $m = i+pj+p^2 k$, $pm \equiv (k+p)+p(i-1) +p^2 j \pmod {p^3-1}$, $p^2 m \equiv (j+p) + p(k+p-1) + p^2(i-1) \pmod
{p^3-1}$. So the regular weights predicted by~\cite{bib:ADP} are $F(i-2,j-1,k)\subreg$, $F(k+p-2,i-2,j)\subreg$, $F(j-1,k-1,i-p)\subreg$
together with the reflections~$\r F$ for any~$F$ in this list that is in the lower alcove. Now note that the first three weights in the lemma
are all restricted.

If (b) holds, the expression for~$m$ we have to use is $m = k+p(j+1)+p^2 k$ and the weights predicted by~\cite{bib:ADP} are as in (a)
except that the first becomes $F(j-1,k-1,k)\subreg$ which equals the third. On the other hand, we should only use the second
and the third weights of the lemma, and we are fine as both are restricted.

If (c) holds, the expressions for~$m$ and $p^2 m$ are as in (b) whereas $pm$ does not have an expression of the required form. We are fine again
as precisely the first weight among the first three in the lemma fails to be restricted.
\end{proof}

\begin{rk}\label{rk:doud_extension}
  Doud independently extended the conjecture of~\cite{bib:ADP} to include the remaining weights in niveau~3 predicted here \cite{bib:Doud_ss}.
\end{rk}

\section{Computational evidence for the conjecture}\label{sec:comp_evid}

\subsection{Verification of ``extra weights''}

In~\cite{bib:ADP}, Ash, Doud and Pollack consider various explicit irreducible, odd~$\rho$ that are tame at~$p$ and test computationally
whether eigenclasses to which $\rho$ is attached occur in the weights predicted by them (in level $N^?(\rho)$ and nebentype determined by $\det(\rho)$;
see~\cite[p.\,524]{bib:ADP}).  Among them are seven examples of such~$\rho$ of niveau~2, for which conjecture~\ref{conj:serre} predicts one
further weight than the ADPS conjecture. There is another such example in~\cite[\S3]{bib:Doud_3d}. Darrin Doud and David Pollack agreed
to test with their respective computer programs the existence of an eigenclass with the correct eigenvalues in this ``extra weight.''
They indeed verified its existence (in the sense that~\eqref{eq:rho_attached} is satisfied for all $l \le 47$) except in the one case of
level $N=144$, which could not be handled by their programs.

To summarise, here is a table of the extra weight confirmed in each case:
\begin{equation*}
  \begin{array}[h]{c|c|c|c}
      p & \text{level(s) $N$} & \rho|_{I_p} & \text{weight} \\[2pt]
      \hline
      \topspace{20pt}
      5 & \text{73, 83, 89, 
        151, 157} & \bigg(\begin{smallmatrix} \omega_2^8 \\ & \omega_2^{16} \\ && 1 \end{smallmatrix}\bigg) & F(6,3,0) \\[10pt]
      7 & 67 & \bigg(\begin{smallmatrix} \omega_2^{12} \\ & \omega_2^{36} \\ && \omega^3 \end{smallmatrix}\bigg) & F(13,8,3) \\[10pt]
      11 & 17 & \bigg(\begin{smallmatrix} \omega_2^{40} \\ & \omega_2^{80} \\ && 1 \end{smallmatrix}\bigg) & F(16,9,2) \\
  \end{array}
\end{equation*}

The image of~$\rho$ in these cases is either $S_4$ ($N = 17,$ 67, 73), $A_5$ ($N = 89$, 151, 157) or a suitable semi-direct product $(\Z/3
\times \Z/3) \rtimes S_3$ when $N = 83$ \cite{bib:ADP}, \cite{bib:Doud_3d}.

\subsection{Exhaustive calculations}

In the example of level~73 listed above, Doud verified upon request that no eigenclasses to which $\rho$ is
attached occur in regular weights outside $W^?(\rho|_{I_p})$ (as before, in level $N^?(\rho)$ and nebentype determined by $\det(\rho)$).

In \cite[\S 4, \S 5.2, \S 5.3]{bib:Doud_ss}, Doud documents similar exhaustive calculations for several (tame)~$\rho$ of niveau~3, and the results
are again consistent with conj.~\ref{conj:serre}. (As noted in rk.~\ref{rk:doud_extension}, the
extension of the ADPS conjecture in~\cite{bib:Doud_ss} for~$\rho$ of niveau~3, which Doud found independently, agrees on the subset
of regular weights with $W^?(\rho|_{I_p})$.)
In one example only roughly half the non-predicted weights are ruled out due to computational limitations. 

\section{Evidence for a conjecture of Gee}

After an earlier version of this work~\cite{bib:thesis}, Toby Gee made another conjecture for the weights in this context in
terms of the existence of local crystalline lifts with prescribed Hodge--Tate numbers (in the spirit of the
Buzzard--Diamond--Jarvis conjecture)~\cite[\S 4.3]{bib:Gee-lifts}. This conjecture is motivated by the hope of being able to
globalise local lifts and the Fontaine--Mazur--Langlands conjecture. It naturally led him to make a second conjecture to the
effect that $F \in W(\rho)$ implies $F(\lambda) \in W(\rho)$ whenever the Serre weight~$F$ is a constituent of $W(\lambda)$
and $\lambda$ is restricted. In fact for groups that are compact at infinity and $\GL_n$ at~$p$, this second conjecture is
implied by the first.

We verify that Toby Gee's second conjecture holds for the conjectural weight set $W^?(\rho|_{I_p})$ in generic situations.

\begin{prop}\label{prop:gee_evidence}
  Suppose that $\lambda$ is \sd\ in a restricted alcove, $\lambda' \in X_1(T)$, and that $F(\lambda')$ is a Jordan--H\"older constituent
of $W(\lambda)$ as representation of $\gln$. Then for any tame $\tau : I_p \to \GL_n(\fpb)$ that can be extended to~$G_p$,
\begin{equation*}
  F(\lambda') \in W^?(\tau) \Rightarrow F(\lambda) \in W^?(\tau).
\end{equation*}
\end{prop}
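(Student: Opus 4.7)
The strategy is to apply proposition~\ref{prop:generic_pred}(b) twice: once to $\lambda'$, to turn the hypothesis $F(\lambda') \in W^?(\tau)$ into an isomorphism $\tau \cong \tau(w',\lambda''+\rho)$ for some dominant $\lambda'' \uparrow \lambda'$, and once to $\lambda$, in the reverse direction, to deduce $F(\lambda) \in W^?(\tau)$. The bridge between the two applications is the strong linkage principle together with transitivity of the $\uparrow$-ordering.

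By the strong linkage principle~\eqref{prop:stronglinking}, $\lambda' \uparrow \lambda$, so $\lambda'$ is obtained from $\lambda$ by a finite chain of dot-action reflections $s_{\alpha_i, n_i p}$ in $W_p$. Each such reflection acts as an isometry on $X(T)\tens \R$ (with respect to any $W$-invariant inner product) and permutes the family of affine hyperplanes $H_{\alpha,np}$ defining the alcove structure; consequently the distance from $\lambda' + \rho'$ to the union of these hyperplanes equals that of $\lambda + \rho'$. So if $\lambda$ is $\delta$-deep in its alcove, $\lambda'$ is $\delta$-deep in its alcove with the same $\delta$. The containment $\lambda' \in X_1(T) \subset \overline{A_{res}}$ forces this alcove to lie in $A_{res}$ once $\delta > 0$, i.e., to be restricted. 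Thus $\lambda'$ itself is sufficiently deep in a restricted alcove.

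Hence proposition~\ref{prop:generic_pred}(b) applies to $\lambda'$, and from $F(\lambda') \in W^?(\tau)$ produces $w' \in W$ and a dominant $\lambda''$ with $\lambda'' \uparrow \lambda'$ such that $\tau \cong \tau(w', \lambda''+\rho)$. Concatenating the defining chains of $\lambda'' \uparrow \lambda'$ and $\lambda' \uparrow \lambda$ gives $\lambda'' \uparrow \lambda$ by transitivity, and a second application of proposition~\ref{prop:generic_pred}(b)---in the ``if'' direction, using the hypothesis on $\lambda$---yields $F(\lambda) \in W^?(\tau)$.

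The main subtlety is verifying the inheritance of ``sufficiently deep'' from $\lambda$ to $\lambda'$ uniformly in $p$, which is precisely what the isometry argument in the second paragraph secures: because the $W_p$-action preserves the hyperplane arrangement set-wise and distances exactly, no $p$-dependent constants are lost in passing from $\lambda$ to $\lambda'$, so the same depth threshold works for both invocations of proposition~\ref{prop:generic_pred}.
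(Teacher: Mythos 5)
Your first deduction---that $F(\lambda')$ being a Jordan-H\"older constituent of $W(\lambda)$ as a $\gln$-module implies $\lambda' \uparrow \lambda$ via the strong linkage principle---is where the proof breaks, and this gap is fatal to the entire argument. Proposition~\ref{prop:stronglinking} concerns the decomposition of $W(\lambda)$ as a module over the \emph{algebraic group} $G = GL_n$: if $F(\mu)$ is a $G$-constituent then $\mu \uparrow \lambda$. But the hypothesis of proposition~\ref{prop:gee_evidence} is about constituents as representations of the \emph{finite group} $\gln$, and these are not the same. A $G$-constituent $F(\mu)$ with $\mu \uparrow \lambda$ dominant but not restricted (this already happens for $n=4$: the alcoves $C_{0'}$ and $C_{0''}$ lie $\uparrow$-below the top restricted alcove $C_5$ yet are not restricted) further decomposes as a $\gln$-module via the Steinberg tensor product theorem, $F(\mu) \cong F(\mu_0) \tens F(\mu_1)$ with $\mu = \mu_0 + p\mu_1$, and the resulting highest weights $\lambda'$ are small perturbations of $\mu_0$ that in general lie outside the $W_p$-orbit of $\lambda$ altogether. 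Thus $\lambda' \uparrow \lambda$ simply fails, and with it the isometry argument for propagating depth (which implicitly needs $\lambda' \in W_p\cdot \lambda$) and the transitivity step.

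The actual proof in the paper confronts exactly this extra layer of decomposition: it first picks a $G$-constituent $F(\mu)$ with $\mu \uparrow \lambda$ such that $F(\lambda')$ is a $\gln$-constituent of $F(\mu)$, writes $\mu = \mu_0 + p\mu_1$, and proves (via the translation principle and Brauer's formula) that $F(\mu) = \sum_\varepsilon a_\varepsilon F(\mu_0 + \varepsilon)$ in the $\gln$-Grothendieck group, where $a_\varepsilon$ are weight multiplicities of $F(\mu_1)$. It then uses the explicit $\gpxtw$-action on the parameters $\wmu$ together with Stembridge's lemma to manufacture the required dominant $\lambda'' \uparrow \lambda$ with $\tau \cong \tau(w',\lambda''+\rho)$. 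None of this apparatus is dispensable: your shortcut would only go through when every $G$-constituent of $W(\lambda)$ already has restricted highest weight, which is true for $n \le 3$ but false in general.
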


\begin{proof}
By prop.~\ref{prop:stronglinking}, the constituents of $W(\lambda)$ as $\GL_n$-module are of the form $F(\mu)$ for dominant $\mu \uparrow \lambda$. We can choose
such a~$\mu$ such that $F(\lambda')$ is a constituent of $F(\mu)$ considered as representation of $\gln$ and we write $\mu = \mu_0 + p \mu_1$ with
$\mu_0 \in X_1(T)$, $\mu_1 \in X(T)_+$. Note that for $n$ fixed, $\mu_1$ can only take finitely many values modulo~$pX^0(T)$. Let us write
\begin{equation*}
  \ch F(\mu_1) = \sum_{\varepsilon \in X(T)} a_\varepsilon e(\varepsilon)\quad \text{with $a_\varepsilon \in \Z$}.
\end{equation*}

\medskip

\emph{Claim:} If $\lambda$ lies \sd\ in its alcove, then
\begin{equation*}
  F(\mu) = \sum_{\varepsilon \in X(T)} a_\varepsilon F(\mu_0 + \varepsilon)
\end{equation*}
in the Grothendieck group of $\gln$-representations.

\medskip

Restricting $\lambda$ in its alcove if necessary, we may assume that $\mu_0+\varepsilon$ lies in the same alcove
as~$\mu_0$ whenever $a_\varepsilon \ne 0$. In the Grothendieck group of $\GL_n$-modules we can write (using prop.~\ref{prop:stronglinking})
\begin{equation*}
  F(\mu_0) = \sum_{\mu_0' \uparrow \mu_0} b_{\mu'_0,\mu_0} W(\mu'_0),
\end{equation*}
where $b_{\mu'_0,\mu_0} = 0$ if $\mu'_0$ is not dominant. Using thm.~\ref{thm:Steinberg} and prop.~\ref{prop:Brauer_formula},
in the Grothendieck group of $\gln$-modules,
\begin{align*}
  F(\mu) &= F(\mu_0) \tens F(\mu_1) \\
  &= \sum_{\mu_0' \uparrow \mu_0} b_{\mu'_0,\mu_0} W(\mu'_0) \tens F(\mu_1) \\
  &= \sum_{\mu_0' \uparrow \mu_0}\sum_{\varepsilon \in X(T)} a_\varepsilon b_{\mu'_0,\mu_0} W(\mu'_0+\varepsilon) \\
  &= \sum_{\varepsilon \in X(T)} a_\varepsilon F(\mu_0 + \varepsilon).
\end{align*}
The last step made use of the translation principle \cite[II.7.17(b)]{bib:Jan-reps}, which implies that the $b_{\mu'_0,\mu_0}$ only depend on the alcoves
$\mu'_0$ and~$\mu_0$ lie in, and the fact that the $a_\varepsilon$ depend only on the $W$-orbit of~$\varepsilon$.

Using the claim, $F(\lambda') \cong F(\mu_0 +\varepsilon)$ for some weight~$\varepsilon$ of $F(\mu_1)$ and some dominant
$\mu \uparrow \lambda$. If $F(\lambda') \in W^?(\tau)$, $\tau \cong \tau(w,\lambda''+\rho)$ for some dominant $\lambda'' \uparrow \mu_0+\varepsilon$
by prop.~\ref{prop:generic_pred}.
But by the remark after def.~\ref{df:uparrow} such a~$\lambda''$ is of the form $\mu_0' + w'\varepsilon$ for some dominant $\mu'_0 \uparrow \mu_0$ and some $w' \in W$
(in fact, $w'$ underlies the affine Weyl group element taking the alcove of $\mu'_0$ to the alcove of~$\mu_0$). The following simple
manipulation---using~\eqref{eq:jantzen-action} and valid for all $\sigma \in W$---is the key point of the proof:
\begin{multline}\label{eq:3}
  \tau \cong \tau(w,\mu'_0 + w'\varepsilon + \rho) \cong \tau(w,\mu'_0 + pw^{-1}w'\varepsilon + \rho) \\ \cong \tau(\sigma w\sigma^{-1},\sigma\cdot (\mu'_0 +
  pw^{-1}w'\varepsilon) + \rho).
\end{multline}
We choose $\sigma \in W$ so that $\sigma\cdot (\mu'_0 + pw^{-1}w'\varepsilon)$ is dominant. Note that $a_\varepsilon \ne 0$ implies that
$\pi\varepsilon \le \mu_1$ for all $\pi \in W$ \cite[II.2.4]{bib:Jan-reps}. Then the following lemma applies and shows that
\begin{equation}\label{eq:4}
  \sigma\cdot (\mu'_0 + pw^{-1}w'\varepsilon) \uparrow \mu'_0 + p\mu_1 \uparrow \mu_0 + p \mu_1\uparrow \lambda
\end{equation}
(using \cite[II.6.4(4)]{bib:Jan-reps}). Finally apply prop.~\ref{prop:generic_pred} to~\eqref{eq:3}.
\end{proof}

\begin{lm}
  Suppose that $\mu$, $\nu \in X(T)_+$. If $\varepsilon \in X(T)$ such that $w\varepsilon \le \nu$ for all $w \in W$ then
  \[ \sigma \cdot (\mu + p\varepsilon) \uparrow \mu + p\nu \quad \forall \sigma \in W. \]
\end{lm}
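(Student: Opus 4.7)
The plan is to factor the desired relation through the intermediate weight $\mu + p\sigma\varepsilon$, proving the chain
\[
  \sigma\cdot(\mu+p\varepsilon) = \sigma\cdot\mu + p\sigma\varepsilon \;\uparrow\; \mu + p\sigma\varepsilon \;\uparrow\; \mu + p\nu,
\]
and then concluding by transitivity of $\uparrow$. The initial equality is immediate from the definition of the dot action. Throughout I will use the following basic observation: the relation $\uparrow$ is preserved by translation by elements of $pX(T)$, since the shift $\lambda \mapsto \lambda + p\xi$ sends $H_{\alpha,np}$ to $H_{\alpha,(n+\langle\xi,\alpha^\vee\rangle)p}$ and thus carries any increasing chain of affine reflections in $W_p$ to another such chain.

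For the first arrow, translating by $-p\sigma\varepsilon \in pX(T)$ reduces it to the standard assertion $\sigma\cdot\mu \uparrow \mu$ for $\mu$ dominant, which I would prove by induction on $\ell(\sigma)$. Writing $\sigma = s_i\sigma'$ in reduced form forces $(\sigma')^{-1}\alpha_i > 0$, and hence
\[
  \langle\sigma'\cdot\mu+\rho',\alpha_i^\vee\rangle = \langle\mu+\rho',(\sigma')^{-1}\alpha_i^\vee\rangle \ge 1 > 0,
\]
using the dominance of $\mu$ together with the fact that $\langle\rho',\beta^\vee\rangle \ge 1$ for every positive coroot $\beta^\vee$. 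Thus the reflection $s_{\alpha_i,0}$ sends $\sigma'\cdot\mu$ strictly downward to $\sigma\cdot\mu$, and prepending this step to the inductive chain $\sigma'\cdot\mu \uparrow \mu$ completes the argument.

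For the second arrow, specializing the hypothesis to $w = \sigma$ gives $\sigma\varepsilon \le \nu$, so $\nu - \sigma\varepsilon = \sum_i c_i\alpha_i$ with integer $c_i \ge 0$. It then suffices to iterate the following auxiliary claim $\sum_i c_i$ times: \emph{for any $\xi \in X(T)$ and any simple root $\alpha_i$, $\xi \uparrow \xi + p\alpha_i$}. To establish this, I write $\langle\xi+\rho',\alpha_i^\vee\rangle = np + d$ with $n\in\Z$ and $0\le d<p$. If $d = 0$ then the single reflection $s_{\alpha_i,(n+1)p}$ already sends $\xi$ to $\xi + p\alpha_i$. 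If $0 < d < p$, a direct computation gives $s_{\alpha_i,(n+1)p}\cdot\xi = \xi + (p-d)\alpha_i$ and then $s_{\alpha_i,(n+2)p}\cdot(\xi + (p-d)\alpha_i) = \xi + p\alpha_i$, with each reflection weakly increasing $\xi$ in the partial order on $X(T)$.

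The main technical ingredient is the two-reflection jump used in the auxiliary claim, but this is a short explicit calculation once one fixes the form $np+d$ and checks which affine hyperplane to reflect across; everything else is formal bookkeeping with the $\uparrow$-relation, its translation invariance under $pX(T)$, and the inductive descent for $\sigma\cdot\mu \uparrow \mu$.
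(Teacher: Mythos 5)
Your proof is correct, and it takes a genuinely different route from the paper's. The paper proves the lemma by downward induction starting from the known case $\varepsilon = \nu$, via two reduction steps: (R1), which says one may replace $\varepsilon$ by $\varepsilon - i\alpha$ for $0 \le i \le \langle\varepsilon,\alpha^\vee\rangle$ and is established by appealing to Jantzen's \cite[II.6.9]{bib:Jan-reps}; and (R2), Stembridge's lemma, which produces a chain of dominant weights joining any dominant $\varepsilon$ to $\nu$. A further reduction using a reduced word for $w$ with $w\varepsilon$ dominant handles non-dominant $\varepsilon$. Your argument instead splits off the Weyl part and the translation part into two explicit chains through the intermediate weight $\mu + p\sigma\varepsilon$: the first chain $\sigma\cdot\lambda \uparrow \lambda$ for dominant $\lambda$ is the standard length-induction argument, and the second builds up $\mu + p\sigma\varepsilon \uparrow \mu + p\nu$ one simple root at a time via the two-reflection computation $s_{\alpha_i,(n+2)p}\,s_{\alpha_i,(n+1)p} = $ translation by $p\alpha_i$, choosing the hyperplanes so the intermediate stop stays in the interval $[\xi,\xi+p\alpha_i]$. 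The upshot is that your approach is more self-contained and elementary: you avoid both Stembridge's lemma and the reference to Jantzen, replacing them with explicit affine reflection chains; the paper's approach, by contrast, recycles machinery it already cites, and its (R1) is slightly more general in that it applies to any positive root $\alpha$, not just simple roots. One small inessential slip: the translation $\lambda \mapsto \lambda + p\xi$ carries $H_{\alpha,(n-\langle\xi,\alpha^\vee\rangle)p}$ to $H_{\alpha,np}$, so the index shift in your parenthetical remark has the opposite sign; this does not affect the conclusion (translation by $pX(T)$ normalizes $W_p$ and preserves $\uparrow$), which you use correctly.
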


\begin{rk}
  In fact the converse is true if $\mu \in C_0$ \(but not in general\).
\end{rk}

\begin{proof}
  We will use two reduction steps:

  \begin{enumerate}
  \item[(R1)] Suppose the lemma is true for~$\varepsilon$ and that $\alpha \in R^+$ such that $\langle \varepsilon,\alpha\dual\rangle \ge 0$.
    Then the lemma is true for $\varepsilon - i\alpha$ for all $0 \le i \le \langle \varepsilon,\alpha\dual\rangle$.
  \item[(R2)] Suppose that $\varepsilon \lneqq \nu$ are both dominant. Then there exists $\alpha \in R^+$ such that $\varepsilon \le \nu -\alpha$ and
    $\nu - \alpha$ is dominant.
  \end{enumerate}
  
  Assume first the validity of these two claims. Note that the lemma is true for $\varepsilon = \nu$ \cite[II.6.4(5)]{bib:Jan-reps}.
  Suppose next that $\varepsilon$ is dominant.  By (R2) there is a sequence $\varepsilon = \varepsilon_0 \le \varepsilon_1 \le \dots \le
  \varepsilon_r = \nu$ with $\varepsilon_j$ dominant and $\beta_j := \varepsilon_j-\varepsilon_{j-1} \in R^+$ for all $j > 0$. Note that
  $\langle \varepsilon_j,\beta_j\dual\rangle = \langle \varepsilon_{j-1},\beta_j\dual\rangle + \langle \beta_j,\beta_j\dual\rangle \ge 2$.
  Then (R1) with $i = 1$ implies inductively that the lemma is true for~$\varepsilon$. Finally for a general~$\varepsilon$ choose $w \in W$
  such that $w\varepsilon$ is dominant. Write $w = s_1\cdots s_r$, a reduced expression in terms of simple reflections~$s_j$. A standard
  argument shows that $\varepsilon = \varepsilon_r \le \varepsilon_{r-1} \le \dots \le \varepsilon_0 = w\varepsilon$ with
  $\varepsilon_j = s_{j+1}\cdots s_{r-1}s_r \varepsilon$.  Since the lemma is true for $w\varepsilon$, (R1) with $i$ maximal shows
  inductively that the lemma is true for~$\varepsilon$.

  To prove (R1), choose $w \in W$ such that $\lambda := w\cdot (\mu+p\varepsilon) \in X(T)_+-\rho'$. Then
  \begin{equation*}
    0 \le pi < \langle \mu+\rho',\alpha\dual\rangle + p\langle \varepsilon,\alpha\dual\rangle = \langle \lambda+\rho',w \alpha\dual \rangle.
  \end{equation*}
  In particular, $w \alpha \in R^+$. Then \cite[II.6.9]{bib:Jan-reps} applies (note that the case $i = 0$ is vacuous and use \cite[II.6.4(5)]{bib:Jan-reps}):
  \begin{equation*}
    \sigma\cdot (s_{w\alpha}w \cdot (\mu+p\varepsilon) + p i w\alpha) \uparrow \lambda\quad \forall \sigma \in W.
  \end{equation*}
  Replacing $\sigma$ by $\sigma s_{w\alpha}w$ and using that the lemma holds for~$\varepsilon$ proves (R1):
  \begin{equation*}
    \sigma \cdot (\mu + p(\varepsilon-i\alpha)) \uparrow \mu + p\nu \quad \forall \sigma \in W.
  \end{equation*}

  (R2) is also known as Stembridge's lemma and is true for arbitrary root systems; see~\cite[2.3]{bib:Rapoport_Satake} for a short proof due to Waldspurger.
\end{proof}

\section{Theoretical evidence for the conjecture}\label{sec:theo_evid}

Recall that we assume that $n > 1$. Let $\A := \A_\Q$ and define
\begin{align*}
  U_1(N) &:= \{ g \in \GL_n(\ZZ) : \text{last row $\equiv (0,\dots,0,1) \pmod N$} \}, \\
  \Sigma_1(N) &:= \{ g \in \GL_n(\A^\infty) : g_N \in U_1(N) \},
\end{align*}
where $g_N = \prod_{l | N} g_l$. Then $(\uo,\sigo)$ is a Hecke pair, and we denote by $\ha$ the associated Hecke algebra.
Recall that the Hecke pair $(\go,\so)$ and the associated Hecke algebra $\ho$ were defined in~\S\ref{sub:hecke_alg}.

\begin{lm}\label{lm:loc_glob_hecke}
  There is an isomorphism of Hecke algebras
  \begin{align*}
    \ha &\congto  \ho \\
  \intertext{determined by requiring that}
    [\uo s \uo] &\mapsto [\go s \go]
  \end{align*}
  for all $s \in \so$.
\end{lm}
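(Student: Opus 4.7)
The plan is to realize $(\go, \so)$ and $(\uo, \sigo)$, with $\so$ embedded diagonally in $\sigo \subset GL_n(\A^\infty)$, as \emph{strongly compatible} Hecke pairs in the sense of \S\ref{sub:hecke}. The desired isomorphism $\ha \cong \ho$ sending $[\uo s \uo] \mapsto [\go s \go]$ then follows formally from the general framework recalled there.

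First I would verify weak compatibility. The only nontrivial condition is $\sigo = \so \cdot \uo$. Given $g \in \sigo$, decompose $\det g = q \cdot v$ using $\A^{\infty,\times} = \Q^\times_{>0} \times \hat{\Z}^\times$ (class number one for $\Q$); since $\det g_\ell \in \Z_\ell^\times$ for every $\ell \mid N$, in fact $q \in \zn^\times$ with $q > 0$. Setting $d := \dia(q,1,\dots,1) \in \so$ and $u_0 := \dia(v,1,\dots,1) \in \uo$, the element $h := d^{-1} g u_0^{-1}$ lies in $\sigo \cap SL_n(\A^\infty)$. Strong approximation for $SL_n$ ($n \ge 2$) in the form $SL_n(\A^\infty) = SL_n(\Q) \cdot (SL_n(\A^\infty) \cap \uo)$ yields $h = \gamma k$ with $\gamma \in SL_n(\Q)$ and $k \in \uo$, so $g = (d\gamma)(k u_0)$. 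A local check at each $\ell \mid N$, using $(d\gamma)_\ell = g_\ell (ku_0)_\ell^{-1} \in \uo_\ell$, shows $d\gamma \in \so$. The condition $\uo \cap \so^{-1}\so = \go$ follows from $GL_n(\Q)^+ \cap GL_n(\hat{\Z}) = SL_n(\Z)$ together with the last-row condition.

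Next I would verify strong compatibility: $\uo\, s\, \uo = \go\, s\, \uo$ for every $s \in \so$, equivalently $\uo = \go \cdot K_s$ with $K_s := \uo \cap s \uo s^{-1}$. Via the exact sequence $1 \to SL_n(\A^\infty) \cap \uo \to \uo \xrightarrow{\det} \hat{\Z}^\times \to 1$ and $\det \go = \{1\}$, this splits into (a) $\det K_s = \hat{\Z}^\times$ and (b) $SL_n(\A^\infty) \cap \uo = \go \cdot (SL_n(\A^\infty) \cap K_s)$. For (a): at $\ell \mid N$ one has $s_\ell \in \uo_\ell$ by definition of $\so$, whence $K_{s,\ell} = \uo_\ell$ and $\det$ surjects onto $\Z_\ell^\times$; at $\ell \nmid N$, the Cartan decomposition $s_\ell = k_1 \dia(\ell^{a_1},\dots,\ell^{a_n}) k_2$ identifies $K_{s,\ell}$ with a parahoric-type subgroup of $GL_n(\Z_\ell)$ that visibly contains $\dia(u,1,\dots,1)$ for all $u \in \Z_\ell^\times$. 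Statement (b) is exactly strong approximation for $SL_n$: $\go = SL_n(\Z) \cap \uo$ is dense in the open compact $SL_n(\A^\infty) \cap \uo$, and $SL_n(\A^\infty) \cap K_s$ is an open subgroup, so $\go$ hits every coset of the finite quotient.

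The main obstacle is bridging the gap between strong approximation, which holds for $SL_n$ but fails for $GL_n$, and the $GL_n$-statements actually required; this is overcome throughout by systematically splitting off the $\G_m$ (determinant) part via the class-number-one identification $\A^{\infty,\times} = \Q^\times_{>0} \times \hat{\Z}^\times$, and handling the resulting $SL_n$-piece with strong approximation. Once both compatibilities are in hand, \S\ref{sub:hecke} yields the claimed isomorphism, and the explicit formula $[\uo s \uo] \mapsto [\go s \go]$ is immediate from the definition of the induced map on left cosets $s \uo \mapsto s \go$.
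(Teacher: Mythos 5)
Your proof is correct and follows essentially the same route as the paper's: both reduce the lemma to showing the Hecke pairs are strongly compatible, prove $\sigo = \so\uo$ via strong approximation for $SL_n$ after peeling off the determinant (the paper's ``$GL_n(\Q)\uo = GL_n(\A^\infty)$'' is your class-number-one factorization in disguise), and prove $\uo = \go(\uo\cap {}^s\uo)$ by again separating the $SL_n$-part (your density statement is the paper's surjection $\go \onto SL_n(\Z/M)$) from the determinant part (your Cartan-decomposition argument is the paper's ``elementary divisors'').
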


\begin{proof}
It suffices to show that $(\go,\so) \subseteq (\uo,\sigo)$ are strongly compatible Hecke pairs (\S\ref{sub:hecke}).
To see that $\so\uo = \sigo$, note that by strong approximation and as $n > 1$,
\[ \GL_n(\Q) \uo = G(\A^\infty) \supseteq\sigo, \]
so for $\sigma \in \sigo$ write
$\sigma = \gamma u$ ($\gamma \in \GL_n(\Q)$, $u \in \uo$). Without loss of generality, $\det \gamma > 0$. Then it follows immediately that
$\gamma \in \so$. Also, $\uo \cap \so^{-1}\so = \go$ is obvious. 

Finally we need to show that $\uo s\uo = \go s \uo$ for all $s\in\so$, or equivalently that $\uo = \go (\uo\cap {}^s \uo)$.
As $s_N \in \uo$ and $\uo$ is compact open, $\uo \cap {}^s \uo \supseteq \uo \cap U(M)$
for some $(M,N) = 1$, where $U(M) = \{g \in \GL_n(\ZZ) : g \equiv 1 \pmod M\}$. Since $\go \onto \SL_n(\Z/M)$, it follows
that
\[ \{u \in \uo : \det u \equiv 1 \pmod M\} \subseteq \go(\uo\cap {}^s \uo). \]
The desired equality follows by noting that the determinant of the right-hand side is~$\ZZ\s$, which can be seen by using the theorem on elementary
divisors for all $l | M$.
\end{proof}

The following proposition will be used to obtain cohomology classes from algebraic automorphic representations. It is similar in spirit to
\cite[\S3]{bib:AStev} for $n = 3$. For $x \in \R$, $\lfloor x\rfloor$ denotes the largest integer less than or equal to~$x$.

\begin{prop}\label{prop:coho_autrep}
  Suppose that $\pi$ is a cuspidal automorphic representation of $\GL_n(\A_\Q)$ of conductor~$N$. Suppose moreover that
  for some integers
  \[ c_1 > c_2 > \dots > c_n, \]
  $\pi_\infty$ corresponds, under the Local Langlands Correspondence, to a representation of $W_\R$ sending $z\in\C\s$ to
  \[ \dia(z^{-c_1}\bar z^{-c_n},z^{-c_2}\bar z^{-c_{n-1}},\dots,z^{-c_n}\bar z^{-c_1})\tens (z\bar z)^{(n-1)/2} \in \GL_n(\C) \]
  and~$j$ to an element of determinant $(-1)^{\sum c_i + \lfloor n/2\rfloor}$ \(in particular, $\pi$ is regular algebraic; c.f.\
  \cite{bib:Clozel}, def.~1.8 and def.~3.12\). Let~$r$ be the irreducible representation of $\GL_{n/\C}$ with highest weight
  $(c_1-(n-1),c_2-(n-2),\dots, c_n)$. Then there is an $\ho$-equivariant injection
    \[ (\pi^\infty)^\uo \INTO H^e(\go,r) \]
  for any~$e$ in the range
  \begin{equation}\label{eq:range_of_e}
    \Big\lfloor\Big(\frac n2\Big)^2\Big\rfloor \le e < \Big\lfloor\Big(\frac {n+1}2\Big)^2\Big\rfloor.
  \end{equation}
\end{prop}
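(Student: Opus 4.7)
The strategy is standard: identify $H^e(\go, r)$ with the cohomology of an adelic locally symmetric space, use the Borel-Matsushima decomposition to embed cuspidal cohomology into it Hecke-equivariantly, and then invoke the computation of $(\mathfrak{gl}_n, K_\infty)$-cohomology for cohomological cuspidal automorphic representations of $GL_n$.

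First, set $K_\infty = O(n)\cdot \R_{>0}$ and $K_\infty^\circ = SO(n)\cdot\R_{>0}$. Strong approximation for $SL_n$, valid because $n > 1$, gives $GL_n(\A^\infty) = GL_n(\Q)^+\cdot\uo$ with $GL_n(\Q)^+\cap\uo = \go$, exactly as in the proof of lemma~\ref{lm:loc_glob_hecke}. Picking out the connected component of the adelic double coset space containing the identity and combining with lemma~\ref{lm:loc_glob_hecke}, one obtains an $\ho$-equivariant isomorphism
\[ H^e(\go, r) \cong H^e\bigl(GL_n(\Q)\backslash GL_n(\A)/\uo\, K_\infty^\circ,\,\widetilde r\bigr), \]
where $\widetilde r$ is the automorphic local system attached to $r$.

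Second, Borel-Matsushima (together with the fact that cuspidal cohomology injects into full cohomology) produces a Hecke-equivariant embedding
\[ \bigoplus_{\pi'} H^e(\mathfrak{gl}_n, K_\infty;\pi'_\infty\otimes r)\otimes ((\pi')^\infty)^\uo \INTO H^e\bigl(GL_n(\Q)\backslash GL_n(\A)/\uo\, K_\infty^\circ,\,\widetilde r\bigr), \]
with $\pi'$ running over cuspidal automorphic representations of $GL_n(\A)$. The hypotheses on $\pi_\infty$---the explicit $L$-parameter recipe together with the parity $\det\pi_\infty(j)=(-1)^{\sum c_i + \lfloor n/2\rfloor}$---are precisely the conditions for $\pi$ to be regular algebraic in the sense of~\cite[def.\ 3.12]{bib:Clozel}, with the infinitesimal character of $\pi_\infty$ matching that of $r^\vee$ and with the full $K_\infty$-type (not just the $K_\infty^\circ$-type) compatible with the central character of $r$. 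By the classification of cohomological representations of $GL_n(\R)$ (Vogan-Zuckerman, as summarised in~\cite[Lemma\ 3.14]{bib:Clozel}), such a $\pi_\infty$ is essentially tempered and $H^e(\mathfrak{gl}_n, K_\infty;\pi_\infty\otimes r)\ne 0$ for every $e$ in the range~\eqref{eq:range_of_e}. Restricting the Matsushima embedding to the $\pi$-isotypic summand then gives the desired injection $(\pi^\infty)^\uo\INTO H^e(\go, r)$.

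The main technical point lies in Step 1: one must carefully match connected components and verify that the Hecke action transports correctly through the classical/adelic identification. This ultimately reduces to the strong compatibility of Hecke pairs in lemma~\ref{lm:loc_glob_hecke} and is essentially the $GL_n$-analogue of the discussion in \cite[\S 3]{bib:AStev} for $n = 3$. The parity hypothesis in the statement is exactly what is needed so that the computation in Step 3, which a priori concerns $(\mathfrak{gl}_n, K_\infty)$-cohomology of $\pi_\infty \otimes r$ with respect to the disconnected $K_\infty$, is what actually governs the cohomology of the connected locally symmetric space appearing on the right-hand side of the isomorphism in Step 1.
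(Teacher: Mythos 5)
Your high-level plan matches the paper's: adelic-to-classical identification, Matsushima-type embedding of cuspidal cohomology, and the computation of relative Lie algebra cohomology for the specific $\pi_\infty$. But two concrete gaps remain.

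First, you cite Clozel's lemma~3.14 for the nonvanishing of $H^e(\mathfrak{gl}_n,K_\infty;\pi_\infty\otimes r)$ in the entire range~\eqref{eq:range_of_e}, but that lemma is stated only for $n$ \emph{even}, where $\pi_\infty$ is parabolically induced from discrete series on a parabolic of type $(2,\dots,2)$. For $n$ odd, $\pi_\infty$ is an induction from a parabolic of type $(2,\dots,2,1)$, with a $GL(1)$ factor, and one must separately verify that the sign character of $\pi_0(O(n))$ coming out of Delorme's calculation (Clozel, p.~120) is \emph{even}. It is precisely at this point that the parity hypothesis on $\det\pi_\infty(j)$ is used for $n$ odd; you assert that the parity hypothesis ``is exactly what is needed,'' but the verification requires writing out the discrete series parameters and the $GL(1)$ character explicitly and checking the resulting sign, which the paper does and you do not.

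Second, the identification of the relevant adelic cohomology with $H^e(\go,r)$ Hecke-equivariantly is not merely ``standard bookkeeping''; you label it the main technical point but reduce it to a citation of lemma~\ref{lm:loc_glob_hecke}. The paper's proof devotes a sublemma and a universal $\delta$-functor argument to identifying $H^e(X,\L_r)^\uo$ with $H^e(\Gamma,r)$ for $\Gamma = GL_n(\Q)\cap\uo$ (which strictly contains $\go$ with index two), and then uses the corestriction splitting of $H^\bullet(\Gamma,r)\to H^\bullet(\go,r)$; Hecke equivariance is then checked in degree zero. Your variant---working with $K_\infty^\circ = SO(n)\R_{>0}$ rather than $O(n)$---does collapse $\Gamma\backslash GL_n(\R)/O(n)$ to $\go\backslash GL_n^+(\R)/SO(n)\R_{>0}$, eliminating the index-two step, but then Clozel's theorem~3.15, which supplies the Matsushima embedding you invoke, is stated with $O(n)$, so you still owe a comparison between $(\ssl_n,O(n))$- and $(\ssl_n,SO(n))$-cohomology before the displayed injection is licit. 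That comparison is exactly the $\pi_0(K_\infty)$-sign question, so the two gaps are entangled: the parity hypothesis has to be cashed out at one of these two points, and you only gesture at it.
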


\begin{rk}\ 

  \begin{enumerate}
  \item   As $N$ is the conductor of~$\pi$, $(\pi^\infty)^\uo$ is one-dimensional. Thus we get a Hecke eigenclass in group cohomology.
  \item It is known that $\go$ has virtual cohomological dimension $n(n-1)/2$. In particular, $H^e(\go,r) = 0$ for $e > n(n-1)/2$
    \(see \cite{bib:Serre_coh_disc}, p.~132 and the remark on p.~101\).
  \end{enumerate}
\end{rk}

\begin{proof} Let $G := \GL_{n}$. For any open compact subgroup $U \subseteq G(\A^\infty)$, let
\begin{gather*}
  \widetilde X_U := G(\R)/ \rO(n) \times G(\A^\infty)/U, \\
  X_U = G(\Q) \backslash \big( G(\R)/ \rO(n) \times G(\A^\infty)/U \big),
  \end{gather*}
and denote by $\pi_U : \widetilde X_U \to X_U$ the natural projection. Then $\widetilde X_U$ and~$X_U$ are
real manifolds of dimension $\binom{n+1}2$ ($X_U$ is not necessarily connected). If $U$ is sufficiently small, $G(\Q)$
acts properly discontinuously on $\widetilde X_U$ and the constant sheaf on $\widetilde X_U$ with fibre~$r$
gives rise to a local system on the quotient $X_U$, which will be denoted by~$\L_r$: for any open subset $Z \subseteq X_U$,
$\L_r(Z)$ is the set of locally constant functions
\begin{equation}
  \label{eq:loc_sys}
  \{f: \pi_U^{-1}(Z) \to r : f(\gamma x) = \gamma f(x) \ \forall \gamma \in G(\Q),\ x \in \pi_U^{-1}(Z)\}.  
\end{equation}

Notice that $r\dual$ is the representation of~$G$ associated to $\pi_\infty$ defined in~\cite{bib:Clozel}, pp.~112--113
(where it is denoted by~$\tau$). By~\cite[3.15]{bib:Clozel} there is a $G(\A^\infty)$-equivariant injection
\[ \bigoplus_\Pi H^e (\ssl_n,\rO(n); \Pi_\infty \tens r) \tens \Pi^\infty \INTO \ilim_V H^e(X_V,\L_r) \]
where $\Pi$ runs through all cuspidal automorphic representations of $G(\A_\Q)$ whose central character agrees with that of $r\dual$ on
$\R\s_+$, and where the limit is over all (sufficiently small) compact open subgroups $V \subseteq G(\A^\infty)$. The cohomology groups
on the left-hand side are $(\mathfrak{g},K)$-cohomology.
 The $G(\A^\infty)$-action on the
right-hand side is as in sublemma~\ref{sublm:coho_locsym}(ii) below. Here $\ssl_n$ denotes the complexified Lie algebra of $\SL_n(\R)$.

When $n$ is even, lemma~3.14 in~\cite{bib:Clozel} shows that
\[ H^e (\ssl_n,\rO(n); \pi_\infty \tens r) \cong \wedge^{e-n^2/4}\, \C^{n/2-1}, \]
(by the remark on p.~120 in the same reference, there is no quadratic character appearing on the left-hand side).

When $n$ is odd, the condition on the determinant of~$j$ made above implies that $\pi_\infty$ is the induction, using
a parabolic subgroup of type $(2,2,\dots, 2,1)$, of $\sigma_1\tens \sigma_2\tens \cdots \tens \sigma_{(n-1)/2} \tens \chi$ (keeping Clozel's notation), where
$\sigma_i$ is the discrete series representation of central character $|.|^{-c_i-c_{n+1-i}+n-1}\sgn^{c_i+c_{n+1-i}+1}$ and lowest weight
$c_i-c_{n+1-i}+1$, and $\chi = |.|^{-c_{(n+1)/2}+(n-1)/2}\sgn^{c_{(n+1)/2}}$. This has the consequence that the character considered in~\cite{bib:Clozel},
p.~120 is \emph{even} and again we get (without quadratic character on the left-hand side):
\[ H^e (\ssl_n,\rO(n); \pi_\infty \tens r) \cong \wedge^{e-(n^2-1)/4}\, \C^{(n-1)/2}. \]

Thus we get an $\ho$-equivariant homomorphism
\[ (\pi^\infty)^\uo \INTO \Big(\ilim_U H^e(X_U,\L_r)\Big)^\uo \]
for any~$e$ in the range claimed above. It remains to identify the right-hand side as a group cohomology module.

Let $H^e(X,\L_r) = \ilim_V H^e(X_V,\L_r)$ to simplify notation ($X$ itself will not have any meaning). The following elementary sublemma
will be useful.

\begin{sublm}\label{sublm:coho_locsym}
  Suppose that $U$, $V$ are sufficiently small compact open subgroups of $G(\A^\infty)$ and $e\ge 0$ arbitrary.
  \begin{enumerate}
  \item If $U \subseteq V$ consider the natural projection map $f: X_U \to X_V$. Then $f^* \L_r \cong \L_r$ \(canonically\) and
    the induced map $f^* : H^e(X_V,\L_r) \to H^e(X_U,\L_r)$ is an injection.
  \item If $g\in G(\A^\infty)$ and $U \subseteq gVg^{-1}$, denote by $[g]$ the natural map $X_U \to X_V$ given by right multiplication by~$g$.
    Again there is a canonical isomorphism $[g]^* \L_r \cong \L_r$ and an induced map $[g]^* : H^e(X_V,\L_r) \to H^e(X_U,\L_r)$.
    It is compatible with the maps defined in \(i\) and yields a smooth left action of $G(\A^\infty)$ on the direct limit $H^e(X,\L_r)$.
  \item The image of the natural map $H^e(X_U,\L_r) \to H^e(X,\L_r)$, which is an injection by \(i\), is precisely the subspace of $U$-invariants.
  \end{enumerate}
\end{sublm}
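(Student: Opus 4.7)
The plan is to treat the three parts separately, relying on the fact that for sufficiently small $U$, $V$ the $G(\Q)$-action on $\tilde X_U$, $\tilde X_V$ is free and properly discontinuous, so that $X_U$, $X_V$ are genuine quotient manifolds and $\pi_U$, $\pi_V$ are (trivial) covering projections. This is what ``sufficiently small'' is meant to buy us, and I would invoke it at the outset.

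For (i), I would unwind definition~\eqref{eq:loc_sys}: sections of $\L_r$ over $Z\subset X_V$ are $G(\Q)$-equivariant locally constant functions $\pi_V^{-1}(Z)\to r$. The inclusion $U\subset V$ induces a natural projection $\tilde f:\tilde X_U\to\tilde X_V$ lying over $f:X_U\to X_V$, and restriction of sections along $\tilde f$ gives the canonical isomorphism $f^*\L_r\cong\L_r$ (the key point being that $r$ is the same abstract $G(\Q)$-module in both constructions). The map $f$ is a finite covering onto each connected component (of some degree $d$ that may vary by component but is always a positive integer, equal essentially to $[V:U]$ modulo how $G(\Q)\backslash G(\A^\infty)$ double cosets partition). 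Injectivity of $f^*$ then comes from the standard transfer $f_*:H^e(X_U,\L_r)\to H^e(X_V,\L_r)$ with $f_*f^*=d\cdot\mathrm{id}$ on each component, which is invertible because coefficients are in~$\C$.

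For (ii), I would factor $[g]$ as a homeomorphism $X_U\xrightarrow{\sim}X_{g^{-1}Ug}$ given by right multiplication by $g$ followed by the covering projection $X_{g^{-1}Ug}\to X_V$ coming from the inclusion $g^{-1}Ug\subset V$. Each factor carries a canonical isomorphism of local systems (the homeomorphism case is trivial; the projection case is (i)), and their composition is $[g]^*\L_r\cong\L_r$. Compatibility with (i) is routine: everything is induced by maps at the level of $\tilde X$'s, and the resulting diagram commutes by construction. For the left action on $H^e(X,\L_r)=\varinjlim_U H^e(X_U,\L_r)$, I would set $g\cdot\alpha:=[g^{-1}]^*\alpha\in H^e(X_{g^{-1}Ug},\L_r)$ for $\alpha\in H^e(X_U,\L_r)$; associativity follows from $[g_1g_2]=[g_1]\circ[g_2]$ via contravariance, and smoothness from the observation that for $u\in U$ the map $[u^{-1}]:X_{u^{-1}Uu}=X_U\to X_U$ is the identity because $hu^{-1}U=hU$, so $U$ fixes every class in $H^e(X_U,\L_r)$.

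For (iii), the inclusion ``$\supseteq$'' is the smoothness statement just proved. For ``$\subseteq$'', given $\alpha\in H^e(X,\L_r)^U$ represented by $\beta\in H^e(X_W,\L_r)$ for some sufficiently small $W$, I would first replace $W$ by $W':=\bigcap_{u\in U}uWu^{-1}\cap U$, which is normal and finite-index in $U$, so that $X_{W'}\to X_U$ is a Galois cover with group $U/W'$ on each component. Standard characteristic-zero invariant theory (the averaging projector $\frac{1}{|U/W'|}\sum_{\bar u\in U/W'}\bar u$) identifies $H^e(X_U,\L_r)$ with $H^e(X_{W'},\L_r)^{U/W'}$, and $U$-invariance of $\alpha$ in the limit, combined with injectivity from (i), forces $\beta$ (after passing to $W'$) to lie in this invariant subspace and hence to come from $H^e(X_U,\L_r)$.

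The only real obstacle is the bookkeeping in the opening paragraph: one must verify that ``sufficiently small'' makes all $G(\Q)$-stabilisers on $\tilde X_U$ trivial so that $X_U$ really is a manifold quotient and the covers $X_W\to X_U$ really are Galois. Given $G=GL_n$ with $n>1$, this is a standard consequence of the torsion-free-ness of principal congruence subgroups of level~$\ge3$, and can be arranged by intersecting $U$ with such a subgroup throughout; once this is in place, everything reduces to standard transfer and averaging arguments.
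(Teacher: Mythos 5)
The paper does not actually write out a proof of this sublemma---it is labelled ``elementary''---so I compare your proposal against what a correct proof would look like. Your overall strategy (covering-space theory, transfer for injectivity in~(i), averaging over a normal open subgroup for~(iii), and the preliminary reduction to torsion-free arithmetic groups) is the natural one and is sound, and parts (i) and (iii) go through as you describe.

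There is, however, a concrete error in part~(ii). You define the action by $g\cdot\alpha := [g^{-1}]^*\alpha$ and justify associativity via the identity ``$[g_1g_2]=[g_1]\circ[g_2]$.'' Since $[g]$ is \emph{right} multiplication by $g$, applying $[g_1]$ and then $[g_2]$ is right multiplication by $g_1g_2$, so the correct functoriality (when the composites are defined) is $[g_1g_2]=[g_2]\circ[g_1]$, hence $[g_1]^*\circ[g_2]^*=[g_1g_2]^*$. With your formula this yields
$g_1\cdot(g_2\cdot\alpha)=[g_1^{-1}]^*[g_2^{-1}]^*\alpha=[g_1^{-1}g_2^{-1}]^*\alpha=[(g_2g_1)^{-1}]^*\alpha=(g_2g_1)\cdot\alpha$,
which is a \emph{right} action, not the left action claimed. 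The fix is to take the definition the sublemma itself is pointing at, $g\cdot\alpha := [g]^*\alpha$; then $g_1\cdot(g_2\cdot\alpha)=[g_1]^*[g_2]^*\alpha=[g_1g_2]^*\alpha=(g_1g_2)\cdot\alpha$, a left action. This is also the convention forced by the subsequent use in the proof of proposition~\ref{prop:coho_autrep}: there the Hecke operator $T_s=\sum s_i$ acts on $f\in H^0(X,\L_r)^{\uo}$ by $(s_i\cdot f)(g)=f(gs_i)$, i.e.\ by $[s_i]^*$, not $[s_i^{-1}]^*$. Your smoothness observation and the invariance argument in~(iii) are unaffected by the change, since for $u\in U$ both $[u]$ and $[u^{-1}]$ induce the identity on $X_U$.
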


Choose an auxiliary prime $q \nmid 2N$, and let
\[ U = \{ g \in U_1(N) : g \equiv 1 \pmod q \} \unlhd\, \uo.\]
The projection of~$U$ to $G(\Q_q)$ contains no elements of finite order, which
implies that $U$ is sufficiently small in the above sense, so that $\L_r$ is defined on~$X_U$. (In fact, any other
sufficiently small open normal subgroup~$U$ of~$\uo$ would do.) By the sublemma,
$H^e(X,\L_r)^\uo = H^e(X_U,\L_r)^{\uo/U}$.

For now, we allow $r$ to be any $\C[G(\Q)]$-module. Let $\Gamma := G(\Q) \cap \uo$, an arithmetic subgroup of~$G$.

\medskip

\emph{Claim:} $H^\expbull(X_U,\L_r)^{\uo/U}$ and $H^\expbull(\Gamma,r)$ are universal $\delta$-functors, and they are canonically isomorphic.

\medskip

First note that if $H \le K$ are two groups and $V$ is an injective $K$-module (over~$\C$, say), then $V|_H$ is an injective $H$-module.
The reason is that the left adjoint of the forgetful functor $\textbf{$K$-mod} \to \textbf{$H$-mod}$ is
$\C K \tens_{\C H}\mathrm{-}$, which is exact. By putting $H = \Gamma$, $K = G(\Q)$, we see that $H^\expbull(\Gamma,r)$ is a universal
$\delta$-functor.

As for $H^\expbull(X_U,\L_r)^{\uo/U}$, note that it is at least a $\delta$-functor: $\uo/U$ is a
finite group so that taking $\uo/U$-invariants is an exact functor (we are in characteristic zero!). To demonstrate universality, it suffices
to show that $H^e(X_U,\L_r) = 0$ if $e > 0$ and $r$ is an injective $\C[G(\Q)]$-module. By the strong approximation theorem,
\[ G(\A) = \coprod_{i=1}^t G(\Q)g_i UG(\R)  \]
for some $g_i \in G(\A^\infty)$, which implies that
\[ X_U \cong \coprod_{i=1}^t (G(\Q) \cap {}^{g_i} U)\backslash G(\R)/\rO(n). \]
Under this isomorphism, $\L_r$ gives rise to a local system on each space in the disjoint union.
It is easy to see that on the $i$-th piece it is the
one induced by the constant sheaf on $G(\R)/\rO(n)$ with fibre~$r$ under the $(G(\Q) \cap {}^{g_i} U)$-action (as
in~\eqref{eq:loc_sys}). It will be denoted by~$\L_r$ as well. 
By~\cite{bib:Tohoku}, corollaire~3 to th\'eor\`eme~5.3.1, $H^e((G(\Q) \cap
{}^{g_i} U)\backslash G(\R)/\rO(n),\L_r) = 0$ if $e > 0$ and $r$ injective as $(G(\Q) \cap {}^{g_i} U)$-module; in particular if $r$ is
injective as $G(\Q)$-module. (Note that for the constant sheaf~$\underline r$, $H^i(G(\R)/ \rO(n),\underline r) = 0$ for $i > 0$ since
$G(\R)/ \rO(n)$ is contractible; see~\cite{bib:Bredon}, thm.~III.1.1 for the comparison of sheaf cohomology with singular cohomology.)

To check that the two universal $\delta$-functors above are canonically isomorphic, it is enough to identify them in degree~0. By~\eqref{eq:loc_sys},
$H^0(X_U,\L_r)^{\uo/U}$ is the set of locally constant, $G(\Q)$-invariant functions $f:G(\A)/\uo \rO(n) \to r$. By the strong approximation
theorem, using that $\det \uo = \ZZ\s$, such a function is determined by its values on $G(\R)$; by local constancy it is even determined
by $f(1)\in r$. It follows easily that the set of possible values of $f(1)$ is precisely $r^\Gamma = H^0(\Gamma,r)$. This establishes the claim.

\medskip

\emph{Claim:} The map of $\delta$-functors $H^\expbull(\Gamma,r) \xrightarrow{\res} H^\expbull(\go,r)$ is a (canonically split) injection.

\medskip

As $(\Gamma:\go) = 2$ (sign of the determinant), this is clear: $\frac 12 \cores$ provides the splitting, where
$\cores$ is the corestriction map.

\medskip

\emph{Claim:} The above canonical injection
\[ H^e(X,\L_r)^\uo \INTO H^e(\go,r) \]
of $\delta$-functors is $\ha \cong \ho$-equivariant.

\medskip

Note that the Hecke action on the left is defined in terms of the $G(\A^\infty)$-action of sublemma~\ref{sublm:coho_locsym}, whereas the one on the
right is the usual one on group cohomology (see~\S\ref{sub:hecke}). Both Hecke actions are $\delta$-functorial, so again it suffices to check
the claim in degree~0. Given $s \in S_1(N)$, we know by lemma~\ref{lm:loc_glob_hecke} that the Hecke operator $T_s = [\go s\go]\in \ho$ corresponds to
$T_s = [\uo s\uo] \in \ha$. Moreover, the strong compatibility~\eqref{lm:loc_glob_hecke} implies that if $s_i \in \so$ ($1 \le i\le n$) are chosen such that
\[ \go s\go = \coprod s_i\go, \]
then also
\[ \uo s\uo = \coprod s_i\uo. \]
An element of $H^0(X,\L_r)^\uo$ is a locally constant, $G(\Q)$-invariant function
\begin{equation*}
  f:G(\A)/\uo \rO(n) \to r
\end{equation*}
which is determined by $f(1) \in
r^\Gamma \subseteq r^\go$. By the sublemma, $T_s(f)$ is the function sending $g\in G(\A)$ to $\sum f(gs_i)$; in particular, the image of~1 is $\sum f(s_i) =
\sum s_i f(1) = T_s(f(1))$ (we used that $f$ is locally constant).  This verifies the Hecke equivariance.\end{proof}

The following lemma will be needed below. If $K$ is a CM field, we denote by~$K^+$ its totally real subfield, so that
$[K:K^+] \le 2$. By the Galois group of a number field~$K$ we mean the Galois group of the normal closure of $K/\Q$.

\begin{lm}\label{lm:CM_existence} Suppose that $p > 2$.

  \begin{enumerate}
  \item The Galois group of a quartic \(i.e., degree~4 over~$\Q$\), totally complex CM field can be either of $\Z/2 \times \Z/2$, $\Z/4$ or~$D_8$.
  \item There is a quartic, totally complex CM field~$K$ with Galois group $\Delta \cong D_8$, unramified at~$p$ such that $\Frob_{p}\in \Delta$ is
    \(a\) trivial, \(b\) the complex conjugation, \(c\) a \(non-central\) element of order~2 not fixing $K^+$, \(d\) a non-central element of order~2 fixing $K^+$, 
    or \(e\) an element of order~4.
  \end{enumerate}
\end{lm}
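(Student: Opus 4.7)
For (i), I would argue group-theoretically. Let $\tilde K$ denote the Galois closure of $K/\Q$, and write $G = \Gal(\tilde K/\Q)$, $H = \Gal(\tilde K/K)$, $H_0 = \Gal(\tilde K/K^+)$. Since $K^+$ is real quadratic, $K^+/\Q$ is Galois, so $H_0 \triangleleft G$ has index $2$; and since $K/K^+$ is Galois of degree $2$, $H \subset H_0$ with $[H_0:H] = 2$. The natural map $G \INTO S_4$ coming from the action on the four embeddings of $K$ is a transitive embedding, and complex conjugation $c \in G$ lies in $H_0 \setminus H$ (it fixes $K^+$ but not $K$) and acts without fixed points on $G/H$ (since $K$ is totally complex), forcing $c$ to have cycle type $(2,2)$. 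The transitive subgroups of $S_4$ are $V_4$, $\Z/4$, $D_8$, $A_4$ and $S_4$; the chain $H \subset H_0 \subset G$ of successive index-$2$ subgroups excludes $A_4$ and $S_4$, since $A_4$ has no index-$2$ subgroup. This gives (i).

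For (ii), fix the presentation $D_8 = \langle r, s \mid r^4 = s^2 = 1,\ srs = r^{-1}\rangle$ and take $H = \{1, s\}$. The unique index-$2$ subgroup of $D_8$ containing $H$ is $H_0 = \{1, r^2, s, r^2 s\}$, and among elements of $H_0 \setminus H$ only $r^2$ acts freely on $G/H$; so complex conjugation must equal the central involution $r^2$. The five conjugacy classes $\{1\}$, $\{r^2\}$, $\{s, r^2 s\}$, $\{rs, r^3 s\}$, $\{r, r^3\}$ correspond respectively to cases (a)--(e); in particular, the non-central involutions $s, r^2 s$ lie in $H_0$ (so fix $K^+$) while $rs, r^3 s$ do not. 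A direct computation of $\Frob_p$-orbits on $G/H$ and $G/H_0$ translates these into splitting data at $p$: (a) $p$ splits completely in $K$; (b) $p$ splits in $K^+$ with both primes inert in $K/K^+$; (c) $p$ is inert in $K^+$ and splits in $K/K^+$; (d) $p$ splits in $K^+$ with one prime splitting and the other inert in $K/K^+$; (e) $p$ is inert in $K$.

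I then realise $K = F(\sqrt\alpha)$ for a suitable real quadratic $F = K^+$ and totally negative $\alpha \in F\s$. Choose $F = \Q(\sqrt d)$ unramified at $p$ with the splitting of $p$ in $F$ as just prescribed, and let $\sigma$ generate $\Gal(F/\Q)$. Pick an auxiliary rational prime $q$ coprime to $2p$ and to the discriminant of $F$ that splits in $F$ as $q\O_F = \qq_1 \qq_2$ with $\sigma(\qq_1) = \qq_2$. By weak approximation in $F$, find $\alpha \in F\s$ satisfying: $\alpha$ is totally negative; $\ord_{\qq_1}(\alpha) = 1$ and $\ord_{\qq_2}(\alpha) = 0$; and at each $\pp \mid p$, $\alpha$ is a unit whose residue in $\O_F/\pp$ is a square or non-square according to whether $\pp$ is to split or stay inert in $K/F$. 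The first condition makes $K$ totally complex; the last (together with $p$ odd) ensures $K/F$ is unramified above $p$ and realises the required splitting; and the $\qq$-conditions force $\ord_{\qq_2}(\sigma(\alpha)/\alpha) = 1$, hence $\sigma(\alpha)/\alpha \notin (F\s)^2$. Since also $\alpha \notin (F\s)^2$, an elementary analysis of $F(\sqrt\alpha, \sqrt{\sigma\alpha})$ over $F$ shows that $K/\Q$ is non-Galois, whence by (i) its Galois closure has group $D_8$; the splitting data at $p$ then place $\Frob_p$ in the prescribed conjugacy class.

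The main obstacle is just the bookkeeping: identifying exactly which local conditions at $p$ realise each conjugacy class of $D_8$, and checking that the totally-negative condition at the infinite places, the quadratic-residue conditions at primes above $p$, and the valuation conditions at $\qq_1, \qq_2$ all involve disjoint places, so that a single $\alpha$ meeting them simultaneously exists by weak approximation. No global reciprocity obstruction intervenes.
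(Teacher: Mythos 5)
Your proof of (i) is essentially the paper's: both arguments pin down the cycle type of complex conjugation $c$ inside the transitive image in $S_4$ (it must be $(2,2)$ since $K$ is totally complex) and then exclude $A_4$ and $S_4$. You do this via the chain $H \subset H_0 \subset G$ of index-$2$ subgroups, which kills $A_4$ (no index-$2$ subgroup) and hence $S_4$; the paper instead observes that $G$ lies inside $C_{S_4}(c) \cong D_8$. Same content, slightly different packaging.

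For (ii), you take a genuinely different route from the paper's actual proof, though not from the paper's \emph{remarks}. The paper's proof of lemma~\ref{lm:CM_existence}(ii) is an explicit construction: it takes $K = \Q(\sqrt{a+b\sqrt d})$ and imposes concrete congruence conditions on $a,b,d$ modulo $p$ (plus some inequalities) tailored case by case, checking directly that $d$ and $a^2 - b^2 d$ lie in distinct nontrivial square classes (forcing $D_8$) and that the local conditions at $p$ hold. Your proof instead uses weak approximation on $\alpha \in F^\times$ (with the auxiliary split prime $q$ forcing $\sigma\alpha/\alpha \notin (F^\times)^2$ and hence non-normality of $K/\Q$, and local square/nonsquare conditions at $\pp|p$ dictating the splitting). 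This is precisely the strategy the paper sketches in remark~\ref{rk:theo_evid_n=2m} for $GL_{2m}$ and then declines to use here in favour of ``a more direct argument''. Both arguments are correct; yours buys generality (it works verbatim for any $m$, as the remark notes), the paper's buys explicitness. Your translation of the five $D_8$-conjugacy classes into splitting data at $p$ is accurate, and the places involved in your weak approximation (the two real places, the two places over $q$, and the one or two places over $p$) are pairwise distinct, so no obstruction arises. One small point you could make more explicit: once $\sigma\alpha/\alpha$ is not a square, $K/\Q$ is non-Galois, and then by part (i) (which forces $|G| \in \{4, 8\}$) the Galois closure has group exactly $D_8$; you assert this but it deserves a sentence.
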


Note that (ii)(a)--(e) exhaust the conjugacy classes of~$\Delta$. The analogous result is true for the other two kinds of quartic, totally complex CM fields
and also if $p = 2$ \cite[\S13]{bib:thesis}.

For both the proof of the lemma and prop.~\ref{prop:GL4_theo_evid} below it will be useful to keep at hand a diagram of the subgroup lattice of~$D_8$,
together with explicit generators of each subgroup.

\begin{proof}
  (i) The Galois group is a transitive permutation group on four letters which has a central element of order 2 (as $L$ is CM). The result
  follows by considering the centralisers of a 2-cycle (it is the Klein 4-group) and of a permutation of cycle type $(2,2)$ (it is dihedral
  of order 8).

(ii) It would be possible to give a proof which works more generally, as alluded to in rk.~\ref{rk:theo_evid_n=2m}. We give a more direct argument instead.

Consider $K = \Q(\sqrt{a+b\sqrt d})$ with integers $a$, $b$, $d$, with normal closure (over~$\Q$) denoted by~$L$. If $d > 0$ and $a^2-b^2d >
0$ lie in different, non-trivial square classes of~$\Q\s$ and $a < 0$ then $K$ is a quartic CM field with dihedral Galois group of order~8.  For, $K$ is a
totally complex quadratic extension of $\Q(\sqrt d)$, a totally real quadratic field. Moreover, $K/\Q$ is not Galois, as it would otherwise
contain a square root of $(a+b\sqrt d)(a -b\sqrt d) = a^2-b^2d > 0$, which is ruled out by the assumptions.

Note that cases (c) and (d) are equivalent upon replacing $K$ by one of the two quartic, totally complex subfields $K' \subseteq L$ that are not conjugate to~$K$.
Let us henceforth assume that we are not in case (c).

In addition to requiring $a < 0$, $a^2-b^2d > 0$ and $d > 1$ with $d$ square-free, we also demand that $b > 0$, $a < -(b^2d+1)/2$ and that:
\begin{itemize}
  \setlength{\itemsep}{5pt}
\item $a \equiv d \equiv 1$, $b \equiv 0 \pmod p$ and $d \nmid a$ in case (a),
\item $\leg ap = -1$, $d \equiv 1$, $b \equiv 0 \pmod p$ and $d \nmid a$ in case (b),
\item $\leg{2a-1}p = -1$, $d \equiv 1$, $b \equiv a-1 \pmod p$ in case (d),
\item $\leg dp = \leg{a^2-b^2d}p = -1$ and $d\nmid a$ in case (e).
\end{itemize}
(Choose $d$ first and $a$ last.)
In the fourth case, choose~$d$ with $\leg dp = -1$, $\leg{d-1}p = 1$.  Then $a \equiv d$, $b \equiv 1 \pmod p$ will work.

Clearly the conditions ensure that $a^2-b^2d$ and $d$ lie in different, non-trivial square classes.  The corresponding CM
field~$K$ is unramified at~$p$, as $d$ and $a^2-b^2d$ are prime to~$p$. In the first two cases, $L^+$ is split at~$p$, as
$\leg dp = \leg{a^2-b^2 d} p = 1$. Moreover $\qp(\sqrt{a+b\sqrt d}) = \qp$ in the first, but not the second, case as the
reduction mod~$p$ of $a+b\sqrt d$ is a square, resp.\ a non-square, in~$\fp\s$. Thus $K$ is as required in the first two
cases. In the third case, $K^+$ is split at~$p$ whereas the other two quadratic subfields of~$L$ are inert at~$p$,
establishing that $K$ is as in (d). The fourth case is similar with $F := \Q(\sqrt{d(a^2-b^2d)})$ split at~$p$ and the other
two quadratic subfields of~$L$ inert at~$p$, once we see that $F$ is indeed the subfield of~$L$ fixed by the elements of
order~4 in~$\Delta$. As $L = \Q(\alpha, \alpha')$ with $\alpha = \sqrt{a+b\sqrt d}$, $\alpha' = \sqrt{a-b\sqrt d}$, any
element of~$\Delta$ is determined by its action on~$\alpha$ and~$\alpha'$.  The conjugates of~$\alpha$ are $S =
\{\pm\alpha,\pm \alpha'\}$. Given $s_1$, $s_2 \in S$, $s_1 \ne \pm s_2$ there is a $\tau \in \Delta$ such that $\tau(\alpha)
= s_1$ and $\tau(\alpha') = s_2$ (as $\# \Delta=8$). Thus an element of order~4 in~$\Delta$ is given by~$\tau$ with
$\tau(\alpha) = \alpha'$, $\tau(\alpha') = -\alpha$. In particular, $\tau(\sqrt d) = -\sqrt d$ and hence $\tau$ fixes
$\alpha\alpha'\sqrt d$, as required.
\end{proof}

Fix an isomorphism $\iota: \C \to \qpb$.

\begin{prop}\label{prop:GL4_theo_evid} Suppose that $n = 4$ and that $p > 2$.
  Given $\mu \in X(T)_+$ with $\mu_1+\mu_4 = \mu_2+\mu_3$ and suppose that $w$ is in the dihedral subgroup $\langle (1\; 2\; 4\; 3),
  (1\;2)(3\;4) \rangle \subseteq S_4 \cong W$ of order~8.

  Then there is an irreducible, odd Galois representation $\rho : G_\Q \to \GL_4(\fpb)$ with $\rho|_{I_p} \cong \tau(w,\mu+\rho)$,
  integers~$N$ prime to~$p$ and $e \ge 0$, a Serre weight~$F$ occurring as Jordan--H\"older constituent of $W(\mu)$, and a Hecke eigenclass in
  \[ H^e(\go,F) \]
  with attached Galois representation~$\rho$.
\end{prop}

Note that the definition $\tau(w,\mu)$ in~\eqref{eq:tau(w,mu)} makes sense even if $\wmu$ is not good.

\need{What can one get if $p=2$? I think still works --- idea: take a prime $p' \equiv 1\pmod 4$ (exclude finitely many, in terms of~$K$).
Choose all $q_i$ as below such that moreover $p' | q_i^2 + 1$. Then look at $p'$-torsion instead of 2-torsion.}

\begin{rk}\label{rk:theo_evid_n=2m}
  This all generalises to $\GL_{2m}$, $m > 2$, \emph{assuming that the automorphic induction needed exists and satisfies the required local
  compatibility properties}. Let us just state the general result and say
  a few words about the changes in the proof.  Here one starts with $\mu\in X(T)_+$ with $\mu_i + \mu_{2m+1-i}$ being independent of~$i$.
  The tame inertial Galois representations obtained are all $\tau(w,\mu+\rho)$ where $w \in S_{2m}$ such that $w$
  respects the equivalence relation induced by $i \sim 2m+1-i$. For generic such~$\mu$ in the lowest alcove one thus obtains $2^m m!/(2m)!$
  of all predicted tame inertial Galois representations in weight $F(\mu)$~\eqref{prop:generic_pred}.
  
  The only part of the proof that does not immediately generalise is the construction of appropriate CM fields. The
  largest possible Galois group for a totally complex CM field~$K$ of degree $2m$ over~$\Q$ is the ``hyperoctahedral'' group $\Delta :=
  (\Z/2)^m \rtimes S_m$ with $S_m$ acting in the natural way. \(It is the largest since it is isomorphic to the centraliser of an element of
  cycle type $2^m$ in $S_{2m}$.  The subgroup of $w \in S_{2m}$ defined in the previous paragraph is the centraliser of $(1\; 2m)(2\;
  2m-1)\dots (m\; m+1)$.\) For each conjugacy class~$C$ of~$\Delta$ we need to be able to choose such a $K = K(C)$ which is unramified at~$p$
  and with $\Frob_{p} \in C$.  First one finds a totally real number field $K^+$ of degree~$m$ over~$\Q$, unramified at~$p$, whose
  Galois group is~$S_m$ and with $\Frob_{p} \in \overline C \subseteq S_m$. \(Use weak approximation on degree~$m$ polynomials over~$\Q$. In particular
  one may force that the Frobenius elements at auxiliary unramified primes are of all cycle types in their action on the roots. Finally
  an elementary lemma of Jordan says that no proper subgroup of a finite group contains an element of each conjugacy class.\) One chooses
  an auxiliary prime~$q$ split in~$K^+$ and uses weak approximation to find $\alpha \in (K^+)\s$ such that \(i\)~$\alpha$ is totally
  negative, \(ii\) $\ord_\qq(\alpha)$ is~0 for all but one prime $\qq | q$ for which it is~1, \(iii\) $p$ is unramified in $K =
  K^+(\sqrt\alpha)$, and \(iv\) the set of $\pp | p$ in~$K^+$ that split in~$K$ correspond to the conjugacy class~$C$. \(By analysing the
  conjugacy classes of~$\Delta$ one sees that the class of the Frobenius element in~$\Delta$ is determined precisely by its image~$\overline C$
  in the Galois group of~$K^+$---\ie\ the information of how many primes $\pp | p$ there are in~$K^+$ of each residue degree~$d$---plus, for
  each $d \ge 1$, the number of~$\pp$ of degree~$d$ that split in~$K$.\)
\end{rk}

\begin{proof}[Proof of prop.~\ref{prop:GL4_theo_evid}] By lemma~\ref{lm:CM_existence}, choose a quartic totally complex CM field $K/\Q$, unramified at~$p$,
with normal closure~$L$ and Galois group $\Delta := \Gal(L/\Q)$ dihedral of order~8. The conjugacy class of $\Frob_p$ will be irrelevant until the end of the
proof. Let $\mu(K)$ be the torsion subgroup of~$\oks$ and let $w(K)$ be its order; finally let $c \in \Delta$ denote
the complex conjugation (the unique central element of order~2).

We now want to make a careful choice of a Hecke character~$\chi$ over~$K$. For this recall (or notice):

\begin{sublm} Fix an ideal~$\f$ in~$\ok$.
  There is a bijection between Hecke characters~$\chi$ over~$K$ of conductor dividing~$\f$ and 3-tuples $(\epsilon,\epsilon_\f,\epsilon_\infty)$,
  where $\epsilon : I_K^\f \to \C\s$ \($I_K^\f$ being the ideals prime to~$\f$\), $\epsilon_\f : (\ok/\f)\s \to \C\s$, and
  $\epsilon_\infty : K_\infty\s \to \C\s$ continuous such that for all $x \in K\s$, $x$ prime to~$\f$,
  \begin{equation}
    \label{eq:eps_heckechar}
    \epsilon((x)) = \epsilon_\f(x)\epsilon_\infty(x).
  \end{equation}
  \(By weak approximation, $\epsilon_\f$ and $\epsilon_\infty$ are in fact determined by~$\epsilon$.\)
  The bijection is determined by demanding that
  \[ \chi(x) = \epsilon_\f(x_\f)^{-1}\epsilon_\infty(x_\infty)^{-1}\epsilon((x)) \]
  for all $x \in \A_K\s$ that are prime to~$\f$.
\end{sublm}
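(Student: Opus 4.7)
The plan is to exhibit mutually inverse constructions between the two sides, using the open subgroup $U_\f := \prod_{v|\f}\O_{K_v}\s \times \prod_{v\nmid\f}K_v\s$ of ``idèles prime to $\f$'' and the decomposition $\A_K\s = K\s \cdot U_\f$ coming from weak approximation. \emph{Forward direction.} Given $\chi$ of conductor dividing $\f$, set $\epsilon_\infty := \chi^{-1}|_{K_\infty\s}$, which is automatically continuous. Since the conductor divides $\f$, the restriction $\chi^{-1}|_{\prod_{v|\f}\O_{K_v}\s}$ is trivial on $\prod_{v|\f}(1+\f\O_{K_v})$ and hence descends to a character $\epsilon_\f$ of $(\ok/\f)\s$. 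At each prime $\qq\nmid\f$ the character $\chi$ is unramified, so $\chi(\pi_\qq)$ is independent of the choice of local uniformiser (viewed as an idèle supported at $\qq$); set $\epsilon(\qq) := \chi(\pi_\qq)$ and extend multiplicatively to $I_K^\f$. The relation~(\ref{eq:eps_heckechar}) then drops out of $1 = \chi(x) = \prod_v\chi_v(x)$ for $x\in K\s$ prime to $\f$, with the three partial products over the finite places outside $\f$, the places dividing $\f$, and the archimedean places equalling $\epsilon((x))$, $\epsilon_\f(x)^{-1}$, and $\epsilon_\infty(x)^{-1}$ respectively.

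\emph{Reverse direction.} Given a compatible triple, use the displayed formula to define a continuous character $\chi$ on $U_\f$, where $(x)$ makes sense as a fractional ideal prime to $\f$. By weak approximation at the finitely many places $v|\f$, every $x\in\A_K\s$ admits a decomposition $x = yu$ with $y\in K\s$ and $u \in U_\f$: one picks $y$ matching the $v$-adic valuations of $x$ at each $v|\f$. Setting $\chi(x) := \chi(u)$ is a well-defined character of $\A_K\s/K\s$ iff $\chi$ is trivial on the overlap $K\s \cap U_\f = \{x \in K\s : x \text{ prime to }\f\}$, which is precisely the content of~(\ref{eq:eps_heckechar}). Continuity of $\chi$ globally then follows from continuity on the open subgroup $U_\f$, and the conductor visibly divides $\f$ by construction.

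These two constructions are mutually inverse by inspection of the formulas. For the parenthetical ``determined by $\epsilon$'' remark, weak approximation shows that the set of $x \in K\s$ prime to $\f$ has dense image in $(\ok/\f)\s \times K_\infty\s$, so $\epsilon((x))$ determines the product $\epsilon_\f\cdot\epsilon_\infty$ on a dense subset; continuity of $\epsilon_\infty$ together with discreteness of $(\ok/\f)\s$ then separates the two factors. The only place where the content of the sublemma is used is the well-definedness step in the reverse direction, where~(\ref{eq:eps_heckechar}) is exactly the needed triviality on $K\s \cap U_\f$; everything else is routine bookkeeping between the ideal-theoretic and idèlic pictures of Hecke characters.
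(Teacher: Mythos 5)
Your proof is correct and complete. The paper does not actually supply a proof---the sublemma is introduced with ``recall (or notice)''---so there is nothing to compare against; your argument is the standard identification of id\`elic Hecke characters of conductor dividing $\f$ with ideal-theoretic Gr\"ossencharaktere of modulus $\f$, obtained by splitting $\A_K\s = K\s \cdot U_\f$ via weak approximation and unwinding the compatibility condition~\eqref{eq:eps_heckechar} as triviality on the overlap $K\s \cap U_\f$.
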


\need{ref?}

Fix for each $\sigma : K \to \C$ an integer $n_\sigma$ with the property that $n_\sigma + n_{\sigma c} = w$ for all~$\sigma$
(some~$w$).  These will be pinned down later.  Let $\epsilon_\infty : K_\infty\s \to \C\s$ be given by $\epsilon_\infty(x) =
|x|^{-3/2} \prod_{\sigma} \sigma(x)^{n_\sigma}$.  (Here, $|.|$ denotes the usual adelic norm on $K\s \backslash \A_K\s$
and on its subgroup $K_\infty\s$, and $\sigma(x)$ means $\sigma(x_v)$ for the unique place $v|\infty$ which is induced by~$\sigma$ on~$K$.)

\medskip

\emph{Claim:} $\epsilon_\infty(\oks)$ is finite, and hence contained in $\mu_{w(L)}(\C)$.

\medskip

Fix an embedding $j:L \to \C$ and for $\tau \in \Delta$ let $m_\tau = n_{j\tau|K}$. In particular, $m_{\tau} + m_{\tau c} = w$ for all~$\tau$.
It will suffice to show that $\prod_\tau \tau(-)^{m_\tau}$ kills $(\O_L\s)\subtf$. For,
$j\prod_\tau \tau(-)^{m_\tau} = \prod_{\sigma} \sigma(-)^{n_\sigma[L:K]}$ on $\O_K\s$.

By the unit theorem, $(\O_L\s)\subtf \INTO \Map(S_\infty,\R)_0$ as $\Delta$-module, where $S_\infty$ is the set of archimedean places of~$L$
and the subscript ``0'' denotes the subspace of $f : S_\infty \to \R$ with $\sum_v f(v) = 0$. As $\Delta$ acts transitively on $S_\infty$ with
stabiliser $\langle c\rangle$, $\Map(S_\infty,\R)_0 \cong \R[\Delta/\langle c\rangle]_0$ as $\R \Delta$-module, where the subscript ``0'' now refers
to $\sum_{\Delta/\langle c\rangle} \lambda_g g$ with $\sum_{\Delta/\langle c\rangle} \lambda_g = 0$ (\ie, the augmentation ideal). It will suffice to show that
for $\bar\nu \in \Delta/\langle c\rangle$, the action of $\sum_\Delta m_\tau \tau(-)$ on $\bar\nu \in \R[\Delta/\langle c\rangle]$ is independent of $\bar\nu$.
Indeed,
\begin{align*}
  \sum_{\tau \in \Delta} m_\tau \tau\bar\nu = \sum_{\tau \in \Delta/\langle c\rangle} (m_\tau \tau + (w-m_\tau)\tau c)\bar\nu
  = w \sum_{\tau\in \Delta/\langle c\rangle} \overline{\tau\nu} = w \sum_{\tau\in \Delta/\langle c\rangle} \overline{\tau}
\end{align*}
is independent of $\bar\nu$. This proves the claim.

Note that $L$ does not have any \emph{abelian} totally complex CM subfields, so the claim implies that $\epsilon_\infty(\oks) \subseteq \{\pm 1\}$.

Using the Cebotarev density theorem, choose distinct rational primes $q_i \nmid 2p$ ($1 \le i \le t$, any $t\ge 3$) that stay inert in~$K$
(equivalently, $\Frob_{q_i}\in \Delta$ has order 4). Denote by~$\qq_i$ the prime of~$K$ lying above~$q_i$.

If $\alpha \in \oks$ and $\alpha \equiv 1\pmod {\prod \qq_i}$ then in particular $\epsilon_\infty(\alpha) \equiv 1 \pmod {q_1}$ (in the
subring $\overline\Z \subseteq \C$).  But $\epsilon_\infty(\alpha) \in \{\pm 1\}$ by above and hence it is 1 (as $q_1$ odd). Therefore
$\epsilon_\infty|_{\oks}$ can be written as
\[ \epsilon_\infty|_{\oks} : \oks \to (\ok/{\textstyle\prod} \qq_i)\s \xrightarrow{\;\theta\,} \C\s, \]
where $\theta$ is not uniquely determined! 
Letting $A$ be the image of~$\oks$ in $(\ok/\/\prod\qq_i)\s$, we see that $\theta$ is determined by $\epsilon_\infty$ on~$A$
but nowhere else (the characters of $(\ok/\prod\qq_i)\s/A$ separate points).

Let $B_p$ be the $p$-Sylow subgroup of $(\ok/\prod \qq_i)\s$.
Observe that
\begin{equation*}
  \prod_{i=1}^t ((\ok/\qq_i)\s)^{q_i^2-1} \not\subseteq A\cdot B_p.
\end{equation*}
This is because the size of the 2-torsion on the left-hand side is exactly $2^t \ge 8$, whereas on the right it is bounded above
by~4 due to the unit theorem. Therefore we can assume, without loss of generality, that $\theta$ is non-trivial on
$\prod_{i=1}^t ((\ok/\qq_i)\s)^{q_i^2-1}$ while being of order prime to~$p$ (simply first extend the given map on~$A$
to $A\cdot B_p$ by making it trivial on~$B_p$).

Let $\f = \prod \qq_i$ and $\epsilon_\f = \theta^{-1}$. Writing $\epsilon_\f = \prod \epsilon_{\qq_i}$ (with the obvious meaning), we see
that $\epsilon_{\qq_i}$ has order not dividing $q_i^2 - 1$ for some~$i$. By permuting the~$\qq_i$, let us assume that this happens when $i =
1$ and set $\qq = \qq_1$, $q = q_1$.

By construction, $\epsilon_\f\epsilon_\infty$ is trivial on~$\oks$. Now $\epsilon$ can be defined by~\eqref{eq:eps_heckechar} on the
finite index subgroup of $I_K^\f$ generated by $(x)$ with $x \in K\s$ prime to~$\f$ and extended arbitrarily to $I_K^\f$. The above
sublemma yields a Hecke character~$\chi$ over~$K$; we record here some of its properties:
\begin{equation}\label{eq:chi_props}
  \begin{split}
    & \circ\ {\textstyle \chi_\infty(x) =  |x|^{3/2} \prod_{\sigma} \sigma(x)^{-n_\sigma}}, \\
    & \circ\ \text{$\chi$ has conductor dividing~$\textstyle\prod\qq_i$ (prime to $p$),} \\
    & \circ\ \text{$\chi_\qq|_{\O_{K_\qq}\s}$ has order dividing $q^4-1$ but not dividing $q^2-1$}, \\
    & \circ\ \text{$\chi(\textstyle\prod_{v\nmid \infty} \O_{K_v}\s)$ has order prime to $p$.}
  \end{split}
\end{equation}

By~\cite[\S{}III.6]{bib:AC} we can consider the automorphic induction $\AI_{K/\Q}(\chi)$, which is obtained in two stages: first inducing along the cyclic
extension $K/K^+$: $\Pi := \AI_{K/K^+} (\chi)$; then inducing along the cyclic
extension $K^+/\Q$: $\pi := \AI_{K^+/\Q}(\Pi)$.

Let us write $\mu+\rho = (a,b,c,d)$, so that $a > b > c > d$ and $a+d = b+c$.  Suppose that the $n_\sigma$ above chosen so that $\{n_\sigma
 \}_\sigma = \{a, b, c, d\}$ (note that there are only~8 possible choices as we demanded above that $n_\sigma + n_{\sigma c}$ is
independent of~$\sigma$).

\medskip

\emph{Claim:} $\pi$ is a cuspidal automorphic representation of $\GL_4(\A_\Q)$ of conductor prime to~$p$
to which prop.~\ref{prop:coho_autrep} applies with $(c_1,c_2,c_3,c_4) = (a,b,c,d)$. 

\medskip

Note the following facts about Arthur--Clozel's cyclic automorphic inductions: (i) they construct them using cyclic base change
(\cite{bib:AC}, thm.~III.6.2), (ii) global cyclic base change is compatible with local base change at all (finite or infinite) places (see
\cite{bib:AC}, thm.~III.5.1), (iii) local cyclic base change is compatible with restriction under the Local Langlands Correspondence (see
\cite{bib:AC}, p.~71 in the archimedean case and \cite{bib:HT}, thm.~VII.2.6 in the non-archimedean case).

As $\chi \ne \chi^c$ (look at either of the infinite components), $\Pi$ is cuspidal and is determined by
\begin{equation*}
  \BC_{K/K^+}(\Pi) \cong \chi \times \chi^c,
\end{equation*}
where $\BC_{K/K^+}$ denotes base change from $K^+$ to~$K$ (\cite{bib:AC}, bottom of p.~216). In particular, under the
Local Langlands Correspondence the infinite components of~$\Pi$ correspond to the representations sending
\begin{gather*}
  z \mapsto |z|^3 \dia ( z^{-a}\bar z^{-d}, z^{-d}\bar z^{-a}),\ \text{resp.} \\
  z \mapsto |z|^3 \dia ( z^{-b}\bar z^{-c}, z^{-c}\bar z^{-b}),
\end{gather*}
for $z \in W_\C = \C\s$. Repeating the argument shows that $\pi$ is cuspidal and
that under the Local Langlands Correspondence $\pi_\infty$ corresponds to a representation sending
\[ z \mapsto |z|^3 \dia ( z^{-a}\bar z^{-d}, z^{-d}\bar z^{-a}, z^{-b}\bar z^{-c}, z^{-c}\bar z^{-b}) \]
for $z \in W_\C$. As $a \ne d$ and $b \ne c$, by the classification of representations of $W_\R$ (see \eg~\cite{bib:Tate_backgr}, (2.2.2)),
this representation is the direct sum of
\begin{align*}
  z &\mapsto |z|^3
  \begin{pmatrix}
    z^{-a}\bar z^{-d} \\ & z^{-d}\bar z^{-a}
  \end{pmatrix} \\
  j & \mapsto \begin{pmatrix}
    & 1 \\ (-1)^{a+d}
  \end{pmatrix}
\end{align*}
and the same with $(a,d)$ replaced by $(b,c)$. This shows that $(c_1,c_2,c_3,c_4) = (a,b,c,d)$ in the notation of prop.~\ref{prop:coho_autrep}.

Let $S$ be the set of primes~$l$ that either ramify in~$K$ or divide a prime where $\chi$ is ramified.
For $l\not\in S$, $\pi_l$ is an unramified principal series which corresponds to
\begin{equation}\label{eq:sigma_l}
  \sigma_l := \bigoplus_{\lambda|l} \Ind^{W_l}_{W_\lambda} \chi_\lambda
\end{equation}
under the Local Langlands Correspondence (see \cite{bib:AC}, pp.~214f). In particular, the conductor~$N$ of~$\pi$ is prime to~$p$.
This establishes the claim. We get, for any~$e$ as in~\eqref{eq:range_of_e}, an $\ho$-equivariant injection
\begin{equation}
  \label{eq:clozel}
    (\pi^\infty)^\uo \INTO H^e(\go,r)
\end{equation}
with $r$ of highest weight $\mu = (a-3,b-2,c-1,d)$.

Let $\Sigma := \Ind_{W_K}^{W_\Q} \chi$, where $W_K$, $W_\Q$ denote the global Weil groups of~$K$ and~$\Q$. Since
\begin{equation*}
  \Sigma|_{I_q} \cong \bigoplus_{i \mmod 4}(\chi_\qq|_{I_\qq})^{q^i},
\end{equation*}
$\Sigma$ is irreducible (this uses~\eqref{eq:chi_props}). The previous paragraph shows that $\Sigma_v$ and~$\pi_v$ correspond to each other
under the unramified Langlands Correspondence for almost all places~$v$. Therefore we can use corollary~4.5 of~\cite{bib:Henniart} to see
that at \emph{all} finite places~$v$, the $L$-factors (and even the $\epsilon$-factors) of $\Sigma_v$ and~$\pi_v$ agree. In particular,
$\Sigma$ and $\pi$ are ramified at the same set of finite places (namely those finite primes at which the $L$-factor has degree less than~4;
for~$\pi$ this characterisation follows from~\cite[\S3]{bib:jacquet}). It follows that \emph{$S$ is precisely the set of prime divisors of~$N$}.

\def\mathbi#1{\textbf{\em #1}}

For $l\nmid N$, let $\mathbi{t}_l = \{t_{l,1},\dots,t_{l,4}\}$ denote the eigenvalues of $\sigma_l(\Frob_l)$.
It is known and easy to see that $[G(\zl)\bigg(\begin{smallmatrix}l\! \\[-5pt]
  & \ddots \\ && 1 \end{smallmatrix}\bigg)G(\zl)]$ with $i$ diagonal entries being equal to~$l$ has eigenvalue $s_i(\mathbi{t}_l) l^{i(4-i)/2}$ on
$\pi_l^{G(\zl)}$, where
$s_i$ denotes the $i$-th elementary symmetric function. Therefore, with the notation of~\S\ref{sub:hecke_alg},
\begin{equation*}
[\uo \bigg(\begin{smallmatrix}l\! \\[-5pt] & \ddots \\ && 1 \end{smallmatrix}\bigg)_{\!l} \uo] =
[\uo \bigg(\begin{smallmatrix}l\! \\[-5pt] & \ddots \\ && 1 \end{smallmatrix}\bigg) \widehat{\omega_N(l)} \uo]
\end{equation*}
has the same eigenvalue on $(\pi^\infty)^\uo$. Since this Hecke operator corresponds to $T_{l,i} \in \ho$ by lemma~\ref{lm:loc_glob_hecke},
\eqref{eq:clozel} yields a Hecke eigenclass in $H^e (\go,r)$ whose $T_{l,i}$-eigenvalue is $s_i(\mathbi{t}_l) \cdot l^{i(4-i)/2}$
($\forall l\nmid N$, $\forall i$). Equivalently, there is an eigenclass in $H^e (\go,r\tens_{\C,\iota} \qpb)$ with $T_{l,i}$-eigenvalue
$\iota (s_i(\mathbi{t}_l) \cdot l^{i(4-i)/2})$ ($\forall l\nmid N$, $\forall i$).

\medskip

\emph{Claim:} There is a Hecke eigenclass in $H^e(\go,F)$ with $T_{l,i}$-eigenvalue
\begin{equation*}
\overline{\iota (s_i(\mathbi{t}_l) \cdot
  l^{i(4-i)/2})}
\end{equation*}
($\forall l\nmid Np$, $\forall i$) for some Jordan--H\"older constituent~$F$ of $W(\mu)$ (as representation of $G(\fp)$).

\medskip

By~\cite{bib:Jan-reps}, II.2.9 and I.10.4, $r$ has a model~$M$ over $\Z_{(p)}$ (a representation of the reductive group scheme
$\GL_{n/\Z_{(p)}}$). Let $\overline M$ denote its reduction mod~$p$, a representation of $\GL_{n/\fp}$.

By~\cite{bib:Serre_coh_disc}, \S2.4, thm.~4, $\go$ is of type (WFL). In particular, the $\go$-module~$\Z$ has a resolution with finite free
$\go$-modules and, a fortiori, for any noetherian ring~$A$, $H^e(\go,P)$ is a finite $A$-module
whenever $P$ is a finite $A$-module with commuting $\go$-action, and $H^e(\go,-)$ commutes
with flat base extension (see~\cite{bib:Serre_coh_disc}, remark on p.~101).

Consider now only the Hecke operators $T_{l,i}$ with $l\nmid Np$. For any $\Z_{(p)}$-algebra~$R$, let $r_R := M \tens_{\Z_{(p)}} R$.
Note that $r_{\zpb}$ is a $\GL_n(\Z_{(p)})$-invariant $\zpb$-lattice in $r_{\qpb} \cong r \tens_{\C,\iota} \qpb$.
Since
\begin{equation}\label{eq:6}
  H^e(\go,r_{\qpb}) \cong H^e(\go,r_{\qp}) \tens_{\qp} \qpb
\end{equation}
(Hecke equivariantly) and this space is finite-dimensional over~$\qpb$, the simultaneous generalised eigenspaces for $T_{l,i}$ with $l\nmid Np$
can  be defined over some finite extension $E/\qp$. Thus
the above system of Hecke eigenvalues also occurs in $H^e(\go,r_E)$. Consider the following Hecke-equivariant map:
\begin{equation*}
  H^e(\go,r_{\oe})\subtf \INTO H^e(\go,r_E).
\end{equation*}
The image of the map is a lattice in $H^e(\go,r_E)$; this follows by looking at the long exact sequence
associated to $0 \to r_{\oe} \to r_E \to r_E/r_{\oe} \to 0$. By scaling the Hecke eigenclass in $H^e(\go,r_E)$, we may assume it lies in this
sublattice and has non-zero reduction in $H^e(\go,r_{\oe})\subtf \tens_{\oe} k_E$.

Consider the Hecke-equivariant map
\begin{equation*}
  H^e(\go,r_{\oe}) \tens_{\oe} k_E \onto H^e(\go,r_{\oe})\subtf \tens_{\oe} k_E.
\end{equation*}
Let $\HH$ denote the $k_E$-linear span of~$\hop$ in the endomorphism ring of the left-hand side. This is a finite-dimensional commutative
$k_E$-algebra and the above system of Hecke eigenvalues determines a maximal ideal $\mm$ in~$\HH$. Since this system of Hecke eigenvalues
occurs in an $\HH$-module~$V$ iff $V_{\mm} \ne 0$, it follows that it occurs also in 
$H^e(\go,r_{\oe})\tens_{\oe} k_E$. 

Finally, the long exact sequence associated to $0 \to r_{\oe} \to r_{\oe} \to r_{k_E} \to 0$ yields a Hecke-equivariant injection
\[ H^e(\go,r_{\oe}) \tens_{\oe} k_E\INTO H^e(\go,r_{k_E}) \INTO H^e(\go,r_{\fpb}). \]

Thus there is a Hecke eigenclass in $H^e(\go,r_{\fpb})$ with $T_{l,i}$-eigenvalue $\overline{\iota (s_i(\mathbi{t}_l) \cdot l^{i(4-i)/2})}$
($\forall l\nmid N$, $\forall i$).

By~\cite[I.2.11(10)]{bib:Jan-reps}, the formal characters of $M$, $r$ and~$\overline M$ are equal (under the natural
identifications).  Since the formal characters of both $r$ and $W(\mu)$ are given by the Weyl character formula for the
highest weight~$\mu$ \cite[II.5.10]{bib:Jan-reps}, the $G$-modules $r_{\fpb} = \overline M \tens_{\fp} \fpb$ and $W(\mu)$
have the same formal character so that they are isomorphic up to semisimplification (as $G$-modules, and hence as
$G(\fp)$-modules). By devissage the same system of Hecke eigenvalues obtained in $H^e(\go,r_{\fpb})$ also occurs in
$H^e(\go,F)$ for some Jordan--H\"older constituent~$F$ of $W(\mu)$. This establishes the claim.

\need{I suppose the actual reduction can be chosen to be W (or its dual) though; somewhere in Jantzen's papers?}

The Hecke character $\eta := \chi^{-1}|.|^{3/2}$ is algebraic with algebraic infinity type $\eta_\infty(x) = \prod \sigma(x)^{n_\sigma}$.
Recall the definition of the associated $p$-adic Galois character $\eta^{(p)}$ 
(using the global Artin map; see \eg~\cite{bib:HT}, pp.~20f):
\begin{align}\notag
  \eta^{(p)} : G_K^{ab} \cong \overline{K\s K_{\infty,+}\s}\backslash \A_K\s &\to \qpb\s \\ \label{eq:etap}
  x &\mapsto \iota\eta(x^\infty)\prod_{\tau:K\to\qpb} \tau(x_p)^{n_{\iota^{-1}(\tau)}}.
\end{align}
Here, the convention is that $\tau(x_p)$ means $\tau(x_v)$ for the unique $v|p$ induced by~$\tau$ on~$K$. In particular,
$\eta^{(p)}|_{G_{K_\lambda}} = \iota(\chi^{-1}_\lambda |.|_\lambda^{3/2})$ under the local Artin map for all $\lambda\nmid p$.

\medskip

\emph{Claim:} The Galois representation
\begin{equation*}
  \rho := \Ind_{G_K}^{G_\Q} \bigl(\overline {\eta^{(p)}}\bigr)
\end{equation*}
is attached to the eigenclass in $H^e(\go,F)$ constructed above.
It it continuous, irreducible, odd and its ramification outside~$p$ occurs precisely at all $l|N$.

\medskip

Clearly, $\rho$ is continuous. By Mackey's formula, using the local Artin map, for any prime $l \ne p$,
\begin{equation*}
  \rho|_{I_l} \cong \bigoplus_{\lambda | l} \bigoplus_{g \in G_\lambda I_l\backslash G_l} \Ind_{I_\lambda^g}^{I_l}
     (\overline{\iota\chi}^{-1}_\lambda|_{I_\lambda}^g).
\end{equation*}
By Frobenius reciprocity, $I_l$ acts trivially on the direct
summand corresponding to the index $(\lambda,g)$ if and only if $I_\lambda = I_l$ and $\overline{\iota\chi}|_{I_\lambda} = 1$. Thus the claim about
ramification outside~$p$ follows from~\eqref{eq:chi_props} and the fact that $S$ is the set of primes dividing~$N$. Specialising now to $l = q$ we even get:
\[ \rho|_{I_q} \cong \bigoplus_{i \mmod 4}(\overline {\iota\chi}_\qq|_{I_\qq})^{-q^i}. \]
Note that even the order of $\overline{\iota\chi}_\qq|_{I_\qq}$ does not divide $q^2-1$, by~\eqref{eq:chi_props}. Hence $\rho|_{G_q}$ is
irreducible; a fortiori, so is~$\rho$.

For $l\nmid Np$, we know that
\[ \rho|_{G_l} \cong \bigoplus_{\lambda|l} 
  \Ind_{G_\lambda}^{G_l} (\overline{\eta^{(p)}}|_{G_\lambda}) \]
is unramified. Using an explicit basis, we see that $\rho(\Frob_l)$ has characteristic polynomial
\begin{equation*}
  X^{[K_\lambda:\ql]} - \overline{\eta^{(p)}(\Frob_\lambda)}
\end{equation*}
on the $\lambda$-direct summand. A similar consideration applied to~$\sigma_l$ in~\eqref{eq:sigma_l} shows that the eigenvalues of $\rho(\Frob_l)$ are
$\overline{\iota(t_{l,j}^{-1} l^{-3/2})}$ (recall that the $t_{l,j}$ are the eigenvalues of $\sigma_l(\Frob_l)$ and that
$\overline{\eta^{(p)}}|_{G_\lambda} = \overline{\iota(\chi^{-1}_\lambda|.|_\lambda^{3/2})}$).

By the following simple computation, and the fact that $S$ is the set of prime divisors of~$N$,
 we see that $\rho$~is attached to the eigenclass constructed above: for all $l\nmid Np$,
\begin{align*}
  \sum_{i=0}^4 (-1)^i l^{i(i-1)/2} \overline{s_i(\iota \mathbi{t}_l) \cdot \iota l^{i(4-i)/2}} X^i = \prod_{j=1}^4 (1- \overline{\iota
    (t_{l,j} l^{3/2}) } \cdot X).
\end{align*}

Finally, note that
\[ \rho|_{G_\R} \cong \Bigl(\Ind_{G_\C}^{G_\R} (1)\Bigr)^{\oplus 2}, \]
which has eigenvalues $1$ and $-1$ twice each on complex conjugation. Thus $\rho$ is odd and the claim is established.

To determine $\rho|_{I_p}$, note that
\begin{equation*}
  \rho|_{I_p} \cong \bigoplus_{\pp|p} \bigoplus_{i \mmod f_\pp} \overline {\eta^{(p)}}|_{I_\pp}^{p^i}
\end{equation*}
where $f_\pp$ is the inertial degree. Also, as $\chi$ is unramified at all $\pp|p$ we get from~\eqref{eq:etap},
\begin{equation*}
  \overline{\eta^{(p)}} : x_p \mapsto \prod_{\tau: K \to \qpb} \overline{\tau(x_p)}^{\, n_{\iota^{-1}(\tau)}}
\end{equation*}
for $x_p \in \prod_{\pp|p} \O_{K_\pp}\s$. Fix for each $\pp | p$ an embedding $\tau_\pp : K\to \qpb$ which induces the place~$\pp$ on~$K$ and
denote by $\phi : \qpnr \to \qpnr$ the arithmetic Frobenius. Recall that the composite $I_{K_\pp} \onto \O_{K_\pp}\s \to k_\pp\s$, where the
first map is induced by local class field theory and the second is $x_\pp \mapsto \bar x_\pp$, is the fundamental tame character
$\theta_\pp$ of level $f_\pp$ (see~\cite{bib:Serre_proprietes}, prop.~3 with $L = K_d$ in Serre's notation; notice the different sign
convention for the local Artin map). We get
\begin{equation*}
  \overline{\eta^{(p)}} : x_\pp \mapsto \overline{\tau_\pp}\theta_\pp^{\sum_{i \mmod f_\pp} p^i n_{\iota^{-1}(\phi^i \tau_\pp)}}
\end{equation*}
for $x_\pp \in \O_{K_\pp}\s$.

Now we let the $n_\sigma$ vary through the 8 allowed permutations of $\{a, b, c, d\}$ (recall that $n_\sigma + n_{\sigma c}$
has to be independent of~$\sigma$). To see which $\rho|_{I_p}$ are obtained for a fixed conjugacy class of $\Frob_p\in
\Delta$, it thus only matters how complex conjugation acts on the set of~$\pp|p$, and what $f_\pp$ is in each case.  With the
notation of lemma~\ref{lm:CM_existence}(iv) we obtain $\tau(w,\mu+\rho)$ where $w$ can equal 1 in case (a), $(1\;4)(2\;3)$ in
case (b), either of $(1\;2)(3\;4)$, $(1\;3)(2\;4)$ in case (c), either of $(1\;4)$, $(2\;3)$ in case (d), and either of
$(1\;2\;4\;3)$, $(1\;3\;4\;2)$ in case (e). This completes the proof of prop.~\ref{prop:GL4_theo_evid}.
\end{proof}

Suppose that $F \cong F(\mu)$ is a regular Serre weight and that $\tau : I_p \to \GL_n(\fpb)$ is tame and can be extended to~$G_p$.
Suppose that $F \in W^?(\tau)$.

\begin{df}\
  \begin{enumerate}
  \item We say that an irreducible, odd Galois representation $\rho : G_\Q \to \GL_n(\fpb)$ \emph{provides evidence} for $(F,\tau)$ if
    $\rho|_{I_p} \cong \tau$ and $F \in W(\rho)$.
  \item 
    Suppose that none of the Serre weights in $\JH(W(\mu))$ lie on an alcove boundary~\eqref{eq:hyperplanes}; in particular they are all regular.
    We say that an irreducible, odd Galois representation $\rho : G_\Q \to \GL_n(\fpb)$ \emph{provides weak evidence} for $(F,\tau)$ if
    $\rho|_{I_p} \cong \tau$, $W(\rho) \cap \JH(W(\mu)) \ne \varnothing$, and $W^?(\tau) \cap \JH(W(\mu)) = \{ F \}$.
  \end{enumerate}
\end{df}

By $\JH(W(\mu))$ in (ii) we mean the Jordan--H\"older constituents of $W(\mu)$ as $\gln$-representation. Let us denote by~$C$
the alcove containing~$\mu$.  At least for $\mu$ \sd\ in~$C$ it is clear that all constituents of $W(\mu)$ besides~$F$ lie in
alcoves strictly below~$C$.  (This is because of prop.~\ref{prop:stronglinking} and since by the claim in the proof of
prop.~\ref{prop:gee_evidence}, all $\GL_4(\fp)$-constituents of $F(\lambda)$ for $\lambda$ \sd\ in alcove~$C_{0'}$
or~$C_{0''}$ lie in alcove~$C_0$. For general~$n$, the statement is easily seen to be true at least for the finer partial
order on alcoves induced by a certain function $d$ sending alcoves to the integers \cite[II.6.6]{bib:Jan-reps}.) Thus if the
conjecture correctly predicts the weights of~$\tau$ in all alcoves strictly below~$C$, $\rho$ provides actual evidence for
$(F,\tau)$.

\begin{thm}\label{thm:gl4_th_evid} 
  Suppose that $F \cong F(\mu)$ with $\mu_1+\mu_4 = \mu_2+\mu_3$ lies sufficiently deep in one of the four possible restricted alcoves.

  If $F \in C_0$ then for 8~of the 24~tame inertial representations~$\tau$ with $F \in W^?(\tau)$,
  prop.~\ref{prop:GL4_theo_evid} provides evidence for $(F,\tau)$.
 
  If $F \in C_1$ \(resp., $C_4$, $C_5$\) then for 8~of the 48 \(resp., 120, 192\) tame inertial representations~$\tau$ with
  $F \in W^?(\tau)$, prop.~\ref{prop:GL4_theo_evid} provides weak evidence for $(F,\tau)$.
\end{thm}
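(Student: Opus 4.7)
The plan is to apply Proposition \ref{prop:GL4_theo_evid} to each of the 8 elements $w$ of the dihedral subgroup $D = \langle (1\;2\;4\;3), (1\;2)(3\;4)\rangle \subset S_4$. First I would fix a representative $\mu \in X(T)_+$ with $F \cong F(\mu)$, sufficiently deep in its restricted alcove $C \in \{C_0, C_1, C_4, C_5\}$, and satisfying $\mu_1+\mu_4 = \mu_2+\mu_3$; such a $\mu$ exists because the symmetry hyperplane intersects each of $C_0, C_1, C_4, C_5$ in a nonempty open set, and shifts by multiples of $(1,1,1,1)$ preserve both the symmetry and the isomorphism class of $F$. To establish the total counts $24, 48, 120, 192$, I would combine Proposition \ref{prop:generic_pred}(b) (applicable since $\mu$ is \sdr) with the uniqueness statement \eqref{eq:5} from the proof of Corollary \ref{cor:no_wts_predicted}: $F \in W^?(\tau)$ iff $\tau \cong \tau(w', \lambda'+\rho)$ for some dominant $\lambda' \uparrow \mu$ and some $w' \in W$, and this pair is uniquely determined by $\tau$ modulo $(p-1)X^0(T)$-shifts of $\lambda'$. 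Hence the total count equals $\#W = 24$ times the number of dominant alcoves $\uparrow C$, which from the $\uparrow$-diagram of Section~3.3 is $1, 2, 5, 8$ respectively, giving $24, 48, 120, 192$.

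Next, for each $w \in D$ I would apply Proposition \ref{prop:GL4_theo_evid} to produce an irreducible, odd $\rho_w : G_\Q \to GL_4(\fpb)$ with $\rho_w|_{I_p} \cong \tau_w := \tau(w, \mu+\rho)$ and a Hecke eigenclass in $H^e(\go, F_w')$ for some $F_w' \in JH(W(\mu))$ with $\rho_w$ attached. Applying \eqref{eq:5} to the pair of representatives $(w_1, \mu)$ and $(w_2, \mu)$ (both dominant, both $\uparrow \mu$) shows that distinct $w_1, w_2 \in D$ force $\tau_{w_1} \not\cong \tau_{w_2}$, so we obtain 8 distinct tame inertial types, each containing $F = F(\mu)$ in its predicted weight set via $\lambda' = \mu$ in Proposition \ref{prop:generic_pred}.

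It remains to verify the evidence claims. For $C = C_0$, the strong linkage principle forces $W(\mu) = F$, so $F_w' = F$ automatically and we obtain an eigenclass in $H^e(\go, F)$ with $\rho_w$ attached, i.e.\ actual evidence for each $(F, \tau_w)$. For $C \in \{C_1, C_4, C_5\}$, the condition $W(\rho_w) \cap JH(W(\mu)) \ne \varnothing$ holds since $F_w' \in W(\rho_w) \cap JH(W(\mu))$. The delicate point is $W^?(\tau_w) \cap JH(W(\mu)) = \{F\}$: if some $F(\nu) \in W^?(\tau_w) \cap JH(W(\mu))$ with $\nu \ne \mu$, then $\nu$ is dominant with $\nu \uparrow \mu$, and Proposition \ref{prop:generic_pred}(b) gives dominant $\nu' \uparrow \nu$ and $w' \in W$ with $\tau_w \cong \tau(w', \nu'+\rho)$; but then \eqref{eq:5} applied to the two pairs $(w, \mu)$ and $(w', \nu')$ (both dominant and $\uparrow \mu$ by transitivity) forces $\nu' = \mu$, whence $\mu = \nu' \uparrow \nu \uparrow \mu$ and antisymmetry of $\uparrow$ on dominant weights gives $\nu = \mu$, a contradiction.

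The main obstacle will be ensuring that the genericity underlying \eqref{eq:5} and Proposition \ref{prop:generic_pred}(b) is uniformly valid throughout the argument, where it is used three times: to count the total number of $\tau$ with $F \in W^?(\tau)$, to distinguish the 8 $\tau_w$, and to rule out other constituents of $W(\mu)$ from $W^?(\tau_w)$. This is precisely why $\mu$ is required to lie sufficiently deep in its alcove, with the implied depth depending only on $n = 4$ and not on $p$.
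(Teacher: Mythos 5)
Your proposal matches the paper's structure closely and gets most of the details right: the counts $24, 48, 120, 192$ (via Proposition~\ref{prop:generic_pred}(b), the uniqueness statement~\eqref{eq:5}, and the enumeration of dominant alcoves $\uparrow C$), the distinctness of the 8 inertial types $\tau_w$ via~\eqref{eq:5}, and the reduction of the $C_0$ case to $W(\mu)=F(\mu)$ by strong linkage. However, there is a genuine gap in the final step for $C=C_5$: you claim that $F(\nu)\in JH(W(\mu))$ (Jordan--H\"older as $\gln$-module) with $\nu$ restricted implies $\nu\uparrow\mu$. This only follows from the strong linkage principle when $F(\nu)$ is itself a constituent of $W(\mu)$ as an algebraic-group module. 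But for $\mu$ in the top restricted alcove $C_5$, the $G$-module $W(\mu)$ has constituents $F(\lambda)$ with $\lambda$ in the non-restricted dominant alcoves $C_{0'}$ and $C_{0''}$. Such $F(\lambda)$ decomposes via the Steinberg tensor product theorem~\eqref{thm:Steinberg} and Brauer's formula~\eqref{prop:Brauer_formula} into restricted $G(\fp)$-constituents $F(\nu)$ whose highest weights $\nu$ need not satisfy $\nu\uparrow\mu$, so the deduction ``$\nu$ is dominant with $\nu\uparrow\mu$'' is false. (For $C\in\{C_0,C_1,C_4\}$ this issue does not arise, since all dominant alcoves $\uparrow C$ happen to be restricted, which is why you do not notice the problem there.)

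The paper closes this gap by routing the argument through the proof of Proposition~\ref{prop:gee_evidence}: given $F'\in W^?(\tau)\cap JH(W(\mu))$, one first locates the $G$-module constituent $F(\lambda)$ of $W(\mu)$ (with $\lambda$ dominant, $\lambda\uparrow\mu$, but possibly non-restricted) such that $F'\in JH(F(\lambda))$ as $\gln$-module; then equations~\eqref{eq:3} and~\eqref{eq:4} produce a dominant $\mu'\uparrow\lambda$ and $w'\in W$ with $\tau\cong\tau(w',\mu'+\rho)$, after which~\eqref{eq:5} forces $\mu'=\lambda=\mu$ and hence $F'\cong F$. Your appeal to antisymmetry of $\uparrow$ is the right idea in spirit, but you need the intermediate bookkeeping from the Steinberg decomposition to justify even getting to a pair $(w',\mu')$ with $\mu'\uparrow\mu$; your shortcut skips precisely the lemma-level work that~\eqref{eq:3}--\eqref{eq:4} encapsulate.
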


\begin{proof}
  Note that the Galois representations~$\rho$ obtained from prop.~\ref{prop:GL4_theo_evid} for the given $\mu \in X(T)_+$
  satisfy $F \in W^?(\rho|_{I_p})$: as $\rho|_{I_p} \cong \tau(w,\mu+\rho)$ for some $w \in W$ we may apply
  prop.~\ref{prop:generic_pred} with $\lambda = \lambda' = \mu$.  Also, by prop.~\ref{prop:generic_pred} and~\eqref{eq:5} the
  set $\{ \tau : F \in W^?(\tau) \}$ has cardinality $\# W \cdot \{ C' : \text{$C'$ dominant, $C' \uparrow C$} \}$, where $C$
  is the alcove containing~$\mu$.
  
  It remains to verify that $W^?(\tau) \cap \JH(W(\mu)) = \{ F \}$ where $\tau \cong \tau(w,\mu+\rho)$ (for one of the 8~values of $w
  \in W$ as in prop.~\ref{prop:GL4_theo_evid}). Suppose thus that $F' \in W^?(\tau) \cap \JH(W(\mu))$.  Then there exists a constituent
  $F(\lambda)$ of $W(\mu)$ as $G$-module ($\lambda \in X(T)_+$) such that $F' \in \JH(F(\lambda))$.  From the proof
  of prop.~\ref{prop:gee_evidence}, in particular from \eqref{eq:3}, \eqref{eq:4}, it follows that there exist $\mu' \in X(T)_+$ and $w' \in W$
  such that $\mu' \uparrow \lambda \uparrow \mu$ and $\tau \cong \tau(w',\mu'+\rho)$. But~\eqref{eq:5}
  implies that $\mu' = \lambda = \mu$, so that $F' \cong F(\lambda) \cong F(\mu) \cong F$, as required.
\end{proof}

\section{Weights in Serre's Conjecture for Hilbert modular forms}\label{sec:hilb_mod}

In~\cite{bib:BDJ}, Buzzard, Diamond and Jarvis formulate a Serre-type conjecture for Hilbert modular
forms. Theorem~\ref{thm:relation_to_bdj} below will show that their weight conjecture in the tame case is related, via an operation on the Serre weights
analogous to~$\RR$ in \S\ref{sub:operator_R}, to the decompositions of irreducible representations of $\GL_2(\F)$ over~$\qpb$ when reduced mod~$p$
(where $\F$ is a finite field of characteristic~$p$). They work with a totally real number field~$K$ that is \emph{unramified at~$p$}.

Suppose that $\rho : G_K \to \GL_2(\fpb)$ is an irreducible, totally odd representation. A \emph{Serre weight} in this context is an
isomorphism class of irreducible representations of $\GL_2(\ok/p) \cong \prod_{\pp|p} \GL_2(k_\pp)$ over~$\fpb$ where $k_\pp$ is the residue
field of~$K$ at~$\pp$. Any such representation is isomorphic to $\bigotimes_{\pp|p} W_\pp$ with $W_\pp$ an irreducible representation of
$\GL_2(k_\pp)$. The weight conjecture in~\cite{bib:BDJ} defines the $W_\pp$ independently of one another in terms of $\rho|_{I_\pp}$.
Let us therefore restrict our attention to a single prime~$\pp | p$.

Fix an embedding $\overline K\to \qpb$ inducing the place~$\pp$ on~$K$. Let $I_\pp := \Gal(\qpb/K_\pp^{nr})$ denote the corresponding
inertia subgroup.  Let $k_\pp' \subseteq \fpb$ be the quadratic extension of $k_\pp$. Let $f := [k_\pp:\fp]$.
There are canonical fundamental tame
characters $\psi : I_\pp \onto k_\pp\s$ of level~$f$ and $\psi' : I_\pp \onto (k_\pp')\s$ of level~$2f$.

For $i \in \Z/f$, let $\lambda_i$ be the $p^i$-th power of $k_\pp\s \xrightarrow{\subseteq} \fpb\s$
and for $i\in \Z/2f$ let $\lambda_{i'}$ be the $p^i$-th power of $(k_\pp')\s \xrightarrow{\subseteq} \fpb\s$.
Also let $\psi_i := \lambda_i \circ \psi$ for $i \in \Z/f$ and $\psi_{i'} := \lambda_{i'} \circ \psi'$ for $i \in \Z/2f$.

To describe the set $W_{\Ser,\pp}$ of isomorphism classes of irreducible representations of $\GL_2(k_\pp)$ over~$\fpb$ (\emph{Serre weights at~$\pp$}),
note first that theorem~\ref{thm:mod_reps} shows that
\[ W_{\Ser,\pp} = \{ F(a,b): 0 \le a-b \le p^f-1, 0 \le b < p^f-1 \}. \]
If we write $a-b = \sum_{i=0}^{f-1} m_i p^i$, $b = \sum_{i=0}^{f-1} b_i p^i$ with $0 \le m_i,\ b_i \le p-1$ then
by the Steinberg tensor product theorem~\eqref{thm:Steinberg}, 
\[ F(a,b) \cong \bigotimes_{i=0}^{f-1} F(b_i+m_i,b_i)^{(p^i)}. \]
Since $F(b_i+m_i,b_i) \cong \Sym^{m_i} \fpb^2 \tens \Det^{b_i}$ (see \S\ref{sub:case-gl_n}),
\[ F(a,b) \cong \bigotimes_{i=0}^{f-1} (\Sym^{m_i} k_\pp^2 \tens \Det^{b_i}) \tens_{k_\pp,\phi^i} \fpb \]
where $\phi:k_\pp \to k_\pp$ is the $p$-power Frobenius element. This representation will also be denoted by $F_{\vec m,\vec b}$.

Suppose that $\rho$ is \emph{tame at~$\pp$}. Then we can write $\rho|_{I_\pp} \cong \chi_1 \oplus \chi_2$.  We say that $\rho|_{I_\pp}$ is
of \emph{niveau~1} if $\chi_i^{p^f-1} = 1$ ($i = 1$, 2) and of \emph{niveau~2} otherwise.  Let us recall the definition of the conjectured
set of weights $W_\pp^?(\rho)$ from~\cite{bib:BDJ} in the tame case. If $\rho|_{I_\pp}$ is of niveau~1, $W_\pp^?(\rho)$ consists of all
$F_{\vec m,\vec b}$ such that
\begin{equation}\label{eq:BDJ_niv1}
  \rho|_{I_\pp} \sim
  \begin{pmatrix}
    \prod_J \psi_i^{m_i+1} \\ & \prod_{J^c} \psi_i^{m_i+1}
  \end{pmatrix} \prod \psi_i^{b_i}  
\end{equation}
for some $J \subseteq \Z/f$. If $\rho|_{I_\pp}$ is of niveau~2, $W_\pp^?(\rho)$ consists of all $F_{\vec m,\vec b}$ such that
\[ \rho|_{I_\pp} \sim
\begin{pmatrix}
  \prod_J \psi_{i'}^{m_i+1} \\ & \prod_{J^c} \psi_{i'}^{m_i+1}
\end{pmatrix}\prod \psi_i^{b_i} \]
for some $J \subseteq \Z/2f$ projecting \emph{bijectively} onto~$\Z/f$ (under the natural map). Here we are abusing notation in that
the indices of~$m$ and~$b$ should be taken ``mod~$f$''.

Associated to each $\rho|_{I_\pp}$ define a representation $V_\pp(\rho|_{I_\pp})$ of $\GL_2(k_\pp)$ over~$\qpb$.  The Teichm\"uller lift will
again be denoted by~$\widetilde\ $. For characters $\chi_i : k_\pp\s \to \qpb\s$, $I(\chi_1,\chi_2)$ will denote the induction from the
Borel subgroup of upper-triangular matrices to $\GL_2(k_\pp)$ of $\chi_1 \tens \chi_2$, whereas for a character $\chi : (k_\pp')\s \to
\qpb\s$ which does not factor through the norm $(k_\pp')\s \to k_\pp\s$, the cuspidal representation $\kappa(\chi)$ of $\GL_2(k_\pp)$ was
defined in def.~\ref{df:cusp}.

\begin{df}\label{df:v_p(rho)} \
  \begin{enumerate}
  \item If $\rho|_{I_\pp} \sim
    \begin{pmatrix}
      \prod \psi_i^{c_i} \\ & \prod \psi_i^{c'_i}
    \end{pmatrix}$ is of niveau~1,
    \[ V_\pp(\rho|_{I_\pp}) := I(\pprod \widetilde\lambda_i^{c_i}, \pprod \widetilde\lambda_i^{c_i'}).\]

  \item If $\rho|_{I_\pp} \sim
    \begin{pmatrix}
      \prod \psi_{i'}^{\gamma_i} \\ & \prod \psi_{i'}^{p^f \gamma_i}
    \end{pmatrix}$ is of niveau~2,
    \[ V_\pp(\rho|_{I_\pp}) := \kappa(\pprod \widetilde\lambda_{i'}^{\gamma_i}). \]
  \end{enumerate}
\end{df}

Note that in (ii), $i$ runs through $\Z/2f$.  In particular, $V_\pp(\rho|_{I_\pp} \tens (\chi \circ \psi)) \cong V_\pp(\rho|_{I_\pp}) \tens
\widetilde\chi$ for any character $\chi: k_\pp\s \to \fpb\s$. Also note that this is the same as $V(\rho|_{I_\pp})$ in def.~\ref{df:V(tau)} (in
light of \S\ref{sub:case-gl_n-char0}). We prefer to use the above description here as we can then use the decomposition formulae derived
in~\cite{bib:Di}. (To identify his $\Theta(\chi)$ with $\kappa(\chi)$, compare their characters at elements whose characteristic polynomial
is irreducible using \cite[7.3]{bib:DL}.)

A \emph{regular Serre weight at~$\pp$} is any Serre weight $F_{\vec m,\vec b}$ with $0 \le m_i < p-1$ for all~$i$. The set
of regular Serre weights at~$\pp$ is denoted by $W_{\reg,\pp}$.
Define $\RR_\pp : W_{\reg,\pp} \to W_{\reg,\pp}$ by
\[ \RR_\pp(F(a,b)) = F(b+(p-2){\textstyle\sum}_{i=0}^{f-1} p^i,a), \]
(compare this with $\RR$ in \S\ref{sub:operator_R}).

\begin{thm}\label{thm:relation_to_bdj}
  Suppose that $\rho : G_K \to \GL_2(\fpb)$ is irreducible, totally odd, and tame at~$\pp$.
  \begin{enumerate}
  \item $W_\pp^?(\rho) \cap W_{\reg,\pp} = \RR_\pp(\JH(\overline{V_\pp(\rho|_{I_\pp})})\cap W_{\reg,\pp})$.
  \item There is a \emph{multi-valued} function $\RR_{\ext,\pp} : W_{\Ser,\pp} \to W_{\Ser,\pp}$ that extends $\RR_\pp$
    such that
    \[ W_\pp^?(\rho) = \RR_{\ext,\pp}(\JH(\overline{V_\pp(\rho|_{I_\pp})})). \]
  \end{enumerate}
\end{thm}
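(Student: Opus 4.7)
The plan is to carry out a term-by-term comparison between $\RR_\pp(JH(\overline{V_\pp(\rho|_{I_\pp})}))$ and $W_\pp^?(\rho)$, using Diamond's explicit decomposition formulas~\cite{bib:Di} to compute the Jordan--H\"older constituents mod~$p$ in the two cases. In niveau~1, with $\rho|_{I_\pp}$ as in~\eqref{df:v_p(rho)}(i), Diamond parametrizes the constituents of $\overline{I(\pprod\tilde\lambda_i^{c_i}, \pprod\tilde\lambda_i^{c_i'})}$ by subsets $J \subset \Z/f$, describing each as an explicit Serre weight $F_{\vec m(J), \vec b(J)}$ whose Steinberg-tensor coordinates are piecewise-linear in the $c_i$, $c_i'$ according to whether $i \in J$. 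The niveau~2 case is analogous, with subsets $J \subset \Z/2f$ projecting bijectively onto $\Z/f$ indexing the constituents of the reduced cuspidal.

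For part~(i), one observes that $\RR_\pp$ acts factorwise in the Steinberg decomposition by $F(b_i + m_i, b_i) \mapsto F(b_i + p-2, b_i + m_i)$, and restricts to a bijection on regular Serre weights. A direct calculation, unwinding the BDJ recipe~\eqref{eq:BDJ_niv1} and comparing with Diamond's formulas (the bridge being the identity $c_i - c_i' \equiv \pm(m_i+1) \pmod{p-1}$, with sign determined by $i \in J$ or $i \in J^c$), yields the componentwise identity $\RR_\pp(F_{\vec m(J), \vec b(J)}) = F_{\vec m^{\mathrm{BDJ}}(J), \vec b^{\mathrm{BDJ}}(J)}$. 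Restricting this bijection to regular weights on both sides gives part~(i); the niveau~2 case is verified by the same method, with the cuspidal version of Diamond's formula replacing the principal-series decomposition.

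For part~(ii), non-regular weights appear in $W_\pp^?(\rho)$ precisely when some $m_i = p-1$, and the same phenomenon causes some Jordan--H\"older constituents of $\overline{V_\pp(\rho|_{I_\pp})}$ to be non-regular or to collapse. I would define $\RR_{ext,\pp}(F_{\vec m, \vec b})$ to be the set of Serre weights obtained from the $\RR_\pp$-formula by making, at each degenerate index $i \in S := \{j : m_j = p-1\}$, an independent binary choice corresponding to `$i \in J$' versus `$i \in J^c$' in the BDJ recipe; on regular $F$ this set consists of the single weight $\RR_\pp(F)$. The verification $\RR_{ext,\pp}(JH(\overline{V_\pp(\rho|_{I_\pp})})) = W_\pp^?(\rho)$ then reduces to the combinatorial matching of part~(i), tracking both degenerate and non-degenerate indices. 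The main obstacle is the niveau~2 case: the constraint that $J \subset \Z/2f$ project bijectively onto $\Z/f$, together with the relation $\psi_{(i+f)'} = \psi_{i'}^{p^f}$, imposes identifications on both sides that one must show $\RR_{ext,\pp}$ respects, so that it yields a well-defined multi-valued function on Serre weights independent of the chosen presentation of $\rho|_{I_\pp}$.
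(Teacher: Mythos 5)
Part (i) of your proposal is essentially what the paper does: one applies Diamond's decomposition formula to match the constituents of $\overline{V_\pp(\rho|_{I_\pp})}$ against the BDJ recipe term by term, and observes that $\RR_\pp$ restricted to regular weights is exactly the bridge (the paper packages the needed computation as lemma~\ref{lm:relate_decomp_bdj}, converting the BDJ exponents $m_i + 1 \in [1,p]$ into the Diamond-side exponents $p-1-m_i \in [0,p-1]$). That part is fine.

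Part (ii) has a real gap: your proposed definition of $\RR_{ext,\pp}$ is not the right combinatorial object, and the reverse inclusion ($W_\pp^?(\rho) \subset \RR_{ext,\pp}(\cdots)$) will not follow from it. You define the multi-valuedness by independent binary choices at indices $i$ with $m_i = p-1$. But the correct extra freedom is parametrized by subsets $S$ of \emph{successor} indices: $s \in S$ only if $m_s \ne 0$ and there is a run $m_i = p-1$, $m_{i+1} = \cdots = m_{s-1} = p-2$ disjoint from $S$ (this is the paper's set $\SS(F)$). This is genuinely different from the set $\{j : m_j = p-1\}$. As a concrete discrepancy: for $f \geq 2$ and $\vec m = (p-1, 0, \ldots, 0)$, the paper's $\SS(F) = \{\varnothing\}$ (a single output weight), whereas your scheme would produce $2$; and for $\vec m = (p-1, \ldots, p-1)$ the paper's count grows roughly like the number of "independent sets" in a cycle whereas yours gives $2^f$. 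The underlying reason the localized binary choice is wrong is that translating between the Diamond-side exponent range $[0,p-1]$ and the BDJ-side range $[1,p]$ requires applying $\psi_{j+1} = \psi_i^p\psi_{i+1}^{p-1}\cdots\psi_j^{p-1}$, which propagates a "carry" along chains of consecutive $0$'s and $1$'s rather than acting index-by-index. The paper makes this precise with the signed-interval sets $\L_{[0,p-1]}$, $\L_{[1,p]}$ and the bijection $\phi$ between them (lemmas~\ref{lm:combi_bij}, \ref{lm:succ_of_ints}), which is exactly what is needed to prove that every BDJ-predicted weight is accounted for; this machinery is absent from your sketch. Your proposal also flags the niveau-$2$ well-definedness issue but doesn't resolve it; in the paper this is handled uniformly because the same interval combinatorics is applied to the lift $\widetilde\I$ of the intervals to $\Z/2f$, so nothing new needs to be proved there.
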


\label{RR_ext} The following definition of $\RR_{\ext,\pp}$ will be shown to satisfy part (ii) of the theorem. Suppose that $F \cong F(a,b)$ with $0 \le
a-b \le p^f-1$. We can write $a-b = \sum_{i=0}^{f-1} m_i p^i$ for some $0 \le m_i \le p-1$. Define a collection $\SS(F)$ of subsets of
$\Z/f$ by: $S \in \SS(F)$ if and only if for all $s \in S$, $m_s \ne 0$ and there is an~$i$ such that $m_i = p-1$, $m_{i+1} = \dots =
m_{s-1} = p-2$ and $\{i,i+1,\dots,s-1\} \cap S = \varnothing$. Then $\RR_{\ext,\pp}(F)$ is defined to be
\[ \Bigl\{F(a',b') : \exists S \in \SS(F),\ a' \equiv b-\textstyle\sum\limits_{i\not\in S} p^i,\ b' \equiv a-\sum\limits_{i\in S} p^i \!\!\pmod{p^f-1}\Bigr\}. \]

In particular, for this choice of $\RR_{\ext,\pp}$, if $F$ is a regular Serre weight then $\SS(F) = \{\varnothing\}$, so 
$\RR_\pp(F) = \RR_{\ext,\pp}(F)$ unless $F$ is a twist of $F((p-2)\sum p^i,0)$ in which case $\RR_{\ext,\pp}(F)$ contains one more weight.

The proof will require several lemmas, proved below.

\begin{lm}\label{lm:relate_decomp_bdj}
  Suppose that $0 \le m_i \le p-1$ \($i \in \Z/f$\).

  \textup{(i)} Suppose that $\rho|_{I_\pp}$ is of niveau~1. Then $F_{\vec m,\vec b}$ is a constituent of
  $\overline{V_\pp(\rho|_{I_\pp})}$ if and only if
  \[ \rho|_{I_\pp} \sim
  \begin{pmatrix}
    \prod_{J^c} \psi_i^{p-1-m_i} \\ & \prod_{J} \psi_i^{p-1-m_i}
  \end{pmatrix} \prod \psi_i^{m_i+b_i} \]
  for some $J \subseteq \Z/f$.
 
  \textup{(ii)} Suppose that $\rho|_{I_\pp}$ is of niveau~2. Then  $F_{\vec m,\vec b}$ is a constituent of
  $\overline{V_\pp(\rho|_{I_\pp})}$ if and only if
  \[ \rho|_{I_\pp} \sim
  \begin{pmatrix}
    \prod_{J^c} \psi_{i'}^{p-1-m_i} \\ & \prod_{J} \psi_{i'}^{p-1-m_i}
  \end{pmatrix} \prod \psi_i^{m_i+b_i} \]
  for some $J \subseteq \Z/2f$  projecting bijectively onto~$\Z/f$.
\end{lm}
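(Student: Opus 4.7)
The plan is to deduce this lemma directly from the explicit Jordan--Hölder decomposition of principal series and cuspidal representations of $GL_2(k_\pp)$ mod $p$ derived by Diamond in~\cite{bib:Di}. Indeed, the representation $V_\pp(\rho|_{I_\pp})$ has been defined precisely so that, in the niveau~1 case it is a principal series $I(\tilde\chi_1,\tilde\chi_2)$ built from the tame characters of $\rho|_{I_\pp}$, and in the niveau~2 case it is a cuspidal representation $\kappa(\tilde\chi)$. In each case Diamond gives a combinatorial parameterisation of the constituents of the reduction by subsets of $\Z/f$ (resp.\ $\Z/2f$), and it is just a matter of unwinding these parameterisations and comparing.

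First I would treat niveau~1. Write $\chi_1 = \prod \psi_i^{c_i}$ and $\chi_2 = \prod \psi_i^{c'_i}$, so that $V_\pp(\rho|_{I_\pp}) = I(\prod \tilde\lambda_i^{c_i},\prod \tilde\lambda_i^{c'_i})$. Diamond's formula says that the Jordan--Hölder constituents of the reduction are parameterised by subsets $J \subset \Z/f$: for each $J$ there is a constituent which, as a Serre weight, has the form $F_{\vec m,\vec b}$ where the exponents $m_i$ and twists $b_i$ are given by an explicit recipe. Specifically, the $m_i$-part records (mod $p-1$) the differences $c_i-c'_i$ or $c'_i-c_i$ depending on whether $i\in J$ or not, while the $b_i$-part records the complementary contribution. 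Inverting this bijection---writing $c_i$, $c'_i$ in terms of $m_i$, $b_i$ and $J$---yields exactly the expression for $\rho|_{I_\pp}$ appearing in part~(i) of the lemma after relabelling $J \mapsto J^c$.

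Next, for niveau~2, the same strategy applies with Diamond's decomposition of $\kappa(\tilde\chi)$ in place of the principal series formula. Writing $\chi = \prod \psi_{i'}^{\gamma_i}$ (with $i$ running over $\Z/2f$ and satisfying $\gamma_{i+f} = p^f \gamma_i$ modulo the appropriate relation), one finds that the constituents are now indexed by subsets $J \subset \Z/2f$ projecting bijectively onto $\Z/f$. For each such $J$ a direct computation identifies the corresponding Serre weight as $F_{\vec m,\vec b}$ with the character identity as in part~(ii).

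The main obstacle will be bookkeeping: making sure the indexing conventions (the role of $J$ versus $J^c$, the cyclic shift by the Frobenius action on level~$2f$ characters, and the periodicity of the twists $\tilde\lambda_i$ versus $\tilde\lambda_{i'}$) are consistently matched between Diamond's statement and the formulation of~\cite{bib:BDJ}. A secondary subtlety is handling the boundary cases where some $m_i$ equal $0$ or $p-1$, since then several subsets $J$ can give isomorphic constituents, but this just means the ``if and only if'' in the lemma allows for a non-unique $J$, which is already implicit in the statement (there is an ``iff there exists $J$'').
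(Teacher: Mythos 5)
Your proposal takes essentially the same route as the paper: both directions of the lemma are deduced by unwinding Diamond's explicit Jordan--Hölder parameterisation of $\overline{I(\chi_1,\chi_2)}$ (resp.\ $\overline{\kappa(\chi)}$) by subsets $J\subset\Z/f$ (resp.\ $J\subset\Z/2f$ projecting bijectively onto $\Z/f$) from~\cite{bib:Di}, prop.~1.1, and the paper does exactly this. One point you wave at but should not dismiss as a mere "non-uniqueness of $J$": in the converse direction, when some $m_i = p-1$ the obvious choice of $J$ may put exponents outside the admissible range in Diamond's formula, and the paper uses the freedom at such indices $i$ to alter $J$ (uniquely, so that $i\in J\Leftrightarrow i-1\in J$ whenever $m_i=p-1$) precisely so that Diamond's prop.~1.1 applies with the same $J$; the paper also excludes $\vec m = \vec p - \vec 1$ in niveau~2 by a dimension count.
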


Let us explain the idea of the proof of the theorem. The above lemma is the key tool that lets us relate the conjectured weight
set $W_\pp^?(\rho)$ with the decomposition of $\overline{V_\pp(\rho|_{I_\pp})}$. This works perfectly for regular Serre weights.
In general the problem is that the number of  constituents of $\overline{V_\pp(\rho|_{I_\pp})}$ might be a lot smaller than $\# W_\pp^?(\rho)$.
This suggests looking for a multi-valued function extending~$\RR$. In view of lemma~\ref{lm:relate_decomp_bdj}, we have to find rules to convert
an expression of the form
\begin{equation*}
  \rho|_{I_\pp} \sim
  \begin{pmatrix}
    \prod_J \psi_i^{\alpha(i)} \\ & \prod_{J^c} \psi_i^{\alpha(i)}
  \end{pmatrix}\chi
\end{equation*}
for some $J \subseteq \Z/f$, $0\le \alpha(i) \le p-1$ and some character~$\chi$ into an expression of the form
\begin{equation*}
  \rho|_{I_\pp} \sim
  \begin{pmatrix}
    \prod_L \psi_i^{\beta(i)} \\ & \prod_{L^c} \psi_i^{\beta(i)}
  \end{pmatrix}\chi'
\end{equation*}
for some $L \subseteq \Z/f$, $1\le \beta(i) \le p$ and some character~$\chi'$ in such a way that the map
\begin{equation*}
  (\alpha,\chi) \mapsto (\beta,\chi')
\end{equation*}
\emph{does not depend on~$J$ and works equally well for the analogous expressions of niveau~2}. The theorem shows, roughly speaking,
that there are enough such rules to explain all of $W_\pp^?(\rho)$.

To make this principle concrete, consider $f = 3$ and $\vec\alpha = (0,1,p-1)$ and $\chi = 1$. It is very instructive to check that
there are such rules giving rise to the following pairs $(\beta,\chi')$:
\begin{equation*}
  ((p,p,p-2),1),\ ((p,2,p-1),\psi_1^{-1}),\ ((p,p,p),\psi_2^{-1}).
\end{equation*}
For example, here are two instances of the second rule:
\begin{align*}
  \begin{pmatrix}
    \psi_1 \\ & \psi_2^{p-1}
  \end{pmatrix} &\sim
  \begin{pmatrix}
    \psi_1^2 \\ & \psi_0^p\psi_2^{p-1}
  \end{pmatrix}\psi_1^{-1} \\
\noalign{\noindent and}
  \begin{pmatrix}
    \psi_{1'}\psi_{2'}^{p-1} \\ & \psi_{4'}\psi_{5'}^{p-1}
  \end{pmatrix} &\sim
  \begin{pmatrix}
    \psi_{3'}^p\psi_{1'}^2\psi_{2'}^{p-1} \\ & \psi_{0'}^p\psi_{4'}^2\psi_{5'}^{p-1}
  \end{pmatrix}\psi_1^{-1}.
\end{align*}
In the end, these rules consist of multiple uses of the identity
\begin{equation*}
  \psi_{j+1} = \psi_{i}^p\psi_{i+1}^{p-1}\cdots \psi_{j}^{p-1}
\end{equation*}
when $\alpha(i) = \cdots = \alpha(j) = 0$ ($\alpha(i) = 1$ is allowed if $\psi_i$ is itself to be expanded in this manner!).
Of course this works equally well for $\psi_{(j+1)'}$. To compare with the formalism below, let us indicate in each case
the corresponding choice of \I:
\begin{equation*}
  \underline{0,}_- \underline{1,}_- p-1, \quad \underline{0,}_+ 1,\ p-1, \quad \underline{0,}_- \underline{1,}_+ p-1.
\end{equation*}
Note that the last of these is not covered by the $\RR_{\ext,\pp}$ we defined above. In fact, it is not hard to see that
axiom~A4 below could be weakened to:
\begin{enumerate}
\item[A4$'$] If an \I-interval is positive, its successor does not lie in any \I-interval.
\end{enumerate}
This corresponds to removing the condition $m_s \ne 0$ in the definition of $\RR_{\ext,\pp}$ above. If we denote this modified
version of $\RR_{\ext,\pp}$ by $\RR_{\ext,\pp}'$ then it is clear that \emph{any} multi-valued function between
$\RR_{\ext,\pp}$ and $\RR_{\ext,\pp}'$ (\ie, such that there is a containment pointwise) satisfies thm.~\ref{thm:relation_to_bdj}(ii).

\separator

For our purposes, an \emph{interval} in~$\Z/f$ is any ``stretch'' of numbers
$\ll i,j\rr = \{i,i+1,\dots,j\}$ in~$\Z/f$. The start and end points are remembered so that, for example, $\ll 0,p-1\rr \ne \ll 1,0\rr$ even though
the underlying sets are the same. The \emph{successor} of an interval $\ll i, j\rr$ is $j+1$.

Suppose that $\alpha$ is a function $\Z/f \to \{0,1,\dots,p-1\}$, and suppose that $\I$ a collection of disjoint
intervals~$I$ in~$\Z/f$, each labelled with a sign (thought of as pertaining to the entry following that interval). Define the set $\L_{[0,p-1]}$ 
to consist of all $(\alpha,\I)$ which satisfy the following rules:

\begin{enumerate}
\item[A1] For each interval $I\in\I$, $\alpha(I) \subseteq \{0,1\}$.
\item[A2] If $i \in \bigcup\I$ then $\alpha(i) = 1$ if and only if $i$ is start point of an \I-interval and $i-1 \in \bigcup \I$.
\item[A3] If $i \not\in \bigcup \I$ and $\alpha(i) = 0$, then $i-1 \in \bigcup \I$.
\item[A4] If an \I-interval is positive, its successor does not lie in any \I-interval and has $\alpha$-value in $[0,p-2]$.
\item[A5] If an \I-interval is negative, its successor lies in another \I-interval or has $\alpha$-value in $[2,p-1]$.
\end{enumerate}

Note that every function $\alpha : \Z/f \to \{0,1,\dots,p-1\}$ can be equipped with intervals and signs satisfying these rules.

Similarly, suppose that $\beta$ is a function $\Z/f \to \{1,2,\dots,p\}$, and suppose that $\I$ a collection of disjoint
intervals in~$\Z/f$, each labelled with a sign (thought of as pertaining to the entry following that interval). Define the set $\L_{[1,p]}$ 
to consist of all $(\beta,\I)$ which satisfy the following rules:

\begin{enumerate}
\item[B1] For each interval $I\in\I$, $\beta(I) \subseteq \{p-1,p\}$.
\item[B2] The set of start points of \I-intervals is $\beta^{-1}(p)$.
\item[B3] If an \I-interval is positive, its successor does not lie in any \I-interval and has $\beta$-value in $[1,p-1]$.
\item[B4] If an \I-interval is negative, its successor lies in another \I-interval or has $\beta$-value in $[1,p-2]$.
\end{enumerate}

Note that every function $\beta : \Z/f \to \{1,2,\dots,p\}$ can be equipped with intervals and signs satisfying these rules.

To define a map $\phi : \L_{[0,p-1]} \to \L_{[1,p]}$, represent~$\alpha$ as the string of numbers $\alpha(0)$,
$\alpha(1)$, \dots, $\alpha(f-1)$; underline each \I-interval and put the corresponding sign just after the last entry of the interval. In this way
the function~$\phi$ has the following effect on each interval and its successor (it leaves all other entries unchanged):
\begin{gather*}
  \underline{(1), 0, \dots, 0,}_{\pm} a,\dots \mapsto \underline{p,p-1,\dots, p-1,}_\pm a\pm 1,\dots \\
  \underline{\dots,0,0,}_- \underline{1,0,\dots} \mapsto \underline{\dots,p-1,p-1,}_- \underline{p,p-1,\dots}
\end{gather*}

\begin{lm}\label{lm:combi_bij}
  The map~$\phi$ is well defined and in fact a bijection.
\end{lm}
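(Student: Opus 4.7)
The plan is to verify well-definedness of $\phi$ by a direct case analysis showing that $(\beta,\I) := \phi(\alpha,\I)$ satisfies B1--B4 whenever $(\alpha,\I)$ satisfies A1--A5, and then to exhibit an explicit inverse $\psi$ by reversing the local rules. Crucially, the map preserves $\I$ with its signs, so the entire content of $\phi$ is the transformation of $\alpha$ into $\beta$ on each \I-interval and its successor, with all other entries unchanged.

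For well-definedness, axiom A2 forces every entry of an \I-interval to be either $0$ (at non-start points, and at start points $i$ with $i-1 \notin \bigcup\I$) or $1$ (at start points $i$ with $i-1 \in \bigcup\I$). Under $\phi$, non-start entries of \I-intervals become $p-1$ and start-point entries become $p$, immediately giving B1 and B2. For B3, the successor of a positive \I-interval is shifted by $+1$, and A4 ensures $\alpha \in [0,p-2]$ there, so $\beta \in [1,p-1]$. For B4, either the successor of a negative \I-interval lies in another \I-interval (and its $\beta$-value is $p$ by B2, consistent with the first clause of B4), or A5 gives $\alpha \in [2,p-1]$ and the $-1$ shift produces $\beta \in [1,p-2]$.

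The inverse $\psi : \L_{[1,p]} \to \L_{[0,p-1]}$ is defined by reversing the two rules locally: an \I-interval $\underline{p,p-1,\dots,p-1}$ is sent back to $\underline{0,0,\dots,0}$ or $\underline{1,0,\dots,0}$, the choice of first entry being forced by whether $i-1 \in \bigcup\I$; the successor of a positive (resp.\ negative) \I-interval not lying in another \I-interval is shifted by $-1$ (resp.\ $+1$); and the successor of a negative interval that does lie in another \I-interval (which by B2 must carry the value $p$) becomes $1$. Verification that $\psi(\beta,\I) \in \L_{[0,p-1]}$ is parallel: B2 reverses to A2, A3 is recovered from the fact that non-start, non-successor entries are unchanged, and B3--B4 become A4--A5 by inverting the shifts.

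The main obstacle is checking consistency at the boundary of two consecutive \I-intervals, where rule 2 rather than rule 1 applies. Here one must verify that the two rules partition the possible configurations without overlap: rule 2 is triggered exactly when the successor of a negative interval lies in another \I-interval, which is precisely the case excluded from rule 1 (there the successor is a free entry shifted by $\pm 1$). Since A5 on one side and B2 together with B4 on the other characterize this boundary configuration symmetrically, the local correspondences patch consistently, and $\psi \circ \phi = \mathrm{id}$ and $\phi \circ \psi = \mathrm{id}$ follow by applying the pair of rules interval by interval.
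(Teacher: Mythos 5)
Your proof is correct and fills in exactly the case analysis the paper waves away as ``straightforward.'' The only small point you leave implicit is that $\phi$ actually lands in $\{1,\dots,p\}^{\Z/f}$: an entry not in $\bigcup\I$ and not a successor of an $\I$-interval is unchanged, and you need A3 (together with A5) to ensure such an entry has $\alpha$-value $\ge 1$ — you invoke this fact for $\psi$ but not for $\phi$, though the argument is symmetric. Otherwise your verification of B1--B4 from A1--A5 and the explicit inverse $\psi$ are complete and match the intended argument.
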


\begin{lm}\label{lm:succ_of_ints}
  Suppose that $\alpha:\Z/f \to \{0,1,\dots,p-1\}$. Then the following are equivalent for a subset $S\subseteq \Z/f$:
  \begin{enumerate}
  \item $S \in \SS(F_{\vec p-\vec 1-\vec \alpha,\vec x})$ for some~$\vec x$.
  \item $S \in \SS(F_{\vec p-\vec 1-\vec \alpha,\vec x})$ for all~$\vec x$.
  \item $S$ is the set of successors of positive intervals in~$\I$ for some~$\I$ with $(\alpha,\I) \in \L_{[0,p-1]}$.
  \end{enumerate}
\end{lm}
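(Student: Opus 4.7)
The equivalence (i)$\iff$(ii) is immediate: the condition defining membership of $S$ in $\SS(F_{\vec m,\vec x})$ depends only on the vector $\vec m = \vec p - \vec 1 - \vec\alpha$ and makes no reference to $\vec x$ at all (equivalently, to the choice of $\vec b$).  So the content is the equivalence (ii)$\iff$(iii), which I plan to prove by directly translating between the two kinds of combinatorial data.

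For (iii)$\Rightarrow$(ii), I first record the block structure forced by $\L_{[0,p-1]}$.  A maximal connected component $B$ of $\bigcup\I$ inside $\Z/f$ is partitioned by $\I$ into consecutive intervals $I_1,\dots,I_r$; axioms A1 and A2 then force the $\alpha$-pattern along $B$ to be $0^{|I_1|},1,0^{|I_2|-1},1,\dots,1,0^{|I_r|-1}$, and axioms A4/A5 force $I_1,\dots,I_{r-1}$ to be negative (their successors lie inside $B$).  Consequently every $s\in S$ has the form $s=j+1$ where $\ll i',j\rr^+$ is the \emph{final} interval of some block $B$, so $s-1\in B$ while $s\notin\bigcup\I$.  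I then walk leftwards from $j$ through $B$, crossing the length-one intervals carrying $\alpha=1$, until I hit the first position $i\in B$ with $\alpha(i)=0$ (this must occur, at latest at the start of $B$).  By construction $\alpha(i)=0$, $\alpha(i+1)=\cdots=\alpha(s-1)=1$, and $\{i,\dots,s-1\}\subset B\subset\bigcup\I$ is disjoint from $S$; together with A4 giving $\alpha(s)\in[0,p-2]$ and hence $m_s\ne 0$, this verifies $s\in\SS(F_{\vec p-\vec 1-\vec\alpha,\vec x})$.

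For (ii)$\Rightarrow$(iii), I go in reverse.  For each $s\in S$, the $\SS$-witness supplies an index $i_s$ with $\alpha(i_s)=0$, $\alpha(i_s+1)=\cdots=\alpha(s-1)=1$, and $\{i_s,\dots,s-1\}\cap S=\varnothing$.  The disjointness condition guarantees that with a consistent choice of the $i_s$ (for instance, taking $i_s$ as small as possible inside the arc that the $\SS$-witness describes) the arcs $A_s:=\{i_s,\dots,s-1\}$ are pairwise disjoint.  On each $A_s$ I place length-one $\I$-intervals, with the one at $s-1$ positive and the preceding ones negative, making $A_s$ into a block of the kind just analysed.  Axioms A1, A2, A4 hold on these blocks by inspection.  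It then remains to extend $\I$ by additional \emph{negative} intervals covering those positions of $\Z/f\setminus\bigcup_s A_s$ where $\alpha\in\{0,1\}$, in such a way that axiom A3 is repaired (every $\alpha=0$ position is either in $\bigcup\I$ or immediately after an element of $\bigcup\I$) and A5 is satisfied (each new negative interval has its successor either in another interval or at a position with $\alpha\in[2,p-1]$).  The $\SS$-condition plus the constraint $\alpha(s)\ne p-1$ for $s\in S$ exactly suffice for this completion to be possible block-by-block around the cycle.

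The main obstacle I anticipate is precisely this completion step in (ii)$\Rightarrow$(iii): one has to organise the remaining $\{\alpha\in\{0,1\}\}$-positions into signed intervals in a way that is internally consistent with A2 while not disturbing the arcs $A_s$ already constructed.  The wrap-around in $\Z/f$ and the interplay between A2 (which constrains the $\alpha$-values at starts of adjacent intervals) and A5 (which constrains the successors of negative intervals) require a case analysis that I expect to handle by treating each maximal run of $\{\alpha\in\{0,1\}\}$-positions separately, using its boundary behaviour (whether it abuts an $A_s$, a position with $\alpha\in[2,p-1]$, or closes up into a full cycle of $\Z/f$) to pick the correct interval partition and signs.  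Once this bookkeeping is fixed, direct verification of A1--A5 completes the argument.
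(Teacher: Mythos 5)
Your reduction of (i)$\iff$(ii) and your argument for (iii)$\Rightarrow$(ii) are correct and essentially match the paper; the block analysis of $\bigcup\I$ via A1/A2 and the leftward walk from $s-1$ are exactly what the paper does (phrased slightly differently). Your claim that the witness arcs $A_s$ are pairwise disjoint is also correct, and in fact unnecessary to arrange by a ``consistent choice'': the witness $i_s$ in the definition of $\SS(F)$ is \emph{unique} when it exists (two witnesses $i < i'$ would force $\alpha(i')$ to equal both $0$ and $1$), and disjointness then follows from the condition $A_s\cap S=\varnothing$ by a short case check in the cycle.

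The direction (ii)$\Rightarrow$(iii) has a genuine gap, and moreover the specific construction you commit to is flawed, not merely incomplete. Placing \emph{length-one} intervals on each $A_s$ is incompatible with axiom A2 as soon as the completion step covers a position adjacent to $i_s$. Concretely, take $f=5$, $\alpha=(0,0,0,0,0)$ and $S=\{0\}$: then $A_0=\{4\}$, so you begin with $\I=\{\{4\}^+\}$; A3 then forces $1,2,3\in\bigcup\I$, and A2 at $4$ (with $3\in\bigcup\I$ and $\alpha(4)=0$) forbids $4$ from being a start point, so $\{4\}$ cannot remain its own interval. The correct $\I$ is the single positive interval $\{1,2,3,4\}$, which your recipe cannot produce. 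The paper sidesteps this by defining $\bigcup\I$ globally in one stroke---$i\in\bigcup\I$ iff there exists $j$ with $\ll j,i\rr\subset S^c$, $\alpha(j)=0$ and $\alpha(j+1)=\dots=\alpha(i)=1$---and then \emph{deriving} the interval partition (start points) and signs from A2 and the membership of successors in $S$. This makes the verification of A1--A5 genuinely routine and also yields uniqueness, whereas the ``place intervals on $A_s$, then patch'' strategy requires retroactively merging intervals across the boundary $i_s-1\!\mid\! i_s$ and so cannot be carried out block-by-block as you propose without redesigning the initial step.
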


\begin{proof}[Proof of the theorem] 

(i) This is a straightforward application of lemma~\ref{lm:relate_decomp_bdj}. First consider the niveau~1 case.
Suppose $F \in W_\pp^?(\rho)$ and $F$ regular. By twisting,
we can assume without loss of generality that $F = F_{\vec b-\vec 1,\vec 0}$ ($1 \le b_i \le p-1$) and 
\begin{equation*}
  \rho|_{I_\pp} \sim
  \begin{pmatrix}
    \prod_J \psi_i^{b_i} \\ & \prod_{J^c} \psi_i^{b_i}
  \end{pmatrix}
\end{equation*}
for some $J \subseteq \Z/f$. By lemma~\ref{lm:relate_decomp_bdj}, the regular Serre weight $F_{\vec p-\vec 1 -\vec b,\vec b}$ is a constituent
of $\overline{V_\pp(\rho|_{I_\pp})}$. Applying $\RR_\pp$ produces $F_{\vec b-\vec 1,\vec 0}$. Reversing the argument yields the other
inclusion.

The niveau~2 case works exactly the same way.

(ii) \emph{Step 1: Show that $\RR_{\ext,\pp}(F) \subseteq W_\pp^?(\rho)$ if $F$ is a constituent of $\overline{V_\pp(\rho|_{I_\pp})}$.}

Without loss of generality (twisting $\rho$ and~$F$) we may assume that $F = F_{\vec m,\vec 0}$ ($0 \le m_i \le p-1$).
If \emph{$\rho|_{I_\pp}$ has niveau~1}, then by lemma~\ref{lm:relate_decomp_bdj} we can write
\begin{equation}
  \label{eq:rho_gl2_niv1}
  \rho|_{I_\pp} \sim
  \begin{pmatrix}
    \prod_J \psi_i^{p-1-m_i} \\ & \prod_{J^c} \psi_i^{p-1-m_i}
  \end{pmatrix} \prod \psi_i^{m_i}
\end{equation}
for some subset $J \subseteq \Z/f$. Define $\alpha:\Z/f \to \{0,1,\dots,p-1\}$, $i \mapsto p-1-m_i$.  Given $S \in \SS(F)$, we can by
lemma~\ref{lm:succ_of_ints} choose a collection \I\ of signed intervals such that $(\alpha,\I) \in \L_{[0,p-1]}$ and $S$ is the set of
successors of positive \I-intervals. Let $J_+$ (resp.\ $J_-$) denote those elements of~$J$ that succeed positive (resp.\ negative) intervals
of~$\I$. Similarly define $J^c_+$ and $J^c_-$. Let $J_0$ (resp.\ $J^c_0$) denote those elements of~$J$ (resp.\ $J^c$) that do not lie in any
interval of~$\I$. Note that $S = J_+ \cup J^c_+$. Then
\begin{equation}
  \label{eq:rewrite_rho_niv1}
  \rho|_{I_\pp} \sim \begin{pmatrix} \chi_1 \\ &\chi_2 \end{pmatrix} \prod_{S} \psi_i^{-1}\prod_i \psi_i^{m_i}
\end{equation}
where
\begin{align*}
  \chi_1 &= \prod_{J_+} \psi_i^{\alpha(i)+1}\prod_{J_0\setminus (J_+\cup J_-)} \psi_i^{\alpha(i)} \prod_{J_0\cap J_-} \psi_i^{\alpha(i)-1}
  \prod_{\substack{j+1\in J_- \cup J^c_+\\ \ll i,j\rr\in \I}}
  (\psi_{i}^p\psi_{i+1}^{p-1}\cdots \psi_{j}^{p-1})
\end{align*}
and $\chi_2$ is obtained by interchanging the roles of~$J$ and~$J^c$.
Note that each~$\psi_i$ appears with non-zero exponent in precisely one of $\chi_1$, $\chi_2$ (the way they are expressed here); call this
non-zero exponent $\beta(i)$. It is not hard to see that $\phi(\alpha,\I) = (\beta,\I)$.
Thus
\begin{equation*}
  \chi_1 = \prod_L \psi_i^{\beta(i)},\ \chi_2 = \prod_{L^c} \psi_i^{\beta(i)}
\end{equation*}
for some $L \subseteq \Z/f$ and all exponents $\beta(i)$ are in $[1,p]$, so
\eqref{eq:rewrite_rho_niv1} gives rise to a Serre weight $F(A,B) \in W_\pp^?(\rho)$ (by~\eqref{eq:BDJ_niv1}).
Combining equations~\eqref{eq:rho_gl2_niv1} and \eqref{eq:rewrite_rho_niv1} we find that
\begin{equation*}
  \det (\rho|_{I_\pp}\cdot \pprod \psi_i^{-m_i}) = \psi_0^{-\sum m_i p^i} = \psi_0^{\sum (\beta(i) - 2\cdot \ones(i)) p^i}.
\end{equation*}
Using this, we easily see that $F(A,B)$ satisfies
\begin{equation*}
  A \equiv -\sum_{S^c} p^i,\ B \equiv \sum m_i p^i - \sum_{S} p^i \pmod{p^f-1}.
\end{equation*}
We are done except for showing that any other weight $F(A',B')$ satisfying these congruences is in the conjectured weight set.
But these congruences determine $F(A,B)$ except for the pairs
$\{F(x,x), F(x+p^f-1,x)\}$ and for all~$x$, $F(x,x) \in W_\pp^?(\rho)$ if and only if $F(x+p^f-1,x) \in W_\pp^?(\rho)$
(this follows directly from the definition). Therefore $\RR_{\ext,\pp}(F) \subseteq W_\pp^?(\rho)$.

If \emph{$\rho|_{I_\pp}$ has niveau~2}, then
\begin{equation*}
  \rho|_{I_\pp} \sim
  \begin{pmatrix}
    \prod_J \psi_{i'}^{p-1-m_i} \\ & \prod_{J^c} \psi_{i'}^{p-1-m_i} 
  \end{pmatrix} \prod \psi_i^{m_i}
\end{equation*}
for some $J \subseteq \Z/2f$ projecting bijectively onto~$\Z/f$. The argument is now formally identical to the niveau~1 case provided we replace each~$\psi_i$
by $\psi_{i'}$ and ``$\ll i,j\rr \in \I$'' in the subscript of the right-most product in the expression for~$\chi_1$ by ``$\ll i,j\rr \in
\widetilde\I$'', where $\widetilde\I$ is the set of intervals in $\Z/2f$ which project bijectively onto the \I-intervals in~$\Z/f$.

\emph{Step 2: Show that all weights~$F$ in $W_\pp^?(\rho)$ are obtained in this way.}

If \emph{$\rho|_{I_\pp}$ has niveau~1}, then
we can twist by characters and assume without loss of generality that $F = F_{\vec \beta-\vec 1,\vec 0}$ ($1 \le \beta(i) \le p$) and
\begin{equation}\label{eq:rho_bdj_niv1}
  \rho|_{I_\pp} \sim
  \begin{pmatrix}
    \prod_L \psi_i^{\beta(i)} \\ & \prod_{L^c} \psi_i^{\beta(i)}
  \end{pmatrix}
\end{equation}
for some $L \subseteq \Z/f$. Define a collection~$\I$ of disjoint signed intervals in~$\Z/f$ which is in bijection with $\beta^{-1}(p)$, as
follows. Whenever $\beta(i) = p$ and $i \in L$ (resp.\ $L^c$) choose~$j$ such that all numbers in $\beta(\ll i,j\rr-\{i\}) \subseteq \{p-1\}$,
$\ll i,j\rr \subseteq L$ (resp.\ $L^c$) and $j$ is maximal with respect to these properties (\ie, $j$ cannot be replaced by $j+1$).  In that
case $\ll i,j\rr$ is the \I-interval corresponding to $i \in \beta^{-1}(p)$. We let it be negative if and only if $\beta(j+1) = p$ or $j+1
\in L$ (resp.\ $L^c$). Observe that $(\beta,\I) \in \L_{[1,p]}$.

Let~$\Sigma_L$ (resp.\ $\Sigma_{L^c}$) be the set of successors of \I-intervals contained in~$L$ (resp.\ $L^c$).
The notation $L_0$, $L^c_0$ has the same meaning as in the previous part. Note that $S = \Sigma_L \cap L_0^c \cup \Sigma_{L^c} \cap L_0$
is the set of successors of positive \I-intervals. We see that
\begin{equation}\label{eq:rewrite_rho_bdj_niv1}
  \rho|_{I_\pp} \sim
  \begin{pmatrix} \chi_1 \\ & \chi_2
  \end{pmatrix} \prod_{S} \psi_i
\end{equation}
where
\begin{equation*}
  \chi_1 = \prod_{L_0 \cap \Sigma_{L^c}} \psi_i^{\beta(i)-1} \prod_{L_0 \setminus (\Sigma_{L}\cup \Sigma_{L^c})} \psi_i^{\beta(i)} 
  \prod_{L_0 \cap \Sigma_{L}} \psi_i^{\beta(i)+1} \prod_{\Sigma_L\setminus (L_0 \cup L_0^c)} \psi_i
\end{equation*}
and $\chi_2$ is obtained by interchanging the roles of~$L$ and~$L^c$.  Every~$\psi_i$ occurs with a non-zero exponent in at most one of
$\chi_1$, $\chi_2$ (the way they are expressed here); call this exponent $\alpha(i) \in \{0,1,\dots,p-1\}$. By lemma~\ref{lm:relate_decomp_bdj},
taking into account the twist, this decomposition shows that $F' = F_{\vec p-\vec 1-\vec\alpha,\vec\alpha+\vec\ones}$ is a constituent of
$\overline{V_\pp(\rho|_{I_\pp})}$ (here $\ones$ is the characteristic function of~$S$).

It is not hard to see that $\phi^{-1}(\beta,\I) = (\alpha,\I)$. In particular, by lemma~\ref{lm:succ_of_ints} $S \in \SS(F')$.
Equations~\eqref{eq:rho_bdj_niv1}, \eqref{eq:rewrite_rho_bdj_niv1} yield
\begin{equation*}
  \det (\rho|_{I_\pp}) = \psi_0^{\sum (\alpha(i) + 2\cdot \ones(i))p^i} = \psi_0^{\sum \beta(i)p^i}.
\end{equation*}
We see that the weight in 
$\RR_{\ext,\pp}(F')$ corresponding to $S \in \SS(F')$ is $F_{\vec \beta-\vec 1,\vec 0} = F$, and we are done.

If \emph{$\rho|_{I_\pp}$ has niveau~2}, the argument is completely analogous (as in Step~1). \end{proof}

\begin{proof}[Proof of lemma~\ref{lm:relate_decomp_bdj}] (i) First let us show the implication ``$\Rightarrow$''.
Without loss of generality,
\begin{equation*}
  \rho|_{I_\pp} \sim
  \begin{pmatrix}
    \prod \psi_i^{n_i} \\ &1
  \end{pmatrix}
\end{equation*}
for some $0 \le n_i \le p-1$. By~\cite{bib:Di}, prop.~1.1, the constituents of $\overline{V_\pp(\rho|_{I_\pp})}$ are the $F_{\vec c_J,\vec d_J}$ where
$J \subseteq \Z/f$ and
\begin{gather*}
  c_{J,i} =
  \begin{cases}
    n_i + \delta_J(i) -1 & \text{if $i \in J$}\\
    p-1-n_i-\delta_J(i) & \text{if $i \not\in J$}
  \end{cases}\\
  d_{J,i} =
  \begin{cases}
    0 & \text{if $i \in J$}\\
    n_i+\delta_J(i) & \text{if $i \not\in J$}
  \end{cases}
\end{gather*}
where~$\delta_J$ is the characteristic function of $\{i+1: i \in J\}$. Also, the convention is that $F_{\vec c_J,\vec d_J} = (0)$ if
$c_{J,i} = -1$ for some~$i$. Now note that
\begin{equation*}
  \rho|_{I_\pp} \sim
  \begin{pmatrix}
    \prod_{J^c} \psi_i^{n_i+\delta_J(i)} \\ & \prod_J \psi_i^{p-n_i-\delta_J(i)}
  \end{pmatrix}\prod_J \psi_i^{n_i+\delta_J(i)-1}\prod_{J^c} \psi_i^{p-1}.
\end{equation*}

Conversely, suppose without loss of generality that $\rho|_{I_\pp}$ is as in the statement of the lemma with $\vec b = 0$.
Note that whenever $m_i = p-1$ it is irrelevant whether $i \in J$ or not. Thus for all such~$i$ we can prescribe whether or not $i \in J$. There is
a unique way to alter~$J$ in this manner such that for all~$i$ with $m_i = p-1$, $i \in J \eq i-1 \in J$ (the latter is equivalent to $\delta_J(i) = 1$).
Note that
\begin{align*}
  V_\pp(\rho|_{I_\pp}) &\cong I(\pprod_{J^c}\lambda_i^{p-1-m_i}\pprod_J \lambda_i^{m_i+1-p},1) \tens \pprod \lambda_i^{m_i} \pprod_J \lambda_i^{p-1-m_i} \\
  &\cong I(\pprod_{J^c} \lambda_i^{p-1-m_i-\delta_J(i)}\pprod_J \lambda_i^{m_i+1-\delta_J(i)},1) \tens \pprod \lambda_i^{m_i} \pprod_J
  \lambda_i^{p-1-m_i}.
\end{align*}
By our choice of~$J$, all exponents of the first character in the induction are contained in $\{0,1,\dots,p-1\}$.  It follows
from~\cite{bib:Di}, prop.~1.1 (using the same subset~$J$) that $F_{\vec m,\vec 0}$ is a constituent of $\overline{V_\pp(\rho|_{I_\pp})}$, as
required.

(ii) This works completely analogously, it is only more cumbersome to write out. Note that we can assume $\vec m \ne \vec p -\vec 1$
as on the one hand
\begin{equation*}
  \dim F_{\vec p-\vec 1, \vec b} = p^f > p^f-1 = \dim V_\pp(\rho|_{I_\pp})  
\end{equation*}
and on the other hand $\rho|_{I_\pp}$ cannot be unramified up to twist (being of niveau~2).
\end{proof}

\begin{proof}[Proof of lemma~\ref{lm:combi_bij}] This is straightforward.
\end{proof}

\begin{proof}[Proof of lemma~\ref{lm:succ_of_ints}] 
Note that the first two statements are equivalent, by the definition of $\SS(F)$, to

\begin{enumerate}
\item [(i$'$)] For all $s\in S$,
   \begin{enumerate}
   \item $\alpha(s) \ne p-1$.
   \item There is an $i \in \Z/f$ such that $\ll i,s-1\rr \cap S = \varnothing$ and
     $\alpha(i) = 0$, $\alpha(i+1) = \dots = \alpha(s-1) = 1$.
   \end{enumerate}
\end{enumerate}
We will now show that $\text{(i$'$)} \eq \text{(iii)}$.

First suppose that $(\alpha,\I) \in \L_{[0,p-1]}$ and let $S$ be the set of successors of positive
intervals. Then by property~A4, $\alpha(s) \ne p-1$ if $s\in S$. Moreover, $\alpha(s-1) \in \{0,1\}$
and $s-1 \not\in S$ (as $s-1$ is in an interval).
If it is 1, by property~A2 the preceding entry lies in a different (negative) interval and iterating this process gives the desired interval
$\ll i,s-1\rr$. Note that the process has to stop (\ie, eventually we hit a~0) because $s \in S$ cannot itself lie in an interval (by A4).

Conversely, suppose given $S$ satisfying $\text{(i$'$)}$. Here is a way to define $\I$ having $S$ as set of
successors of positive intervals and such that $(\alpha,\I) \in \L_{[0,p-1]}$ (in fact it is the unique way).
It is easier to define $\bigcup \I$ first: we let $i \in \bigcup \I$ if and only if there is a~$j$ such that
$\ll j,i\rr \subseteq S^c$ and $\alpha(j) = 0$, $\alpha(j+1) = \dots = \alpha(i) = 1$ (in particular, this whole interval
will be contained in $\bigcup \I$). We let $i \in \bigcup \I$ be the start point of an \I-interval if and only if
$i-1 \not\in \bigcup \I$ or $i-1 \in \bigcup \I$ and $\alpha(i) = 1$. We let an \I-interval be positive if and only
if its successor is in~$S$.
It is straightforward to see that $(\alpha,\I) \in \L_{[0,p-1]}$; by definition $S$ is the set of successors of positive intervals.
\end{proof}

\appendix
\section{Generalisation of Jantzen's formula}\label{app:gener-jantz-form}

The purpose of this appendix is to explain how Jantzen's theorem on the decomposition of the reduction modulo~$p$ of
Deligne--Lusztig representations generalises to the case of reductive groups whose derived subgroup is simply connected. The
case of simply connected almost simple groups is treated in Jantzen's original paper~\cite{bib:Jan-DL}, and the case of split
reductive groups with simply connected derived subgroup was explained to the author by Jantzen in an informal yet very
carefully written manuscript. Below we take the ``fibre product'' of Jantzen's paper and his subsequent manuscript to give a
proof of the result in the general case. This doesn't require any new ideas, but is presented here for the sake of
completeness.

The argument follows that of~\cite{bib:Jan-DL}, and we will simply explain what changes need to be made to that argument.
As much as possible we will keep with the notation of that paper, including the numbering of sections and references.  Since
we are only interested in the decomposition result~\cite{bib:Jan-DL}, thm.~3.4, we will not comment on section~4 and a couple of
aside remarks like the one at the end of (2.5).

\emph{Acknowledgements.} I am very grateful to Jens Carsten Jantzen for explaining his proof and for allowing me to write it up
in this appendix. All results in this write-up are due to Jantzen; the author takes responsibility for all errors.

\medskip

1.1. Let~$G$ be a connected reductive algebraic group defined and split over~$\fp$ and such that its derived subgroup $G'$ is
\emph{simply connected}. Then $T_1 = T \cap G'$ is a split maximal torus in~$G'$ (its connectedness follows by comparing the
Bruhat decompositions of~$G$ and~$G'$). The restriction map $X(T) \onto X(T_1)$, which identifies the roots and the Weyl
groups of~$G$ and~$G'$, will be denoted by $\mu \mapsto \overline\mu$ and its kernel by $X^0(T)$.  Note that $X^0(T) = \{\mu
\in X(T) : \langle \mu, \alpha\dual \rangle = 0\ \forall \alpha \in R\} = X(T)^W$.

Let $R^+$ denote the set of positive roots. Since $G'$ is simply connected, for any simple root $\alpha \in B$ 
there exists $\omega'_\alpha \in X(T)$ 
satisfying $\langle \omega'_\alpha, \beta\dual \rangle = \delta_{\alpha\beta}$ for all $\beta \in B$. Equivalently,
$\omega'_\alpha$ is a choice of lifting of the fundamental weight $\omega_\alpha$ of~$G'$. Let $\rho' = \sum_{\alpha \in B}
\omega'_\alpha$. In particular, $\rho' - \frac 12 \sum_{R^+} \alpha \in X^0(T)\otimes\R$, and
for $w \in \widetilde W_p$
and $\lambda \in X(T)$, $w\cdot \lambda = w(\lambda+\rho')-\rho'$ is well defined.
\emph{Any occurrence of~$\rho$ in the text should be read as~$\rho'$.}

Define $\alpha_0\dual \in X(T)\dual$ to be the sum of the longest coroots of all irreducible components of~$R$. It is thus
generally not a coroot. If $\lambda \le \mu$ in $X(T)$ then $\langle \lambda,\alpha_0\dual\rangle \le \langle
\mu,\alpha_0\dual\rangle$ (the strict inequality in~\cite{bib:Jan-DL} is a typo), and for $\lambda \in X(T)^+$, $\langle
\lambda,\alpha_0\dual\rangle \ge 0$ with equality if and only if $\lambda \in X^0(T)$.

\medskip

1.2. Note that for $\mu \in X^0(T)$, $L(\mu) = V(\mu)$ is a one-dimensional $G$-module with formal character $e(\mu)$
(see the proof of prop.~1.3 below); denote it by~$\mu$ if no confusion arises. It follows from the definitions that
$V(\lambda+ \mu) \cong V(\lambda) \otimes \mu$, $L(\lambda+ \mu) \cong L(\lambda) \otimes \mu$
for any $\lambda \in X(T)$.

\medskip

1.3. Now $\pi$ is a finite order automorphism of the based root datum of~$G$; note that it preserves $\alpha_0\dual$ and
$X^0(T)$.  We may lift~$\pi$ to an automorphism~$\pi$ of $(G,B^+,T)$ that is of the same order and that  is defined over~$\fp$ (where
$B^+$ is the Borel subgroup determined by~$R^+$).  This follows from \cite[16.3.2]{bib:Springer_LAG} (or \cite[II.1.15]{bib:Jan-reps})
by using one fixed realisation $(u_\alpha)_\alpha$ 
for~$G$ in the proof, so that the lifted automorphism fixes a pinning. Note that this procedure
induces a bijection between conjugacy classes of finite order automorphisms of the based root datum
and isomorphism classes of $\fpn n$-forms of~$G$. Also note that $\pi$ induces $\fpn n$-structures on~$G'$, $G/G'$, etc. We
let $\Gamma'_n = (G')^F \le \Gamma_n$.

We have the following classification of simple $K\Gamma_n$-modules. For lack of a reference we explain how it follows from
the semisimple case \cite[2.11]{bib:Hum}.

\begin{propJ} \
  \begin{enumerate}
  \item For all $\lambda \in X_n(T)$, the simple $G$-module $L(\lambda)$ restricts to a simple $K\Gamma_n$-module.
    Each simple $K\Gamma_n$-module is isomorphic to such a restricted $L(\lambda)$.
  \item Let $\lambda$, $\lambda' \in X_n(T)$. Then $L(\lambda)$ and $L(\lambda')$ are isomorphic as $K\Gamma_n$-modules
    if and only if $\lambda-\lambda' \in (p^n-\pi)X^0(T)$.
  \end{enumerate}
\end{propJ}

\begin{proof}
  Any $L(\lambda)$ with $\lambda \in X(T)^+$ restricts to the simple $G'$-module $L(\overline\lambda)$, as $G = Z(G) \cdot G'$.
  If $\lambda \in X_n(T)$, then $L(\overline\lambda)$ is simple as $K\Gamma'_n$-module and so $L(\lambda)$ is simple
  as $K\Gamma_n$-module. The result in the semisimple case implies furthermore that for any~$\lambda$, $\lambda' \in X_n(T)$,
  $L(\lambda) \cong L(\lambda')$ as $K\Gamma'_n$-modules if and only if $\lambda-\lambda' \in X^0(T)$.

  Let $U^+$ denote the unipotent radical of the Borel subgroup $B^+$. As $U^+ \subset
  G'$, it is known that $L(\lambda)^{(U^+)^F} = L(\lambda)^\lambda$ \cite[2.11]{bib:Hum}. Thus $T^F$ acts on this space via the
  restriction of~$\lambda$ to~$T^F$; so if $\lambda$, $\lambda' \in X_n(T)$ and $L(\lambda) \cong L(\lambda')$ as
  $K\Gamma_n$-modules, then $\lambda-\lambda'$ is trivial on~$T^F$. By Lang's theorem there is a short exact
  sequence of diagonalisable groups, $1 \to T^F \to T \xrightarrow{F-1} T \to 1$, and by taking character groups it follows
  that $\lambda-\lambda' \in (p^n-\pi) X(T)$ (as $\Frn = p^n$ on~$T$). Let us write $\lambda-\lambda' = (p^n-\pi)\mu$; by the
  above this weight also lies in $X^0(T)$. If $d \ge 1$ denotes the order of~$\pi$, it follows that $(p^{nd}-1)\mu \in
  X^0(T)$ and thus $\lambda-\lambda' \in (p^n-\pi)X^0(T)$. This proves the ``only if'' direction of (ii).
  
  For the converse, since $L(\lambda+\mu) \cong L(\lambda)\otimes \mu$ for $\mu \in X^0(T)$, it suffices to show that
  $L((p^n-\pi)\mu)$ is trivial on~$\Gamma_n$ for $\mu \in X^0(T)$. Let~$\overline T$ denote the torus $G/G'$. By considering
  the short exact sequence of tori, $1 \to T_1 \to T \to \overline T \to 1$, it follows that $X(\overline T) = X^0(T)$.
  Moreover $G \onto \overline T \xrightarrow{\mu} \G_m$ has to be the irreducible $G$-module~$L(\mu)$.
  As above, $(p^n-\pi)\mu \in (p^n-\pi)X(\overline T)$ is trivial on $\overline T^F$, hence $L((p^n-\pi)\mu)$ is trivial
  on $G^F = \Gamma_n$. (Note that $\pi$ acts compatibly on~$T$ and~$\overline T$.)
  This proves the ``if'' direction of (ii).

  The argument so far shows that each simple $K\Gamma'_n$-module $L(\overline\lambda)$ ($\lambda \in X_n(T)$) has at least
  $\#(X^0(T)/(p^n-\pi)X^0(T))$ non-isomorphic extensions to a simple $K\Gamma_n$-module. Each extension is a quotient of
  $\Ind_{\Gamma'_n}^{\Gamma_n} L(\overline\lambda)$. By Lang's theorem we have the short exact sequences
  \begin{gather*}
      1 \to (G')^F \to G^F \to \overline T^F \to 1, \\
      1 \to \overline T^F \to \overline T \xrightarrow{F-1} \overline T \to 1,
  \end{gather*}
  and by applying character groups to the second sequence we obtain $[\Gamma_n : \Gamma'_n] = \#(X^0(T)/(p^n-\pi)X^0(T))$.
  For dimension reasons it follows that $\Ind_{\Gamma'_n}^{\Gamma_n} L(\overline\lambda)$ is a direct sum of all $L(\lambda +
  \mu)$ with $\mu$ running over representatives of $X^0(T)/(p^n-\pi)X^0(T)$. Since each simple $K\Gamma_n$-module is a
  homomorphic image of a module induced from a simple $K\Gamma'_n$-module, this proves (i).
\end{proof}

In the inside sum of (2), $\lambda$ should run over a system of representatives~$Z$ of $X_n(T)/p^n X^0(T)$;
then every dominant weight can be expressed uniquely as $p^n \nu + \lambda$ with $\nu \in X(T)^+$ and $\lambda \in Z$.

\medskip

1.4. Both sums in the lemma involve only finitely many non-zero terms (see the comment in the proof of lemma~2.3 below).

Fix a system of representatives~$Z$ as at the end of the last paragraph. By shifting the index~$\mu$ and by adjusting
$\chi_2$ we may assume without loss of generality that $\lambda \in Z$. Then the proof goes through, provided that
$\lambda'$ runs through $Z$, rather than $X_n(T)$.

\medskip

1.5. Denote by $\St'_n$ the simple $G$-module $L((p^n-1)\rho') = V((p^n-1)\rho')$ and by $\St_{n,\pi}$ the simple
$G$-module $L((p^n-\pi)\rho') = V((p^n-\pi)\rho')$. Thus $\St'_n \cong \St_{n,\pi} \otimes (\pi-1)\rho'$ since $(\pi-1)\rho'
\in X^0(T)$. The first will be useful in the context of $G$-modules, the second when dealing with $K\Gamma_n$-modules. As
$K\Gamma_n$-modules they are simple 
by prop.~1.3. Note that as $K\Gamma_n$-module,
$\St'_n$ may depend on the choice of the $\omega'_\alpha$, whereas $\St_{n,\pi}$ is independent of it. 
Observe that $\St'_n \cong \St_{n,\pi}$ automatically in the split case ($\pi = 1$) or if $G$ is
semisimple (as $X^0(T) = 0$). \emph{Any occurrence of the $G$-module $\St_n$ in the text should be read as $\St'_n$
in sections~1 and~2 and as $\St_{n,\pi}$ in section~3.}

For the proof of the theorem, the first case is now $\nu \in X^0(T)$. Using $(p^n-1)\rho' + p^n \mu \le \pi(\mu) + p^n\nu + \lambda$
and $\langle \lambda,\alpha_0\dual\rangle \le \langle (p^n-1)\rho',\alpha_0\dual\rangle$ it follows that $\mu \in X^0(T)$.
Since $\chi\chi_p(\pi(\mu)) = \chi_p(p^n\nu+\lambda+\pi(\mu))$, either side of the claimed equation equals~1 if
$p^n\nu + \lambda$ is congruent to $(p^n-1)\rho'$ modulo $(p^n-\pi)X^0(T)$, and 0 otherwise.
The remaining case follows as is written, once ``$\nu \ne 0$'' is replaced with ``$\nu \not\in X^0(T)$.''

\medskip

2.1. References [9] and [10] have mostly been superseded by Jantzen's book~\cite{bib:Jan-reps}, II.9 and II.11. To keep with the book,
we will use ``$G_nT$-module'' instead of ``$\text{$\mathbf{u}_n$-$T$}$-module,'' and the notation $\widehat L_n(\lambda)$,
$\widehat Z_n(\lambda)$, $\widehat Q_n(\lambda)$.

Note that for $\mu \in X^0(T)$, $\widehat L_n(\mu)$ is one dimensional and has character $e(\mu)$. Denote it by~$\mu$. Then
for all $\lambda \in X(T)$,
\begin{equation*}
  \widehat L_n(\lambda+\mu) \cong \widehat L_n(\lambda)\otimes \mu,\   \widehat Z_n(\lambda+\mu) \cong \widehat Z_n(\lambda)\otimes \mu,\ 
  \widehat Q_n(\lambda+\mu) \cong \widehat Q_n(\lambda)\otimes \mu.
\end{equation*}

\medskip

2.2. In equation~(1), the right-hand side should be replaced with
\begin{equation*}
  \begin{cases}
    \dim L(\nu)^{\nu'+\nu_0} & \text{if $\mu-\lambda = p^n\nu_0 \in p^n X^0(T)$} \\
    0 & \text{otherwise}
  \end{cases}
\end{equation*}

\medskip

2.3. Everything goes through except showing that only finitely many terms are non-zero.  Suppose $\mu$, $\nu$ are dominant
weights making the term in~(1) non-zero. Then $\nu \le \mu$ and $p^n \mu + \lambda \le \mu' + \pi(\nu)$ for some
weight~$\mu'$ of~$\chi$. Then $(p^n-\pi)\mu \le \mu' - \lambda$.  Note that $p^{dn}-1 = (\sum_{i=0}^{d-1}
p^{in}\pi^{d-1-i})(p^n-\pi)$ where $d \ge 1$ is the order of~$\pi$.  Thus $(p^{dn}-1)\mu$ is dominant and bounded for the
$\le$ partial order; so there are only finitely many choices for~$\mu$, a fortiori the same is true for~$\nu$.  Similarly one
shows that the term in~(2) is non-zero for only finitely many pairs $(\mu,\nu)$.

\medskip

2.4. On top of p.~460 the equation should be replaced by
\begin{equation*}
  \langle \ch \widehat Q_n(\lambda),\ch \widehat L_n(\mu)\rangle = 
  \begin{cases}
    e(\mu-\lambda) & \text{if $\mu-\lambda \in p^n X(T)$} \\
    0 & \text{otherwise},
  \end{cases}
\end{equation*}
for $\lambda$, $\mu \in X(T)$.

\medskip

2.5. To see that $\widehat Z_n((p^n-1)\rho') \cong \St'_n$, compare their formal characters using \cite{bib:Jan-reps}, II.5.10 and
II.9.2(3) and note that $A(p^n\rho') = A(\rho')^{\Frn}$ and $A(\rho') = e(\rho')\prod_{\alpha \in R^+} (1-e(-\alpha))$. 
Then $\ch \widehat Z_n(\mu) = e(\mu-(p^n-1)\rho')(\ch \St'_n)$ follows immediately from \cite[II.9.2(3)]{bib:Jan-reps}.
For the reciprocity law see \cite[II.11.4]{bib:Jan-reps}.  The result quoted from [10,\,3.2(1)] follows easily by adapting the proof
of \cite[II.9.16(a)]{bib:Jan-reps} using $\rho'$ instead of~$\rho$ and by noticing that the formula there is valid for all $\mu_0
\in X_r(T)$. \need{can in fact avoid covering groups referred to in II.9.16 by using different decomposition for $X(T)$;
see his manuscript, lemma 2.5}

\medskip

2.7. Let~$Y$ denote a set of representatives for $X_n(T)/(p^n-\pi)X^0(T)$. Then the sum in the first formula should
run over $\lambda \in Y$, and similarly the $\Psi L(\lambda)$ for $\lambda \in Y$ are linearly independent. For
the projectivity of $\St'_n$ as $K\Gamma_n$-module see the comments on (3.2) below. Also
\begin{equation*}
  \langle \Psi U(n,\lambda),\Psi L(\mu) \rangle = 
  \begin{cases}
    1 & \text{if $L(\lambda) \cong L(\mu)$ as $K\Gamma_n$-modules}, \\
    0 & \text{otherwise}.
  \end{cases}
\end{equation*}
One defines $[\widehat Q_n(\lambda) : U(n,\mu)]$ first for $\mu \in Y$ by using the same definition as in the text,
but with the sum running over $\mu \in Y$. Then one defines it in general by demanding that it depends on~$\mu$ only
modulo $(p^n-\pi)X^0(T)$. It is clearly independent of the choice of~$Y$.

\medskip

2.8 and 2.9. The sums over~$\mu$ should run over~$Y$ (rather than $X_n(T)$).

\medskip

2.10. In the corollary, ``$\lambda \ne \mu$'' should be replaced by ``$\lambda-\mu \not\in (p^n-\pi)X^0(T)$.''
In the proof, the terms for $\nu \in X^0(T)$ contribute 1 if $\mu-\lambda \in (p^n-\pi)X^0(T)$ and 0 otherwise.
The other case, now $\nu \not\in X^0(T)$, goes through as written.

\medskip

3.2. \emph{As pointed out in (1.5), from now on all occurrences of $\St_n$ in the text should be read as $\St_{n,\pi}$.
In this paragraph, any expression of the form ``$\widehat Z_n(\dots-\rho)$'' should be read as
``$\widehat Z_n(\dots-\pi\rho)$.''}

Jantzen establishes Humphreys' formula in great generality, following a suggestion of Lusztig. For the purpose of this
proposition only, $G$ denotes a connected reductive group defined over~$\fpn n$ and $T$ an arbitrary maximal torus of~$G$ 
that is defined over~$\fpn n$.  Let $F$ be the corresponding Frobenius endomorphism.
Note that to any $\chi \in \Z[X(T)]^W$ we can associate a Brauer character $\Psi\chi$ of~$G^F$ just as in (2.7). The point is
that any $G$-module can be restricted to a $KG^F$-module and that $\Psi$ is additive.

\begin{propJ}
  With the above notation,
  \begin{equation*}
    \sum_{w \in W} R_w(n,\mu) = (\# \Stab_W \mu) \Psi s(\mu) \St_G,
  \end{equation*}
  where $\St_G$ is the Steinberg character of~$G^F$ \cite[6.2]{bib:Carter}.
\end{propJ}

Note that $G^F = \Gamma_n$ and $\St_G = \Psi\! \St_{n,\pi}$ in the context above. This can be seen as follows. By \cite{bib:Carter},
6.2, 6.4.3, and 2.9, $\dim \St_G = \#((U^+)^F) = p^{n(\# R^+)}$, where $U^+$ is the unipotent radical of the Borel $B^+$.
Since $(U^+)^F$ is a Sylow $p$-subgroup in~$\Gamma_n$, the Brauer--Nesbitt theorem \cite[16.6]{bib:Hum} implies that
$\overline{\St_G}$, the reduction modulo~$p$ of $\St_G$, is irreducible and projective.  A short calculation with the Weyl
dimension formula shows that $\dim V(\lambda) \le p^{n(\# R^+)}$ for all $\lambda \in X_n(T)$ with equality if and only if
$\langle\lambda,\alpha\dual\rangle = p^n-1$ for all simple roots~$\alpha$. By prop.~1.3, $\overline{\St_G} \cong L(\lambda)$
for some such~$\lambda$. As $\St_G$ is trivial on~$T^F$ by definition, $\lambda \in (p^n-\pi)X(T)$ and the claim follows
easily. (This argument together with \cite[8.2]{bib:Hum} shows moreover that $L(\lambda)$ for $\lambda \in X_n(T)$ is projective
as $K\Gamma_n$-module if and only if $\langle\lambda,\alpha\dual\rangle = p^n-1$ for all simple roots~$\alpha$.)

\begin{proof}
  Let $(T_w^F)\dual$ denote that set of irreducible complex characters of~$T_w^F$ and let $\langle\;,\,\rangle_{T_w^F}$
  denote the usual inner product on the space of class functions. For any complex class function~$\chi$ on~$G^F$,
  \begin{equation}\tag{\text{$*$}}\label{eq:app}
    \chi\St_G = \frac 1{\# W} \sum_{w\in W} \sum_{\eta \in (T_w^F)\dual} \langle \chi,\eta\rangle_{T_w^F}\, 
    \varepsilon_G \varepsilon_{T_w} R^\eta_{T_w},
  \end{equation}
  where $\varepsilon_G = (-1)^{\text{$\fpn n$-$\mathrm{rank} (G)$}}$ and similarly for $\varepsilon_{T_w}$, and their
  product is the sign that makes $R^\eta_{T_w}$ positive at 1 \cite[7.5.1]{bib:Carter}. This is essentially the content of
  [4,\,7.12.2] and can be seen as follows. By [4,\,7.5], $\chi\St_G$ is a linear combination of Deligne--Lusztig characters.
  To determine the coefficients one uses the calculation of the inner product on top of p.~144 in~[4].

  To determine $\Psi s(\mu)$, note that for any $p$-regular $s \in G^F$ there exists a $t \in T$ that is conjugate to~$s$ in~$G$. Then
  \begin{equation*}
    \Psi s(\mu)(s) = \sum_{\nu \in W\mu} (\Theta \circ \nu)(t),
  \end{equation*}
  where $\Theta$ is the same embedding of the roots of unity in~$K$ into~$\C\s$ that was used implicitly in (2.7) and (3.1).
  To prove this, note that for any $G$-module~$V$, $s$ and~$t$ have the same set of eigenvalues~$\lambda$ on~$V$. Thus
  \begin{equation*}
    \Psi V(s) = \sum_{\lambda} \Theta(\lambda) = \sum _{\nu \in X(T)} (\Theta \circ \nu)(t) \dim V^\nu,
  \end{equation*}
  and the formula follows by taking linear combinations.

  In particular, $\langle \Psi s(\mu),\eta \rangle_{T_w^F} = \sum_{\nu \in W\mu} \langle \theta(\nu,w),\eta \rangle_{T_w^F}$.
  Applying~\eqref{eq:app} to $\chi = \Psi s(\mu)$ and using $\langle \theta(\nu,w),\eta \rangle_{T_w^F} = \delta_{\theta(\nu,w),\eta}$  yields
  \begin{equation*}
    \Psi s(\mu) \St_G = \frac 1 {\# W} \sum_{w\in W} \sum_{\nu \in W\mu} R_w(n,\nu).
  \end{equation*}
  The right-hand side may be rewritten as
  \begin{equation*}
    \frac {\#(W\mu)} {(\# W)^2} \sum_{w\in W}\sum_{w_1\in W} R_w(n,w_1\mu) =
    \frac {\#(W\mu)} {(\# W)^2} \sum_{w\in W}\sum_{w_1\in W} R_{w_1^{-1}w F(w_1)}(n,\mu),
  \end{equation*}
  where we used that a Deligne--Lusztig character $R^\theta_T$ depends only on the $G^F$-conjugacy class of $(T,\theta)$
  (see also (3.1)). The proposition now follows by interchanging the order of summation.
\end{proof}

Note that the formula just after (1) follows from (2.5(1), (2)) after shifting the index~$\mu$ in the sum by\
$(\pi-1)\rho' \in X^0(T)$.

Regarding the reference [10,\,3.2(1)], please see the remark in (2.5) above.

\medskip

3.3. \emph{In this paragraph, any expression of the form ``$\widehat Z_n(\dots-\rho)$'' should be read as
``$\widehat Z_n(\dots-\pi\rho)$.'' Similarly for ``$\chi(\dots-\rho)$,'' with the exception of the very first
formula.}

The weights $\rho'_w$ and $\varepsilon'_w$ are defined as in the text, but depend now on the choice of the $\omega'_\alpha$.
Also the definition of $\gamma'_{w,w'} \in \Z[X(T)]^W$ carries over for the following reason. A result
of Hulsurkar, recalled in [8,\,p.\,448],  implies that the matrix $(\chi(-\varepsilon_{w_0w} + \varepsilon_{w'} - \rho)\det(w'))_{w,w'}$
for the simply connected group $G'$ with entries in $\Z[X(T_1)]^W$
is upper triangular and unipotent for a suitable ordering of~$W$. Since for $\lambda \in X(T)$,
\begin{equation*}
  \chi(\lambda) = 0 \ \eq\  \langle \lambda+\rho',\alpha\dual\rangle = 0 \ \forall \alpha \in R \ \eq\  \chi(\overline\lambda) = 0
\end{equation*}
and $\overline{\chi(\lambda)} = \chi(\overline\lambda) = 1$ if and only if $\lambda \in X^0(T)$ (in which case $\chi(\lambda)
= e(\lambda)$), it follows that also the lifted matrix $(\chi(-\varepsilon'_{w_0w} + \varepsilon'_{w'} -
\rho')\det(w'))_{w,w'}$ is upper triangular with invertible diagonal entries, under the same ordering of~$W$. \emph{Any
  occurrence of~$\rho_w$, $\varepsilon_w$, $\gamma_{w,w'}$ in the text should be read as $\rho'_w$, $\varepsilon'_w$,
  $\gamma'_{w,w'}$.}

Here is how $\rho'_w$, $\varepsilon'_w$, and $\gamma'_{w,w'}$ depend on the choice of the $\omega'_\alpha$. For another choice
$\omega''_\alpha = \omega'_\alpha + \xi_\alpha$ ($\xi_\alpha \in X^0(T)$) let $\xi_w \in X^0(T)$ be the sum of $\xi_\alpha$ for all~$\alpha$
with $w^{-1}\alpha < 0$. Then $\rho''_w = \rho'_w + \xi_w$,  $\varepsilon''_w = \varepsilon'_w + \xi_w$, and
\begin{equation*}
  \gamma''_{w,w'} = \gamma'_{w,w'}\, e(\xi_{w_0w'} - \xi_w + \xi_{w_0}).
\end{equation*}

The statement and proof in [9,\,5.2] carry over word by word with $q = p^n$ (adding primes, as usual). Then~(1) follows by plugging in $\lambda = \mu-\pi\rho'$
and by using the character formula of (2.5) on the left-hand side.

We define for any $w \in W$ and $\mu \in X(T)$,
\begin{equation*}
  X'_w(n,\mu) = \sum_{w_1,w_2 \in W} \gamma^{\prime\,\Frn}_{w_1,w_2} \chi(w_1(\mu-w\pi\varepsilon'_{w_0w_2}) + p^n\rho'_{w_1}-\pi\rho'),
\end{equation*}
an element of $\Z[X(T)]^W$.
By the formulae just given, it is easy to see that $[X'_w(n,\mu):L]_{\Gamma_n}$ for a simple $K\Gamma_n$-module~$L$
is independent of all choices.

The proof of the lemma goes through. The formula of Brauer quoted from [6,\,p.\,38] is a simple exercise using the Weyl
character formula. \need{or \cite[II.5.8]{bib:Jan-reps}.}A slight simplification can actually be achieved in the middle of
p.~467 by choosing $w'$ so that $w'\nu$ is dominant, yielding right away that $b$ equals the sum of
\begin{equation*}
  \frac{\#(W\nu)}{\# W} 
  [\gamma^{\prime\,\Frn}_{w_1,w_2} \chi(w_1(\mu-w\pi\varepsilon'_{w_0w_2}) + w'\pi\nu + p^n\rho'_{w_1}-\pi\rho'):
  \widehat L_n(p^n\nu + \lambda)]
\end{equation*}
with $w_1$, $w_2$ and $w'$ running over~$W$ and~$\nu$ over $X(T)^+$, which together with Brauer's formula completes the proof.

\medskip

3.4. The sum in~(1) now runs over $\lambda \in Y$ (with $Y$ as in~(2.7)). To define $[\widetilde\zeta : L(\lambda)]_{\Gamma_n}$ for
general~$\lambda \in X_n(T)$, one demands that it depends on~$\lambda$ only modulo $(p^n-\pi)X^0(T)$. In this way the definition is seen
to be independent of all choices.

\begin{thmJ}
  For all $w \in W$ and all $\mu \in X(T)$, $\widetilde {R_w}(n,\mu) = \Psi(X'_w(n,\mu))$.
\end{thmJ}

In the proof of the theorem, one restricts $\lambda$, $\lambda_1 \in X_n(T)$ to be elements of~$Y$ everywhere. Note that by choosing
an ordering~$\le_Y$ of~$Y$ such that $\lambda \mathbin{\le_Y} \lambda_1$ implies
$\langle \lambda,\alpha_0\dual\rangle \le \langle \lambda_1,\alpha_0\dual\rangle$ it still follows that the matrix
of all $[\widehat Q_n(\lambda_1):U(n,\lambda)]$ is invertible, as it is unipotent by (2.10).

One slight simplification is possible. It is not necessary to introduce~$\zeta$ in the two formulas
at the top of p.~469; rather $\Psi\ch \widehat Q_n(\lambda_1)$ is the sum of 
\begin{equation*}
   [X'_{w'}(n,\mu')s(\pi\nu) : \widehat L_n(p^n\nu + \lambda_1)] \frac{R_{w'}(n,\mu')}{\langle R_{w'}(n,\mu'),R_{w'}(n,\mu')\rangle}
\end{equation*}
with $(w',\mu')$ and~$\nu$ running over the same index sets as in the text.

\bibliography{my}
\bibliographystyle{halpha}

\end{document}